\documentclass[10pt]{article}       
\usepackage{geometry}               
\usepackage{graphicx}
\usepackage{caption}
\usepackage{subcaption}
\usepackage{mathrsfs}
\usepackage{comment}
\usepackage{amsmath} 
\usepackage{amssymb}  
\usepackage{amsthm}  
\usepackage{bm}
\usepackage{color}
\usepackage{array}
\usepackage{cite}
\usepackage{hyperref}

\usepackage{hyperref}

\makeatletter
\def\namedlabel#1#2{\begingroup
    #2%
    \def\@currentlabel{#2}%
    \phantomsection\label{#1}\endgroup
}

\usepackage{enumitem}

\usepackage[utf8]{inputenc}
\usepackage{booktabs} 

\usepackage{float}

\usepackage{pgfplots}
\pgfplotsset{compat=1.18}
\usepgfplotslibrary{fillbetween}

\newtheorem{theorem}{Theorem}[section]
\newtheorem{lemma}[theorem]{Lemma}

\newtheorem{proposition}[theorem]{Proposition}

\numberwithin{equation}{section}

\newcommand{\R}{\mathbb{R}}

\newcommand{\cA}{\mathcal{A}}

\newcommand{\cC}{\mathcal{C}}
\newcommand{\cF}{\mathcal{F}}
\newcommand{\cH}{\mathcal{H}}

\newcommand{\cP}{\mathcal{P}}

\newcommand{\cE}{\mathcal{E}}

\def\le{\leqslant}
\def\leq{\leqslant}

\def\geq{\geqslant}

\def\pa{\partial}
\def\na{\nabla}
\def\eps{\varepsilon}
\def\div{\mathrm{div}\, }

\title{Cell-cell adhesion and multiphase Hele--Shaw problem as the singular limit of a Keller--Segel system}

\newcommand{\footremember}[2]{%
    \footnote{#2}
    \newcounter{#1}
    \setcounter{#1}{\value{footnote}}%
}

\author{
  Jiwoong Jang \footremember{trailer}{Department of Mathematics, University of Maryland-College Park (jjang124@umd.edu).},
  Antoine Mellet \footremember{alley}{Department of Mathematics, University of Maryland-College Park (mellet@umd.edu). Partially supported by NSF Grant DMS-2307342.
}
  }

\date{}
\providecommand{\keywords}[1]{\textbf{Key words.} #1}
\providecommand{\MSC}[1]{\textbf{MSC codes.} #1}

\begin{document}
\maketitle

\pagestyle{myheadings}
\thispagestyle{plain}

\begin{abstract}

We investigate a singular limit of a system of Patlak--Keller--Segel (PKS) equations modeling the evolution of multiple interacting species. Our primary motivation is the Differential Adhesion Hypothesis (DAH), introduced by Malcolm Steinberg in 1962, which posits that cell populations self-organize by minimizing adhesion energy, in a manner analogous to fluids minimizing surface tension.
Our starting point is a continuum model describing the evolution of the density distributions of $N$ distinct species, representing different cell types. These species interact through attractive nonlocal forces governed by interaction kernels of similar form but different strengths (the $N\times N$ matrix of interaction coefficients encodes the key properties of the system).
These attractive forces are balanced by a nonlinear pressure, depending on the total density and strong enough to prevent concentration. 

In the limit of short-range interactions, we establish sufficient conditions on the interaction matrix for  cell sorting  to take place (i.e., the spontaneous separation of the different species). We then prove a general $\Gamma$-convergence result for the associated energy functional, showing convergence to an interfacial energy where the surface tension coefficients are determined by a geodesic problem. 
A detailed analysis of this problem allows us to identify regimes in which engulfment occurs (more adhesive species cluster together and are surrounded by less adhesive ones) which is a key feature of the DAH.

In the second part of the paper, we analyze the asymptotic behavior of solutions to the PKS system in the combined long-time and short-range interaction limit. Under a standard energy convergence assumption, we prove the convergence to a multiphase Hele--Shaw problem with surface tension (and contact angle conditions at triple junctions).
\end{abstract}

\keywords{Chemotaxis, Singular limit, Cell-cell adhesion, Hele--Shaw flow, Multiphase flows.}

\vspace{0.2cm}

\MSC{35A15, 35K57, 53E10, 76T30.}

\section{Introduction}\label{sec:introduction}

\subsection{The multispecies PKS model}\label{subsec:summary-intro}
Several recent works  have investigated the limit $\eps\to0$ of the rescaled Patlak--Keller--Segel (PKS) model for chemotaxis:
\begin{equation}\label{eq:general-PKS}
\begin{cases}
\eps \partial_t \rho - \Delta \rho^m + \alpha \operatorname{div} (\rho  \nabla \phi ) = 0, & \text{in }\Omega\times(0,\infty), \\
 \sigma\phi - \eps^2 \Delta \phi = \rho . & \text{in }\Omega .
\end{cases}
\end{equation}
When $m>2$, the limit $\eps\to 0$ leads to phase separation, that is, $\rho^\eps (x,t) \to \theta \chi_{E(t)}$ for some constant $\theta>0$ (depending only on $m$ and the coefficients $\sigma>0$ and $\alpha>0$) and the limiting set $E(t)$ can be proved to evolve according to the Hele--Shaw free boundary problem with surface tension (see \cite{D18,GL98,KMW24,M24,KMW242}).

Our goal   is to extend this analysis to a system of coupled Patlak--Keller--Segel equations describing multiple species interacting through chemotaxis and cross-diffusion.
For each $i=1,\dots,N$, we denote by $\rho_i(x,t)$ the density distribution of the $i$-th species of living organisms (hereafter referred to as “cells”). Its evolution is described by:
\begin{equation}
 \varepsilon \partial_t \rho_i^{\varepsilon} - \operatorname{div}\left(\rho_i^{\varepsilon}\nabla f_m'\left(\rho_1^{\varepsilon}+\dots +\rho_N^\eps\right)\right) +  \operatorname{div} \left(\rho_i^{\varepsilon} \left(\sum_{j=1}^N\alpha_{ij}\nabla \phi^{\varepsilon}_j\right)\right) = 0  \qquad \text{in } \Omega \times (0, \infty),  \label{eq:epsilon-problem-1} 
\end{equation}
where $\phi^\eps_j(\cdot,t)$ denotes the chemotaxis potential associated to the $j$-th species, solution of 
\begin{equation}
\sigma\phi^{\varepsilon}_j - \varepsilon^2 \Delta \phi^{\varepsilon}_j = \rho^{\varepsilon}_j  \qquad \text{in } \Omega , \quad j=1,\dots N .\label{eq:epsilon-problem-2}
    \end{equation} 
The evolution of the density of the $i$-th species is thus coupled to that of the other species through two mechanisms. First, the cross-diffusion term models the tendency of cells to move away from crowded regions via Darcy’s law. It depends on the total density 
    $\rho^{\varepsilon}:=\rho^{\varepsilon}_1 + \cdots + \rho_N^{\varepsilon}$ through the simple power-law pressure:
\begin{equation}\label{eq:f-s}
f_m(s) = \frac{s^m}{m-1}, \qquad m>2,
\end{equation}
so that when $N=1$, this model reduces to \eqref{eq:general-PKS}.
Second, 
the terms $+ \alpha_{ij}\operatorname{div} (\rho_i^{\varepsilon}  \nabla \phi_j^{\varepsilon})$ describe the interactions between the $i$-th and $j$-th species. In the context of chemotaxis, the potentials $\phi_j^{\varepsilon}$ are often interpreted as the density of some chemoattractant (produced by the $j$-th cells), solution of the elliptic equation \eqref{eq:epsilon-problem-2}.
We can write $\phi_j^{\varepsilon}(x)=\int_\Omega G(x,y)\rho_j^{\varepsilon}(y)\, dy$
for some Green function $G$ independent of $j$. In particular the chemotaxis mechanisms are identical and the properties of the system depend on the coefficients  $\alpha_{ij}\in\R$ which represent the strength of the interactions between the $i$-th and $j$-th cells. These interactions can be attractive ($\alpha_{ij}>0$) or repulsive ($\alpha_{ij}<0$) 
but we will always assume that self-interactions are attractive ($\alpha_{ii}>0$, see Assumptions \ref{assumption:A1} and \ref{assumption:A2} below).
A key role in our analysis is played by the interaction coefficients matrix 
$$A:=(\alpha_{ij})_{1\leq i,j\leq N}.$$
Finally,  the system is set on a fixed bounded domain $\Omega$ and supplemented with Neumann boundary conditions and initial conditions:
$$
\left(-\rho_i^{\varepsilon}\nabla f_m'\left(\rho^{\varepsilon}_1 + \cdots + \rho_N^{\varepsilon}\right) + \rho_i^{\varepsilon} \left(\sum_{j=1}^N\alpha_{ij}\nabla \phi^{\varepsilon}_j\right)\right) \cdot n = 0\qquad  \text{on } \partial \Omega \times (0, \infty),
$$ 
$$
\nabla \phi^{\varepsilon}_i \cdot n = 0 \qquad  \text{on } \partial \Omega,
$$
and
$$
\rho^{\varepsilon}_i(x, 0) = \rho^{\varepsilon}_{i,in}(x)\qquad  \text{in } \Omega\qquad\text{for each }i=1,\cdots,N.$$
These conditions imply in particular that \eqref{eq:epsilon-problem-1} preserves the mass of each species so we have 
\begin{equation}\label{eq:mass}
\int_\Omega \rho^{\varepsilon}_i(x, t)\, dx = \int_\Omega\rho^{\varepsilon}_{i,in}(x) \, dx =:m_i \qquad \forall t>0.
\end{equation}

The primary objective of this paper is to show that differences in the interaction coefficients $\alpha_{ij}$ lead to cell sorting and to emergent collective behavior (large-scale patterns arising from simple pairwise interactions).
This will be done by studying the singular limit $\eps\to0$ of the system \eqref{eq:epsilon-problem-1}-\eqref{eq:epsilon-problem-2}, which can be interpreted as a long-time/large-scale asymptotic limit: the system \eqref{eq:epsilon-problem-1}-\eqref{eq:epsilon-problem-2} can be obtained via a rescaling $(x,t)\mapsto(\varepsilon x,\varepsilon^3t)$ assuming that the microscopic domain is large (so that $\Omega$ has size of order $1$ after the rescaling) and that the cell populations are large (so that the $m_i$ are of order $1$ after this rescaling). 
We refer to Section \ref{subsec:PKS-system} for further details about the role of $\eps$  in \eqref{eq:epsilon-problem-1}-\eqref{eq:epsilon-problem-2}.

Our first step is to identify sufficient conditions on the interaction  matrix $A=(\alpha_{ij})$ under which
the system evolves from overlapping density distributions $\rho_i$ to segregated regions $E_i(t)$ where $\rho_i>0$ and $\rho_j=0$ for $j\neq i$ (see Assumptions \ref{assumption:A1}-\ref{assumption:A2} below). 
We then perform an asymptotic analysis of the associated energy functional and prove  that it $\Gamma$- converges to an interfacial energy functional where the surface tension coefficients are determined by the length of some geodesics in the phase plane (for an appropriate metric).

This result validates the Differential Adhesion Hypothesis (DAH), introduced by Malcolm Steinberg in 1962 \cite{S62a,S62b,S62c,S62d}, which states that 
cells in a tissue behave like molecules in a fluid: they move and adhere to each other with different strengths and these  differences drive the system toward a configuration that minimizes an interfacial surface energy.
Our analysis rigorously connects the system \eqref{eq:epsilon-problem-1}-\eqref{eq:epsilon-problem-2}
to the regimes predicted by the DAH: mixing, complete sorting, partial engulfment, and full engulfment. In particular, a careful analysis of the auxiliary geodesic problem allows us to identify conditions on $A$ that lead to engulfment (which is a state where   more adhesive species cluster together and are surrounded by less adhesive ones).
 
Finally, we derive an effective evolution equation for the segregated regions $E_i(t)$:
Building on recent results for the one-species model \cite{KMW23,KMW24,M24}, 
we prove that, in the sharp-interface limit $\eps\to0$, the evolution of the sets $E_i(t)$
 is governed by the multiphase Hele--Shaw free boundary  problem with surface tension, thereby providing a continuum description of the dynamics driven by adhesion.

The emergence of collective behavior in multicellular systems as a result of cell-cell interactions is an actively studied topic. Our work is particularly motivated by \cite{FBC22}, where a Cahn-Hilliard approximation is used to derive a local continuum description of adhesion-driven dynamics. In contrast, we retain here the nonlocal structure of the underlying PKS-type model.

\medskip

\subsection{Main Assumptions}
We recall that $f_m$ is given by \eqref{eq:f-s} with $m>2$ and we make the following assumptions on the matrix $A$:

\medskip
\medskip

\namedlabel{assumption:A1}{(A1)} The matrix $A=(\alpha_{ij})_{1\leq i,j\leq N}$ is symmetric and positive semidefinite.
\medskip
\medskip

\namedlabel{assumption:A2}{(A2)} No two columns of the matrix $A$ are identical (i.e., 
for all $i,j$, there exists $k$ such that $\alpha_{ki}\neq\alpha_{kj}$) and no columns of $A$ are identically $0$ (i.e., 
for all $i$, there exists $k$ such that $\alpha_{ki}\neq0$).

\medskip

Assumption \ref{assumption:A1} plays a crucial role in establishing that the energy associated to our system is bounded from below and admits a $\Gamma$-limit. 
It implies in particular that cross-species interactions are symmetric and that self-interactions dominate the dynamics, as expressed by the inequalities
\begin{equation}\label{eq:detpositive}
\alpha_{ij}^2 \leq \alpha_{ii} \alpha_{jj}
\end{equation}
(which are obtained by writing the determinant condition for the positive semidefinite 2 by 2 submatrix formed by the $i$-th and $j$-th rows and columns of $A$).
It also implies that $\overrightarrow{e_i}^T A \overrightarrow{e_i} = \alpha_{ii}\geq 0$.  Using the nonzero column condition from Assumption \ref{assumption:A2} together with \eqref{eq:detpositive}, we deduce
$$\alpha_{ii} >0 \qquad\forall i=1,\dots ,N,$$
which indicates that all self-interactions are of attractive nature.

Assumption \ref{assumption:A2} rules out indistinguishable species in terms of their interaction coefficients and ensures that each species exhibits some nontrivial interactions, either self-interactions or interactions with other species.
This is not a restrictive requirement: if two species $i$ and $j$ were identical in this sense, they could be merged into a single species by replacing $\rho_i$ and $\rho_j$ with the combined density $\rho_i + \rho_j$. 
Indeed, if the first two columns of $A$ are equal, then the functions $(\rho_1+\rho_2, \rho_3 , \dots ,\rho_N)$ solve the reduced system of equations\footnote{indeed, we have in particular $\alpha_{11}=\alpha_{12}$ and $\alpha_{21}=\alpha_{22}$. The symmetry assumption then implies that $\alpha_{11}=\alpha_{12}=\alpha_{21}=\alpha_{22}$} with $N-1$ species and the associated coefficient matrix obtained by removing the first row and column from $A$.

Under these assumptions, we will prove that in the limit $\eps\to0$, the system \eqref{eq:epsilon-problem-1}-\eqref{eq:epsilon-problem-2} leads to the formation of disjoint cell aggregates: Each species density function will satisfy
$$ \rho_i^\eps(x,t)\to  \theta_i \chi_{E_i(t)} (x) \qquad \mbox{ as } \eps\to0$$ 
for some family of disjoint sets $E_i(t)$ and with $\theta_i$ given by
$$ \theta_i:=\left(\frac{1}{2\sigma}\alpha_{ii}\right)^{\frac{1}{m-2}}>0  \qquad i=1,\cdots N,$$
the volume of these sets $E_i(t)$ being then prescribed by the mass condition \eqref{eq:mass}.
Of course, this requires that the domain $\Omega$ be large enough to accommodate all the cell aggregates: We will always assume that the domain $\Omega$ satisfies
\begin{align}\label{assumption:vacuum}
\sum_{i=1}^N\frac{m_i}{\theta_i}<|\Omega|.
\end{align}

\medskip

\subsection{Main results}\label{sec:settings-main-results}
The PKS system   \eqref{eq:epsilon-problem-1}-\eqref{eq:epsilon-problem-2} is classically the gradient flow of the energy
\begin{align}\label{eq:energy-J}
\mathcal{J}^{\varepsilon}(\rho_1^{\varepsilon},\dots , \rho_N^{\varepsilon})
&:=\frac{1}{\varepsilon}\int_{\Omega}f_m(\rho_1^{\varepsilon}+\cdots +\rho_N^{\varepsilon} )  -\frac12\sum_{i,j=1}^N\alpha_{ij}\rho_i^{\varepsilon}\phi_{j}^{\varepsilon}\,dx 
\end{align}
with respect to the Wasserstein distance. 
In what follows, it will be convenient to use the vector notation
$$
\overrightarrow{\rho}  := ({\rho_1} , \dots ,{\rho_N} )^T\,:\,\Omega\to\R^N
$$
to describe the state of the system.
We also introduce the function $f:\R^N\to\R\cup\{+\infty\}$ defined by 
\begin{equation}\label{eq:f}
f(\overrightarrow{\rho})= 
\begin{cases}
    f_m(\rho_1+\cdots +\rho_N) & \mbox{ if } \overrightarrow{\rho}\in\R^N_{\geq0}\\
    +\infty & \mbox{  otherwise.}
\end{cases}
\end{equation}
With these notations, the energy \eqref{eq:energy-J} can be written as
\begin{align*}
\mathcal{J}^{\varepsilon}(\overrightarrow{\rho})
=\frac{1}{\varepsilon}\int_{\Omega}f(\overrightarrow{\rho})-\frac12(\overrightarrow{\rho})^TA\overrightarrow{\phi}^\eps\,dx
\end{align*}
where $\overrightarrow{\phi}^\eps:=(\phi_1^\eps,\cdots,\phi_N^\eps)^T$ and $\phi_i^\eps$ solution of \eqref{eq:epsilon-problem-2}, that is,
\begin{equation}\label{eq:phi}
\begin{cases}
    \sigma\overrightarrow{\phi}^\eps - \eps^2 \Delta \overrightarrow{\phi}^\eps = \overrightarrow{\rho} & \mbox{ in } \Omega\\
    \na \overrightarrow{\phi}^\eps \cdot n  = 0 & \mbox{ on } \pa\Omega.
\end{cases}
\end{equation}

Next, we recall that the mass of each species  is fixed (it is preserved by the equation) and denoted by $m_i$. We then write
$$
\overrightarrow{m} = (m_1,\dots , m_N)^T.
$$
Before passing to the limit $\eps\to 0$, we need to make sure that the minimum of the energy (with the mass constraint) is zero. As in the one-species model (see  \cite{KMW24,M24}), this  requires adding a constant $\overrightarrow{a}\cdot\overrightarrow{m} = \int_\Omega \overrightarrow{a}\cdot\overrightarrow{\rho}(x)\, dx $ to the energy, with 
$$ \overrightarrow{a} = (a_1,\dots a_N)^T, \qquad 
a_i:=
\frac{m-2}{m-1}\left(\frac{1}{2\sigma}\alpha_{ii}\right)^{\frac{m-1}{m-2}}.   
$$
We thus  set
\begin{equation}\label{eq:definition-cE}
\cE^{\varepsilon}(\overrightarrow{\rho}):= 
\begin{cases} 
\displaystyle \frac{1}{\varepsilon} \int_{\Omega} f(\overrightarrow{\rho}(x))+ \overrightarrow{a}\cdot\overrightarrow{\rho}(x) - \frac12 \overrightarrow{\rho}^T(x)A \overrightarrow{\phi}^{\varepsilon}(x) \, dx \qquad & \text{if }  \int_{\Omega} \overrightarrow{\rho}\,dx=\overrightarrow{m}, \\ 
+\infty & \text{otherwise}.
\end{cases}
\end{equation}

\medskip

Next, we define the limiting energy: When $\eps\to0$, the energy \eqref{eq:definition-cE} leads to phase separation.
We thus introduce the admissible set:
\begin{align*}
\cF(A,\overrightarrow{m})&:=\left\{\overrightarrow{\rho} =(\theta_1\chi_{E_1},\dots ,\theta_N\chi_{E_N})^T \, \Big|\, 
\chi_{E_i}\in BV(\Omega,\{0,1\})\text{ for }i=1,\dots,N,\right. \\
&\qquad\qquad\qquad\qquad\qquad\qquad\qquad\qquad \left.\{E_i\}_{i=1}^N\text{ disjoint, and}\int_{\Omega}\overrightarrow{\rho}(x)\,dx=\overrightarrow{m}\right\}
\end{align*}
where we recall that $\theta_i=\left(\frac{1}{2\sigma}\alpha_{ii}\right)^{\frac{1}{m-2}}$.
This set describes all configurations in which the $i$-th species aggregates on a set $E_i$ (with fixed density $\theta_i$ and total mass $m_i$) without mixing with the other species ($E_i\cap E_j=\emptyset$ for all $i\neq j$). 

Note that we work here within the framework of sets of finite perimeter: the assumption $\chi_{E_i}\in BV(\Omega,\{0,1\})$ implies that the perimeter of $E_i$ in $\Omega$ is finite:
$$P(E_i,\Omega) = \int_\Omega |\na \chi_{E_i}|<\infty.$$
We  denote by   $\partial^*E_i$ the reduced boundary of $E_i$ in $\Omega$ and by  $\Sigma_{ij} :=\partial^*E_i \cap \partial^*E_j$ the interface where  the $i$-th and $j$-th species are in contact. 
To simplify the notations below, we also introduce the vacuum set $E_0=\Omega \setminus \cup_{i=1}^N E_i$ and denote by
$\Sigma_{0i} := (\partial^*E_0 \cap \partial^*E_i)$ the free surface of the $i$-th species (see Figure \ref{fig:phases}).

\begin{figure}[htbp]
	\begin{center}
            \includegraphics[height=6cm]{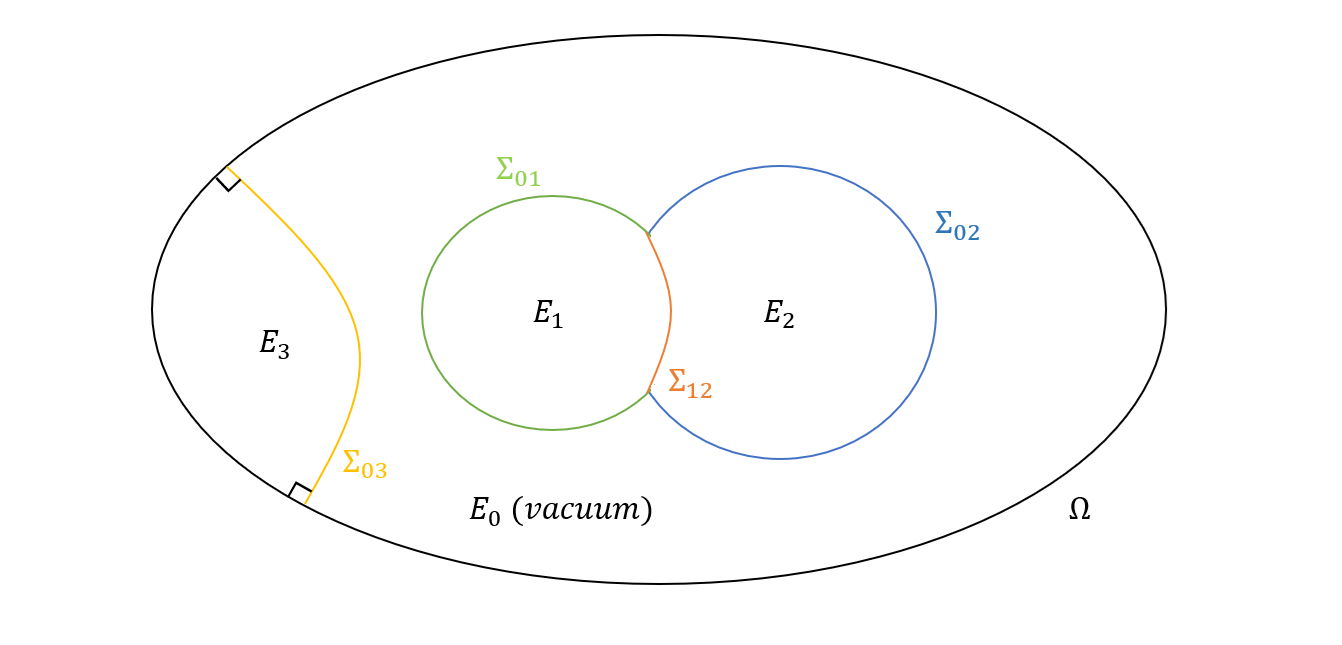}
            \captionsetup{width=0.8\textwidth} 
		\vskip 0pt
		\caption{Phases $E_i$ and their interfaces $\Sigma_{ij}$. The ninety-degree contact angle condition to $\partial\Omega$ (see $\Sigma_{03}$) comes from the Neumann boundary condition.}
        \label{fig:phases}
	\end{center}
\end{figure}

Our first main result, Theorem \ref{thm:gamma-convergence} below, identifies the $\Gamma$-limit of $\cE^\eps$ as the following interfacial energy functional for multiphases: 
\begin{equation}\label{eq:limit-energy-perimeter}
\cE^{0}(\overrightarrow{\rho}):= 
\begin{cases} 
\displaystyle \sum_{i,j=0}^N\sigma_{ij}\cH^{d-1}(\Sigma_{ij}) \qquad & \text{if } \overrightarrow{\rho}\in\cF(A,\overrightarrow{m}), \\ 
+\infty & \text{otherwise.}
\end{cases}
\end{equation}

The relative adhesion coefficients $\sigma_{ij}$ play a key role in the properties of $\cE^0$. Their definition involves a distance function $d_A:\R^N\times \R^N\to [0,\infty)$ as follows:
\begin{equation}\label{eq:metric-coefficient}
    \sigma_{ij}:=d_A(\theta_i\overrightarrow{e_i},\theta_j\overrightarrow{e_j}), \qquad i,j = 0,\cdots, N
\end{equation}
where $\overrightarrow{e_i}$ is the usual basis vector with $1$ in the $i$-th coordinate and
with the convention that $\theta_0\overrightarrow{e_0}:=\overrightarrow{0}$.
The distance $d_A$ is defined by
\begin{align}\label{eq:distance-definition}
d_A(\overrightarrow{\zeta_0},\overrightarrow{\zeta_1}):=\inf\left\{\frac{1}{\sqrt{2}}\int_0^1\sqrt{H(\gamma(t))}|\gamma'(t)|_A\,dt\,|\,\text{$\gamma:[0,1]\to\R^N$ is piecewise $C^1$} ,\,\gamma(0)=\overrightarrow{\zeta_0},\,\gamma(1)=\overrightarrow{\zeta_1}\right\},
\end{align}
where we used the notation $|\overrightarrow{\phi}|^2_A = \overrightarrow{\phi}^TA\overrightarrow{\phi}$
and where $H:\R^N\to\R$ is the function
\begin{align*}
H(\overrightarrow{\phi}):=\frac{\sigma}{2}|\overrightarrow{\phi}|_A^2-f^*(A\overrightarrow{\phi}-\overrightarrow{a}).
\end{align*}   
with 
\begin{equation}\label{eq:f^*}
f^*(v_1, \dots, v_N) = \left( \frac{m-1}{m} \max \{ v_1, \dots, v_N, 0 \} \right)^{\frac{m}{m-1}}
\end{equation}
(which is the Legendre transform of the function $f$ defined by \eqref{eq:f}).

\medskip

We can now state our first main result:
\begin{theorem}\label{thm:gamma-convergence}
Let $m>2$ and assume \ref{assumption:A1}, \ref{assumption:A2}, and \eqref{assumption:vacuum}. Then, with respect to the strong $L^1(\Omega)^N$-topology, the energy $\cE^{\varepsilon}$ $\Gamma$-converges as $\varepsilon\to0$ to $\cE^0$ defined by \eqref{eq:limit-energy-perimeter}-\eqref{eq:metric-coefficient}.
More precisely, we have:
\begin{enumerate}[label=(\roman*)]
\item The liminf property: For any sequence $\overrightarrow{\rho}^{\varepsilon}\in L^1(\Omega)^N$ converging to $\overrightarrow{\rho}^0$ strongly in $L^1(\Omega)^N$, it holds
\begin{align*}
\liminf_{\varepsilon\to0}\cE^{\varepsilon}(\overrightarrow{\rho}^{\varepsilon})\geq\cE^0(\overrightarrow{\rho}^0).
\end{align*}

\item The limsup property: For any $\overrightarrow{\rho}^0\in L^1(\Omega)^N$, there exists a sequence $\overrightarrow{\rho}^{\varepsilon}\in L^1(\Omega)^N$ converging to $\overrightarrow{\rho}^0$ strongly in $L^1(\Omega)^N$ such that
\begin{align*}
\limsup_{\varepsilon\to0}\cE^{\varepsilon}(\overrightarrow{\rho}^{\varepsilon})\leq\cE^0(\overrightarrow{\rho}^0).
\end{align*}
\end{enumerate}
\end{theorem}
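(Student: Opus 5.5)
The plan follows the strategy used for the one-species model \cite{KMW24,M24}: rewrite the nonlocal energy as a local Modica--Mortola type functional in the potential $\overrightarrow{\phi}$. By Legendre duality, $f(\overrightarrow{\rho})+\overrightarrow{a}\cdot\overrightarrow{\rho}\geq \overrightarrow{\rho}\cdot(A\overrightarrow{\phi})-f^*(A\overrightarrow{\phi}-\overrightarrow{a})$, with equality when $\overrightarrow{\rho}\in\partial f^*(A\overrightarrow{\phi}-\overrightarrow{a})$. Next, test \eqref{eq:phi} against $A\overrightarrow{\phi}^\eps$ to get $\int\overrightarrow{\rho}^TA\overrightarrow{\phi}^\eps=\int\sigma|\overrightarrow{\phi}^\eps|_A^2+\eps^2 \na\overrightarrow{\phi}^\eps:A\na\overrightarrow{\phi}^\eps$; this uses the symmetry in \ref{assumption:A1}. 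Combining the two gives
\begin{equation*}
\cE^\eps(\overrightarrow{\rho})\geq \int_\Omega \frac{1}{\eps}H(\overrightarrow{\phi}^\eps)+\frac{\eps}{2}|\na\overrightarrow{\phi}^\eps|_A^2\,dx
+\frac1\eps\int_\Omega \Big(\overrightarrow{\rho}-\sigma\overrightarrow{\phi}^\eps\Big)^T\!A\,\Big(\ldots\Big)\,dx .
\end{equation*}
More cleanly, I would write $\cE^\eps=\cG^\eps(\overrightarrow{\phi}^\eps)+\frac1\eps\int D(\overrightarrow{\rho},\overrightarrow{\phi}^\eps)$, where $D\geq0$ is the Fenchel--Young defect and $\cG^\eps(\overrightarrow{\phi})=\int \frac1\eps H(\overrightarrow{\phi})+\frac{\eps}{2}|\na\overrightarrow{\phi}|^2_A$. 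Positive semidefiniteness of $A$ makes the gradient term nonnegative.

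The first key step is an analysis of $H$. I would show that $H\geq0$ and that its zero set, restricted to the relevant region, is exactly $\{0\}\cup\{\overrightarrow{\phi}:A\overrightarrow{\phi}=A\theta_i\overrightarrow{e_i}/\sigma\cdots\}$; in other words, the wells correspond to the pure phases $\theta_i\overrightarrow{e_i}$, up to the kernel of $A$ and the scaling $\sigma\overrightarrow{\phi}\approx\overrightarrow{\rho}$. This is where the choice of $\theta_i$ and $a_i$ enters, through the one-dimensional computation $\max_s\big(\frac{\sigma}{2}\alpha_{ii}s^2\ldots\big)$. Assumption \ref{assumption:A2} is used here so that distinct species give distinct wells, which makes $d_A(\theta_i\overrightarrow{e_i},\theta_j\overrightarrow{e_j})>0$. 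I would also check that mixed states, with several $\rho_i>0$, are not zeros of $H$. The max structure of $f^*$, together with $\alpha_{ij}^2\le\alpha_{ii}\alpha_{jj}$ from \eqref{eq:detpositive} and \ref{assumption:A2}, should rule them out. This is the mechanism behind sorting, and I expect it to be the main obstacle: positive semidefiniteness gives only non-strict inequalities, so degenerate directions coming from $\ker A$ must be handled carefully. The natural fix is to work with $A\overrightarrow{\phi}$, or equivalently $A^{1/2}\overrightarrow{\phi}$, rather than with $\overrightarrow{\phi}$ itself.

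For the liminf, suppose $\cE^\eps(\overrightarrow{\rho}^\eps)\le C$. Then $\int H(\overrightarrow{\phi}^\eps)\le C\eps$ and $\int D\le C\eps$, and $\overrightarrow{\phi}^\eps-\overrightarrow{\rho}^\eps/\sigma\to0$ in $L^1$ by elliptic estimates. Hence $\overrightarrow{\rho}^0$ takes values in the wells, i.e. $\overrightarrow{\rho}^0\in\cF(A,\overrightarrow{m})$. Mass is preserved by the strong $L^1$ convergence. Next, define the vector Modica--Mortola potentials $w_k=\delta_k(\overrightarrow{\phi})$, where $\delta_k(\zeta)=d_A(\zeta,\theta_k\overrightarrow{e_k}/\sigma)$ suitably rescaled; one has $|\na (\delta_k\circ\overrightarrow{\phi})|\le\sqrt{H/2}\,|\na\overrightarrow{\phi}|_A$. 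Young's inequality then gives $\frac1\eps H+\frac\eps2|\na\overrightarrow{\phi}|_A^2\geq|\na(\delta_k\circ\overrightarrow{\phi}^\eps)|$ pointwise. Lower semicontinuity of total variation yields a lower bound by each $\int|D(\delta_k\circ\overrightarrow{\phi}^0)|$. The standard localization argument for multiphase systems (Baldo's technique) then recovers $\sum\sigma_{ij}\cH^{d-1}(\Sigma_{ij})$: on each interface $\Sigma_{ij}$, take the supremum over $k$ of these measures, which works since the $\sigma_{ij}=d_A$ satisfy the triangle inequality. The case of infinite limit energy is trivial.

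For the limsup, I would first approximate by polyhedral partitions with smooth interfaces, using a density result for partitions together with continuity of $\cE^0$, and then adjust the masses by small dilations. Near each $\Sigma_{ij}$, set $\overrightarrow{\rho}^\eps=\gamma_{ij}(d(x)/\eps)$, where $\gamma_{ij}$ is a near-optimal geodesic for $d_A$ reparametrized by equipartition, and set $\overrightarrow{\rho}^\eps$ equal to the pure phase elsewhere. Near junctions, mollify on a small set whose energy contribution vanishes. To compute $\cE^\eps$, I would not use the exact $\overrightarrow{\phi}^\eps$. Instead I would use the variational characterization $-\frac12\int\overrightarrow{\rho}^TA\overrightarrow{\phi}^\eps=\inf_{\overrightarrow{\psi}}\int\frac\sigma2|\overrightarrow{\psi}|_A^2+\frac{\eps^2}2|\na\overrightarrow{\psi}|_A^2-\overrightarrow{\rho}^TA\overrightarrow{\psi}$, which follows from convexity when $A\geq0$. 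I would construct the recovery pair as $\overrightarrow{\psi}=$ the geodesic profile and $\overrightarrow{\rho}=\nabla f^*(A\overrightarrow{\psi}-\overrightarrow{a})$, so that the Fenchel--Young defect vanishes. The energy then equals $\cG^\eps(\overrightarrow{\psi})$, which converges to $\sum\sigma_{ij}\cH^{d-1}(\Sigma_{ij})$ by the one-dimensional equipartition computation. Finally, a diagonal argument completes the construction.
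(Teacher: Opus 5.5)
Your liminf is the paper's argument in different variables. With $\sigma=1$ one has $H(\overrightarrow{\phi})=h(Q\overrightarrow{\phi})$, and your Fenchel--Young defect $D$ equals $W(\overrightarrow{\rho})+\frac12|Q\overrightarrow{\phi}-Q\overrightarrow{\rho}|^2-h(Q\overrightarrow{\phi})$. So $\cE^\eps=\mathcal{G}^\eps(\overrightarrow{\phi}^\eps)+\frac1\eps\int D$ is exactly the paper's bound $\cE^\eps(\overrightarrow{\rho})\ge\cF^\eps(Q\overrightarrow{\phi}^\eps)$, followed by Baldo's localization.

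The well analysis, which you leave as a sketch, can be done directly on $H$: the bound $(A\overrightarrow{\phi})_k\le\sqrt{\alpha_{kk}}\,|\overrightarrow{\phi}|_A$ reduces it to the one-species double well, and equality forces $Q\overrightarrow{\phi}\in\{0,\theta_kQ\overrightarrow{e_k}\}$. The paper instead argues on $W$ via a Hessian computation. Note, however, that $H=0$ only determines $Q\overrightarrow{\rho}^0$. To conclude $\overrightarrow{\rho}^0\in\{0,\theta_i\overrightarrow{e_i}\}$ you must also use $\int D\le C\eps$ together with $\partial f^*(\theta_iA\overrightarrow{e_i}-\overrightarrow{a})=\{\theta_i\overrightarrow{e_i}\}$. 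This is where \ref{assumption:A2} enters (Lemma~\ref{lem:legendre}(ii)).

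The genuine gap is in the limsup. $\cE^\eps(\overrightarrow{\rho})=+\infty$ unless $\int_\Omega\overrightarrow{\rho}\,dx=\overrightarrow{m}$ exactly, and your sequence $\overrightarrow{\rho}^\eps\in\partial f^*(A\overrightarrow{\psi}^\eps-\overrightarrow{a})$ does not satisfy this.

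\begin{itemize}
\item Its masses are only $m_i+O(\eps)$, because the transition layers have width $O(\eps)$ and carry densities of order one.
\item The defects are coupled across species. Along a geodesic from $\overrightarrow{\alpha_i}$ to $\overrightarrow{\alpha_j}$, the selection from $\partial f^*$ can charge a third species; in the full-engulfment regime of Theorem~\ref{thm:DAH}(iii), the geodesic from $\overrightarrow{\alpha_0}$ to $\overrightarrow{\alpha_1}$ runs through $\overrightarrow{\alpha_2}$.
\item Your ``small dilations'' act on the sharp polyhedral partition, so they do not touch this $\eps$-level defect.
\item As soon as you modify $\overrightarrow{\rho}^\eps$, the Fenchel--Young defect no longer vanishes. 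You then need a quantitative estimate of the perturbation, nonlocal terms included.
\end{itemize}

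The paper identifies this as the main difficulty and spends Steps 2 and 3 of the limsup on it.
\begin{itemize}
\item It multiplies by a uniform $\zeta=1-O(\eps)$ so that all mass deficits are positive and of order $\eps$. The nonlocal part scales by $\zeta^2\le1$ and the potential part changes by $O(\eps)$.
\item It then inserts droplets of radius $\sim\eps^{1/d}$ in the vacuum phase, which is where \eqref{assumption:vacuum} is used. Their profiles are taken in $\partial f^*(\chi_j^{\eps,s_j}Q^T\overrightarrow{\alpha_j}-\overrightarrow{a})$.
\item The ordering $\alpha_{11}\ge\dots\ge\alpha_{NN}$ and Lemma~\ref{lem:legendre}(iii) ensure that the $j$-th droplet puts no mass into species $i<j$. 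The masses can therefore be matched one species at a time by continuity in $s_j$.
\item Proposition~\ref{prop:minimization}(iii), via exponential decay of the Green function, controls the interaction between the pieces.
\end{itemize}

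A componentwise rescaling $\rho_i^\eps\mapsto\zeta_i\rho_i^\eps$ with $\zeta_i=1+O(\eps)$ should also work, but it needs its own estimate. Since $\mathrm{diag}(\zeta_i)$ does not commute with $Q$, the nonlocal terms acquire cross terms bounded by $C\|Q(\overrightarrow{\phi}-\overrightarrow{\rho})\|_{L^2}\|\overrightarrow{\phi}-\overrightarrow{\rho}\|_{L^2}+C\eps^2\|Q\na\overrightarrow{\phi}\|_{L^2}\|\na\overrightarrow{\phi}\|_{L^2}=O(\sqrt\eps)$. Without some such correction step, your recovery sequence is not admissible and the limsup is not proved.
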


This theorem rigorously establishes the connection between the classical nonlocal interaction energy \eqref{eq:energy-J} and the interfacial energy underlying the Differential Adhesion Hypothesis \eqref{eq:limit-energy-perimeter}. 
Energies of the form \eqref{eq:limit-energy-perimeter} commonly appear in the study of fluid-fluid interfaces and  give rise to constant mean curvature surfaces, together with prescribed contact angle conditions determined by the surface tension coefficients
$(\sigma_{ij})_{0\leq i,j\leq N}$.
An important contribution of this work is thus the derivation of the formula \eqref{eq:metric-coefficient} which relates the properties of the limiting energy \eqref{eq:limit-energy-perimeter} to the relative strength of the interaction between the different species (and the pressure function $f$).
This formula is not explicit but the fact that it involves the length of the shortest path in the phase space implies the following triangle inequality:
\begin{align}\label{eq:triangle-ineq}
\sigma_{ij} \leq \sigma_{ik} + \sigma_{kj} \qquad\text{for any }i,j,k=0,\cdots,N,
\end{align}
which in turns guarantees the lower-semicontinuity of $\cE^0$  \cite{M97}.

An  important question is to determine when these triangle inequalities are strict and when equality holds. Indeed, equality in \eqref{eq:triangle-ineq} corresponds to particular configurations of the energy minimizers, which can be directly related to the different regimes identified by Steinberg \cite{S62a,S62b,S62c,S62d} in the context of the Differential Adhesion Hypothesis for interacting cell populations: complete sorting, partial engulfment and full engulfment (see Figure~\ref{fig:steinberg}). 
We refer to Section \ref{subsec:steinberg} for a detailed discussion of these regimes.

In the case of two species, we prove the following theorem, which provides a partial classification of the corresponding regimes.

\medskip

\begin{theorem}\label{thm:DAH}
Assume that $N=2$ (only two types of cells are present) and that the interaction coefficient matrix  $A=\begin{pmatrix}
\alpha_{11} & \alpha_{12}\\
\alpha_{12} & \alpha_{22}
\end{pmatrix}$ satisfies Assumptions \ref{assumption:A1}, \ref{assumption:A2}, and \eqref{assumption:vacuum}. Let $(\sigma_{ij})_{i,j=0,1,2}$ be the coefficients appearing in the limiting energy $\cE^0$, given by \eqref{eq:metric-coefficient}.

Then, the following hold:
\begin{enumerate}[label=(\roman*)]
\item (Complete sorting) When $\alpha_{12}=0$, it holds that $$\sigma_{12} = \sigma_{01} + \sigma_{02}.$$
\item[(ii)] (Partial engulfment) There exists a number $\gamma_0 = \gamma_0(\alpha_{11},\alpha_{22},m,\sigma)>0$ such that whenever $\alpha_{12}\in(0,\gamma_0)$, we have
$$
\sigma_{ij} < \sigma_{ik} + \sigma_{kj} \qquad\text{for every permutation $(i,j,k)$ of }(0,1,2).
$$
\item[(iii)] (Full engulfment) If $\alpha_{11} > \alpha_{22}>0$ and $\alpha_{12} \geq \alpha_{22}$, then it holds
$$
\sigma_{01} = \sigma_{02} + \sigma_{21}.
$$
\end{enumerate}
\end{theorem}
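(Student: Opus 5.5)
The three statements are of different nature, and I would prove them separately. In each case I rely on the following facts: $H\ge0$ on $\R^N$, $H$ vanishes exactly at the wells $0,\theta_1\overrightarrow{e_1},\theta_2\overrightarrow{e_2}$, and $f^*(A\overrightarrow{\phi}-\overrightarrow{a})=0$ in a neighborhood of $\overrightarrow{0}$ (since $\overrightarrow{a}>0$). I am assuming the first two are established where $\cE^0$ is derived and the triangle inequality \eqref{eq:triangle-ineq} is proved; the third follows directly from \eqref{eq:f^*}. Note that by \eqref{eq:triangle-ineq} only one inequality is ever needed in (i) and (iii).

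\emph{(i) Calibration.} When $\alpha_{12}=0$ we have $|v|_A^2=\alpha_{11}v_1^2+\alpha_{22}v_2^2$ and $f^*(A\overrightarrow{\phi}-\overrightarrow{a})=\max(g_1(\phi_1),g_2(\phi_2))$, where $g_i(s)=(\frac{m-1}{m}(\alpha_{ii}s-a_i)_+)^{\frac{m}{m-1}}\ge0$. Set $H_i(s)=\frac\sigma2\alpha_{ii}s^2-g_i(s)\ge0$. Since $\max(g_1,g_2)\le g_1+g_2$, we get $H(\overrightarrow{\phi})\ge H_1(\phi_1)+H_2(\phi_2)$. Define
\[
\Psi_i(s)=\int_0^s\sqrt{\tfrac{\alpha_{ii}}{2}H_i(\tau)}\,d\tau
\qquad\text{and}\qquad
\Phi(\overrightarrow{\phi})=\Psi_1(\phi_1)-\Psi_2(\phi_2).
\]
By Cauchy--Schwarz,
\[
|\nabla\Phi\cdot v|\le \tfrac1{\sqrt2}\sqrt{H_1+H_2}\,|v|_A\le\tfrac1{\sqrt2}\sqrt{H}\,|v|_A .
\]
Hence every admissible path $\gamma$ from $\theta_1\overrightarrow{e_1}$ to $\theta_2\overrightarrow{e_2}$ has length at least $\Phi(\theta_1\overrightarrow{e_1})-\Phi(\theta_2\overrightarrow{e_2})=\Psi_1(\theta_1)+\Psi_2(\theta_2)$. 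The same calibration, restricted to one coordinate, shows $\sigma_{0i}\ge\Psi_i(\theta_i)$. The straight segment from $0$ to $\theta_i\overrightarrow{e_i}$ has length exactly $\Psi_i(\theta_i)$, so $\sigma_{0i}=\Psi_i(\theta_i)$. Therefore $\sigma_{12}\ge\sigma_{01}+\sigma_{02}$, and \eqref{eq:triangle-ineq} gives equality.

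\emph{(ii) Perturbation.} At $\alpha_{12}=0$, part (i) gives $\sigma_{01}<\sigma_{02}+\sigma_{12}$ and $\sigma_{02}<\sigma_{01}+\sigma_{12}$ strictly, because every $\sigma_{ij}>0$. I would first prove that $\sigma_{ij}$ depends continuously on $\alpha_{12}$ near $0$. The upper bound comes from reusing near-optimal paths, since $H$ and $|\cdot|_A$ are continuous in $A$. For the lower bound, geodesics stay in a fixed compact set, on which $H$ and $|\cdot|_A$ converge uniformly. Continuity then preserves these two strict inequalities for small $\alpha_{12}$.

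The remaining inequality $\sigma_{12}<\sigma_{01}+\sigma_{02}$ needs a strict shortcut, which I would build near the origin. Concatenate the near-optimal paths $\theta_1\overrightarrow{e_1}\to0\to\theta_2\overrightarrow{e_2}$. In a ball around $0$ the integrand is exactly $\sqrt{\sigma}/2\,|\overrightarrow{\phi}|_A|d\overrightarrow{\phi}|_A$, a cone metric. In an $A$-orthonormal frame, the squaring map $z\mapsto z^2$ (in $\mathbb C\simeq\R^2$) makes this metric flat, up to a constant, and doubles angles. For $\alpha_{12}>0$, the $A$-angle between $\overrightarrow{e_1}$ and $\overrightarrow{e_2}$ is strictly less than $\pi/2$, so after squaring it is strictly less than $\pi$. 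Replacing the two radial pieces inside a small ball by the straight chord in the squared coordinates is then strictly shorter. (For $\alpha_{12}=0$ the angle is exactly $\pi$, which is consistent with (i).) One point to check is that the optimal paths for $\sigma_{0i}$ enter the ball radially. I would avoid this by using, near $0$, the exact radial segment, which is optimal there by the calibration of (i) applied to the quadratic part.

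\emph{(iii) Full engulfment.} I need $\sigma_{01}\ge\sigma_{02}+\sigma_{21}$, and this is the main obstacle. My first attempt is again a calibration. Look for $\Phi$ with
\[
|\nabla\Phi\cdot v|\le\tfrac1{\sqrt2}\sqrt H|v|_A,\qquad \Phi(0)=0,\qquad \Phi(\theta_1\overrightarrow{e_1})=\sigma_{02}+\sigma_{21}.
\]
I would pass to the variables $u=A\overrightarrow{\phi}$, in which $f^*$ depends only on $\max(u_1-a_1,u_2-a_2,0)$. The hypotheses $\alpha_{12}\ge\alpha_{22}$ and $\alpha_{11}>\alpha_{22}$ should imply the following: any path from $0$ to $\theta_1\overrightarrow{e_1}$ must cross the region where the second argument is the active one, and in this region the metric dominates the pull-back of a one-dimensional metric along the direction of $\overrightarrow{e_2}$. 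The candidate is $\Phi$ equal to the one-dimensional potential $\Psi_2$ in that direction up to the well $\theta_2\overrightarrow{e_2}$, continued by $\sigma_{02}+d_A(\theta_2\overrightarrow{e_2},\cdot)$-type level sets afterwards. Equivalently, I would try to construct a $1$-Lipschitz retraction of the $\overrightarrow{\phi}$-plane onto the broken path $0\to\theta_2\overrightarrow{e_2}\to\theta_1\overrightarrow{e_1}$ that does not increase the weight. Verifying the gradient inequality in the region where the first species dominates is where the sign conditions on $\alpha_{12}-\alpha_{22}$ must be used, and I expect this to be the delicate computation.
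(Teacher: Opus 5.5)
Part (i) is correct. It is the paper's argument in calibration form: the splitting $H\ge H_1+H_2$ (dual to the paper's $\sum_j f(\rho_j)\le f(\sum_j\rho_j)$), followed by Cauchy--Schwarz. The gaps are in (ii) and (iii).

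\textbf{Part (ii).} The chord shortcut only gains something if the two near-optimal paths (for $\sigma_{10}$ and $\sigma_{02}$) meet the small sphere $\{|\overrightarrow{\phi}|_A=\eta\}$ at points $P_1,P_2$ whose $A$-angle is at most some $\mu_0<\pi/2$, \emph{uniformly in the slack $\delta$}.
\begin{itemize}
\item The cone metric $|\overrightarrow{\phi}|_A\,|d\overrightarrow{\phi}|_A$ has total angle $4\pi$. If the $A$-angle between $P_1$ and $P_2$ is $\ge\pi/2$, the geodesic between them inside the ball passes through the tip, and your construction saves nothing.
\item Your remedy (radial segments inside the ball, optimal by the calibration $c|\overrightarrow{\phi}|_A^2$) does not move $P_1$ or $P_2$.
\item For $\alpha_{12}\neq0$, nothing in your argument forces a near-optimal path from $0$ to $\theta_1\overrightarrow{e_1}$ to leave $0$ near the direction $\overrightarrow{e_1}$.
\end{itemize}
The missing step is the paper's reduction to a sector. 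In the phase plane $\overrightarrow{\psi}=Q\overrightarrow{\phi}=(x,y)$, $v_1$ does not depend on $y$ and $v_2$ is nondecreasing in $y$, so $h(x,y)\ge h(x,0)$ for $y<0$. Symmetrically, $h(z,w)\ge h(z,0)$ for $w<0$ in the rotated coordinates aligned with $\overrightarrow{\alpha_2}$. Hence near-optimal paths may be taken inside $\{y\ge0,\,w\ge0\}=Q(\R^2_{\ge0})$, a sector of opening $\arccos(\alpha_{12}/\sqrt{\alpha_{11}\alpha_{22}})<\pi/2$. Only then does the squaring map give a saving of order $\alpha_{12}^2\eta^2$, independent of $\delta$.

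\textbf{Part (iii).} This part is not proved: no calibration is constructed, and the structural claim your sketch rests on is false. A path from $0$ to $\theta_1\overrightarrow{e_1}$ need not enter the region where $u_2-a_2$ is the active argument. When $\alpha_{12}^2<\alpha_{11}\alpha_{22}$, the regions $V_0$ and $V_1$ share the nonempty ray $\{v_1=0\ge v_2\}$, so a path can cross from $V_0$ directly into $V_1$.

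What every such path must cross is the line $\{z=z_2\}$. Here $(z,w)$ are orthonormal coordinates of the $\overrightarrow{\psi}$-plane with $z$ along $\overrightarrow{\alpha_2}$, and $z_2$ is the $z$-coordinate of $\overrightarrow{\alpha_2}$. One has $z(\overrightarrow{\alpha_1})\ge z_2$ because $\theta_1\alpha_{12}\ge\theta_2\alpha_{22}$. The key lemma you are missing is the splitting
\[
h(z,w)\ \ge\ h(z,0)+h(z_2,w)\qquad\text{for } z\le z_2,\ w\ge0 .
\]
To prove it, note that the difference vanishes at $z=z_2$ (since $h(\overrightarrow{\alpha_2})=0$) and is nonincreasing in $z$. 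Set $G(u)=(\frac{m-1}{m}u^+)^{\frac{m}{m-1}}$, $v_1=\frac{\alpha_{12}}{\sqrt{\alpha_{22}}}z+\sqrt{\alpha_{11}-\alpha_{12}^2/\alpha_{22}}\,w-a_1$ and $v_2=\sqrt{\alpha_{22}}\,z-a_2$. The $z$-derivative of the difference is:
\begin{itemize}
\item $0$ where $v_2\ge v_1(z,w)$;
\item $\frac{\alpha_{12}}{\sqrt{\alpha_{22}}}\bigl[G'(v_1(z,0))-G'(v_1(z,w))\bigr]\le0$ where $v_2<v_1(z,0)$;
\item at most $\bigl(\sqrt{\alpha_{22}}-\alpha_{12}/\sqrt{\alpha_{22}}\bigr)G'(v_1(z,w))\le0$ in between. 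This case is exactly where $\alpha_{12}\ge\alpha_{22}$ is used.
\end{itemize}

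With the lemma, reduce to $w\ge0$ and let $P$ be the first hitting point of $\{z=z_2\}$. Cauchy--Schwarz on the arc up to $P$ shows it costs at least the broken segment $0\to\overrightarrow{\alpha_2}\to P$. The triangle inequality on the remaining arc then gives $\sigma_{01}\ge\sigma_{02}+\sigma_{21}$.

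In your language, $\int_0^z\sqrt{h(s,0)}\,ds+\int_0^w\sqrt{h(z_2,s)}\,ds$ is a calibration on the half-strip $\{z\le z_2,\,w\ge0\}$. It only has to work up to the crossing, which is why no global calibration or retraction is needed.
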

We note that in the case $N=2$, Assumptions \ref{assumption:A1} is equivalent to $\alpha_{11}, \alpha_{22}\geq 0$, and $\alpha_{12}^2\leq  \alpha_{11} \alpha_{22}$, so (iii) identifies a non-trivial range of values of $\alpha_{12}$ for which full engulfment is predicted.

\medskip

This theorem and the numerical simulations presented in Appendix \ref{subsec:numerics} suggest that given self-attraction strengths $\alpha_{11}$ and $\alpha_{22}$, there exists a critical value $\gamma_c$ of the cross-interaction strength such that
partial engulfment occurs when $\alpha_{12} <\gamma_c$ and full engulfment occurs when $\alpha_{12}\geq \gamma_c$.
This critical value must satisfy $\gamma_c \in [\gamma_0,\min\{\alpha_{11},\alpha_{22}\}]$, and numerical simulations suggest that $\gamma_c$ depends on $m$ and satisfies $\gamma_c <\min\{\alpha_{11},\alpha_{22}\}$.

\medskip

For the final result of this paper, we go back to the evolution equations \eqref{eq:epsilon-problem-1}-\eqref{eq:epsilon-problem-2}.
The existence of a solution to this system follows from a classical time discretization JKO-type scheme using the gradient flow structure  of the equation. For completeness, we recall the main step of this construction in the appendix (see Theorem \ref{thm:well-posedness}). 
Because the limiting behavior as $\eps\to 0$ will be characterized with the help of the energy functional $\cE^\eps$, we assume that the initial condition satisfies
\begin{equation}\label{eq:initenergy}
\sup_{\varepsilon\in(0,1)}\cE^{\varepsilon}(\overrightarrow{\rho}^{\varepsilon}_{in})\leq M
\end{equation}
for some constant $M>0$. 
This implies in particular that there exists a subsequence along which 
$\overrightarrow{\rho}^{\varepsilon}_{in}\to\overrightarrow{\rho}_{in}$ with $\cE^0(\overrightarrow{\rho}_{in})\leq M$. We thus have (along that subsequence):
\begin{equation}\label{eq:initenergylimit}
\lim_{\eps\to0} \overrightarrow{\rho}^{\varepsilon}_{in}= \overrightarrow{\rho}_{in}=\sum_{i=1}^N\theta_i\chi_{E_{i,in}}\overrightarrow{e_i}
\end{equation}
for some partition $\{E_{i,in}\}_{i=0}^N$ of $\Omega$ into $BV(\Omega)$-sets.

\begin{theorem}\label{thm:phase-separation-convergence}
Assume that  $A=(\alpha_{ij})_{1\leq i,j\leq N}$ satisfies \ref{assumption:A1} and \ref{assumption:A2}. 
Let $\overrightarrow{\rho}^{\varepsilon}=(\rho^{\varepsilon}_1,\cdots,\rho^{\varepsilon}_N)^T$ be a solution to \eqref{eq:epsilon-problem-1}-\eqref{eq:epsilon-problem-2} with initial condition satisfying \eqref{eq:initenergy} and \eqref{eq:initenergylimit}.
Then, the following hold:

\begin{enumerate}[label=(\roman*)]
\item There exists a subsequence (still denoted by $\varepsilon\to0$) such that
\begin{align*}
\overrightarrow{\rho}^{\varepsilon}\to\overrightarrow{\rho}\left(=:(\rho_1,\cdots,\rho_N)\right)\quad\text{strongly in }L^{\infty}_{loc}(0,\infty;L^1(\Omega)^N).
\end{align*}
Furthermore, we have $\cE^0(\overrightarrow{\rho}(t))\leq M$ for every $t\geq0$ so that we can write $$\overrightarrow{\rho}(t)=\sum_{i=1}^N\theta_i\chi_{E_i(t)}\overrightarrow{e_i}\in\cF(A,\overrightarrow{m})$$ for some partition $\{E_i(t)\}_{i=0}^N$ of $\Omega$ into $BV(\Omega)$-sets for every $t\geq0$.

\item For each $i=1,\cdots,N$, there is a velocity field $v_i\in L^2(\Omega\times(0,\infty),\chi_{E_i(t)}(x)\,dxdt;\R^d)$ such that the function $\chi_{E_i(t)}$ solves the continuity equation
\begin{align}\label{eq:continuity}
\begin{cases}
\partial_t \chi_{E_i(t)} + \operatorname{div} (\chi_{E_i(t)} v_i) = 0 & \textit{in } \Omega\times(0,\infty), \\
\chi_{E_i} v_i \cdot n = 0 & \textit{on } \partial \Omega\times(0,\infty), \\
\chi_{E_i(0)}=\chi_{E_{i,in}} & \textit{in } \Omega.
\end{cases}
\end{align}
Moreover, the energy dissipation holds:
\begin{align*}
\cE^{0}(\overrightarrow{\rho}(t))+\sum_{i=1}^N\frac{1}{2}\int_0^t\int_{\Omega}\rho_i|v_i|^2\,dxdt\leq\cE^0(\overrightarrow{\rho}_{in})\qquad\text{for any }t>0.
\end{align*}

\item The pressure
\begin{align*}
p^{\varepsilon}:=\frac{1}{\varepsilon}\left(\rho^{\varepsilon}f_m'(\rho^{\varepsilon})+\overrightarrow{a}\cdot\overrightarrow{\rho}^{\varepsilon}-(\overrightarrow{\rho}^{\varepsilon})^TA\overrightarrow{\phi}^{\varepsilon}\right)
\end{align*}
converges to a function $p(x,t)$ weakly* in $L^2_{loc}(0,\infty;(C^s(\Omega))^*)$ for any $s\in(0,1)$.

\item If the following energy convergence is satisfied:
\begin{align}\label{assumption:energy-convergence}
\lim_{\varepsilon\to0}\int_0^T\cE^{\varepsilon}(\overrightarrow{\rho}^{\varepsilon}(t))\,dt = \int_0^T\cE^0(\overrightarrow{\rho}(t))\,dt, 
\end{align}
then the evolution of the system is described by the multiphase Hele--Shaw free boundary problem with surface tension in the following sense:
The total flux $j := \rho_1v_1+\cdots+\rho_Nv_N $ satisfies
\begin{align}\label{eq:mullins-sekerka}
\int_{0}^{T} \int_{\Omega} j \cdot \xi - p \operatorname{div}(\xi) \, dx \, dt = - \int_{0}^{T} \sum_{i, j=0}^{N} \sigma_{ij} \int_{\Omega} (\operatorname{div}(\xi) - \nu_{ij} \otimes \nu_{ij} : D\xi) \, d\mu_{ij} \, dt,
\end{align}
for every test vector field $\xi \in C_c^\infty (\overline{\Omega} \times [0, T); \mathbb{R}^d)$ satisfying $\xi \cdot n \equiv 0 $ on $  \partial \Omega$
where, for $i,j=0,\cdots,N$,
\begin{align*}
\left\{
\begin{aligned}
\sigma_{ij} &\text{ is the surface tension coefficient given by \eqref{eq:metric-coefficient}}, \\
\mu_{ij} &\text{ denotes the surface measure } d\cH^{d-1}\lfloor_{\partial^* E_i \cap \partial^* E_j \cap \Omega}, \\
\nu_{ij} & \text{ denotes the measure theoretic unit normal vector of } \partial^* E_i \cap \partial^* E_j \cap \Omega. \\
\end{aligned}
\right.
\end{align*}
\end{enumerate}
\end{theorem}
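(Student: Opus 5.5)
We follow the strategy developed for the one-species problem in \cite{KMW24,M24}, adapted to the vector setting, and treat the four items in order.

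\textbf{Items (i)--(ii): compactness and the continuity equations.} The starting point is the energy dissipation identity for the Wasserstein gradient flow:
\begin{align*}
\cE^\eps(\overrightarrow{\rho}^\eps(t))+\frac1\eps\int_0^t\sum_{i=1}^N\int_\Omega \rho_i^\eps|\na \mu_i^\eps|^2\,dx\,ds\le \cE^\eps(\overrightarrow{\rho}^\eps_{in})\le M,
\qquad \mu_i^\eps:=f_m'(\rho^\eps)-\sum_j\alpha_{ij}\phi_j^\eps ,
\end{align*}
which is available from the JKO construction of Theorem \ref{thm:well-posedness}.

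1. Set $v_i^\eps:=-\eps^{-1}\na\mu_i^\eps$, so that $\pa_t\rho_i^\eps+\div(\rho_i^\eps v_i^\eps)=0$ and the dissipation gives $\sum_i\int_0^T\!\!\int\rho_i^\eps|v_i^\eps|^2\le 2M$.

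2. This flux bound yields $W_2(\rho_i^\eps(t),\rho_i^\eps(s))\le C|t-s|^{1/2}$, i.e.\ equicontinuity in time.

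3. For each fixed $t$, the bound $\cE^\eps(\overrightarrow{\rho}^\eps(t))\le M$ together with the compactness underlying the liminf part of Theorem \ref{thm:gamma-convergence} gives $L^1$-precompactness of $\{\overrightarrow{\rho}^\eps(t)\}_\eps$.

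4. An Aubin--Lions/Arzelà--Ascoli argument, interpolating between $W_2$ and $L^1$ using a uniform $L^\infty$ or $L^m$ bound, gives convergence in $L^\infty_{loc}(0,\infty;L^1)$. The $\Gamma$-liminf then gives $\cE^0(\overrightarrow{\rho}(t))\le M$, so $\overrightarrow{\rho}(t)\in\cF(A,\overrightarrow{m})$.

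5. The momenta $\rho_i^\eps v_i^\eps$ are bounded in $L^1$ and, by Cauchy--Schwarz, their weak limit $J_i$ is absolutely continuous with respect to $\rho_i$. Writing $J_i=\rho_iv_i$ and using joint lower semicontinuity of $(\rho,J)\mapsto\int|J|^2/\rho$ gives $v_i\in L^2(\rho_i)$, the continuity equation \eqref{eq:continuity} (dividing by $\theta_i$), and the energy dissipation inequality after combining with the $\Gamma$-liminf.

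\textbf{Item (iii): the pressure.} We use the identity
\begin{align*}
\sum_i\rho_i^\eps\na\mu_i^\eps=\na\big(\rho^\eps f_m'(\rho^\eps)-f_m(\rho^\eps)\big)-\sum_{ij}\alpha_{ij}\rho_i^\eps\na\phi_j^\eps .
\end{align*}
Using the equation for $\phi^\eps$ and the symmetry of $A$, the last term can be rewritten as $-\na\big(\tfrac{\sigma}{2}\overrightarrow{\phi}^TA\overrightarrow{\phi}\big)$ plus a divergence of the quadratic stress tensor $\eps^2\big(\na\overrightarrow{\phi}^T A\na\overrightarrow{\phi}-\tfrac12|\na\overrightarrow{\phi}|_A^2 I\big)$. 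Hence
\begin{align*}
\na p^\eps=-\sum_i\rho_i^\eps v_i^\eps+\div(\eps^{-1}T^\eps)
\end{align*}
for a tensor $T^\eps$ which, up to terms with a definite sign, is controlled in $L^1$ by the energy. To see this, split the energy density into $\eps^{-1}\big(f+\overrightarrow{a}\cdot\overrightarrow{\rho}-\overrightarrow{\rho}^TA\overrightarrow{\phi}+\tfrac\sigma2|\overrightarrow{\phi}|_A^2\big)$ plus $\tfrac\eps2|\na\overrightarrow{\phi}|_A^2$, and note that the first part is nonnegative by the Fenchel inequality combined with $H\le0$-type bounds. Testing against $\xi\in C^s$ then bounds $\na p^\eps$ in $L^2_t(C^{s})^*$. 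A Poincaré-type argument, with the mean fixed by testing against $\xi=x$-type fields or via Neumann conditions, bounds $p^\eps$ itself, and the weak-* limit follows.

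\textbf{Item (iv): the Gibbs--Thomson relation.} This is the main obstacle. Passing to the limit in the weak form
\begin{align*}
\int\!\!\int j^\eps\cdot\xi-p^\eps\div\xi=-\int\!\!\int\eps^{-1}T^\eps:D\xi
\end{align*}
requires identifying the limit of the stress measures $\eps^{-1}T^\eps$ as $\sum\sigma_{ij}(I-\nu_{ij}\otimes\nu_{ij})\mu_{ij}$. Following the Luckhaus--Sturzenhecker approach used in \cite{KMW24,M24}, we use the energy convergence assumption \eqref{assumption:energy-convergence} to show the following:

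(a) equipartition: the potential part $\eps^{-1}(\cdots)$ and the gradient part $\tfrac\eps2|\na\overrightarrow{\phi}|_A^2$ each converge to half the interfacial energy measure;

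(b) the direction of $\na\overrightarrow{\phi}$ aligns with $\nu_{ij}$, i.e.\ $\eps\,\na\overrightarrow{\phi}\otimes\na\overrightarrow{\phi}$ weighted by $A$ converges to $\sigma_{ij}\nu_{ij}\otimes\nu_{ij}\mu_{ij}$.

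The hard point is the localization. The energy convergence must be upgraded to convergence of energy measures on each interface $\Sigma_{ij}$, with the correct coefficient $\sigma_{ij}$. For this, we redo the liminf proof of Theorem \ref{thm:gamma-convergence} locally: a blow-up or slicing argument near $\mathcal H^{d-1}$-a.e.\ point of $\Sigma_{ij}$ shows that the local energy is at least $\sigma_{ij}$ times the area, and the global equality then forces equality locally. We will first try to carry out the equipartition step via the Modica--Mortola inequality $\tfrac\eps2|\na\overrightarrow{\phi}|_A^2+\eps^{-1}(\cdot)\ge\sqrt{2(\cdot)}\,|\na\overrightarrow{\phi}|_A$, in the geodesic metric $d_A$. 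A further technical issue is that $\overrightarrow{\rho}$ is not a function of $\overrightarrow{\phi}$, so the potential must be bounded below by $-H(\overrightarrow{\phi})$ through the Legendre transform before applying this inequality.
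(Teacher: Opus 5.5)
You have the paper's architecture: dissipation bounds, time equicontinuity, lower semicontinuity of the Benamou--Brenier action for the velocities, the stress-tensor identity for the pressure, and equipartition driven by energy convergence. Two steps, however, are not justified as written.

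\textbf{First gap: strong compactness in (i).} The paper's $\Gamma$-liminf \emph{assumes} $L^1$ convergence; it supplies no compactness statement. The only strong compactness the energy bound gives is the $BV$ bound on $\varphi_i\circ\overrightarrow{\psi}^\eps$ (Proposition \ref{prop:MM-functional-Baldo}). This is a nonlinear function of $\overrightarrow{\psi}^\eps=Q\overrightarrow{\phi}^\eps$, not of $\overrightarrow{\rho}^\eps$. Since $A$ is only positive semidefinite, $Q$ may have a kernel, so even compactness of $Q\overrightarrow{\rho}^\eps$ would not suffice. The paper handles this in two moves.
\begin{enumerate}
\item It transfers the $W^{-1,\frac{2m}{m+1}}$ equicontinuity from $\overrightarrow{\rho}^\eps$ to $\varphi_i(\overrightarrow{\psi}^\eps)$ through the affine maps $L_i$, which satisfy $L_i(\theta_j\overrightarrow{e_j})=\varphi_i(\overrightarrow{\alpha_j})$. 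This uses \eqref{eq:auxiliary-ineq}, which rests on $\int_\Omega W(\overrightarrow{\rho}^\eps)\le M\eps$ and $\|\overrightarrow{\psi}^\eps-Q\overrightarrow{\rho}^\eps\|_{L^2}\le C\sqrt\eps$. It then applies Lemma \ref{lem:LA}.
\item It recovers $\overrightarrow{\rho}^\eps$ itself via the sets $A_1,A_2,A_3$. This uses the multiwell structure of $W$, the $L^m$ bound, and Proposition \ref{prop:mixing}: the wells $\theta_j\overrightarrow{e_j}$ have distinct images $\overrightarrow{\alpha_j}$ under $Q$, by \ref{assumption:A2}.
\end{enumerate}
Your step 4 also misidentifies the mechanism. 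A uniform $L^m$ bound together with $W_2$ convergence yields only weak convergence, and no $L^\infty$ bound is available. Your route can be repaired:
\begin{itemize}
\item prove, via the lifting just described, that any sequence with $\eps_k\to0$ and $\cE^{\eps_k}(\overrightarrow{\rho}_k)\le M$ is precompact in $L^1$;
\item apply this to $\overrightarrow{\rho}^{\eps_k}(t_k)$ with arbitrary times $t_k$;
\item combine with $W_2$-equicontinuity.
\end{itemize}
That is a legitimate substitute for Lemma \ref{lem:LA}, but the lifting is the real content and it is absent. Separately, combining the $\Gamma$-liminf with lower semicontinuity of the action bounds the dissipation by $\liminf_\eps\cE^\eps(\overrightarrow{\rho}^\eps_{in})$. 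This is $\geq\cE^0(\overrightarrow{\rho}_{in})$ and coincides with it only for well-prepared data; the paper's proof itself only records the bound $2M$ on the limiting action.

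\textbf{Second gap: the stress term in (iv).} The localization you single out as the hard point is not the obstacle. The paper gets it for free from three facts:
\begin{itemize}
\item $\bigvee_i|\nabla(\varphi_i\circ\overrightarrow{\psi}^\eps)|\le\sqrt{2h(\overrightarrow{\psi}^\eps)}\,|\nabla\overrightarrow{\psi}^\eps|\le\eps^{-1}\big(W(\overrightarrow{\rho}^\eps)+\frac12|\overrightarrow{\psi}^\eps-Q\overrightarrow{\rho}^\eps|^2\big)+\frac\eps2|\nabla\overrightarrow{\psi}^\eps|^2$ holds as measures;
\item the left side is lower semicontinuous on open sets (Proposition \ref{prop:BV-fact});
\item its counterpart for the limit is $\sum_{i,j}\sigma_{ij}\cH^{d-1}\lfloor\Sigma_{ij}$ by \cite[Proposition 2.2]{B90}.
\end{itemize}
Energy convergence then immediately yields \eqref{eq:convergence-squeezing} and the limit of the $\operatorname{div}\xi$ term, with no blow-up or slicing.

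What is genuinely missing is an argument for your (b). Here $\nabla\overrightarrow{\psi}^\eps$ is an $N\times d$ matrix, and equipartition alone does not force $\eps(\nabla\overrightarrow{\psi}^\eps)^T\nabla\overrightarrow{\psi}^\eps$ to concentrate on $\sigma_{ij}\nu_{ij}\otimes\nu_{ij}\,\mu_{ij}$. You must show two things:
\begin{itemize}
\item asymptotic equality in $|\nabla(\varphi_i\circ\overrightarrow{\psi}^\eps)|\le|D\varphi_i(\overrightarrow{\psi}^\eps)|\,|\nabla\overrightarrow{\psi}^\eps|\le\sqrt{2h(\overrightarrow{\psi}^\eps)}\,|\nabla\overrightarrow{\psi}^\eps|$ makes $\nabla\overrightarrow{\psi}^\eps$ asymptotically rank one, with spatial factor parallel to $\nabla(\varphi_i\circ\overrightarrow{\psi}^\eps)$;
\item $\nabla(\varphi_i\circ\overrightarrow{\psi}^\eps)/|\nabla(\varphi_i\circ\overrightarrow{\psi}^\eps)|\to\nu_{ij}$ by Reshetnyak-type continuity.
\end{itemize}
This is \cite[Proposition 3.1]{LS18}, which the paper invokes; the scalar arguments of \cite{KMW24,M24} do not provide it.

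\textbf{Minor points in (iii).} The stress term pairs with $D\xi$, so testing only bounds $\nabla p^\eps$ in $L^2(0,T;(C^1)^*)$, not $(C^s)^*$. The $(C^s)^*$ bound on $p^\eps$ then follows by testing with $\xi=\nabla u$, where $\Delta u=\zeta-\frac{1}{|\Omega|}\int_\Omega\zeta$ and $\nabla u\cdot n=0$. The mean cannot be fixed with $\xi=x$: only tangential fields are admissible, and non-tangential ones produce boundary traces of $p^\eps$ you cannot control. The paper simply normalizes $p^\eps$ to have zero mean.
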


Assuming that we have a smooth solution of \eqref{eq:continuity}-\eqref{eq:mullins-sekerka}, we can describe more explicitly the evolution of the interfaces $\pa E_i$:
We denote by $\nu_i$ the outward unit normal vector of $\Sigma_i:=\partial E_i\cap\Omega$ and by $\nu_{ij}$ the unit normal vector of $\Sigma_{ij}:=\partial E_i\cap\partial E_j\cap\Omega$ directed from $E_i$ to $E_j$.

The continuity equation then leads to the conditions
\begin{align}\label{eq:evolution-law-from-weak-continuity}
\left\{
\begin{aligned}
\div( v_i) &= 0 \text{ in } E_i(t),\\
v_i\cdot n&= 0 \text{ on } E_i(t)\cap\partial\Omega.
\end{aligned}
\right.
\end{align}
together with the condition
$$V_{ij} = v_i\cdot\nu_{ij} = v_j\cdot\nu_{ij},$$
where $V_{ij}$ is the normal speed of $\Sigma_{ij}$ in the direction of $\nu_{ij}$.

Next, we turn to Equation \eqref{eq:mullins-sekerka}: We first note that
\begin{align*}
\int_{\Omega}p\,\div(\xi)\,dx=\sum_{i=0}^N\int_{E_i}p\,\div(\xi)\,dx
&=-\frac12\sum_{i,j=0}^N\int_{\Sigma_{ij}}[p]_{\Sigma_{ij}}\xi\cdot\nu_{ij}\,d\cH^{d-1} - \int_{\Omega}\nabla p\cdot\xi\,dx,
\end{align*}
where $[p]_{\Sigma_{ij}}(x)$ denotes the jump of $p$ across $\Sigma_{ij}$,
so that \eqref{eq:mullins-sekerka} gives
\begin{align}\label{eq:mullins-sekerka-LHS}
& \int_{0}^{T} \int_{\Omega} (j+\nabla p) \cdot \xi \, dx \, dt + \frac12\sum_{i,j=0}^N\int_{0}^{T} \int_{\Sigma_{ij}} [p]\bigg|_{\Sigma_{ij}}\xi\cdot\nu_{ij}\,d\cH^{d-1} \, dt \\
& \qquad \qquad\qquad  =- \int_{0}^{T} \sum_{i, j=0}^{N} \sigma_{ij} \int_{\Omega} (\operatorname{div}(\xi) - \nu_{ij} \otimes \nu_{ij} : D\xi) \, d\mu_{ij} \, dt
\end{align}
This implies that $j=\theta_i v_i =- \na p$ in $E_i(t)$ and 
using the classical formula
\begin{align*}
\int_{\Sigma_{ij}}\div(\xi)-\nu_{ij}\otimes\nu_{ij}:D\xi\,d\cH^{d-1}=\int_{\Sigma{ij}}\kappa_{ij}(\xi\cdot\nu_{ij})\,d\cH^{d-1}+\int_{\partial\Sigma_{ij}}\xi\cdot\widetilde{n}_{ij}\,d\cH^{d-1}
\end{align*}
where $\kappa_{ij}:=\div_{\Gamma_{ij}}(\nu_{ij})$ is the mean curvature and $\widetilde{n}_{ij}$ is the unit conormal vector on $\pa\Gamma_{ij}$,
we obtain the following multiphase Hele-Shaw free boundary problem with surface tension:
\begin{align}\label{eq:evolution-law-from-weak-MS}
\left\{
\begin{aligned}
\Delta p &= 0 \qquad\qquad\,\,\,\,\,\,\,\,\,\text{ in } E_i(t)\quad\text{and}\quad\frac{\partial p}{\partial n} = 0\qquad\text{ on } \partial \Omega\cap E_i(t), \\
[p]_{\Sigma_{ij}} &= 2\sigma_{ij}\kappa_{ij}\,\,\,\,\,\,\qquad\text{ on } \Sigma_{ij},\\
V_{ij} = -\theta_i^{-1}\left[\frac{\partial p}{\partial \nu_{ij}}\right]^{-} &= -\theta_j^{-1}\left[\frac{\partial p}{\partial \nu_{ij}}\right]^{+}\text{on }\Sigma_{ij}, \\
\sigma_{ij}\widetilde{n}_{ij} + \sigma_{jk}\widetilde{n}_{jk} + \sigma_{ki}\widetilde{n}_{ki}&= 0 \qquad\qquad\,\,\,\,\,\,\,\,\,\,\text{ on } \Sigma_{ijk}, \\
\widetilde{n}_{ij} &= n \qquad\qquad\,\,\,\,\,\,\,\,\,\,\text{ in } \partial \Sigma_{ij}\cap\pa\Omega,
\end{aligned}
\right.
\end{align}
where $\Sigma_{ijk}:=\pa E_i\cap \pa E_j\cap \pa E_k\cap\Omega$ denotes the triple junction. 

We point out that the fourth equation in \eqref{eq:evolution-law-from-weak-MS} is a well-known formulation of the Herring contact angle condition at triple junctions (see \eqref{eq:herring}), while the last equation of \eqref{eq:evolution-law-from-weak-MS} indicates that the interfaces meet $\partial\Omega$ orthogonally.

\section{Motivations  and related works}
\subsection{A system of interacting species}\label{subsec:PKS-system}
The system \eqref{eq:epsilon-problem-1}-\eqref{eq:epsilon-problem-2} is a  particular system for interacting populations. 
In a more general context, it might be natural to associate to each species a different volume density $\mu_i>0$ and a different mobility coefficient $d_i>0$ leading to
\begin{align*}
\begin{cases}
\partial_t \rho_i - d_i\operatorname{div}(\rho_i \nabla f'(\rho )) +  \operatorname{div} \left(\rho_i \sum_{j=1}^N \nabla\left( K _{ij}*\rho_{j} \right)\right) = 0,\\
\sigma_{ij}K_{ij} - \Delta K_{ij} = \chi_{ij} \delta_0,
\end{cases}
\end{align*}
where $\sigma_{ij},\chi_{ij}>0$ are possibly different adhesion coefficients and the pressure depends on the total volume 
$$\rho :=\mu_1\rho_1+\cdots+\mu_N\rho_N.$$
The change of variables, $\overline{\rho}_i(x,t) = \mu_i\rho_i(x,d_it)$ and $\overline{K}_{ij} = \frac{1}{d_i\mu_j\chi_{ij}}K_{ij}$ leads back to a system with $\mu_i=1$, $d_i=1$, and $\chi_{ij}=1$.
On the other hand, we stress the fact that our analysis requires that 
$${K}_{ij} = \alpha_{ij} G  \qquad \mbox{ where $G$ solves }  \sigma G -\Delta G = \delta,$$
meaning that all interactions, while of different strength, result from the same underlying mechanism.
This allows us to compare the self/cross and attraction/repulsion effects simply by comparing the interaction coefficients $\alpha_{ij}$. 

Our particular choice of kernel $G$ is classical in the context of chemotaxis. While our results are expected to hold for more general kernel $G$, this choice plays a fundamental role in this paper. 
Rigorously justifying the result (especially Theorem \ref{thm:gamma-convergence}) for more general $G$ is an open and challenging problem.

\medskip

The introduction of the small parameter $\eps$ in the system can then be justified as follows:
The formation of sharp interfaces in such system is observed  when a large population is observed at appropriate scale. 
More precisely, we consider a very large total number of cells, which we parametrize by
\[
\int_\Omega \rho_i\,dx=m_i \varepsilon^{-d}
\qquad\text{for some small parameter }\varepsilon\ll 1.
\]
The rescalings $x\mapsto \varepsilon x$ and $t\mapsto \varepsilon^{3} t$ lead to $\int_\Omega \rho_i^\eps\,dx=m_i$ and
\begin{align}\label{eq:rescaled-multicells}
\varepsilon \partial_t \rho_i^{\varepsilon} - \operatorname{div}(\rho_i^{\varepsilon}\nabla f_m'(\rho^{\varepsilon})) +  \operatorname{div} \left(\rho_i^{\varepsilon} \sum_{j=1}^N \nabla\left( K^{\varepsilon}_{ij}*\rho_{j}^{\varepsilon}\right)\right) = 0,
\end{align}
where  
$$
K_{ij}^\varepsilon(x)=\varepsilon^{-d}K_{ij}(\varepsilon^{-1}x).
$$

\medskip

With this rescaling and particular choice of $G$, we are able to rigorously identify the asymptotic dynamics and interpret our results in the context of the celebrated Differential Adhesion Hypothesis (DAH) which we discuss in further details below.

\subsection{The Differential Adhesion Hypothesis}\label{subsec:steinberg}
The Differential Adhesion Hypothesis (DAH), first formulated by Steinberg \cite{S62a,S62b,S62c,S62d}, suggests that equilibrium configurations are solely determined by the relative strength of adhesion between the different species of cells via the interfacial energy given by (for two species):
$$
\sigma_{01}\cH^{d-1}(\Sigma_{01}) + \sigma_{02}\cH^{d-1}(\Sigma_{02})+\sigma_{12}\cH^{d-1}(\Sigma_{12}).
$$
As a result, the DAH leads to different types of equilibria: mixing, complete sorting, partial and full engulfment (see Figure \ref{fig:steinberg}).

\medskip

A major contribution of this work is the rigorous derivation of this interfacial energy from the nonlocal interaction model \eqref{eq:epsilon-problem-1}-\eqref{eq:epsilon-problem-2}.
This derivation leads in particular to the formula \eqref{eq:metric-coefficient}-\eqref{eq:distance-definition} that relates the adhesion coefficient $\sigma_{ij}$ to the strength of the attraction between the $i$-th and $j$-th species $\alpha_{ij}$.
Although this formula is hardly explicit, the coefficients $\sigma_{ij}$  can be computed numerically and we establish (see Theorem \ref{thm:DAH}) rigorously the appearance of the different regimes predicted by the DAH depending on the cross-interaction coefficient.
In Appendix \ref{subsec:numerics} we briefly present some numerical results that illustrate the transition between these regimes depending on the strength of the cross-interactions.

\begin{figure}[htbp]
	\begin{center}
            \includegraphics[height=8cm]{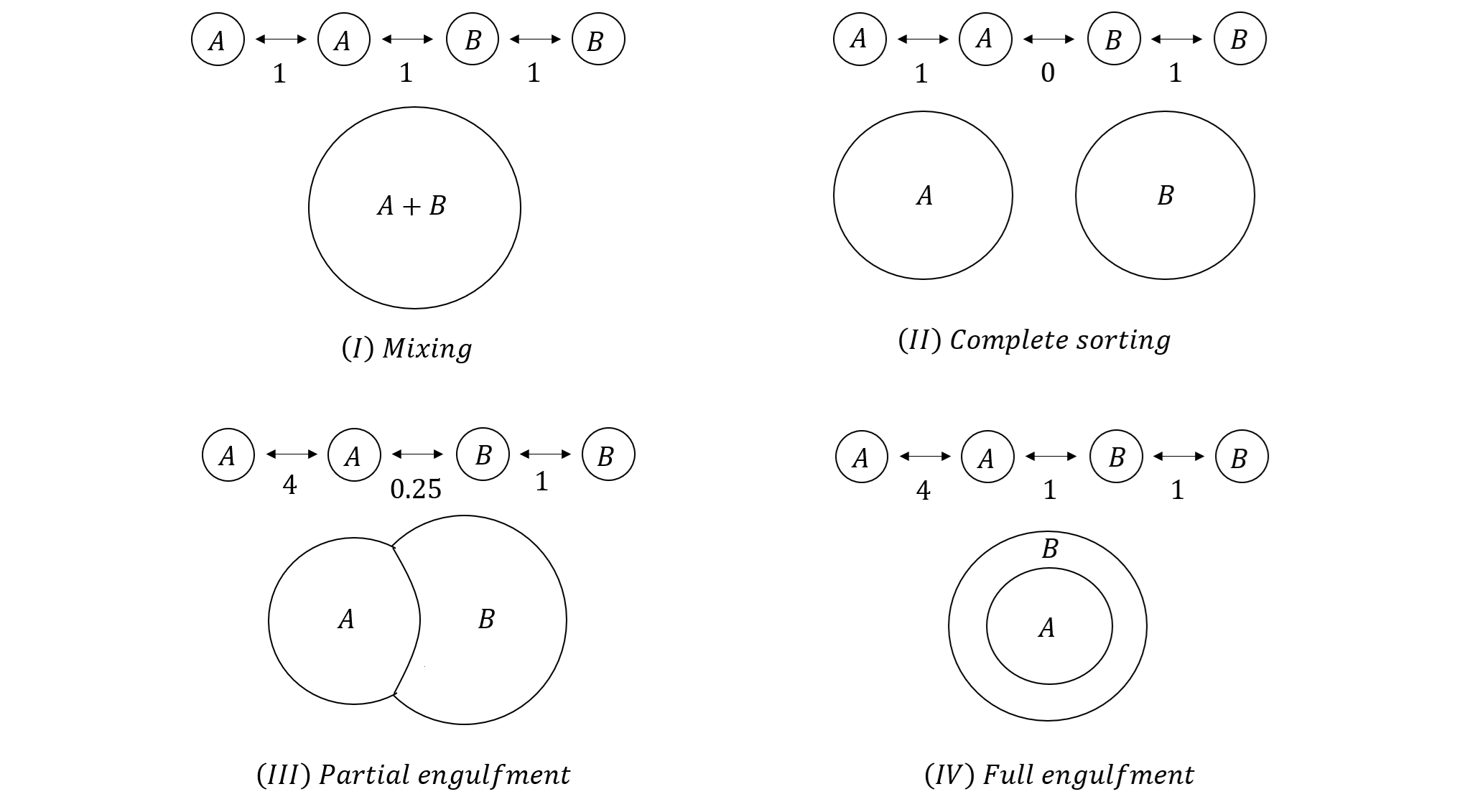}
            \captionsetup{width=0.8\textwidth} 
		\vskip 0pt
		\caption{The four configurations of cell-cell adhesion from the DAH.}
        \label{fig:steinberg}
	\end{center}
\end{figure}

\medskip

(I) Mixing corresponds to a situation where the two types of cell are not distinguishable energetically (same interaction coefficients) leading in particular to $\sigma_{12} = 0$. In our analysis, this case is ruled out by Assumption \ref{assumption:A2} (two such cell populations would be treated as one).

\medskip

(II) In the complete sorting case, different species avoid interfacial contact with each other (and thus evolve to balls with prescribed volume). 
This occurs when 
$$ \sigma_{12}=\sigma_{01}+\sigma_{02}$$
and the interfacial energy reduces to
\begin{align*}
2 ( \sigma_{01}\cH^{d-1}(\partial^*E_1 \cap \Omega) + \sigma_{02}\cH^{d-1}(\partial^*E_2 \cap \Omega)).
\end{align*}
We show that this energy appears when considering non-interacting species (i.e., with  cross-interaction coefficient $\alpha_{12}=0$), see Theorem \ref{thm:DAH}-(i).

\medskip

(III) Partial engulfment results from cross-interaction that is not exceedingly high, leading cell B (the less cohesive cell) to engulf cell A (the more cohesive one) partially.
It occurs when the adhesion coefficients $\sigma_{ij}$ are positive and satisfy strict triangle inequalities
$$
\sigma_{ij} < \sigma_{ik} + \sigma_{kj} \qquad\text{for any permutation }(i,j,k)=(0,1,2).
$$
Equilibrium states are  characterized by nontangential contact angles at any triple junctions. 
The angles are prescribed by the coefficients $\sigma_{ij}$'s appearing in the energy via the Herring contact angle condition
\begin{align}\label{eq:herring}
\frac{\sin(\theta_0)}{\sigma_{12}} = \frac{\sin(\theta_1)}{\sigma_{02}} = \frac{\sin(\theta_2)}{\sigma_{01}}.
\end{align}
Partial engulfment can be viewed as an intermediate state between complete sorting and full engulfment. 
Theorem \ref{thm:DAH}-(ii) proves that this regime appears in the limit $\eps\to0$ when 
the cross-interaction coefficient $\alpha_{12}$ is positive but small.

\medskip

(IV) Finally, full engulfment happens when cell B envelopes cell A completely. 
It occurs when we have equality in the triangle inequality:
$$
\sigma_{01}=\sigma_{02}+\sigma_{21}
$$
and we will establish this equality for large enough cross-interaction coefficient
$\alpha_{22}\leq \alpha_{12}\leq \sqrt{\alpha_{11}\alpha_{22}}$ - see Theorem \ref{thm:DAH}-(iii). This results coincides with the results of \cite{FBC22,MT15}.

We mention that the interfacial energy does not favor a particular placement of the engulfed cells A, although 
numerical simulation results \cite{FBC22,MT15} suggest that cells A tend to stay at the center as in Figure \ref{fig:steinberg}(IV). This symmetry is also expected to happen in our framework for a fixed (but small) $\varepsilon>0$ as the nonlocal effects should lead to a unique and radially symmetric minimizer.

\subsection{Brief review of the literature}\label{subsec:literature}

This paper is at the intersection of several important topics: the PKS model and free boundary problems, multiphase flows, and cell-cell adhesion. 
There is a rich body of literature devoted to single phase problems (the classical Cahn--Hilliard and Allen--Cahn equations,  Muskat, Hele--Shaw and Mullins--Sekerka flows with surface tension, etc.).
Since our focus is on the multi-phase framework, we will not review these work extensively here.

\medskip

In the case $N=1$, connections between  PKS type models and free boundary problems 
have been explored in various settings. It was first shown in \cite{KMW24,KMW23} that the PKS model with the incompressible constraint $\rho\leq1$ leads to the formation of sharp interfaces that evolve according to the (one-phase) Hele--Shaw free boundary problems with surface tension when $\varepsilon$ tends to zero. 
This result was extended in \cite{M24} to  nonlinear diffusion with $m>2$. 
We also refer to \cite{MR25,R25,JM25} for further work in this direction on the parabolic-parabolic and elliptic-parabolic PKS model (this last one leads to volume-preserving mean curvature flow).
These results rely on the $\Gamma$-convergence of the associated energy to the perimeter functional, in a way that generalizes the classical Modica--Mortola \cite{MM77} convergence result for the Allen--Cahn energy.

\medskip

In  the  multiphase setting $N>1$, \cite{B90} generalized the Modica--Mortola   result to multi-well potentials and 
proved the $\Gamma$-convergence to the multiphase perimeter functional.
In fact, \cite{B90} relies on a suitable generalization of the Modica--Mortola idea in vector form 
which we use in our analysis as well (see Theorem \ref{thm:gamma-convergence} and Section \ref{subsec:limsup-property}). 
The properties of the limiting multiphase perimeter functional are an important topic in the calculus of variation. The lower semicontinuity of such  functionals (including anisotropic framework) is investigated in \cite{AB90}.
A sufficient condition for the lower semicontinuity is also established in \cite{M97}, which shows that in the isotropic case (as is the case for \eqref{eq:limit-energy-perimeter}), the lower semicontinuity is equivalent to the triangle inequality \eqref{eq:triangle-ineq}.

\medskip

Using the  $\Gamma$-convergence result of \cite{B90}, \cite{LS18} shows that the vectorial Allen--Cahn equation approximates the multiphase mean curvature flow, whose convergence is proved under the same energy convergence assumption \eqref{assumption:energy-convergence}.
In the same spirit, in \cite{LO16}, the multiphase version of the 
thresholding schemes of  \cite{EO15} (see also \cite{MBO92,MBO94})
 is proved to converge to the multiphase mean curvature flow.

We note that in \cite{EO15,LO16,MBO92,MBO94} the heat content energy is used to approximate the perimeter functional. Using the same idea, \cite{JKM21} introduces a diffuse approximation of the Muskat problem with surface tension (modeling the flow of two immiscible fluids, like oil and water, through a porous medium) as well as its multiphase problem. This viewpoint of \cite{JKM21} allows to construct weak solutions and to prove the rigorous convergence to the Muskat problem under the energy assumption \eqref{assumption:energy-convergence}.

\medskip

Cell-cell adhesion is a classical topic and remains an active area of research. The Differential
Adhesion Hypothesis (DAH) was formulated by Malcolm Steinberg more than fifty years ago
\cite{S62a,S62b,S62c,S62d}, motivated by earlier adhesion-based experiments \cite{H43,TH55}. In this
series of papers, Steinberg identified four qualitative regimes of equilibrium configurations, which
we describe in Figure~\ref{fig:steinberg}. 
The analogy between immiscible fluids with distinct surface
tensions and cell populations with different adhesion strengths, together with the hypothesis that
cells reorganize to minimize a total adhesion energy in a manner similar to immiscible fluids, has
motivated extensive theoretical and experimental studies; see, for instance, \cite{FBC23,FS04} and
the references therein. 

Among the many mathematical models proposed for cell-cell adhesion (local or nonlocal, continuum or discrete), the ones most directly related to our work are the continuum formulations used in \cite{FBC23,CMMTT19}. In particular, \cite{FBC23} studies a local continuum model obtained via a Cahn--Hilliard-type approximation and provides both mathematical analysis and numerical simulations (see also \cite{FBC22,MT15} for additional simulation results). We refer to the bibliography of \cite{FBC23} for further references on adhesion-driven sorting and related modeling approaches. While our motivation is similar, we work in a different regime: we retain a genuinely nonlocal interaction structure (i.e., we do not pass through a Cahn--Hilliard approximation), and nonlinear diffusion effects with $m>2$ play a central role in our results.

\section{Preliminaries}

\subsection{Notations and definitions}\label{subsec:notations}
We collect here the main notations for the reader's convenience (although several of these notations will be recalled when they are first used): 

\begin{itemize}

\item[$\bullet$] The function $\rho_i(x)$ (or, $\rho_i(x,t)$) is a density distribution of the $i$-th cell. The state of the system is described by the vector function $\overrightarrow{\rho}(x):=(\rho_1(x),\cdots,\rho_N(x))^T$.

\item[$\bullet$] The function $f_m:\R\to\R\cup\{+\infty\}$ is defined by
\begin{equation*}
f_m(s):= 
\begin{cases} 
\displaystyle \frac{1}{m-1}s^{m} & \text{if } s\geq0, \\ 
+\infty & \text{otherwise},
\end{cases}
\qquad \mbox{ for some fixed } m>2
\end{equation*}
and we denote by $f$ the function
$$f(\overrightarrow{\rho})= 
\begin{cases}
    f_m(\rho_1+\cdots +\rho_N) & \mbox{ if } \overrightarrow{\rho}\in\R^N_{\geq0}\\
    +\infty & \mbox{  otherwise.}
\end{cases}
$$

\item[$\bullet$] For $1\leq i\leq N$, we fix a prescribed mass $m_i>0$ for the $i$-th cell population throughout the paper and denote $\overrightarrow{m}:=(m_1,\cdots,m_N)^T\in\R^N_{>0}$.

\item[$\bullet$] 
We denote by $\overrightarrow{e_i}=(0,\cdots,0,1,0,\cdots,0)^T\in\R^N$ the vector with $i$-th entry $1$ and we set $\overrightarrow{e_0}=\overrightarrow{0}$.

\item[$\bullet$] For $1\leq i,j\leq N$, the strength of the interactions between the $i$-th cell and $j$-th cell is denoted by $\alpha_{ij}\in\R$ and 
$A$ is the square matrix with coefficients $\alpha_{ij}$.

\item[$\bullet$] A matrix $Q$ is a square matrix such that $A=Q^TQ$ (which exists thanks to Assumption \ref{assumption:A1}).
Several choices are possible such as the symmetric square root $A^{1/2}$ or the upper/lower Cholesky matrices. Any of these matrices can be used.

\item[$\bullet$] 
Given density $\overrightarrow{\rho}^{\varepsilon} $, we denote by $\overrightarrow{\phi}^{\varepsilon}$ the solution of \eqref{eq:phi}. We recall that
 $\overrightarrow{\phi}^{\varepsilon}:=(\phi_1^{\varepsilon},\cdots,\phi_N^{\varepsilon})^T$ where the $\phi_i^\eps$ are the concentration of the chemoattractant associated to the cell density $\rho_i$.
We also denote 
$$\overrightarrow{\psi}^{\varepsilon}:=Q\overrightarrow{\phi}^{\varepsilon}$$
We will use the notation $\overrightarrow{\psi}:=Q\overrightarrow{\phi}$ for general $\overrightarrow{\phi}\in\R^N$ as well.

\item[$\bullet$] For $1\leq i\leq N$, we set $\theta_i:=\left(\frac{1}{2\sigma}\alpha_{ii}\right)^{\frac{1}{m-2}}$. The vectors $\theta_i \overrightarrow{e_i}$ and  $\overrightarrow{\alpha_i}:=\frac{1}{\sigma}\theta_iQ\overrightarrow{e_i}$ play a crucial role in the analysis. We will also use the notation $\overrightarrow{e_0}  = \overrightarrow{\alpha_0}=0$.

\item
$a_i:=\frac{m-2}{m-1}\theta_i^{m-1}$ and $\overrightarrow{a}:=(a_1,\cdots,a_N)^T\in\R^N$.

\item[$\bullet$] For two regular nonnegative Borel measures $\mu$ and $\nu$ on $\Omega$, the supremum $\mu \vee \nu$ is defined by
\begin{align*}
(\mu \vee \nu)(U) = \sup \{ \mu(U') + \nu(U'') : \  U' \cap U'' = \varnothing,\, U' \cup U'' \subset U,\, U' \text{ and } U'' \text{ are open sets in } \Omega \}
\end{align*}
for arbitrary open sets $U$ in $\Omega$. The supremum $\mu \vee \nu$ is the smallest regular measure among all regular measures that are greater than or equal to $\mu$ and $\nu$ on any Borel subsets of $\Omega$. The supremum of finitely many regular nonnegative Borel measures on $\Omega$ is similarly defined.

\item[$\bullet$] Unless indicated otherwise, all constants denoted by $C$  (possibly changing from line to line) depend on $A$, $m$, $N$, $\overrightarrow{m}\in\R^N_{>0}$,  the  coefficient $\sigma>0$ and the domain $\Omega$. Dependence on additional parameters/functions will be indicated.
\end{itemize}

\subsection{The multi-well structure of the energy}\label{subsec:energy}
The proof of Theorem \ref{thm:gamma-convergence} relies on 
the fact that the energy $\cE^{\varepsilon}$ can be written as the sum of a potential energy (with multiple wells) and a transition energy.
The computation below generalizes to multispecies the computation used in \cite{KMW24,M24}.

\begin{lemma}\label{lem:cE-alternative}
Define the function $W\,:\,\R^N\to\R\cup\{+\infty\}$ by
\begin{equation}\label{eq:Wdef}
W(\overrightarrow{\rho}):=  \displaystyle f(\overrightarrow{\rho})+\overrightarrow{a}\cdot\overrightarrow{\rho}-\frac{1}{2\sigma}\overrightarrow{\rho}^TA\overrightarrow{\rho}.
\end{equation}
Then, the energy \eqref{eq:definition-cE}
can be rewritten in the form
\begin{equation}\label{eq:energy-new}
\mathcal{E}^\varepsilon(\overrightarrow{\rho} ) = \frac{1}{\varepsilon} \int_{\Omega} W(\overrightarrow{\rho} )\,dx + \frac{1}{2\sigma\varepsilon}\int_{\Omega} (\overrightarrow{\rho}  - \sigma\overrightarrow{\phi}^\varepsilon)^T A (\overrightarrow{\rho}  - \sigma\overrightarrow{\phi}^\varepsilon)\,dx + \frac{\varepsilon}{2}\int_{\Omega} (\nabla \overrightarrow{\phi}^\varepsilon)^T A \nabla \overrightarrow{\phi}^\varepsilon \, dx
\end{equation}
for all $\overrightarrow{\rho}:\Omega \to \R^N_{\geq 0}$ such that $\int_{\Omega} \overrightarrow{\rho}\,dx=\overrightarrow{m}$ and with $ \overrightarrow{\phi}^\eps$ solution of \eqref{eq:phi}.
\end{lemma}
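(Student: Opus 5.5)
The identity will follow from expanding the quadratic term and integrating by parts against the elliptic equation \eqref{eq:phi}; no estimates are needed. I write $\overrightarrow{\phi}=\overrightarrow{\phi}^\eps$ and compare the integrand of \eqref{eq:definition-cE}, namely $f(\overrightarrow{\rho})+\overrightarrow{a}\cdot\overrightarrow{\rho}-\frac12\overrightarrow{\rho}^TA\overrightarrow{\phi}$, with the right-hand side of \eqref{eq:energy-new}. Substituting the definition \eqref{eq:Wdef} of $W$ cancels the terms $f(\overrightarrow{\rho})+\overrightarrow{a}\cdot\overrightarrow{\rho}$. The mass constraint is only used so that $\cE^\eps$ is given by the first line of \eqref{eq:definition-cE}, and $\overrightarrow{\rho}\geq0$ ensures that $f$ is finite.

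\textbf{Step 1: expand the quadratic term.} Using the symmetry of $A$ from \ref{assumption:A1},
\[
\frac{1}{2\sigma}(\overrightarrow{\rho}-\sigma\overrightarrow{\phi})^TA(\overrightarrow{\rho}-\sigma\overrightarrow{\phi})
=\frac{1}{2\sigma}\overrightarrow{\rho}^TA\overrightarrow{\rho}-\overrightarrow{\rho}^TA\overrightarrow{\phi}+\frac{\sigma}{2}\overrightarrow{\phi}^TA\overrightarrow{\phi}.
\]
The term $\frac{1}{2\sigma}\overrightarrow{\rho}^TA\overrightarrow{\rho}$ cancels against the $-\frac{1}{2\sigma}\overrightarrow{\rho}^TA\overrightarrow{\rho}$ in $W$. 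It therefore remains to show
\[
-\frac12\int_\Omega\overrightarrow{\rho}^TA\overrightarrow{\phi}\,dx=\int_\Omega-\overrightarrow{\rho}^TA\overrightarrow{\phi}+\frac{\sigma}{2}\overrightarrow{\phi}^TA\overrightarrow{\phi}+\frac{\eps^2}{2}(\nabla\overrightarrow{\phi})^TA\nabla\overrightarrow{\phi}\,dx,
\]
where the last integrand is the $\frac{\eps}{2}$ gradient term of \eqref{eq:energy-new} multiplied by $\eps$ (all other terms carry the common factor $\frac1\eps$). Equivalently, I must show $\int_\Omega\overrightarrow{\rho}^TA\overrightarrow{\phi}\,dx=\int_\Omega\sigma\overrightarrow{\phi}^TA\overrightarrow{\phi}+\eps^2(\nabla\overrightarrow{\phi})^TA\nabla\overrightarrow{\phi}\,dx$.

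\textbf{Step 2: use the equation for $\overrightarrow{\phi}$.} Substitute $\overrightarrow{\rho}=\sigma\overrightarrow{\phi}-\eps^2\Delta\overrightarrow{\phi}$ into $\int_\Omega\overrightarrow{\rho}^TA\overrightarrow{\phi}\,dx$ and integrate by parts componentwise:
\[
-\eps^2\int_\Omega\sum_{i,j}\alpha_{ij}\Delta\phi_i\,\phi_j\,dx=\eps^2\int_\Omega\sum_{i,j}\alpha_{ij}\nabla\phi_i\cdot\nabla\phi_j\,dx.
\]
The boundary terms vanish by the Neumann condition in \eqref{eq:phi}. This is exactly the required identity, which concludes the proof.

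\textbf{Obstacle: regularity.} The only real point to check is that the integration by parts is legitimate. For $\overrightarrow{\rho}\in L^2$ (or $L^m$, since $f(\overrightarrow{\rho})$ is integrable when the energy is finite), elliptic regularity gives $\overrightarrow{\phi}\in H^2$, so the computation is justified. If $\overrightarrow{\rho}$ is merely $L^1$, I would argue by density, or use the weak formulation of \eqref{eq:phi} with test function $A\overrightarrow{\phi}\in H^1$.
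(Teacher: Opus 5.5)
Your proof is correct and is essentially the paper's argument. The paper completes the square in $-\frac12\overrightarrow{\rho}^TA(\sigma\overrightarrow{\phi}^\eps)$ and then substitutes $\overrightarrow{\rho}-\sigma\overrightarrow{\phi}^\eps=-\eps^2\Delta\overrightarrow{\phi}^\eps$ and integrates by parts using the Neumann condition; this is your identity $\int_\Omega\overrightarrow{\rho}^TA\overrightarrow{\phi}^\eps\,dx=\int_\Omega\sigma(\overrightarrow{\phi}^\eps)^TA\overrightarrow{\phi}^\eps+\eps^2(\nabla\overrightarrow{\phi}^\eps)^TA\nabla\overrightarrow{\phi}^\eps\,dx$, simply derived in the opposite direction.

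Your regularity remark goes beyond the paper, which does the computation formally. One correction: for densities that are merely in $L^1$, $\overrightarrow{\phi}^\eps$ need not lie in $H^1$ when $d\geq2$, so your fallback of testing with $A\overrightarrow{\phi}^\eps\in H^1$ is not available there. That case has to be handled by density or in the extended-valued sense.
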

\begin{proof}

We recall that 
$$ \mathcal{E}^\varepsilon(\overrightarrow{\rho}) = \frac{1}{\varepsilon} \int_{\Omega} f(\overrightarrow{\rho}) + \overrightarrow{a} \cdot \overrightarrow{\rho} - \frac{1}{2} (\overrightarrow{\rho})^T A \overrightarrow{\phi}^\varepsilon \, dx $$
and we write
\begin{align*}
\int_\Omega - \frac{1}{2} (\overrightarrow{\rho})^T A (\sigma\overrightarrow{\phi}^\varepsilon)\, dx
& = \int_\Omega - \frac{1}{2} (\overrightarrow{\rho})^T A (\overrightarrow{\rho}) + \frac{1}{2} (\overrightarrow{\rho} -  \sigma\overrightarrow{\phi}^\varepsilon )^T A (\overrightarrow{\rho} -  \sigma\overrightarrow{\phi}^\varepsilon )\, dx \\
& \qquad + \int_\Omega \frac{1}{2} (  \sigma\overrightarrow{\phi}^\varepsilon )^T A (\overrightarrow{\rho} -  \sigma\overrightarrow{\phi}^\varepsilon )\, dx.
\end{align*}
Using Equation \eqref{eq:epsilon-problem-2}, the last term in this equality becomes
$$
\int_\Omega \frac{1}{2} ( \sigma \overrightarrow{\phi}^\varepsilon )^T A (\overrightarrow{\rho} -  \sigma\overrightarrow{\phi}^\varepsilon )\, \, dx
=- \eps^2 \int_\Omega \frac{1}{2} ( \sigma \overrightarrow{\phi}^\varepsilon )^T A (\Delta   \overrightarrow{\phi}^\varepsilon )\, \, dx
= \sigma \eps^2 \int_\Omega \frac{1}{2} ( \na  \overrightarrow{\phi}^\varepsilon )^T A (\na    \overrightarrow{\phi}^\varepsilon )\, \, dx.
$$
Putting these equalities together, we get:
\begin{align}\label{eq:cE-alternative}
\mathcal{E}^\varepsilon(\overrightarrow{\rho}) 
&= \frac{1}{\varepsilon} \int_{\Omega} f(\overrightarrow{\rho}) + \overrightarrow{a} \cdot \overrightarrow{\rho} - \frac{1}{2\sigma} (\overrightarrow{\rho})^T A \overrightarrow{\rho}\,dx \notag \\
& \qquad + \frac{1}{2\sigma\varepsilon}\int_{\Omega} (\overrightarrow{\rho} -  \sigma\overrightarrow{\phi}^\varepsilon )^T A (\overrightarrow{\rho} -  \sigma\overrightarrow{\phi}^\varepsilon )\,dx + \frac{\varepsilon}{2}\int_{\Omega} (\nabla \overrightarrow{\phi}^\varepsilon)^T A \nabla \overrightarrow{\phi}^\varepsilon \, dx
\end{align}
and the result follows.
\end{proof}

\medskip

The potential $W$ plays a key role in the asymptotic properties of $\cE^\eps$.
We recall that for the one-species model, the corresponding function, defined by 
\begin{equation}\label{eq:walpha}
w_{\alpha}(s):= 
\begin{cases} 
\displaystyle \frac{s^m}{m-1}+a_{\alpha}s-\frac12\alpha s^2 & \text{if } s\geq0, \\ 
+\infty & \text{otherwise},
\end{cases}
\qquad \mbox{ where } a_{\alpha}:=\frac{m-2}{m-1}\left(\frac12\alpha\right)^{\frac{m-1}{m-2}}.
\end{equation}
is a double-well potential: it is nonnegative on $\R$ and it vanishes at $0$ and $\theta_{\alpha}:=\left(\frac12\alpha\right)^{\frac{1}{m-2}}$.
This can be seen easily by writing 
$$w_{\alpha}(s) = s \left( \frac{s^{m-1}}{m-1} - \frac{\theta_\alpha ^{m-1}}{m-1} +   \theta_\alpha^{m-2}(\theta_\alpha - s)\right) $$
and using the convexity of $s\mapsto  \frac{s^{m-1}}{m-1} $ when $m>2$.
Similarly, using Assumptions \ref{assumption:A1} and \ref{assumption:A2} we can show that $W$ has a multi-well structure:
\begin{proposition}\label{prop:separation-potential-W}
Under Assumptions \ref{assumption:A1} and \ref{assumption:A2},
the function $\overrightarrow{\rho}\mapsto W(\overrightarrow{\rho})$ defined by \eqref{eq:Wdef} satisfies:
\begin{align}\label{eq:W-wells}
W(\overrightarrow{\rho})\geq0\quad \forall  \overrightarrow{\rho}\in \R^N\quad\text{and}\quad\{W(\overrightarrow{\rho})=0\}=\{0,\theta_1\overrightarrow{e_1},\cdots,\theta_N\overrightarrow{e_N}\}.
\end{align}
where $\theta_i=\left(\frac{1}{2\sigma}\alpha_{ii}\right)^{\frac{1}{m-2}}$.
\end{proposition}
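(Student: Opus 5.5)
The plan is to reduce to the one-species double-well potential $w_\alpha$ of \eqref{eq:walpha} by a convexity (Jensen) argument in the "composition" variable. If $\overrightarrow{\rho}\notin\R^N_{\geq0}$, then $W(\overrightarrow{\rho})=+\infty$ and there is nothing to prove. So fix $\overrightarrow{\rho}\in\R^N_{\geq0}$ and set $s:=\rho_1+\cdots+\rho_N$. If $s=0$, then $\overrightarrow{\rho}=0$ and $W=0$. If $s>0$, write $\overrightarrow{\rho}=s\,\overrightarrow{w}$ with $\overrightarrow{w}=(w_1,\dots,w_N)$ in the simplex, $w_i\geq 0$, $\sum_i w_i=1$.

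\textbf{Step 1 (reduction to $w_\alpha$).} Since $A$ is positive semidefinite by \ref{assumption:A1}, the quadratic form $q(\overrightarrow{w}):=\overrightarrow{w}^TA\overrightarrow{w}$ is convex. For $\overrightarrow{w}=\sum_i w_i\overrightarrow{e_i}$ in the simplex one has the exact identity
$$\sum_i w_i\, q(\overrightarrow{e_i})-q(\overrightarrow{w})=\frac12\sum_{i,j}w_iw_j\,|\overrightarrow{e_i}-\overrightarrow{e_j}|_A^2\;\geq 0 .$$
Since $q(\overrightarrow{e_i})=\alpha_{ii}$, this gives $\overrightarrow{\rho}^TA\overrightarrow{\rho}\leq s^2\sum_i w_i\alpha_{ii}$. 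We also have $\overrightarrow{a}\cdot\overrightarrow{\rho}=s\sum_i w_ia_i$ and $f(\overrightarrow{\rho})=f_m(s)=\sum_i w_i f_m(s)$. Combining these,
$$W(\overrightarrow{\rho})\;\geq\;\sum_{i=1}^N w_i\Big(\frac{s^m}{m-1}+a_i s-\frac{\alpha_{ii}}{2\sigma}s^2\Big)=\sum_{i=1}^N w_i\, w_{\alpha_{ii}/\sigma}(s).$$
With $\alpha=\alpha_{ii}/\sigma$, the constant $a_\alpha$ in \eqref{eq:walpha} is exactly $a_i$ and $\theta_\alpha$ is exactly $\theta_i$; this is a routine check. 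Moreover $\alpha_{ii}>0$, as deduced earlier from \ref{assumption:A1}, \ref{assumption:A2} and \eqref{eq:detpositive}. Hence each $w_{\alpha_{ii}/\sigma}\geq 0$, with zeros exactly at $0$ and $\theta_i$. This proves $W\geq 0$, and it also shows that $W(\theta_i\overrightarrow{e_i})=0$ and $W(0)=0$ directly, since then $\overrightarrow{w}$ is a vertex and the Jensen inequality is an equality.

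\textbf{Step 2 (characterizing the zero set).} Suppose $W(\overrightarrow{\rho})=0$ with $s>0$. Then both inequalities in Step 1 must be equalities. Equality in the second one forces $w_{\alpha_{ii}/\sigma}(s)=0$, i.e. $s=\theta_i$, for every $i$ with $w_i>0$. Equality in the Jensen identity forces $|\overrightarrow{e_i}-\overrightarrow{e_j}|_A=0$ whenever $w_i,w_j>0$. Because $A$ is positive semidefinite, $|\overrightarrow{v}|_A=0$ implies $A\overrightarrow{v}=0$ (write $A=Q^TQ$). So columns $i$ and $j$ of $A$ coincide, which contradicts \ref{assumption:A2} when $i\neq j$. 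Therefore exactly one $w_i$ is positive, so $\overrightarrow{w}=\overrightarrow{e_i}$ and $\overrightarrow{\rho}=\theta_i\overrightarrow{e_i}$.

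The only delicate point is Step 2's use of the equality case. This is precisely where the "no identical columns" part of \ref{assumption:A2} enters: without it, whole segments between wells would be zeros of $W$. The nonzero-column part is needed only to guarantee $\alpha_{ii}>0$, so that $\theta_i>0$ is a genuine second well.
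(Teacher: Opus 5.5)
Your proof is correct, and it takes a genuinely different route from the paper's. The paper first restricts $W$ to the axes, where $W(u\overrightarrow{e_i})=w_{\alpha_{ii}}(u)$. It then rules out off-axis zeros by showing that $W$ has no local minimizer with two positive coordinates $\rho_i,\rho_j$. When $\alpha_{ii}\neq\alpha_{jj}$ this uses the sign of a $2\times 2$ Hessian determinant. When $\alpha_{ii}=\alpha_{jj}$ it uses concavity of $W$ along $\overrightarrow{e_i}-\overrightarrow{e_j}$ at fixed total density, which contradicts (A2).

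Your Step 1 amounts to the exact identity
\[
W(s\overrightarrow{w})=\sum_{i=1}^N w_i\,w_{\alpha_{ii}/\sigma}(s)+\frac{s^2}{4\sigma}\sum_{i,j=1}^N w_iw_j\,|\overrightarrow{e_i}-\overrightarrow{e_j}|_A^2 .
\]
Nonnegativity and the zero set are then read off from two nonnegative pieces: (A1) makes the defect nonnegative, and (A2) makes it strictly positive off the axes. This gives a global, quantitative statement with no case split and no second-order analysis.

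In particular, you never need $W$ to attain its infimum. The paper's argument tacitly uses this, via coercivity for $m>2$, to pass from ``no off-axis local minimizer'' to $W\geq 0$ off the axes.

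Your route also avoids a slip in the paper's first case. The Hessian determinant there is actually $\sigma^{-2}\bigl[S(2\alpha_{ij}-\alpha_{ii}-\alpha_{jj})+\alpha_{ii}\alpha_{jj}-\alpha_{ij}^2\bigr]$. The omitted nonnegative term can make it positive, for example for diagonal $A$ with $\alpha_{ii}\neq\alpha_{jj}$ at small total density. That case is really settled by the second-case observation: the second derivative along $\overrightarrow{e_i}-\overrightarrow{e_j}$ equals $-\sigma^{-1}|\overrightarrow{e_i}-\overrightarrow{e_j}|_A^2<0$, and this is the same quantity that drives your defect term.

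The paper's local, critical-point approach would adapt more easily to non-quadratic interaction terms, where no such exact splitting exists. For the quadratic energy here, your argument is the cleaner and more informative one.
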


\begin{proof}
Along the axis $\{ u \overrightarrow{e_i}\,|\, u\in\R\}$, $W$ coincides with the double-well potential \eqref{eq:walpha}, that is 
$$W(u \overrightarrow{e_i}) = w_{\alpha_{ii}} (u).$$
It follows from the comment above that $W$ is non-negative and only vanishes at the points $0,\theta_1\overrightarrow{e_1},\cdots,\theta_N\overrightarrow{e_N}$ along  the axes.

\medskip

To complete the proof, it only remains to show that $W$ 
does not have any zeroes that are  not on one of the axes. 
This will be done by proving that $W$ cannot have a local minimizer in $\R^N_{\geq0}\setminus\left(\cup_{i=1}^N\{\rho_i\overrightarrow{e_i}\,|\,\rho_i\in\R_{\geq0}\}\right)$.

Suppose, by contradiction, that $W$ has a local minimizer $\overrightarrow{\rho}^0\in\R^N_{\geq0}\setminus\left(\cup_{i=1}^N\{\rho_i\overrightarrow{e_i}\,|\,\rho_i\in\R_{\geq0}\}\right)$. Since $\overrightarrow{\rho}^0$ is not on an axis, at least two of its coordinates $\rho_i^0$ and $\rho_j^0$ must be positive.

\medskip

\textbf{Case 1.} Assume that $\alpha_{ii}\neq\alpha_{jj}$.

The function
\begin{align*}
W_0\,:\,(\rho_i, \rho_j) \longmapsto 
W \big(\rho_1^0, \ldots, \rho_{i-1}^0, \rho_i, \rho_{i+1}^0, \ldots, 
\rho_{j-1}^0, \rho_j, \rho_{j+1}^0, \ldots, \rho_N^0\big)
\end{align*}
has a minimum at $(\rho_i^0,\rho_j^0)\in\R^2_{>0}$, so its Hessian must have nonnegative determinant. This Hessian is given by:
\begin{align*}
\mathrm{Hessian}(\rho_i^0, \rho_j^0)
=\frac{1}{\sigma}
\begin{bmatrix}
S - \alpha_{ii} & S - \alpha_{ij} \\
S - \alpha_{ji} & S - \alpha_{jj}
\end{bmatrix}
\qquad\text{where }S:=\sigma f_m''\left(\sum_i\rho_i^0\right)>0.
\end{align*}
We have $\det(\mathrm{Hessian}(\rho_i^0, \rho_j^0))=\left(\frac{1}{\sigma}\right)^2\left(2\alpha_{ij}-(\alpha_{ii}+\alpha_{jj})\right)S$.
Assumption \ref{assumption:A1} implies $2\alpha_{ij}\leq 2\sqrt{\alpha_{ii}\alpha_{jj}} <\alpha_{ii}+\alpha_{jj}$
(this last inequality is only strict when $\alpha_{ii}\neq\alpha_{jj}$), which together with the fact that $S>0$ implies $\det(\mathrm{Hessian}(\rho_i^0, \rho_j^0))<0$, a contradiction.

\medskip

\textbf{Case 2.} Assume that $\alpha_{ii}=\alpha_{jj}$ (in that case, we have $\theta_i=\theta_j$ and $a_i=a_j$).

For $s\in(-\delta,\delta)$ with $\delta>0$ sufficiently small, we define
\begin{align*}
\overrightarrow{\rho}^s=\overrightarrow{\rho}^0+s(\overrightarrow{e_i}-\overrightarrow{e_j}).
\end{align*}
Then, for all $s\in(-\delta,\delta)$, we have $\rho_1^0 + \cdots + \rho_N^0 = \rho_1^s + \cdots + \rho_N^s$ and $\overrightarrow{a}\cdot\overrightarrow{\rho}^0=\overrightarrow{a}\cdot\overrightarrow{\rho}^s$ (since $a_i=a_j$). 
It follows that the first two terms of $W({\overrightarrow{\rho}}^s)= f(\rho^s)+\overrightarrow{a}\cdot{\overrightarrow{\rho}}^s-\frac{1}{2\sigma}{(\overrightarrow{\rho}^s)}^TA{\overrightarrow{\rho}}^s$ do not change with $s$. The assumption that $\overrightarrow{\rho}^0$ is a local minimizer of $W$ thus implies that the function
\begin{align*}
g(s) := \frac{1}{2}(\overrightarrow{\rho}^s)^TA\overrightarrow{\rho}^s\qquad\text{for }s\in(-\delta,\delta)
\end{align*}
attains a local maximum at $s=0$. Consequently, we must have 
\begin{align*}
g''(0) = (\overrightarrow{e_i}-\overrightarrow{e_j})^TA(\overrightarrow{e_i}-\overrightarrow{e_j})\leq0.
\end{align*}
Since $A$ is positive semidefinite, this implies that $A(\overrightarrow{e_i}-\overrightarrow{e_j}) = 0$, which contradicts the assumption \ref{assumption:A2}. 
\end{proof}

\subsection{Connection to the multiphase Allen--Cahn functional}\label{subsec:connection-Baldo}

The assumption that $A$  is symmetric and positive semidefinite (Assumption \ref{assumption:A1}) implies in particular that all the terms in \eqref{eq:energy-new} are non-negative.
This assumption also implies that $A$ can be factored as $A=Q^TQ$ for some square matrix $Q$ (such as the symmetric square root $A^{1/2}$ or the upper/lower Cholesky matrices - any such matrix $Q$ can be used and we fix one from now on). This allows us to write:
\begin{align}\label{eq:energy-psi}
\cE^{\varepsilon}(\overrightarrow{\rho})&=\frac{1}{\varepsilon}\int_{\Omega}W(\overrightarrow{\rho} )+\frac{1}{2\sigma}|Q(\sigma\overrightarrow{\phi^{\varepsilon}}-\overrightarrow{\rho} )|^2\,dx + \frac{\varepsilon}{2}\int_{\Omega}|Q\nabla\overrightarrow{\phi}^{\varepsilon}|^2\,dx \notag \\
&=\frac{1}{\varepsilon}\int_{\Omega}W(\overrightarrow{\rho})+\frac{1}{2\sigma}|\sigma\overrightarrow{\psi^{\varepsilon}}- Q\overrightarrow{\rho} |^2\,dx + \frac{\varepsilon}{2}\int_{\Omega}|\nabla\overrightarrow{\psi}^{\varepsilon}|^2\,dx, \qquad \overrightarrow{\psi}^{\varepsilon}=Q\overrightarrow{\phi}^{\varepsilon}.
\end{align}

Motivated by \cite{KMW24,M24}, we now introduce the function $h:\R^N\to\R\cup\{+\infty\}$ defined by
\begin{equation}\label{def:potential-h}
h\left(\overrightarrow{\psi}\right):=\inf_{\overrightarrow{\rho}\in\R^N}\left\{W(\overrightarrow{\rho})+\frac{1}{2\sigma}\left|\sigma\overrightarrow{\psi}- Q\overrightarrow{\rho}\right|^2\right\},
\end{equation}
which has the multi-well structure inherited from \eqref{eq:W-wells}:
It is clear from the definition that $h$ is non-negative and that $h\left(\overrightarrow{\psi}\right)=0$ if and only if  $\sigma\overrightarrow{\psi} = \theta_i Q \overrightarrow{e_i} $ (for which we can take $\overrightarrow{\rho}= \theta_i\overrightarrow{e_i} $ in the infimum).
We thus introduce the phase points
\begin{equation}\label{eq:phases}
\overrightarrow{\alpha_0}:=\overrightarrow{0},\qquad \overrightarrow{\alpha_i}:=\frac{1}{\sigma}\theta_iQ\overrightarrow{e_i}\quad \mbox{ for } i=1,\cdots,N  .
\end{equation}
Assumption \ref{assumption:A2} implies that these points are distinct:
\begin{proposition}\label{prop:mixing}
Under Assumption \ref{assumption:A1}, we have 
$$\overrightarrow{\alpha_i} =\overrightarrow{\alpha_j}\quad \Longleftrightarrow \quad A\overrightarrow{e_i} = A\overrightarrow{e_j}.$$
In particular, Assumption \ref{assumption:A2} implies that $\overrightarrow{\alpha_i} \neq \overrightarrow{\alpha_j}$ for all $i\neq j\in\{0,\cdots,N\}$.

The potential $\overrightarrow{\psi} \mapsto h(\overrightarrow{\psi})$ defined by \eqref{def:potential-h} satisfies
\begin{align}\label{eq:h-wells}
h\geq0\quad\text{on }\R^N\quad\text{and}\quad\{\overrightarrow{\psi}\in\R^N\,|\,h(\overrightarrow{\psi})=0\}=\{\overrightarrow{\alpha_0},\overrightarrow{\alpha_1},\cdots,\overrightarrow{\alpha_N}\}.
\end{align}
\end{proposition}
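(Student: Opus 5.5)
The plan has two parts: first the equivalence $\overrightarrow{\alpha_i}=\overrightarrow{\alpha_j}\Leftrightarrow A\overrightarrow{e_i}=A\overrightarrow{e_j}$, then the characterization of the zero set of $h$.

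\textbf{The equivalence.} We use $A=Q^TQ$, so that $\ker Q=\ker A$: if $Av=0$ then $|Qv|^2=v^TAv=0$, and the converse is obvious.

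If $A\overrightarrow{e_i}=A\overrightarrow{e_j}$ with $i,j\geq1$, then comparing the $i$-th and $j$-th entries gives $\alpha_{ii}=\alpha_{ij}=\alpha_{ji}=\alpha_{jj}$ by symmetry. Hence $\theta_i=\theta_j$. Moreover $Q(\overrightarrow{e_i}-\overrightarrow{e_j})=0$ since $\ker Q=\ker A$, and therefore $\overrightarrow{\alpha_i}=\overrightarrow{\alpha_j}$.

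Conversely, assume $\overrightarrow{\alpha_i}=\overrightarrow{\alpha_j}$ with $i,j\geq1$. Taking squared norms gives $\theta_i^2\alpha_{ii}=\theta_j^2\alpha_{jj}$. Since $s\mapsto s^{2}\,s^{m-2}$ is strictly increasing in $\theta$ (recall $\alpha_{kk}=2\sigma\theta_k^{m-2}$), this forces $\theta_i=\theta_j$. Then $Q(\overrightarrow{e_i}-\overrightarrow{e_j})=0$, and multiplying by $Q^T$ gives $A\overrightarrow{e_i}=A\overrightarrow{e_j}$.

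The index $0$ is treated separately. We have $\overrightarrow{\alpha_i}=0$ iff $Q\overrightarrow{e_i}=0$ (because $\theta_i>0$) iff $A\overrightarrow{e_i}=0=A\overrightarrow{e_0}$.

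Under \ref{assumption:A2}, no column of $A$ vanishes and no two columns coincide. So the equivalence gives that the $\overrightarrow{\alpha_i}$ are pairwise distinct. (As noted before the proposition, \ref{assumption:A1} and \ref{assumption:A2} also give $\alpha_{ii}>0$, so the $\theta_i$ are well defined and positive.)

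\textbf{The zero set of $h$.} Nonnegativity of $h$ follows from Proposition \ref{prop:separation-potential-W}. If $\overrightarrow{\psi}=\overrightarrow{\alpha_i}$, choosing $\overrightarrow{\rho}=\theta_i\overrightarrow{e_i}$ in the infimum makes both terms vanish, so $h(\overrightarrow{\alpha_i})=0$.

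The main point is the converse: showing the infimum is attained, so that $h(\overrightarrow{\psi})=0$ yields an actual minimizer. We argue by coercivity. $W(\overrightarrow{\rho})=+\infty$ off $\R^N_{\geq0}$. On $\R^N_{\geq0}$, $W$ grows like $|\overrightarrow{\rho}|^m/(m-1)\cdot c$ with $m>2$, which dominates the quadratic term $-\frac{1}{2\sigma}\overrightarrow{\rho}^TA\overrightarrow{\rho}$. Using $\sum\rho_k\geq|\overrightarrow{\rho}|$ on the nonnegative cone, the map $\overrightarrow{\rho}\mapsto W(\overrightarrow{\rho})+\frac{1}{2\sigma}|\sigma\overrightarrow{\psi}-Q\overrightarrow{\rho}|^2$ is lower semicontinuous, is continuous on the closed cone, and tends to infinity. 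The infimum is therefore attained at some $\overrightarrow{\rho}^*$.

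If $h(\overrightarrow{\psi})=0$, both nonnegative terms vanish at $\overrightarrow{\rho}^*$. Then $W(\overrightarrow{\rho}^*)=0$, so by \eqref{eq:W-wells} $\overrightarrow{\rho}^*=\theta_i\overrightarrow{e_i}$ for some $i\in\{0,\dots,N\}$. Finally $\sigma\overrightarrow{\psi}=Q\overrightarrow{\rho}^*=\theta_iQ\overrightarrow{e_i}$, that is, $\overrightarrow{\psi}=\overrightarrow{\alpha_i}$, which proves \eqref{eq:h-wells}.
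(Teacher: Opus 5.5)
Your proof is correct and follows essentially the paper's route. Both arguments use $\ker Q=\ker A$ to reduce to $\theta_i^2\alpha_{ii}=\theta_j^2\alpha_{jj}$ and hence $\theta_i=\theta_j$: you get that identity from $|\overrightarrow{\alpha_i}|^2=|\overrightarrow{\alpha_j}|^2$, while the paper reads it off the $i$-th and $j$-th entries of $\theta_iA\overrightarrow{e_i}=\theta_jA\overrightarrow{e_j}$ together with symmetry. For the zero set of $h$, the paper only says it is clear from the definition, so your coercivity argument that the infimum is attained makes explicit a step the paper leaves implicit (it also follows from the Legendre representation in Lemma \ref{lem:legendre}, since $\partial f^*$ is nonempty everywhere).
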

\begin{proof}
Since $Q\overrightarrow{x} = \overrightarrow{0} $ if and only if $A\overrightarrow{x} = \overrightarrow{0} $, we have that
$$\overrightarrow{\alpha_i} =\overrightarrow{\alpha_j} \Longleftrightarrow \theta_iA\overrightarrow{e_i} = \theta_jA\overrightarrow{e_j}.$$
Next, we note that the equality $\theta_iA\overrightarrow{e_i} = \theta_jA\overrightarrow{e_j}$ is equivalent to $\theta_i \alpha_{ki}=\theta_j\alpha_{kj}$ for all $k=1,\dots N$.
Taking $k=i$ and $k=j$, we get $\theta_i \alpha_{ii}=\theta_j\alpha_{ij}$ and $\theta_i \alpha_{ji}=\theta_j\alpha_{jj}$. Recalling that $A$ is symmetric, we deduce
$$ \theta_i^2   \alpha_{ii} = \theta_i \theta_j \alpha_{ij}= \theta_i \theta_j \alpha_{ji}=  \theta_j^2 \alpha_{jj}.$$
Since $\theta_k=\left(\frac{\chi}{2\sigma}\alpha_{kk}\right)^{\frac{1}{m-2}}$,  this implies that $\alpha_{ii}=\alpha_{jj}$ and thus that $\theta_i=\theta_j$ and $\alpha_{ki}=\alpha_{kj}$ for all $k=1,\dots N$. This proves that
$$\overrightarrow{\alpha_i} =\overrightarrow{\alpha_j}\quad \Longrightarrow \quad A\overrightarrow{e_i} = A\overrightarrow{e_j}.$$

To prove the other implication, we note similarly that the equality $A\overrightarrow{e_i} =  A\overrightarrow{e_j}$ is equivalent to $  \alpha_{ki}= \alpha_{kj}$ for all $k=1,\dots N$ and taking $k=i$ and $k=j$, we get $ \alpha_{ii}= \alpha_{ij}= \alpha_{ji}= \alpha_{jj}$ and so $\theta_i=\theta_j$.
it follows that $\theta_iA\overrightarrow{e_i} = \theta_jA\overrightarrow{e_j}$ which complete the proof.
\end{proof}

\medskip

The formula \eqref{eq:energy-psi} and the definition of $h(\overrightarrow{\psi})$ \eqref{def:potential-h}
yield:
\begin{align}\label{eq:AC-lower-bound}
\cE^{\varepsilon}(\overrightarrow{\rho}) \geq \cF^{\varepsilon}(\overrightarrow{\psi}^{\varepsilon}),  \qquad \overrightarrow{\psi}^{\varepsilon}=Q\overrightarrow{\phi}^{\varepsilon},
\end{align}
where 
\begin{align*}
\cF^{\varepsilon}(\overrightarrow{\psi}) := 
\begin{cases} 
\displaystyle \frac{1}{\varepsilon} \int_{\Omega} h(\overrightarrow{\psi}) \, dx + \frac{\varepsilon}{2} \int_{\Omega} |\nabla \overrightarrow{\psi}|^2 \, dx & \text{if } \overrightarrow{\psi} \in H^{1}(\Omega)^N, 
\\ 
+\infty & \text{otherwise.} 
\end{cases}
\end{align*}
This functional is a Allen--Cahn type functional with the multi-well structure defined by
\eqref{eq:h-wells}.
Furthermore, the mass constraint on $\overrightarrow{\rho}$ naturally implies the constraint $\int_{\Omega}\overrightarrow{\psi}\,dx=Q\overrightarrow{m}$.
The asymptotic behavior of $\cF^{\varepsilon}$ under such a mass constraint was established in \cite{B90}, thus generalizing to this multiphase framework the classical Modica--Mortola result \cite{MM77}:
\begin{theorem}[\!\!\!\protect{\cite{B90}}]\label{thm:Baldo}
The energy $\cF^{\varepsilon}$ $\Gamma$-converges as $\eps\to0$ (with respect to the $L^1(\Omega)^N$-topology) to  the functional
\begin{equation*}
\cF^{0}(\overrightarrow{\psi}):= 
\begin{cases} 
\displaystyle \sum_{i,j=0}^N
\sigma_{ij}
\cH^{d-1}(\partial^*E_i \cap \partial^*E_j \cap \Omega) & \displaystyle\text{if } \overrightarrow{\psi}=\sum_{i=1}^N\overrightarrow{\alpha_i}\theta_i\chi_{E_i}\text{ with }\chi_{E_i}\in BV(\Omega,\{0,1\}), \\ 
+\infty & \text{otherwise}.
\end{cases}
\end{equation*}
The surface tension coefficients are given by
\begin{equation}\label{eq:metric-coefficient-before-change-of-variables}
    \sigma_{ij}:=d(\overrightarrow{\alpha_i},\overrightarrow{\alpha_j}), \qquad i,j = 0,\dots N
\end{equation}
and the function $d:\R^N\to [0,\infty]$ is the distance defined by:
\begin{align}\label{eq:distance-definition-before-change-of-variables}
d(\overrightarrow{\zeta_0},\overrightarrow{\zeta_1}):=\inf\left\{\frac{1}{\sqrt{2}}\int_0^1\sqrt{h(\gamma(t))}|\gamma'(t)|\,dt\,|\,\text{$\gamma:[0,1]\to\R^N$ is piecewise $C^1$} ,\,\gamma(0)=\overrightarrow{\zeta_0},\,\gamma(1)=\overrightarrow{\zeta_1}\right\}.
\end{align}
\end{theorem}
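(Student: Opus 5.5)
Since this is the result of \cite{B90}, the plan is to check that the potential $h$ defined in \eqref{def:potential-h} satisfies Baldo's structural hypotheses, and then follow his Modica--Mortola type argument in vector form. The hypotheses are:
\begin{enumerate}[label=(\alph*)]
\item $h$ is continuous and nonnegative;
\item $h$ vanishes exactly on the finite set $\{\overrightarrow{\alpha_0},\dots,\overrightarrow{\alpha_N}\}$;
\item $h$ has at least linear (here in fact quadratic) growth at infinity.
\end{enumerate}
Property (b) is Proposition \ref{prop:mixing}. Continuity follows because $h$ is an inf-convolution of the lower semicontinuous, superlinearly growing $W$ with a quadratic. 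For (c), the term $f(\overrightarrow{\rho})\sim|\overrightarrow{\rho}|^m$ with $m>2$ dominates $\frac{1}{2\sigma}\overrightarrow{\rho}^TA\overrightarrow{\rho}$. Consequently a minimizing $\overrightarrow{\rho}$ stays in a bounded set when $\overrightarrow{\psi}$ is bounded, and grows sublinearly in $|\overrightarrow{\psi}|$ otherwise. This gives $h(\overrightarrow{\psi})\geq c|\overrightarrow{\psi}|^2-C$. This growth also guarantees that $d$ in \eqref{distance-definition-before-change-of-variables} is a genuine distance, comparable to the Euclidean one on bounded sets.

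\emph{Compactness and liminf.} For each $i$ set $\varphi_i(\overrightarrow{\psi}):=d(\overrightarrow{\alpha_i},\overrightarrow{\psi})$. This function is locally Lipschitz and satisfies $|\nabla\varphi_i|\leq\sqrt{h/2}$. By Young's inequality,
\begin{align*}
\frac{1}{\varepsilon}h(\overrightarrow{\psi})+\frac{\varepsilon}{2}|\nabla\overrightarrow{\psi}|^2\geq\sqrt{2h(\overrightarrow{\psi})}\,|\nabla\overrightarrow{\psi}|\geq |\nabla(\varphi_i\circ\overrightarrow{\psi})|.
\end{align*}
Hence $\varphi_i\circ\overrightarrow{\psi}^\varepsilon$ is bounded in $BV$. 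Together with the growth of $h$, this yields $L^1$ compactness. It also forces the limit to take values in the wells, i.e.\ $\overrightarrow{\psi}=\sum_i\overrightarrow{\alpha_i}\chi_{E_i}$ with each $\chi_{E_i}\in BV$.

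For the sharp lower bound, let $\mu$ be a weak limit of the energy densities. Then $\mu\geq|D(\varphi_k\circ\overrightarrow{\psi})|$ for every $k$, and therefore $\mu\geq\bigvee_k|D(\varphi_k\circ\overrightarrow{\psi})|$, using the supremum of measures introduced in Section \ref{subsec:notations}. The required estimate is that, on $\Sigma_{ij}$, this supremum has density at least $d(\overrightarrow{\alpha_i},\overrightarrow{\alpha_j})$. Taking $k=i$ already gives the jump $|\varphi_i(\overrightarrow{\alpha_j})-\varphi_i(\overrightarrow{\alpha_i})|=\sigma_{ij}$. Summing over the disjoint interfaces, and using that $\mathcal{H}^{d-1}$-a.e.\ point of the jump set lies on exactly one $\Sigma_{ij}$, gives $\liminf\cF^\varepsilon\geq\cF^0$.

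\emph{Limsup.} I would first reduce, by a density argument for partitions, to polyhedral partitions. This uses the triangle inequality for the $\sigma_{ij}$ and the continuity of $\cF^0$ along such approximations. Near each $\Sigma_{ij}$, I would use a one-dimensional profile obtained by reparametrizing a near-geodesic $\gamma_{ij}$ from $\overrightarrow{\alpha_i}$ to $\overrightarrow{\alpha_j}$ so that $\varepsilon|\gamma'|/\sqrt{2}=\sqrt{h(\gamma)}/\sqrt{\varepsilon}$ (equipartition). The profile is truncated to a layer of width $O(\varepsilon|\log\varepsilon|)$, so its cost per unit area is $\sigma_{ij}+o(1)$. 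Near the codimension-two junctions, a Lipschitz interpolation in an $O(\varepsilon)$-neighbourhood costs $O(\varepsilon^{-1}\cdot\varepsilon^2)$ per unit $(d-2)$-measure, which is negligible. The mass constraint $\int\overrightarrow{\psi}=Q\overrightarrow{m}$ is violated only by $o(1)$; I would restore it by small bulk dilations of the sets $E_i$, which change the energy by $o(1)$.

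The main obstacle is the liminf at interfaces. The remaining task there is to make rigorous that $\bigvee_k|D(\varphi_k\circ\overrightarrow{\psi})|$ recovers exactly $\sigma_{ij}$ on each $\Sigma_{ij}$, rather than only a cruder bound obtained from a single test function $\varphi_k$. I would handle this by localizing on disjoint open sets, each adapted to a single interface, and applying the sup-of-measures lemma as in \cite{B90}.
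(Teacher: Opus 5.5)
Your plan is Baldo's argument, which is also what the paper recalls: distance-to-well functions, the Modica--Mortola inequality and a supremum of measures for the liminf, and polyhedral approximation with one-dimensional optimal profiles for the limsup. Your weak-$*$ argument already closes the lower bound. It takes $\mu\geq|D(\varphi_k\circ\overrightarrow{\psi})|$ for every $k$ and then evaluates $\mu$ on the ($\cH^{d-1}$-essentially) disjoint sets $\Sigma_{ij}$, $i<j$, using $k=i$. So the ``main obstacle'' in your last paragraph is not one. A single $\varphi_i$ carries the full jump across $\Sigma_{ij}$. The reverse inequality in Baldo's identification of $\bigvee_k|D(\varphi_k\circ\overrightarrow{\psi})|$, which is where the triangle inequality for $d$ enters, is not needed for the liminf.

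There is, however, a concrete error in the constants, and as written neither half matches $\cF^0$.

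\textbf{The lower bound loses a factor $2$.} Since $d$ carries the factor $1/\sqrt2$, you have $|\nabla d(\overrightarrow{\alpha_i},\cdot)|\leq\sqrt{h/2}$, so $\sqrt{2h(\overrightarrow{\psi})}\,|\nabla\overrightarrow{\psi}|\geq 2\,|\nabla(\varphi_i\circ\overrightarrow{\psi})|$. Your chain discards this factor $2$ and therefore only gives density $\sigma_{ij}$ on $\Sigma_{ij}$. That yields $\liminf\cF^\eps\geq\sum_{i<j}\sigma_{ij}\cH^{d-1}(\Sigma_{ij})=\frac12\cF^0$, because the sum defining $\cF^0$ runs over ordered pairs and counts each interface twice.

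\textbf{The upper bound miscomputes the profile cost.} The equipartition profile $\overrightarrow{\psi}^\eps(x)=\gamma(x\cdot\nu/\eps)$ with $|\gamma'|=\sqrt{2h(\gamma)}$ costs $\int\bigl(h(\gamma)+\frac12|\gamma'|^2\bigr)ds=\int\sqrt{2h(\gamma)}\,|\gamma'|\,ds\approx 2\sigma_{ij}$ per unit area. It does not cost $\sigma_{ij}+o(1)$; that value would even contradict the correct lower bound.

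\textbf{The fix.} Use, as the paper does, $\varphi_i=2d(\cdot,\overrightarrow{\alpha_i})$ truncated at a level $M>\max_{k,l}2\sigma_{kl}$. The truncation makes $\varphi_i$ bounded and globally Lipschitz, which simplifies the $BV$ bound and the $L^1$ convergence of $\varphi_i\circ\overrightarrow{\psi}^\eps$. Then both bounds give $2\sigma_{ij}$ per interface, which is exactly $\cF^0$.

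Two smaller inaccuracies:
\begin{itemize}
\item $d$ is not comparable to the Euclidean distance near the wells. Near $\overrightarrow{\alpha_0}$ one has $h(\overrightarrow{\psi})=\frac12|\overrightarrow{\psi}|^2$, hence $d(\overrightarrow{0},\overrightarrow{\psi})=|\overrightarrow{\psi}|^2/4$ for small $|\overrightarrow{\psi}|$. Only the one-sided bound $d(\overrightarrow{\zeta_0},\overrightarrow{\zeta_1})\leq C_R|\overrightarrow{\zeta_0}-\overrightarrow{\zeta_1}|$ on bounded sets holds, and that is all the argument needs.
\item A literal dilation of one cell of a partition overlaps its neighbours. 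The $O(\eps)$ mass correction should instead trade volume between each $E_k$ and $E_0$, for instance with small balls placed in the bulk of $E_0$ as in Step 3 of the paper's limsup construction.
\end{itemize}
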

The limiting energy \eqref{eq:limit-energy-perimeter} of our Theorem \ref{thm:gamma-convergence} is then $\cE^0(\overrightarrow{\rho})=\cF^0(Q\overrightarrow{\rho})$, where we note that the condition on $Q\overrightarrow{\rho}$ are equivalent to 
$\overrightarrow{\rho}\in\cF(A,\overrightarrow{m})$. 
Using the alternate formula \eqref{eq:hdefl} for $h$ and a simple change of variable, one can show that 
\eqref{eq:metric-coefficient-before-change-of-variables} is equivalent to
\eqref{eq:metric-coefficient}, that is
$$
d(\overrightarrow{\alpha_i},\overrightarrow{\alpha_j})=
d_A(\theta_i\overrightarrow{e_i},\theta_j\overrightarrow{e_j})
$$ 
(with $d_A$ defined by \eqref{eq:distance-definition}).

\medskip

In view of \eqref{eq:AC-lower-bound}, the liminf property for $\cE^{\varepsilon}$ follows immediately from the corresponding property for $\cF^\eps$.
The challenge is thus to construct a recovery sequence for the limsup property. 
It was already observed in \cite{M24} that we cannot simply take the constant sequence given by the limiting characteristic function.
This simpler construction works in the limiting case $m=+\infty$ \cite{MW23,KMW24}, but fails when $m<\infty$ (see \cite[Remark 2.2]{M24}).
The recovery sequence will thus be constructed from the recovery sequence for $\cF^\eps$ whose existence is guaranteed by Theorem \ref{thm:Baldo}.
The main difficulty is to ensure that the constructed sequence satisfies the mass constraint $\int_{\Omega}\overrightarrow{\rho}^{\varepsilon}\,dx=\overrightarrow{m}$.

\section{The $\Gamma$-convergence of the energy}\label{sec:gamma-convergence}

We take $\sigma=1$ for simplicity henceforth in this paper (the general case follows similarly).

\subsection{Auxiliary results}\label{subsec:more-functionals}

Following \cite{M24}, we derive several formula that will be used in the proof of the $\Gamma$-convergence result.
First,   
we denote by  
$
f^*(\overrightarrow{v}):=\sup_{\overrightarrow{\rho}\in\R^N}\left\{\overrightarrow{\rho}\cdot\overrightarrow{v}-f(\overrightarrow{\rho})\right\}
$ the Legendre transform of $f$.
We note that $f$ can be written as a function of one variable as follows:
$f(\overrightarrow{\rho}) = g(\rho_1+\cdots \rho_N)$ with $g(s) = \frac{s^m}{m-1}$.  The definition of the Legendre transform then yields $f^*(\overrightarrow{v}) = g^*(\max \{ v_1, \dots, v_N\})$ and so (since $g^*(p) = \left( \frac{m-1}{m} \max\{p,0\}\right)^{\frac{m}{m-1}}$):
\begin{equation*}
f^*(v_1, \dots, v_N) = \left( \frac{m-1}{m} \max \{ v_1, \dots, v_N, 0 \} \right)^{\frac{m}{m-1}}.
\end{equation*}
At a point $\overrightarrow{v}$ where there is a unique index $i$ such that $v_i = \max \{ v_1, \dots, v_N, 0 \}$, we have ${ f^*}'(v_1, \dots, v_N) = \left( \frac{m-1}{m} \max \{ v_1, \dots, v_N, 0 \} \right)^{\frac{1}{m-1}} \overrightarrow{e_i}$. Otherwise, with 
$I(\overrightarrow{v}):=\{i=1,\cdots,N\,|\,v_i=\max\{v_1,\cdots,v_N,0\}\}$, we find
\begin{equation}\label{eq:subf}
\partial f^*(v_1, \dots, v_N) = \left\{\left( \frac{m-1}{m} \max \{ v_1, \dots, v_N, 0 \} \right)^{\frac{1}{m-1}} \sum_{i \in I(\overrightarrow{v})} \nu_i \overrightarrow{e_i}\,\Big|\,\nu_i\in [0,1]\text{ and } \sum_{i\in I(\overrightarrow{v})}\nu_i=1\right\}.
\end{equation}

\medskip

With these notations, we can prove the following result:

\begin{lemma}\label{lem:legendre}
The following statements hold:
\begin{enumerate}[label=(\roman*)]
\item The function $h$ defined by \eqref{def:potential-h} can also be written as:
\begin{align}\label{eq:hdefl}
h(\overrightarrow{\psi})=\frac12|\overrightarrow{\psi}|^2-f^*(Q^T\overrightarrow{\psi}-\overrightarrow{a}).
\end{align}

\item For a given $\overrightarrow{\psi}\in\R^N$, we have 
$$
h(\overrightarrow{\psi})=W(\overrightarrow{\rho})+\frac{1}{2}|\overrightarrow{\psi}-Q\overrightarrow{\rho}|^2 \iff Q^T\overrightarrow{\psi}-\overrightarrow{a}\in\partial f(\overrightarrow{\rho})  \iff 
\overrightarrow{\rho}\in \partial f^*(Q^T\overrightarrow{\psi}-\overrightarrow{a}).
$$
When $\overrightarrow{\psi}=Q\overrightarrow{\phi}\in\R^N$, these are also equivalent to
$A\overrightarrow{\phi}-\overrightarrow{a}\in\partial f(\overrightarrow{\rho})$
and 
$\overrightarrow{\rho}\in\partial f^*(A\overrightarrow{\phi}-\overrightarrow{a})$. 
Furthermore, we have
\begin{align*}
 \partial f^*(\theta_iA\overrightarrow{e_i}-\overrightarrow{a}) = \{\theta_i{\overrightarrow{e_i}}\}
\end{align*}
for all $i=0,\cdots,N$ (with $\theta_0\overrightarrow{e_0} := \overrightarrow{0}$).

\item After re-numbering the species so that $\alpha_{11}\geq\cdots\geq\alpha_{NN}$, we have: Given $i,j=1,\cdots,N$ with $j\geq i$, there holds $\alpha_{jj}\phi_j-a_j \geq \alpha_{ji}\phi_j-a_i\text{ for any }\phi_j\in[0,\theta_j].$ As a consequence, for any $j=1,\cdots,N$ and $\phi_j\in[0,\theta_j]$, there exists $\overrightarrow{\rho}\in\partial f^*(\phi_jA\overrightarrow{e_j}-\overrightarrow{a})$ such that 
$$
\overrightarrow{\rho}\cdot\overrightarrow{e_i}=0 \quad \mbox{ for all } i=1,\cdots,j-1
$$
(in other words, there is a vector in the subdifferential $\partial f^*(\phi_jA\overrightarrow{e_j}-\overrightarrow{a})$ whose
first $j-1$ components  are zero).
\end{enumerate}
\end{lemma}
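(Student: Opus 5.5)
The plan is to set $\sigma=1$ throughout and to treat the three items in order. Each is essentially a convex-duality computation combined with the well structure from Propositions \ref{prop:separation-potential-W} and \ref{prop:mixing}.

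\textbf{Step (i).} I would expand the square in \eqref{def:potential-h}. Since $A=Q^TQ$, we have $|Q\overrightarrow{\rho}|^2=\overrightarrow{\rho}^TA\overrightarrow{\rho}$, and this term cancels exactly against the quadratic term $-\frac12\overrightarrow{\rho}^TA\overrightarrow{\rho}$ in $W$. What remains is
$$W(\overrightarrow{\rho})+\tfrac12|\overrightarrow{\psi}-Q\overrightarrow{\rho}|^2=\tfrac12|\overrightarrow{\psi}|^2-\Big[(Q^T\overrightarrow{\psi}-\overrightarrow{a})\cdot\overrightarrow{\rho}-f(\overrightarrow{\rho})\Big].$$
Taking the infimum over $\overrightarrow{\rho}$ turns the bracket into $f^*(Q^T\overrightarrow{\psi}-\overrightarrow{a})$, which is \eqref{eq:hdefl}.

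\textbf{Step (ii).} By the identity in Step (i), $\overrightarrow{\rho}$ attains the infimum in $h(\overrightarrow{\psi})$ if and only if it attains the supremum defining $f^*(Q^T\overrightarrow{\psi}-\overrightarrow{a})$. The function $f$ is convex, proper and lower semicontinuous, so the Fenchel--Young equality case gives the equivalences with $Q^T\overrightarrow{\psi}-\overrightarrow{a}\in\partial f(\overrightarrow{\rho})$ and with $\overrightarrow{\rho}\in\partial f^*(Q^T\overrightarrow{\psi}-\overrightarrow{a})$. When $\overrightarrow{\psi}=Q\overrightarrow{\phi}$ we have $Q^TQ\overrightarrow{\phi}=A\overrightarrow{\phi}$, which gives the second pair of equivalences.

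For the identity $\partial f^*(\theta_iA\overrightarrow{e_i}-\overrightarrow{a})=\{\theta_i\overrightarrow{e_i}\}$, I would avoid computing the maximum directly and argue through minimizers instead. We have $Q^T\overrightarrow{\alpha_i}=\theta_iA\overrightarrow{e_i}$. Moreover $W(\theta_i\overrightarrow{e_i})=0$ and $Q\theta_i\overrightarrow{e_i}=\overrightarrow{\alpha_i}$, so $h(\overrightarrow{\alpha_i})=0$ is attained at $\theta_i\overrightarrow{e_i}$, and this point therefore lies in the subdifferential.

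Conversely, let $\overrightarrow{\rho}$ be any element of the subdifferential. Then $0=W(\overrightarrow{\rho})+\frac12|\overrightarrow{\alpha_i}-Q\overrightarrow{\rho}|^2$, and both terms are nonnegative, so both vanish. Proposition \ref{prop:separation-potential-W} then forces $\overrightarrow{\rho}=\theta_k\overrightarrow{e_k}$ for some $k$, and $Q\overrightarrow{\rho}=\overrightarrow{\alpha_i}$ means $\overrightarrow{\alpha_k}=\overrightarrow{\alpha_i}$. Proposition \ref{prop:mixing} together with (A2) gives $k=i$. This argument covers $i=0$ as well.

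\textbf{Step (iii).} This is the only genuinely computational step. The inequality $\alpha_{jj}\phi_j-a_j\geq\alpha_{ji}\phi_j-a_i$ is affine in $\phi_j$, so it suffices to check it at the endpoints $\phi_j=0$ and $\phi_j=\theta_j$.

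At $\phi_j=0$ the inequality reads $a_i\geq a_j$. This holds because $a_k$ is increasing in $\alpha_{kk}$ and $\alpha_{ii}\geq\alpha_{jj}$.

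At $\phi_j=\theta_j$, which I expect to be the main obstacle, I would use three facts. First, $\alpha_{jj}\theta_j=2\theta_j^{m-1}$. Second, by \eqref{eq:detpositive}, $\alpha_{ij}\leq\sqrt{\alpha_{ii}\alpha_{jj}}=2\theta_i^{(m-2)/2}\theta_j^{(m-2)/2}$. Third, $a_k=c\,\theta_k^{m-1}$ with $c=\frac{m-2}{m-1}$. With these, the right-hand side is bounded by $F(\theta_i)$, where
$$F(t):=2t^{(m-2)/2}\theta_j^{m/2}-c\,t^{m-1},$$
and the left-hand side equals $F(\theta_j)$. A direct computation gives
$$F'(t)=(m-2)\,t^{(m-4)/2}\big(\theta_j^{m/2}-t^{m/2}\big)\leq0\quad\text{for }t\geq\theta_j.$$
Since $\theta_i\geq\theta_j$, it follows that $F(\theta_i)\leq F(\theta_j)$, which is the claim.

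For the consequence, set $\overrightarrow{v}=\phi_jA\overrightarrow{e_j}-\overrightarrow{a}$. By what was just shown, $v_j\geq v_k$ for every $k<j$. Hence whenever the maximum in $I(\overrightarrow{v})$ is attained at some index $k<j$, it is also attained at $j$. In the description \eqref{eq:subf} I would therefore choose weights $\nu$ concentrated on an index $\ell\geq j$ in $I(\overrightarrow{v})$. If the maximum equals $0$, the prefactor vanishes and $\overrightarrow{\rho}=0$ works. In either case the resulting $\overrightarrow{\rho}$ has vanishing first $j-1$ components.
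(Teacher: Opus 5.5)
Your proof is correct, and for (i), (ii) and the reduction of (iii) to the two endpoints $\phi_j\in\{0,\theta_j\}$ it is the paper's argument. The one genuine difference is at the endpoint $\phi_j=\theta_j$.

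\textbf{The endpoint $\phi_j=\theta_j$.} The paper computes nothing here. It reads the inequality off from (ii): since $\theta_j\overrightarrow{e_j}\in\partial f^*(\theta_jA\overrightarrow{e_j}-\overrightarrow{a})$, the description \eqref{eq:subf} forces $j\in I(\overrightarrow{v})$, i.e.\ $v_j=\max\{v_1,\dots,v_N,0\}$. This is exactly $\alpha_{jj}\theta_j-a_j\geq\alpha_{ji}\theta_j-a_i$ for every $i$.

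In effect this is the first-order optimality condition at the well $\theta_j\overrightarrow{e_j}$. So the paper's route is shorter, but it rests on the global nonnegativity of $W$ (Proposition \ref{prop:separation-potential-W}). Your computation with $F$ uses only the determinant inequality \eqref{eq:detpositive} and is self-contained.

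Moreover, $F'(t)=(m-2)t^{(m-4)/2}(\theta_j^{m/2}-t^{m/2})$ shows that $F$ is maximized on $(0,\infty)$ at $t=\theta_j$. So, like the paper's argument, yours does not need $\theta_i\geq\theta_j$ at this endpoint; the ordering is only used at $\phi_j=0$.

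\textbf{The case where the maximum is $0$.} Your separate treatment of $\max\{v_1,\dots,v_N,0\}=0$ is a small improvement on the paper. The paper asserts there is $k\geq j$ with $v_k=\max\{v_1,\dots,v_N,0\}$. This fails, for example, at $\phi_j=0$, where $\overrightarrow{v}=-\overrightarrow{a}$ has all components negative. In that case $\partial f^*(\overrightarrow{v})=\{0\}$, and one must take $\overrightarrow{\rho}=0$, as you do.
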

\begin{proof} $\,$
\begin{enumerate}[label=(\roman*)]
\item We compute
\begin{align}
h(\overrightarrow{\psi}) & = \inf_{\rho\in\R^N}\left\{W(\overrightarrow{\rho})+\frac12|\overrightarrow{\psi}-Q\overrightarrow{\rho}|^2\right\} \notag\\
&= \inf_{\rho\in\R^N}\left\{f(\overrightarrow{\rho})+\overrightarrow{a}\cdot\overrightarrow{\rho}-\frac12|Q\overrightarrow{\rho}|^2+\frac12|\overrightarrow{\psi}|^2-\overrightarrow{\psi}^TQ\overrightarrow{\rho}+\frac12|Q\overrightarrow{\rho}|^2\right\}\notag\\
& = \inf_{\rho\in\R^N}\left\{\frac12|\overrightarrow{\psi}|^2 - \left((Q^T\overrightarrow{\psi}-\overrightarrow{a})\cdot\overrightarrow{\rho} - f(\overrightarrow{\rho}) \right)\right\} \notag\\
& = \frac12|\overrightarrow{\psi}|^2 - \sup_{\overrightarrow{\rho}\in\R^N}\left\{(Q^T\overrightarrow{\psi}-\overrightarrow{a})\cdot\overrightarrow{\rho} - f(\overrightarrow{\rho}) \right\} = \frac12|\overrightarrow{\psi}|^2 - f^*(Q^T\overrightarrow{\psi}-\overrightarrow{a}),\label{eq:h-f*}
\end{align}
which yields the first result.

\item 
Next, the computation above shows that the infimum in the definition of $h$ is attained at $\overrightarrow{\rho}\in\R^N$ if and only if the supremum in \eqref{eq:h-f*} is attained at the same $\overrightarrow{\rho}\in\R^N$.
Classically, this is equivalent to $Q^T\overrightarrow{\psi}-\overrightarrow{a}\in\partial f(\overrightarrow{\rho})$. The last equivalence is a standard property of the Legendre transform.

Finally, since we have $h(\overrightarrow{\alpha_i}) = 0 =  W(\theta_i \overrightarrow{e_i})+\frac{1}{2}|\overrightarrow{\alpha_i}-\theta_iQ\overrightarrow{e_i}|^2 $, the argument above implies that
$\theta_i{\overrightarrow{e_i}}\in\partial f^*(\theta_i A\overrightarrow{e_i}-\overrightarrow{a})$. 

Conversely, let $\overrightarrow{\rho} \in \partial f^*(\theta_i A\overrightarrow{e_i}-\overrightarrow{a})$. Then we must have $h(\overrightarrow{\alpha_i}) = 0= W(\overrightarrow{\rho})+\frac{1}{2}|\overrightarrow{\alpha_i}-Q\overrightarrow{\rho}|^2$.
But this means that $W(\overrightarrow{\rho}) = 0$ and $Q\overrightarrow{\rho} = \overrightarrow{\alpha_i}$. The first condition implies that 
$\overrightarrow{\rho}=\theta_i\overrightarrow{e_j}$ for some $j=0,\cdots,N$, and the second one then gives $ Q \theta_i\overrightarrow{e_j} = \overrightarrow{\alpha_i}$ which in view of Proposition \ref{prop:mixing} 
implies $\overrightarrow{\rho}=\theta_i\overrightarrow{e_i}$ (this is an important  consequence of our Assumption \ref{assumption:A2}).

\item Given two indices $j>i$, we first prove that
\begin{equation}\label{eq:alphaiji}
\alpha_{jj}\phi_j-a_j \geq \alpha_{ji}\phi_j-a_i\qquad\text{ for all }\phi_j\in[0,\theta_j].
\end{equation}
By interpolation, it is enough to show that \eqref{eq:alphaiji} holds when $\phi_j=0$ and $\phi_j=\theta_j$: when $\phi_j = 0$ the inequality follows from the ordering $\alpha_{jj}\leq\alpha_{ii}$ and the definition of $a_i$. When $\phi_j = \theta_j$ is it a consequence of (ii) (that is, $\theta_j{\overrightarrow{e_j}}\in\partial f^*(\theta_jA\overrightarrow{e_j}-\overrightarrow{a})$) and the form of the subgradient $\partial f^*$ \eqref{eq:subf}. 

Next, denoting $\overrightarrow{v}=\phi_jA\overrightarrow{e_j}-\overrightarrow{a}$, we have $v_i = \phi_j \alpha_{ij}-a_i$, so \eqref{eq:alphaiji} implies that 
$v_i\leq v_j$ for all $i<j$. We deduce that there exists $k\geq j$ such that $v_k=\max\{v_1,\dots, v_N,0\}$. The definition \eqref{eq:subf} implies that 
$$
\left( \frac{m-1}{m} v_k \right)^{\frac{1}{m-1}} \overrightarrow{e_k} \in \partial f^*(\phi_jA\overrightarrow{e_j}-\overrightarrow{a}).
$$
We can thus choose $\overrightarrow{\rho} =\left( \frac{m-1}{m} v_k \right)^{\frac{1}{m-1}} \overrightarrow{e_k}  $, which has the desired property.

\end{enumerate}
\end{proof}

We now introduce  the following relaxations of the energy functional:
\begin{align}\label{eq:cE-tilde}
\widetilde{\cE}^{\varepsilon}(\overrightarrow{\rho}) := 
\begin{cases} 
\displaystyle \frac{1}{\varepsilon} \int_{\Omega} W(\overrightarrow{\rho}) + \frac{1}{2}|Q\overrightarrow{\phi}^{\varepsilon} - Q\overrightarrow{\rho}|^2\,dx + \varepsilon \int_{\Omega} \frac{1}{2} |\nabla Q\overrightarrow{\phi}^{\varepsilon}|^2 \, dx & \text{if } \overrightarrow{\rho} \in L^{2}(\Omega)^N, 
\\ 
\infty & \text{otherwise,} 
\end{cases}
\end{align}
with $\overrightarrow{\phi}^\eps$ solution of \eqref{eq:phi} (
so $\widetilde{\cE}^{\varepsilon}(\overrightarrow{\rho})=\cE^{\varepsilon}(\overrightarrow{\rho})$ whenever  $\int_{\Omega}\overrightarrow{\rho}\,dx=\overrightarrow{m}$)
and
\begin{align*}
\mathcal{G}^{\varepsilon}(\overrightarrow{\rho}, \overrightarrow{\psi}) := \frac{1}{\varepsilon} \int_{\Omega} W(\overrightarrow{\rho}) + \frac{1}{2}|\overrightarrow{\psi} - Q\overrightarrow{\rho}|^2\,dx + \varepsilon \int_{\Omega} \frac{1}{2} |\nabla \overrightarrow{\psi}|^2 \, dx\quad\text{for }(\overrightarrow{\rho}, \overrightarrow{\psi})\in L^2(\Omega)^N\times H^1(\Omega)^N
\end{align*}
(so $\mathcal{G}^{\varepsilon}(\overrightarrow{\rho},Q \overrightarrow{\phi}^\eps ) =  \widetilde{\cE}^{\varepsilon}(\overrightarrow{\rho}) $ if $\overrightarrow{\phi}^\eps$ solution of \eqref{eq:phi}).

We also recall the definition of the multiphase Allen--Cahn functional introduced earlier:
\begin{align*}
\cF^{\varepsilon}(\overrightarrow{\psi}) := 
\begin{cases} 
\displaystyle \frac{1}{\varepsilon} \int_{\Omega} h(\overrightarrow{\psi}) \, dx + \varepsilon \int_{\Omega} \frac{1}{2} |\nabla \overrightarrow{\psi}|^2 \, dx & \text{if } \overrightarrow{\psi} \in H^{1}(\Omega)^N, 
\\ 
+ \infty & \text{otherwise.} 
\end{cases}
\end{align*}

We then have the following result:
\begin{proposition}\label{prop:minimization}
The following statements hold:
\begin{enumerate}[label=(\roman*)]
\item For all $\overrightarrow{\rho}\in L^{2}(\Omega)^N$, it holds
\begin{align*}
\min_{\overrightarrow{\psi} \in H^{1}(\Omega)^N} \mathcal{G}^{\varepsilon}(\overrightarrow{\rho}, \overrightarrow{\psi}) = \widetilde{\cE}^{\varepsilon}(\overrightarrow{\rho}).
\end{align*}
The minimum is attained when $\overrightarrow{\psi}^{\varepsilon}=Q\phi^\eps$ with $\phi^\eps$ solution of \eqref{eq:phi}.

\item For all $\overrightarrow{\psi} \in H^{1}(\Omega)^N$, it holds 
\begin{align*}
\min_{\overrightarrow{\rho} \in L^{2}(\Omega)^N} \mathcal{G}^{\varepsilon}(\overrightarrow{\rho}, \overrightarrow{\psi}) = \cF^{\varepsilon}(\overrightarrow{\psi}).
\end{align*}
The minimum is attained at $\overrightarrow{\rho}\in\partial f^*(Q^T\overrightarrow{\psi}-\overrightarrow{a})$.

\item There exists a constant $C$ such that: If $\overrightarrow{\rho}^1,\overrightarrow{\rho}^2\in L^{\infty}(\Omega)^N$ have disjoint supports with $$\mathrm{dist}(\mathrm{supp}(\overrightarrow{\rho}^1),\mathrm{supp}(\overrightarrow{\rho}^2))>\delta$$ for some $\delta>0$, then
\begin{align*}
\widetilde{\cE}^{\varepsilon}(\overrightarrow{\rho}^1+\overrightarrow{\rho}^2) \leq \widetilde{\cE}^{\varepsilon}(\overrightarrow{\rho}^1) + \widetilde{\cE}^{\varepsilon}(\overrightarrow{\rho}^2) + C\|\overrightarrow{\rho}^1\|_{L^{\infty}(\Omega)^N}\|\overrightarrow{\rho}^2\|_{L^{\infty}(\Omega)^N}e^{-\frac{\delta}{C\varepsilon}}
\end{align*}
for $\varepsilon\in(0,1)$ small enough.
\end{enumerate}
\end{proposition}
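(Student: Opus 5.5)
For (i), I fix $\overrightarrow{\rho}\in L^2(\Omega)^N$ and view $\overrightarrow{\psi}\mapsto\mathcal G^\varepsilon(\overrightarrow{\rho},\overrightarrow{\psi})$ as a strictly convex, coercive quadratic functional on $H^1(\Omega)^N$. Its unique minimizer solves the Euler--Lagrange equation $\overrightarrow{\psi}-\varepsilon^2\Delta\overrightarrow{\psi}=Q\overrightarrow{\rho}$ with Neumann data. Since $Q$ is a constant matrix, $Q\overrightarrow{\phi}^\varepsilon$ solves this equation (apply $Q$ to \eqref{eq:phi} with $\sigma=1$), so by uniqueness the minimizer is $Q\overrightarrow{\phi}^\varepsilon$. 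Substituting gives $\widetilde{\cE}^\varepsilon(\overrightarrow{\rho})$ by definition. If $W(\overrightarrow{\rho})\notin L^1$, both sides are $+\infty$ and there is nothing to prove.

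For (ii), I fix $\overrightarrow{\psi}$ and observe that the gradient term does not depend on $\overrightarrow{\rho}$. The integrand $W(\overrightarrow{\rho})+\frac12|\overrightarrow{\psi}-Q\overrightarrow{\rho}|^2$ is therefore minimized pointwise, and its infimum is $h(\overrightarrow{\psi}(x))$ by \eqref{def:potential-h}. This gives $\mathcal G^\varepsilon\ge\cF^\varepsilon$. For equality I need a measurable minimizer in $L^2$. Lemma~\ref{lem:legendre}(ii) says the pointwise infimum is attained exactly at $\overrightarrow{\rho}\in\partial f^*(Q^T\overrightarrow{\psi}-\overrightarrow{a})$. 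Using \eqref{eq:subf}, I make an explicit measurable selection: put all the mass on the smallest index attaining the maximum, which gives the vector $\left(\frac{m-1}{m}\max\{\cdot,0\}\right)^{1/(m-1)}\overrightarrow{e_k}$. This selection grows like $|\overrightarrow{\psi}|^{1/(m-1)}$, so it lies in $L^2$ because $\overrightarrow{\psi}\in H^1\subset L^2$ and $m>2$. Integrating then gives equality.

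For (iii), the main obstacle, I write $\overrightarrow{\rho}=\overrightarrow{\rho}^1+\overrightarrow{\rho}^2$ and let $\overrightarrow{\phi}^k$ be the solutions of \eqref{eq:phi} associated with $\overrightarrow{\rho}^k$. The potential is additive: $W(\overrightarrow{\rho})=W(\overrightarrow{\rho}^1)+W(\overrightarrow{\rho}^2)$ pointwise, because the supports are disjoint and $W(0)=0$. By linearity $\overrightarrow{\phi}=\overrightarrow{\phi}^1+\overrightarrow{\phi}^2$. Going back to the original form \eqref{eq:definition-cE}, the quadratic part is $-\frac12\int\overrightarrow{\rho}^TA\overrightarrow{\phi}$. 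This expands into the two self terms plus the cross terms $-\frac12\int(\overrightarrow{\rho}^1)^TA\overrightarrow{\phi}^2+(\overrightarrow{\rho}^2)^TA\overrightarrow{\phi}^1$. Lemma~\ref{lem:cE-alternative} shows that $\widetilde{\cE}^\varepsilon$ equals this original expression, divided by $\varepsilon$. Hence
$$
\widetilde{\cE}^\varepsilon(\overrightarrow{\rho})-\widetilde{\cE}^\varepsilon(\overrightarrow{\rho}^1)-\widetilde{\cE}^\varepsilon(\overrightarrow{\rho}^2)=-\frac1\varepsilon\int_\Omega(\overrightarrow{\rho}^1)^TA\overrightarrow{\phi}^2\,dx ,
$$
using the symmetry of $A$ and of the Green function. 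It remains to bound $|\overrightarrow{\phi}^2|$ on $\mathrm{supp}\,\overrightarrow{\rho}^1$. I write $\overrightarrow{\phi}^2(x)=\int G_\varepsilon(x,y)\overrightarrow{\rho}^2(y)\,dy$, where $G_\varepsilon$ is the Neumann Green function of $1-\varepsilon^2\Delta$. The needed estimate is the exponential decay $G_\varepsilon(x,y)\le C\varepsilon^{-d}e^{-|x-y|/(C\varepsilon)}$. I would prove it by comparison: on $\Omega\setminus B_{\delta/2}(\mathrm{supp}\,\overrightarrow{\rho}^2)$, the function $\phi^2_i$ is a subsolution dominated by a barrier built from sums of $\cosh$-type exponentials, and the Neumann boundary is handled by a reflection or a barrier whose normal derivative is nonnegative. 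More crudely, I can apply the maximum principle to $\phi_i^2\le\|\rho^2\|_\infty$ and an exponentially decaying supersolution $\|\rho^2\|_\infty e^{-(\mathrm{dist}-\delta/2)/(C\varepsilon)}$, valid away from the support. Plugging in gives a bound $C\varepsilon^{-1}|\Omega|\,\|\overrightarrow{\rho}^1\|_\infty\|\overrightarrow{\rho}^2\|_\infty e^{-\delta/(2C\varepsilon)}$. For $\varepsilon$ small, the factor $\varepsilon^{-1}$ is absorbed by enlarging $C$. The delicate point is building the barrier compatibly with the Neumann condition on a general bounded domain. If that proves problematic, I would fall back on an $L^2$ energy (Agmon-type) estimate with weight $e^{\mathrm{dist}(x,\mathrm{supp}\rho^2)/(C\varepsilon)}$, which works directly in weak form with Neumann data.
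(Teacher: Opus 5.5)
\paragraph{Comparison with the paper's proof.}
Your proofs of (i) and (ii) follow the paper, as does your reduction of (iii) to the single cross term $-\frac1\eps\int_\Omega(\overrightarrow{\rho}^1)^TA\overrightarrow{\phi}^2\,dx$. Two details are, if anything, more careful than the paper: your explicit measurable selection in (ii), and your use of $f(\overrightarrow{\rho})+\overrightarrow{a}\cdot\overrightarrow{\rho}$ in the rewritten energy (the paper harmlessly writes $W$ where $f$ is meant).

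The genuine divergence is the exponential decay. The paper simply quotes the classical kernel bound $G(x,y)\le Ce^{-|x-y|/(C\eps)}$, whereas you try to derive it. You should drop your ``crude'' barrier, for two reasons:
\begin{itemize}
\item $e^{-\mathrm{dist}(x,\mathrm{supp}\,\overrightarrow{\rho}^2)/(C\eps)}$ is a supremum over $y$ of the supersolutions $e^{-|x-y|/(C\eps)}$, not a superposition. It therefore has convex kinks wherever the nearest point of the support jumps (e.g.\ halfway between two components), and it fails to be a supersolution there.
\item A barrier that decreases with the distance to the support has $\pa_n\overline{\phi}<0$ on the part of $\pa\Omega$ facing away from the support. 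That is the wrong sign for a Neumann comparison.
\end{itemize}

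\paragraph{The Agmon-type estimate works and should be the main proof.}
Let $u$ solve $u-\eps^2\Delta u=\rho^2_j$ in $\Omega$ with $\pa_n u=0$, where $\rho^2_j$ is the $j$-th component of $\overrightarrow{\rho}^2$. Set $\eta:=\frac12\mathrm{dist}(\cdot,\mathrm{supp}\,\overrightarrow{\rho}^2)$, so that $|\na\eta|\le\frac12$ and $\eta=0$ on $\mathrm{supp}\,\overrightarrow{\rho}^2$. Test the weak form with $u\,e^{2\eta/\eps}\in H^1(\Omega)$ and use $2\eps|u\,\na u\cdot\na\eta|\le\eps^2|\na u|^2+\frac14u^2$. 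This gives
\begin{equation*}
\frac34\int_\Omega u^2e^{2\eta/\eps}\,dx\le\int_\Omega\rho^2_j\,u\,dx\le\|\rho^2_j\|_{L^2(\Omega)}^2 .
\end{equation*}
Since $e^{2\eta/\eps}>e^{\delta/\eps}$ on $\mathrm{supp}\,\overrightarrow{\rho}^1$, it follows that $\|u\|_{L^2(\mathrm{supp}\,\overrightarrow{\rho}^1)}\le 2\|\rho^2_j\|_{L^2(\Omega)}e^{-\delta/(2\eps)}$. Cauchy--Schwarz against $\overrightarrow{\rho}^1$ then bounds the cross term by
\begin{equation*}
C\eps^{-1}|\Omega|\,\|\overrightarrow{\rho}^1\|_{L^\infty}\|\overrightarrow{\rho}^2\|_{L^\infty}e^{-\delta/(2\eps)},
\end{equation*}
and the factor $\eps^{-1}$ is absorbed for $\eps\le\eps_0(\delta)$, as you say.

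\paragraph{What each route buys.}
This route needs no regularity of $\pa\Omega$ and no pointwise kernel asymptotics; an $L^2$ bound on $\mathrm{supp}\,\overrightarrow{\rho}^1$ is all the cross term requires. It also avoids an imprecision in the paper's kernel bound, which omits the $\eps^{-d}$ singular prefactor that you correctly include. The paper's route is shorter, but it rests entirely on the cited estimate.
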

\begin{proof} $\,$
\begin{enumerate}[label=(\roman*)]
\item Write
\begin{align*}
\min_{\overrightarrow{\psi} \in H^{1}(\Omega)^N} \mathcal{G}^{\varepsilon}(\overrightarrow{\rho}, \overrightarrow{\psi}) = \frac{1}{\varepsilon}\int_{\Omega}W(\overrightarrow{\rho})\,dx + \min_{\overrightarrow{\psi} \in H^{1}(\Omega)^N}\left\{\frac{1}{\varepsilon}\int_{\Omega}\frac{1}{2}|\overrightarrow{\psi} - Q\overrightarrow{\rho}|^2\,dx + \varepsilon\int_{\Omega}\frac12|\nabla\overrightarrow{\psi}|^2\,dx\right\}.
\end{align*}
It is now classical that the minimization problem from the second term is attained for $\overrightarrow{\psi}$ (variational) solution of
$$\begin{cases}
    \overrightarrow{\psi} - \eps^2 \Delta \overrightarrow{\psi} = Q\overrightarrow{\rho} & \mbox{ in } \Omega\\
    \na \overrightarrow{\psi} \cdot n  = 0 & \mbox{ on } \pa\Omega.
\end{cases}$$
The result follows.

\item The conclusion follows from Lemma \ref{lem:legendre} and from the fact that the form $\overrightarrow{\rho}\in\partial f^*(Q^T\overrightarrow{\psi}-\overrightarrow{a})$ guarantees $\overrightarrow{\rho}\in L^{2}(\Omega;\R^N_{\geq0})$.

\item We note that \eqref{eq:cE-tilde} can be rewritten as
\begin{align*}
\widetilde{\cE}^{\varepsilon}(\overrightarrow{\rho})=\frac{1}{\varepsilon}\int_{\Omega}W(\overrightarrow{\rho}) + \overrightarrow{a}\cdot\overrightarrow{\rho}-\frac12\overrightarrow{\rho}^TA\overrightarrow{\phi}^{\varepsilon}\,dx
\end{align*}
(via the same computation as in the derivation of \eqref{eq:cE-alternative}). Since $\overrightarrow{\rho}^1$ and $\overrightarrow{\rho}^2$ have disjoint supports, we have
\begin{align*}
\widetilde{\cE}^{\varepsilon}(\overrightarrow{\rho}^1+\overrightarrow{\rho}^2) - \widetilde{\cE}^{\varepsilon}(\overrightarrow{\rho}^1) - \widetilde{\cE}^{\varepsilon}(\overrightarrow{\rho}^2) 
& = -\frac{1}{2\eps} \int_\Omega 
{\overrightarrow{\rho}^1}^TA\overrightarrow{\phi^2}^{\varepsilon}+{\overrightarrow{\rho}^2}^TA\overrightarrow{\phi^1}^{\varepsilon}\,dx\\
&\leq \frac{M}{\varepsilon}\max_{i,j=1,\cdots,N}\int_{\Omega}\int_{\Omega}G(x,y)\rho^1_j(x)\rho^2_j(y)\,dxdy
\end{align*}
where $M:=\max\{|\alpha_{ij}|\,;\,i,j=1,\cdots,N\}$ and $G$ is the green kernel of the equation \eqref{eq:epsilon-problem-2} (so that $ \phi_i^\eps = \int_\Omega G(x,y)\rho_i(y)\, dy$). The result then follows from the assumption $\mathrm{dist}(\mathrm{supp}(\overrightarrow{\rho}^1),\mathrm{supp}(\overrightarrow{\rho}^2))>\delta$ and the classical fact that
\begin{align*}
G(x,y)\leq Ce^{-\frac{|x-y|}{C\varepsilon}}
\end{align*}
for some $C>0$ large enough (depending only on $\Omega$).
\end{enumerate}
\end{proof}

\subsection{The liminf property}\label{subsec:liminf-property}
In this subsection, we prove Theorem \ref{thm:gamma-convergence}-(i), which follows from the inequality \eqref{eq:AC-lower-bound} and the $\Gamma$-convergence of the functional $\cF^\eps$ given by Theorem \ref{thm:Baldo}.
We recall the main steps below, as they will be useful later on, and refer to \cite{B90} for details about the proofs.

First, an important role is played by the functions $\varphi_i\,:\,\R^N\to\R$ defined for $i=0,\cdots,N$ by
\begin{align*}
\varphi_i:=\max\left\{2d(\cdot,\overrightarrow{\alpha_i}),M\right\}
\end{align*}
where $d$ is the distance function defined by \eqref{eq:distance-definition-before-change-of-variables}
and $M>0$ is a fixed constant larger than $2d(\overrightarrow{\alpha_i},\overrightarrow{\alpha_j})+1$ for any $i,j=0,\cdots,N$. 
The following result is the key to adapting the well-known Modica--Mortola trick to this multiphase setting:
\begin{proposition}[\!\!\!\protect{\cite[Proposition 2.1, (2.2)]{B90}}]\label{prop:MM-functional-Baldo}
For any $i=0,\cdots,N$, the function $\varphi_i$ is bounded and Lipschitz continuous on $\R^N$. For any $\overrightarrow{\psi}\in H^1(\Omega;\R^N)$, it holds that $\varphi_i\circ\overrightarrow{\psi}\in W^{1,1}(\Omega)$ and that
\begin{align*}
\int_{\Omega'} \left|\nabla(\varphi_i \circ \overrightarrow{\psi})\right| \, dx \leq \int_{\Omega'} \sqrt{2h(\overrightarrow{\psi}(x))} \left|\nabla\overrightarrow{\psi}(x)\right| \, dx
\end{align*}
for any open subsets $\Omega'$ of $\Omega$.
\end{proposition}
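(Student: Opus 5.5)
The plan is to show directly that $\varphi_i$ is bounded and $1$-Lipschitz with respect to the degenerate metric $d$, and then obtain the gradient bound from a chain-rule argument along short segments. (We read $\varphi_i$ as the truncation $\min\{2d(\cdot,\overrightarrow{\alpha_i}),M\}$; with a $\max$ the function would be unbounded, so the truncation from above is what is meant.)

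\textbf{Step 1: boundedness and Lipschitz continuity.} Boundedness is immediate from $0\le\varphi_i\le M$. For the Lipschitz bound, the triangle inequality for $d$ gives $|d(\overrightarrow{x},\overrightarrow{\alpha_i})-d(\overrightarrow{y},\overrightarrow{\alpha_i})|\le d(\overrightarrow{x},\overrightarrow{y})$. Using the straight segment from $\overrightarrow{x}$ to $\overrightarrow{y}$ as a competitor in \eqref{eq:distance-definition-before-change-of-variables}, we get
$$d(\overrightarrow{x},\overrightarrow{y})\le \tfrac{1}{\sqrt2}\sup_{[\overrightarrow{x},\overrightarrow{y}]}\sqrt{h}\,|\overrightarrow{x}-\overrightarrow{y}|.$$
Since $h(\overrightarrow{\psi})=\frac12|\overrightarrow{\psi}|^2-f^*(\cdot)$ grows quadratically, $\sqrt h$ grows at most linearly, so $d$ is locally Lipschitz. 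On the set where $\varphi_i<M$ we have $2d(\cdot,\overrightarrow{\alpha_i})<M$. We need this set to be bounded, which requires $h$ to be bounded below away from the wells at infinity, so that $d(\overrightarrow{\psi},\overrightarrow{\alpha_i})\to\infty$ as $|\overrightarrow{\psi}|\to\infty$. This is the main technical point. Because $f^*$ grows like $|v|^{m/(m-1)}$ with $m/(m-1)<2$, we get $h(\overrightarrow{\psi})\ge \frac14|\overrightarrow{\psi}|^2-C$, hence $\sqrt h\gtrsim|\overrightarrow{\psi}|$ for large $|\overrightarrow{\psi}|$, and $d(\cdot,\overrightarrow{\alpha_i})$ grows quadratically. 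So $\{\varphi_i<M\}$ is bounded, and the local Lipschitz bound becomes a global one once we note that $\varphi_i$ is constant outside a bounded set.

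\textbf{Step 2: pointwise slope bound.} The key local estimate is
$$|\varphi_i(\overrightarrow{y})-\varphi_i(\overrightarrow{x})|\le 2d(\overrightarrow{x},\overrightarrow{y})\le \sqrt{2}\,\big(\sqrt{h(\overrightarrow{x})}+o(1)\big)|\overrightarrow{y}-\overrightarrow{x}|,$$
which uses the continuity of $h$; $h$ is continuous because $f^*$ is. Hence, at every point $\overrightarrow{x}$ where $\varphi_i$ is differentiable (almost every point, by Rademacher),
$$|\nabla\varphi_i(\overrightarrow{x})|\le\sqrt{2h(\overrightarrow{x})}.$$

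\textbf{Step 3: chain rule.} For $\overrightarrow{\psi}\in H^1$ and a Lipschitz $\varphi_i$, the composition $\varphi_i\circ\overrightarrow{\psi}$ lies in $W^{1,1}$. Vector chain rules for Lipschitz functions are delicate, because $\overrightarrow{\psi}$ may spend positive measure on the null set where $\varphi_i$ fails to be differentiable. To avoid this, I would argue with difference quotients. For a.e.\ $x$ and a direction $e$,
$$\frac{|\varphi_i(\overrightarrow{\psi}(x+te))-\varphi_i(\overrightarrow{\psi}(x))|}{t}\le \sqrt2\sup_{B(\overrightarrow{\psi}(x),r_t)}\sqrt h\;\frac{|\overrightarrow{\psi}(x+te)-\overrightarrow{\psi}(x)|}{t},$$
using Step 2's estimate at the point $\overrightarrow{\psi}(x)$. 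Taking limits along absolutely continuous lines (ACL representative), we get
$$|\partial_e(\varphi_i\circ\overrightarrow{\psi})|\le\sqrt{2h(\overrightarrow{\psi})}\,|\partial_e\overrightarrow{\psi}|\le \sqrt{2h(\overrightarrow{\psi})}\,|\nabla\overrightarrow{\psi}|.$$
We then pass from directional derivatives to the full gradient. Choose $e$ parallel to $\nabla(\varphi_i\circ\overrightarrow{\psi})(x)$, doing this over a countable dense set of directions, and take the supremum; this gives $|\nabla(\varphi_i\circ\overrightarrow{\psi})|\le\sqrt{2h(\overrightarrow{\psi})}|\nabla\overrightarrow{\psi}|$ almost everywhere. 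Integrating over $\Omega'$ yields the claim.

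The main obstacle is Step 3's handling of non-differentiability points, which the difference-quotient approach circumvents. Step 1's coercivity of $h$ is the other point requiring care.
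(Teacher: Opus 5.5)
Your argument is correct. Your reading of $\varphi_i$ as $\min\{2d(\cdot,\overrightarrow{\alpha_i}),M\}$ is also the right one: the paper later uses $\varphi_i(Q\theta_j\overrightarrow{e_j})=2d(\overrightarrow{\alpha_j},\overrightarrow{\alpha_i})$, which would be false with a $\max$.

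The paper gives no proof of its own and only cites \cite{B90}, so the comparison is with the standard argument behind that citation. Your Steps 1--2 are that argument. The triangle inequality for $d$, with the straight segment as competitor, gives $|\varphi_i(\overrightarrow{y})-\varphi_i(\overrightarrow{x})|\le 2d(\overrightarrow{x},\overrightarrow{y})\le\sqrt2\,\sup_{[\overrightarrow{x},\overrightarrow{y}]}\sqrt h\,|\overrightarrow{y}-\overrightarrow{x}|$. The subquadratic growth of $f^*$ (since $m/(m-1)<2$) gives $h\ge\frac14|\overrightarrow{\psi}|^2-C$, so $\{\varphi_i<M\}$ is bounded and $\varphi_i$ is globally Lipschitz.

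Where you genuinely choose a route is the passage to compositions.
\begin{itemize}
\item A natural alternative is to prove the inequality first for smooth $\overrightarrow{\psi}$ and then approximate in $H^1$. One uses lower semicontinuity of the total variation on open sets, together with the $L^1$ convergence of $\sqrt{2h(\overrightarrow{\psi}_n)}|\nabla\overrightarrow{\psi}_n|$, which follows from the continuity and linear growth of $\sqrt h$. That route avoids fine properties of Sobolev functions and naturally yields the integrated form over arbitrary open $\Omega'$ in which the statement is phrased.
\item Your ACL/difference-quotient argument works directly with the $H^1$ map. It gives the pointwise a.e.\ bound $|\nabla(\varphi_i\circ\overrightarrow{\psi})|\le\sqrt{2h(\overrightarrow{\psi})}\,|\nabla\overrightarrow{\psi}|$. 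The cost is invoking absolute continuity on a.e.\ line in each direction and the agreement of classical and weak directional derivatives.
\end{itemize}
Both routes rest on the point you identified: only finite differences of $\varphi_i$ are needed, never its differentiability on the image of $\overrightarrow{\psi}$.

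Two small remarks follow from this. Your Step 2 (Rademacher) is never used in Step 3 and can be dropped. The phrase ``choose $e$ parallel to $\nabla(\varphi_i\circ\overrightarrow{\psi})(x)$'' should be read only through the countable dense family of directions combined with $\sup_{|e|=1}|\nabla\overrightarrow{\psi}(x)e|\le|\nabla\overrightarrow{\psi}(x)|$, as you in fact do.
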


With this result in hand, we can prove the liminf property: 
Consider a sequence $\overrightarrow{\rho}^{\varepsilon}$ which converges to $\overrightarrow{\rho}^0$ strongly in $L^1(\Omega)^N$ as $\varepsilon\to0$. If $\liminf_{\varepsilon\to0}\mathcal{E}^{\varepsilon}(\overrightarrow{\rho}^{\varepsilon})=+\infty$, the statement trivially holds, so we  assume  that $\liminf_{\varepsilon\to0}\cE^{\varepsilon}(\overrightarrow{\rho}^{\varepsilon})<+\infty$ and fix $M>0$ and a subsequence (still denoted by $\overrightarrow{\rho}^{\varepsilon}$) along which
\begin{equation}\label{boundontheenergy}
\cE^{\varepsilon}(\overrightarrow{\rho}^{\varepsilon})\leq M.
\end{equation}
In particular, we have $\overrightarrow{\rho}^{\varepsilon}\in\R^N_{\geq0}$ a.e. on $\Omega$ and $\int_{\Omega}\overrightarrow{\rho}\,dx=\overrightarrow{m}$. Up to another sequence, we can further assume that $\overrightarrow{\rho}^{\varepsilon}$ converges to $\overrightarrow{\rho}^0$ a.e. on $\Omega$, and therefore that $W(\overrightarrow{\rho}^{\varepsilon}) \to W(\overrightarrow{\rho}^0)$ a.e. on $\Omega$. Since $\cE^{\varepsilon}(\overrightarrow{\rho}^{\varepsilon})\leq M$, we have
$$
\int_{\Omega}W(\overrightarrow{\rho}^{\varepsilon})\,dx\leq \varepsilon\cE^{\varepsilon}(\overrightarrow{\rho}^{\varepsilon})\leq M\varepsilon,
$$ and Fatou lemma implies that $W(\overrightarrow{\rho}^0)=0$ a.e. on $\Omega$. Consequently, we have $\overrightarrow{\rho}^0=\sum_{i=1}^N\theta_i\chi_{E_i}\overrightarrow{e_i}$ for some family of subsets $\{E_i\}_{i=1}^N$ of $\Omega$ and so $\overrightarrow{\rho}^0\in \cF(A,\overrightarrow{m}$).
 
\medskip

Denote $\overrightarrow{\phi}^{\varepsilon}$ solution to \eqref{eq:phi} and $\overrightarrow{\psi}^{\varepsilon}:=Q\overrightarrow{\phi^{\varepsilon}}$. 
For any collection  of  open sets $\{U_i\}_{i=0}^N$ such that $U_i\cap U_j=\emptyset$   for $i\neq j$,  Proposition \ref{prop:MM-functional-Baldo} and \eqref{eq:AC-lower-bound}  implies
\begin{align}\label{eq:MM-trick}
\sum_{i=0}^N\left|\nabla(\varphi_i\circ \overrightarrow{\psi}^{\varepsilon})\right|(U_i) 
& \leq  \sum_{i=0}^N\int_{U_i} \sqrt{2h(\overrightarrow{\psi}^{\varepsilon}(x))} \left|\nabla\overrightarrow{\psi}^{\varepsilon}(x)\right| \, dx \notag \\
& \leq \int_{\Omega} \sqrt{2h(\overrightarrow{\psi}^{\varepsilon}(x))} \left|\nabla\overrightarrow{\psi}^{\varepsilon}(x)\right| \, dx\notag\\
&\leq \frac{1}{\varepsilon}\int_{\Omega}h(\overrightarrow{\psi^{\varepsilon}})\,dx + \varepsilon\int_{\Omega}\frac12|\nabla\overrightarrow{\psi}^{\varepsilon}|^2\,dx \leq \cE^{\varepsilon}(\overrightarrow{\rho}^{\varepsilon}).
\end{align}
By taking the supremum over all such collections, 
we obtain\footnote{For two regular nonnegative Borel measures $\mu$ and $\nu$ on $\Omega$, the supremum $\mu \vee \nu$ is defined by
\begin{align*}
(\mu \vee \nu)(U) = \sup \{ \mu(U') + \nu(U'') : \  U' \cap U'' = \varnothing,\, U' \cup U'' \subset U,\, U' \text{ and } U'' \text{ are open sets in } \Omega \}.
\end{align*} 
It is the smallest regular measure which is greater than or equal to $\mu$ and $\nu$ on any Borel subsets of $\Omega$. 
}
\begin{align}\label{eq:MM-applied}
\bigvee_{i=0}^N\left|\nabla(\varphi_i\circ \overrightarrow{\psi}^{\varepsilon})\right|(\Omega) &:= \sup_{\{U_i\}}\left\{\sum_{i=0}^N\left|\nabla(\varphi_i\circ \overrightarrow{\psi}^{\varepsilon})\right|(U_i)\right\} 
\leq \cE^{\varepsilon}(\overrightarrow{\rho}^{\varepsilon}).
\end{align}

\medskip

We can then check that $\varphi_i\circ\overrightarrow{\psi}^{\varepsilon}$ converges to $\varphi_i\circ Q\overrightarrow{\rho}^0$ strongly in $L^1(\Omega)$. Indeed, the bound on the energy  \eqref{boundontheenergy} implies
\begin{align*}
\|\varphi_i\circ\overrightarrow{\psi}^{\varepsilon}-\varphi_i\circ Q\overrightarrow{\rho}^0\|_{L^1(\Omega)} & \leq \|\varphi_i\circ Q\overrightarrow{\phi}^{\varepsilon}-\varphi_i\circ Q\overrightarrow{\rho}^{\varepsilon}\|_{L^1(\Omega)} + \|\varphi_i\circ Q\overrightarrow{\rho}^{\varepsilon}-\varphi_i\circ Q\overrightarrow{\rho}^0\|_{L^1(\Omega)} \\
& \leq C\left(\left\|Q\left(\overrightarrow{\phi}^{\varepsilon}-\overrightarrow{\rho}^{\varepsilon}\right)\right\|_{L^2(\Omega)}+\|\overrightarrow{\rho}^{\varepsilon}-\overrightarrow{\rho}^{0}\|_{L^1(\Omega)}\right) \\
& \leq C\left(\sqrt{M\varepsilon}+\|\overrightarrow{\rho}^{\varepsilon}-\overrightarrow{\rho}^{0}\|_{L^1(\Omega)}\right)\to0\qquad\text{as }\varepsilon\to0.
\end{align*}
The lower semicontinuity of the BV norm then yields
\begin{align}\label{eq:lsc-BV}
\left|\nabla(\varphi_i\circ Q\overrightarrow{\rho}^{0})\right|(U)\leq\liminf_{\varepsilon\to0}|\nabla(\varphi_i\circ\overrightarrow{\psi}^{\varepsilon})|(U)
\end{align}
for all open subsets $U\subset \Omega$, and therefore
\begin{align*}
\bigvee_{i=0}^N\left|\nabla(\varphi_i\circ Q\overrightarrow{\rho}^{0})\right|(\Omega) & = \sup\left\{\sum_{i=0}^N\left|\nabla(\varphi_i\circ Q\overrightarrow{\rho}^{0})\right|(U_i)\,|\,U_i\subset\Omega\text{ is open and }U_i\cap U_j=\emptyset\text{ for $i\neq j$}\right\} \\
& \leq \liminf_{\varepsilon\to0} \bigvee_{i=0}^N\left|\nabla(\varphi_i\circ \overrightarrow{\psi}^{\varepsilon})\right|(\Omega).
\end{align*}

We deduce (using \eqref{eq:MM-applied}):
\begin{align*}
\bigvee_{i=0}^N\left|\nabla(\varphi_i\circ Q\overrightarrow{\rho}^{0})\right|(\Omega)\leq \liminf_{\varepsilon\to0} \bigvee_{i=0}^N\left|\nabla(\varphi_i\circ \overrightarrow{\psi}^{\varepsilon})\right|(\Omega) \leq \liminf_{\varepsilon\to0} \cE^{\varepsilon}(\overrightarrow{\rho}^{\varepsilon}).
\end{align*}
Finally, we recall that $\overrightarrow{\rho}^0=\sum_{i=1}^N\theta_i\chi_{E_i}\overrightarrow{e_i}$ and  \eqref{eq:MM-trick} and \eqref{eq:lsc-BV} imply that $\varphi_i\circ Q\overrightarrow{\rho}^0\in BV(\Omega)$ for all $i=0,\cdots,N$.  We thus have   $\overrightarrow{\rho}^0\in\cF(\Omega,\overrightarrow{m})$. Finally,  \cite[Proposition 2.2]{B90} gives
\begin{equation}\label{eq:bigVd}
\bigvee_{i=0}^N\left|\nabla(\varphi_i\circ Q\overrightarrow{\rho}^{0})\right|(\Omega)
=\sum_{i,j=0}^Nd(\overrightarrow{\alpha_i},\overrightarrow{\alpha_j})\cH^{d-1}(\partial^*E_i \cap \partial^*E_j \cap \Omega)
\end{equation}
which completes the proof.

\subsection{The limsup property}\label{subsec:limsup-property}
We now prove Theorem \ref{thm:gamma-convergence}-(ii), namely the existence of a recovery sequence.

\begin{proof}[Proof of Theorem \ref{thm:gamma-convergence}-(ii)]
Fix $\overrightarrow{\rho}^0\in L^1(\Omega)^N$. If $\cE^0(\overrightarrow{\rho}^0)=+\infty$, the statement holds trivially, so we assume that $\cE^0(\overrightarrow{\rho}^0)<+\infty$. We can then write $\overrightarrow{\rho}^0=\sum_{i=1}^N\theta_i\chi_{E_i}\overrightarrow{e_i}$
for a partition $\{E_i\}_{i=0}^N$ of $\Omega$ into $BV(\Omega)$-sets with volumes $|E_i|=\frac{m_i}{\theta_i}$ for $i=1,\cdots,N$ and $|E_0|=\left|\Omega\setminus\left(\cup_{i=1}^NE_i\right)\right|=|\Omega|-\sum_{i=1}^N\frac{m_i}{\theta_i}>0$ by \eqref{assumption:vacuum}. 
By \cite[Lemma 3.1]{B90} and a diagonal argument, we can assume without loss of generality that each $E_i$ is a polygonal domain in $\Omega$ and satisfies $\cH^{d-1}(\partial E_i\cap\partial\Omega)=0$ and that the vacuum set $E_0$ is a nonempty open set.

\medskip

As in \cite{M24}, the construction of the recovery sequence is done by using a recovery sequence $\{\overrightarrow{\psi}^\eps\}$ for the functional $\cF^\eps$ and using the relation (see Lemma \ref{lem:legendre})
\begin{align}\label{eq:equivalence-relation}
\overrightarrow{\rho}^\eps\in
\partial f^*(Q^T\overrightarrow{\psi^\eps}-\overrightarrow{a})
\end{align}
to define the corresponding recovery sequence $\{\overrightarrow{\rho}^{\varepsilon}\}$.

An important issue in the construction of the recovery sequence (already present for the one-species model $N=1$ in \cite{M24}) is that the function $\overrightarrow{\rho}^{\varepsilon}$  defined by \eqref{eq:equivalence-relation}  may not satisfy the mass constraint $\int_\Omega \overrightarrow{\rho}^{\varepsilon}(x) \, dx = \overrightarrow{m}$. 
This issue is significantly more difficult to address in the present case than in the case $N=1$ because modifying one component of $\overrightarrow{\psi^\eps}$ affects all the components of $\overrightarrow{\rho}^{\varepsilon}$.

\medskip

\noindent{\bf Step 1:} Construct a sequence $\{\overrightarrow{\rho^{\eps,0}}\}$ which does not satisfy the mass constraint $m_j$.

\medskip

As mentioned above, we first consider a sequence of functions $\{\overrightarrow{\psi}^{\varepsilon,0}\}_{\varepsilon\in(0,1)}$ such that $\overrightarrow{\psi}^{\varepsilon,0} \to \overrightarrow{\psi}^{0} :=Q\overrightarrow{\rho}^0$ as $\eps\to0$ and 
\begin{align}\label{eq:recovery=property-1}
\limsup_{\varepsilon\to0}\cF^{\varepsilon}\left(\overrightarrow{\psi}^{\varepsilon,0}\right) \leq \cE^0(\overrightarrow{\rho}^0).
\end{align}
The existence of such a sequence follows from the $\Gamma$-convergence of $\cF^\eps$ and its construction can be found in \cite{B90}. We record here some facts that will be used in the proof:
\begin{enumerate}
    \item The sequence $\{\overrightarrow{\psi}^{\varepsilon,0}\}$ is bounded in $L^\infty(\Omega)^N$ by a constant depending only $\overrightarrow{\rho}^0$.
    \item For all $i=0,\dots N$, there exist  sets $E_i^\eps \subset E_i$ such that $\overrightarrow{\psi}^{\varepsilon,0}(x)=\overrightarrow{\alpha}_i$ for all $x\in E_i^\eps$ and 
    \begin{equation}\label{eq:massEi}
     |E_i\setminus E_i^\eps|\leq C\eps.
    \end{equation} 
\item In particular 
\begin{equation}\label{eq:E_0}
\mathrm{supp}(\overrightarrow{\psi}^{\varepsilon,0})\subset \{x\in\Omega\,|\,\mathrm{dist}(x,\partial E_0) < C\varepsilon\text{ or }x\notin E_0\}
\end{equation}
for some constant $C=C(\overrightarrow{\rho}^0)>0$.  
    \item The transition region
$
\Omega_{tr} := \Omega \setminus  \cup_{i=0}^N E_i^\eps$ satisfies:
\begin{equation}\label{eq:transition-region-small}
|\Omega_{tr}|\leq C_1\varepsilon.
\end{equation}
for some  constant $C_1=C_1(\overrightarrow{\rho}^0)>0$. 
\end{enumerate}

With this recovery sequence at hand, we now define the corresponding density distribution using \eqref{eq:equivalence-relation}:
$$\widetilde{\overrightarrow{\rho}}^{\varepsilon,0} \in \partial f^*(Q^T\overrightarrow{\psi^{\varepsilon,0}}-\overrightarrow{a})$$
for which we have (see Proposition \ref{prop:minimization}):
\begin{align}\label{eq:energy-bound-rho-tilde}
\widetilde{\cE}^{\varepsilon} \left(\widetilde{\overrightarrow{\rho}}^{\varepsilon,0}\right) \leq \mathcal{G}^{\varepsilon}\left(\widetilde{\overrightarrow{\rho}}^{\varepsilon,0},\overrightarrow{\psi}^{\varepsilon,0}\right)=\cF^{\varepsilon}\left(\overrightarrow{\psi}^{\varepsilon,0}\right).
\end{align}

We immediately note that Lemma \ref{lem:legendre} (ii) guarantees that 
$\widetilde{\overrightarrow{\rho}}^{\varepsilon,0}(x)=\theta_j\overrightarrow{e_j}$ 
on the sets $E_i^\eps$, and so 
$$\widetilde{\rho}_i^{\varepsilon,0} (x)= \begin{cases} \theta_i & \mbox{ when } x\in E_i^\eps \\
0 & \mbox{ when } x\in E_j^\eps, \quad j\neq i.
\end{cases}
$$
Recalling that $|E_i| = \frac{m_i}{\theta_i}$, we then deduce (using \eqref{eq:massEi}, \eqref{eq:transition-region-small} and the fact that $\overrightarrow{\psi}^{\varepsilon,0}$ is bounded)  that
\begin{equation}\label{eq:masseps}
m_i-C_2\varepsilon \leq \int_{\Omega}\widetilde{\rho}_i^{\varepsilon,0}\,dx \leq m_i+C_2\varepsilon \qquad\text{for all }i=1,\cdots,N.
\end{equation}

\medskip

We now need to show that we can modify this sequence $\overrightarrow{\rho}^{\varepsilon,0} $ to get the correct mass constraint. This will be done in two steps: First, we show that we can replace $\overrightarrow{\rho}^{\varepsilon,0} $ such that each component $\widetilde{\rho}_i^{\varepsilon,0} $ has a smaller mass than desired: $\int_{\Omega}\widetilde{\rho}_i^{\varepsilon,0}\,dx \leq m_i$.
Then we will add a bit of mass to $\widetilde{\rho}_i^{\varepsilon,0} $ in the vacuum region $E_0^\eps$ to recover the mass constraint.

\medskip

\noindent{\bf Step 2:} Construct a sequence  $\{\overrightarrow{\rho}^{\eps,0}\}$ with masses smaller than $m_j$.

\medskip

We define
\begin{align*}
\overrightarrow{\rho}^{\varepsilon,0} := \zeta\,\widetilde{\overrightarrow{\rho}}^{\varepsilon,0}\qquad\text{with }\zeta := \min_{j=1,\cdots,N}\left\{\frac{m_j-\varepsilon }{m_j+C_2\varepsilon }\right\}.
\end{align*}
In view of \eqref{eq:masseps}, the components of  $\overrightarrow{\rho}^{\varepsilon,0}  $ satisfy $\int_{\Omega}\rho_i^{\varepsilon,0}\,dx\leq m_i$ and the mass defects satisfy
\begin{align}\label{eq:step1-conclusion}
\overline{m}_i^{\varepsilon}:=m_i-\int_{\Omega}\rho_i^{\varepsilon,0}\,dx\in\left[C_0^{-1}\varepsilon ,C_0\varepsilon\right]\qquad\text{for all $i=1,\cdots,N$}
\end{align}
for some constant $C_0=C_0(\overrightarrow{\rho}^0)>1$. 

Moreover, we can show that
\begin{align}\label{eq:rho-0-approximate}
\widetilde{\cE}^{\varepsilon} \left(\overrightarrow{\rho}^{\varepsilon,0}\right)\leq \widetilde{\cE}^{\varepsilon} \left(\widetilde{\overrightarrow{\rho}}^{\varepsilon,0}\right) + C\varepsilon .
\end{align}
Indeed, 
denoting by $\widetilde{\overrightarrow{\phi}}^{\varepsilon,0}$ (respectively $\overrightarrow{\phi}^{\varepsilon,0}=\zeta\,\widetilde{\overrightarrow{\phi}}^{\varepsilon,0}$) the solution of \eqref{eq:phi}  with $\widetilde{\overrightarrow{\rho}}^{\varepsilon,0}$ (respectively $\overrightarrow{\rho}^{\varepsilon,0}$) and 
using the fact that $\zeta\leq1$, we get:
\begin{align*}
& \frac{1}{\varepsilon}\int_{\Omega}\frac12|Q\overrightarrow{\phi}^{\varepsilon,0}-Q\overrightarrow{\rho}^{\varepsilon,0}|^2\,dx + \varepsilon\int_{\Omega}\frac12|\nabla Q\overrightarrow{\phi}^{\varepsilon,0}|^2\,dx \\
&\qquad\qquad = \zeta^2\left(\frac{1}{\varepsilon}\int_{\Omega}\frac12|Q\widetilde{\overrightarrow{\phi}}^{\varepsilon,0}-Q\widetilde{\overrightarrow{\rho}}^{\varepsilon,0}|^2\,dx + \varepsilon\int_{\Omega}\frac12|\nabla Q\widetilde{\overrightarrow{\phi}}^{\varepsilon,0}|^2\,dx\right) \\
&\qquad\qquad  \leq \frac{1}{\varepsilon}\int_{\Omega}\frac12|Q\widetilde{\overrightarrow{\phi}}^{\varepsilon,0}-Q\widetilde{\overrightarrow{\rho}}^{\varepsilon,0}|^2\,dx + \varepsilon\int_{\Omega}\frac12|\nabla Q\widetilde{\overrightarrow{\phi}}^{\varepsilon,0}|^2\,dx.
\end{align*}

To control the contribution of the double-well potential, we first note that 
$0\leq 1-\zeta\leq \frac{1+C_2}{\inf m_j} \eps$ and so
$$
\left\|\widetilde{\overrightarrow{\rho}}^{\varepsilon,0} - \overrightarrow{\rho}^{\varepsilon,0}\right\|_{L^\infty(\Omega)}\leq C\varepsilon .$$
Using the fact that $W$ is locally Lipschitz continuous, we can then write
\begin{align*}
\frac{1}{\varepsilon}\int_{\Omega_{tr}}W\left(\overrightarrow{\rho}^{\varepsilon,0}\right)\,dx &\leq \frac{1}{\varepsilon}\int_{\Omega_{tr}}W\left(\widetilde{\overrightarrow{\rho}}^{\varepsilon,0}\right)+C\varepsilon \,dx \\
&\leq \frac{1}{\varepsilon}\int_{\Omega_{tr}}W\left(\widetilde{\overrightarrow{\rho}}^{\varepsilon,0}\right) \,dx + C\varepsilon^{-1}|\Omega_{tr}|\varepsilon  \leq \frac{1}{\varepsilon}\int_{\Omega_{tr}}W\left(\widetilde{\overrightarrow{\rho}}^{\varepsilon,0}\right) \,dx + C\varepsilon 
\end{align*}
where we used the fact that the transition region is small (see \eqref{eq:transition-region-small}).

Next, the fact that $W$ has a minimum at $\theta_j\overrightarrow{e_j}$ (for all $j=1,\cdots,N$) implies that 
$W(\theta_j\overrightarrow{e_j}+\overrightarrow{p})\leq C|\overrightarrow{p}|^2$    for  $|\overrightarrow{p}| $  small enough satisfying $\theta_j\overrightarrow{e_j}+\overrightarrow{p}\in \R^N_{\geq0}$.
This allows us to control the contribution of the double-well potential in the bulk region  $  \Omega \setminus \Omega_{tr}$, 
\begin{align*}
\frac{1}{\varepsilon}\int_{ \Omega \setminus \Omega_{tr}}W\left(\overrightarrow{\rho}^{\varepsilon,0}\right)\,dx &\leq \frac{1}{\varepsilon}\int_{ \Omega \setminus \Omega_{tr}}W\left(\widetilde{\overrightarrow{\rho}}^{\varepsilon,0}\right) + C\varepsilon ^2\,dx \leq \int_{ \Omega \setminus \Omega_{tr}}W\left(\widetilde{\overrightarrow{\rho}}^{\varepsilon,0}\right)\,dx + C\varepsilon .
\end{align*}
This completes the proof of  \eqref{eq:rho-0-approximate}, which, together with 
 \eqref{eq:energy-bound-rho-tilde}, imply
\begin{align}\label{eq:no-cost-0}
\widetilde{\cE}^{\varepsilon} \left(\overrightarrow{\rho}^{\varepsilon,0}\right) \leq \cF^{\varepsilon}\left(\overrightarrow{\psi}^{\varepsilon,0}\right) + C\varepsilon. 
\end{align}

\medskip

\noindent{\bf Step 3:} Use the vacuum region to increase the masses as needed.

\medskip

For this construction, we assume (without loss of generality) that 
\begin{equation}\label{eq:order}
\alpha_{11} \geq \alpha_{22} \geq \dots \geq \alpha_{NN}.
\end{equation}

\medskip

We recall that $\overrightarrow{\psi}^{\varepsilon,0}$, and therefore $\overrightarrow{\rho}^{\varepsilon,0}$, vanishes on the vacuum set $E_0^\eps$.
We will thus construct a corrector $\overrightarrow{\eta}^{\varepsilon}$
supported in $E^\eps_0$ such that $\overrightarrow{\rho}^{\varepsilon}=\overrightarrow{\rho}^{\varepsilon,0}+\overrightarrow{\eta}^{\varepsilon}$ is a recovery sequence satisfying 
$\int_{\Omega}\overrightarrow{\rho}^{\varepsilon}\,dx=\overrightarrow{m}$, that is
\begin{equation}\label{eq:corrector}
\mathrm{Supp}\overrightarrow{\eta}^{\varepsilon} \subset E_0^\eps , \quad \eta_i^\eps \geq 0 \mbox{ in } \Omega ,\quad
\int_\Omega \eta_i^\eps(x)\, dx = \overline m_i^\eps \qquad i=1,\dots ,N
\end{equation}
with the mass defects $\overline m_i^\eps$ defined by \eqref{eq:step1-conclusion}.

\medskip

\medskip
We recall that $E_0^\eps$ approaches the vacuum set $E_0$ - see \eqref{eq:E_0}. 
By assumption \ref{assumption:vacuum}, this set satisfies $|E_0|=\left|\Omega\setminus\left(\cup_{i=1}^NE_i\right)\right|=|\Omega|-\sum_{i=1}^N\frac{m_i}{\theta_i}>0$.
We can thus find $N$ points $x_1,\dots x_N$ in the interior of $E_0^\eps$ such that
\begin{align*}
\mathrm{dist}(x_i,x_j)>\delta_0\quad\text{for all }i\neq j \quad \text{ and }\quad \mathrm{dist}(x_i,\mathrm{supp}(\overrightarrow{\rho}^{\varepsilon,0}))>\delta_0\text{ for any }i=1,\cdots,N
\end{align*}
for some $\delta_0=\delta_0(\overrightarrow{\rho}^0)>0$. 
We further define the radii $r_j$ such that $\theta_j|B(x_j,r_j)| =  \overline{m}_j^\varepsilon$ and note that \eqref{eq:step1-conclusion} then implies that 
\begin{equation}\label{eq:rjeps}
   \bar C_0^{-1} \eps^{\frac 1 d} \leq r_j\leq \bar C_0\eps^{\frac 1 d}. 
\end{equation}
In particular, for $\eps$ small enough, the balls $B_j:=B\left(x_j,r_j+\varepsilon^{\frac{3}{2d}}\right)$ are disjoint subsets of $E_0^\eps$ 
such that
\begin{align}\label{eq:disjoint}
\mathrm{dist}(B_i,B_j)>\delta_0/2\quad\text{for any }i\neq j \text{ and }\mathrm{dist}(B_i,\mathrm{supp}(\overrightarrow{\rho}^{\varepsilon,0}))>\delta_0/2\text{ for any }i=1,\cdots,N,
\end{align}

The construction of the corrector is based on the following functions supported in $\overline{B_j}$ (see Figure \ref{fig:discrete-motion-law}):
\begin{align*}
\chi_j^{\varepsilon, s_j}(x) :=
\begin{cases}
    1 & \mbox{ when } |x-x_j|\leq s_j \\
    1-\varepsilon^{-\frac{3}{2d}}(|x-x_j|-s_j) &  \mbox{ when } s_j \leq |x-x_j| \leq s_j+\eps^{\frac{3}{2d}} \\
    0 &\mbox{ when }  |x-x_j| \geq s_j+\eps^{\frac{3}{2d}}
\end{cases}
\end{align*}
where $s_j\in[0,r_j]$ will be chosen later.
\medskip

\begin{figure}[thp]
	\begin{center}
            \includegraphics[height=5cm]{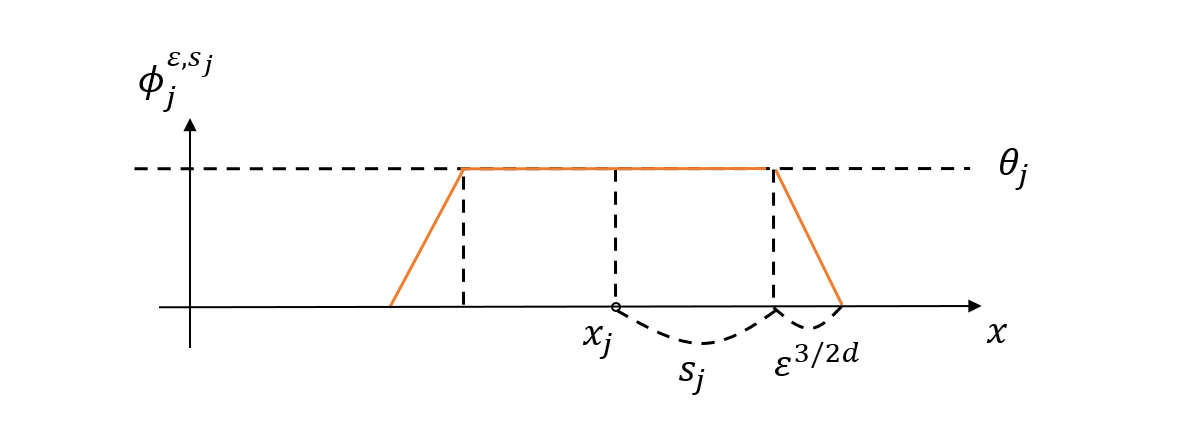}
		\vskip 0pt
		\caption{The graph of $\phi_j^{\varepsilon, s_j}$.}
        \label{fig:discrete-motion-law}
	\end{center}
\end{figure}

We now proceed to define the corrector in each of the ball $B_i$:

In $B_1$, we define the corrector $\overrightarrow{\eta}^\eps$ so that 
\begin{equation}\label{eq:eta1}
\overrightarrow{\eta}^\eps(x)\in \pa f^*(\chi_1^{\eps,s_1}(x) Q^T  \overrightarrow{\alpha_1}-\overrightarrow{a}) \qquad \forall x\in B_1
\end{equation}
(this amounts to adding the corrector  $\chi^{\eps,s_1} (x)\overrightarrow{\alpha_1}$ to the recovery sequence $\overrightarrow{\psi}^{\eps,0}(x)$).
We make the following observations:
\begin{itemize}
\item  $| \overrightarrow{\eta}^\eps(x)|$ is bounded (uniformly in $x$) by some constant $C_3$ depending only on $m$ and $A$.
\item In view of Lemma \ref{lem:legendre}-(ii), \eqref{eq:eta1} implies that  
$\overrightarrow{\eta}^\eps =\theta_1\overrightarrow{e_1}$ in $ B_{s_1}(x_1)$ and
$\overrightarrow{\eta}^\eps =  0 $ outside the balls $ \overline {B_{s_j+ \eps^{\frac{3}{2d}}}}(x_j)$. 
\item  In particular, when we take $s_1=r_1$, we have
\begin{align}\label{eq:lower-bound-continuity} \int_{B_1}\eta_1^\eps(x) \,dx \geq \int_{B_{r_1}(x_1)}\theta_1\,dx = \overline{m}_1^{\varepsilon} 
\end{align}
(by definition of $r_j$) and when  we take $s_j=0$ we find
\begin{align}\label{eq:upper-bound-continuity}
\int_{B_1}\eta_1^\eps (x)\,dx \leq C_3\omega_d\varepsilon^{\frac32},
\end{align}
The right hand side in \eqref{eq:upper-bound-continuity} is smaller than $\overline{m}_1$ as long as $\varepsilon\in(0,1)$ is small enough since $C_3\omega_d\varepsilon^{\frac32} \leq C_0^{-1}\varepsilon \leq \overline{m}_1$. 
\end{itemize}

By a continuity argument, it is thus possible to find $s_1\in [0,r_1]$ and $\overrightarrow{\eta}^\eps$ satisfying \eqref{eq:eta1} such that
$$\int_{B_1}\eta_1^\eps(x) \,dx  = \overline{m}_1^{\varepsilon}  . $$

We need to check what happens with the other components of $\overrightarrow{\eta}^\eps$: For $j=2,\dots, N$, we have $\eta_j^\eps=0$ in  $B_{s_1}(x_1) $ (since $\overrightarrow{\eta}(x) =\theta_1\overrightarrow{e_1}$ in that set),
so these other components have support in the ring $\{ s_1\leq  |x-x_1| \leq  s_1+\varepsilon^{\frac{3}{2d}}\}$. Since this will add some mass to the corresponding densities, we need to modify the mass defects $\overline{m}_i^{\varepsilon}  $ for $i=2,\dots N$ before moving forward with the construction of $\eta(x)$ in $B_2$.
Fortunately, this ring has a small volume. In fact, its volume is the largest when $s_1=r_1$ so that (recalling that $r_j \sim \eps^{\frac 1 d}$ - see \eqref{eq:rjeps}):
\begin{equation}\label{eq:ring}
|\{ s_1\leq  |x-x_1| \leq  s_1+\varepsilon^{\frac{3}{2d}}\}|\leq \left(\left(r_j + \varepsilon^{\frac{3}{2d}}\right)^d-r_j^d\right) \leq C \eps^{1+\frac{1}{2d}} .
\end{equation}
We thus have 
\begin{align}\label{eq:shell}
\int_{B_1}\eta_j^\eps(x)\,dx \leq   C_4 \varepsilon^{1+\frac{1}{2d} }
\qquad  \mbox{ for all } j=2,\dots , N.
\end{align}
and we can change the mass defect $\overline{m}_j^{\varepsilon}$, $j=2,\dots,N$ to
\begin{align*}
\overline{m}_j'^{\varepsilon} := \overline{m}_j^{\varepsilon} - \int_{B_1}\eta_j^{\varepsilon}(x)\,dx.
\end{align*}
In view of \eqref{eq:step1-conclusion} and \eqref{eq:shell}, we still have
\begin{align}\label{eq:second-mass-nonnegative}
 C_0^{-1} \eps/2 \leq \overline{m}_j'^{\varepsilon}\leq C_0\eps
\end{align}
as long as  $\varepsilon\in(0,1)$ is small enough. 
\medskip

We can now proceed in the same way to construct the corrector $\overrightarrow{\eta}^\eps$ in $B_2$. It should satisfy
\begin{equation}\label{eq:eta2}
\overrightarrow{\eta}^\eps(x)\in \pa f^*(\chi_2^{\eps,s_2}(x) Q^T \overrightarrow{\alpha_2}-\overrightarrow{a}) \qquad \forall x\in B_2.
\end{equation}
and for the same reasons as above, it is possible to choose $s_2\in[0,r_2]$ so that
\begin{align*}
\int_{B_2} \eta_2^{\varepsilon}(x)\,dx = {\overline{m}'}_2^{\varepsilon}.
\end{align*}
This implies that 
$$\int_{B_1\cup B_2} \eta_2^{\varepsilon}(x)\,dx = \overline{m}_2^{\varepsilon}.$$

Crucially, this corrector might have non trivial components $\eta_j(x)$ for $j=3,\dots ,N$  in the ring
$\{ s_2\leq  |x-x_2| \leq  s_2+\varepsilon^{\frac{3}{2d}}\}$,
but thanks to \eqref{eq:order} and Lemma \ref{lem:legendre}-(iii), we can choose $\overrightarrow{\eta}^\eps(x)$ satisfying \eqref{eq:eta2} such that $\eta_1^\eps(x) = 0$ in $B_2$. We thus still have
$$\int_{B_1\cup B_2} \eta_1^{\varepsilon}(x)\,dx = \overline{m}_1^{\varepsilon}.$$
This is the key remark that allows us to adjust the mass of the second component  while keeping the mass of the first component constant.

We can now iterate this process, taking $\eps$ a bit smaller at each step and constructing a corrector $\overrightarrow{\eta}^\eps(x)$ satisfying in particular
$$
\mathrm{Supp}\; \eta_i^\eps \subset \cup_{j\leq i}B_j, \quad \int_\Omega \eta_i^\eps(x) \, dx= \overline m_i^\eps.
$$

The function  
$$\overrightarrow{\rho}^{\varepsilon} = \overrightarrow{\rho}^{\varepsilon,0} + \overrightarrow{\eta}^\eps$$
 satisfies
\begin{align}\label{eq:rho-conclusion}
\overrightarrow{\rho}^\varepsilon\in L^{2}(\Omega;\R^N_{\geq0}),\,\int_{\Omega}\overrightarrow{\rho}^\varepsilon\,dx=\overrightarrow{m},\text{ and }\overrightarrow{\rho}^\varepsilon\text{ converges to $\overrightarrow{\rho}^0$ strongly in }L^{1}(\Omega;\R^N).
\end{align}
Furthermore, since the $\chi_i^{\eps,s_i}(x)$ have disjoint support, we can write:
$$ \overrightarrow{\eta}^\eps(x) \in \pa f^*(Q^T\overrightarrow{\chi}^\eps(x) -\overrightarrow{a}) \qquad \forall x\in B_1
$$
where
$
\overrightarrow{\chi}^\eps = \sum_{i=1}^N\chi_i^{\eps,s_i}(x) \overrightarrow{\alpha_i}.$

\medskip
\medskip

\noindent{\bf Step 3:} Estimate the contribution of the corrector to the energy.

\medskip

Define 
$$\overrightarrow{\psi}^{\varepsilon} =  \overrightarrow{\psi}^{\varepsilon, 0} + \overrightarrow{\chi}^{\varepsilon}.$$
We note that the two term have disjoint supports so we can write:
\begin{align}\label{eq:psi-disjoint-sum}
\cF^{\varepsilon}(\overrightarrow{\psi}^{\varepsilon}) = \cF^{\varepsilon}\left(\overrightarrow{\psi}^{\varepsilon, 0}\right)+\sum_{j=1}^N\left(\frac{1}{\varepsilon}\int_{B_j}h(\overrightarrow{\chi}^{\varepsilon})\,dx + \varepsilon\int_{B_j}\frac12|\nabla\overrightarrow{\chi}^{\varepsilon}|^2\,dx\right).
\end{align}
Since $\overrightarrow{\chi}^{\varepsilon}= \overrightarrow{\alpha_j}$ in $B_{s_j}(x_j)$, the function $h(\overrightarrow{\chi}^{\varepsilon})$ is supported in the ring $\{ s_j\leq  |x-x_1| \leq  s_j+\varepsilon^{\frac{3}{2d}}\}$ and so
(using \eqref{eq:ring}):
\begin{align}\label{eq:psi-term-1}
\frac{1}{\varepsilon}\int_{B_j}h(\overrightarrow{\chi}^{\varepsilon})\,dx 
& \leq 
\frac{C}{\varepsilon}|\{ s_j\leq  |x-x_1| \leq  s_j+\varepsilon^{\frac{3}{2d}}\}|   \nonumber \\
& \leq 
C\varepsilon^{-1}\omega_d\left(\left(r_j + \varepsilon^{\frac{3}{2d}}\right)^d-r_j^d\right) \leq C\varepsilon^{\frac{1}{2d}} \to0\qquad\text{as }\varepsilon\to0.
\end{align}
Similarly, $\nabla\overrightarrow{\chi}^{\varepsilon}$ is supported in that same ring and satisfies $|\na\overrightarrow{\chi}^{\varepsilon} (x)|\leq\varepsilon^{-\frac3{2d}}$ so that 
\begin{align}\label{eq:psi-term-2}
\varepsilon\int_{B_j}\frac12|\nabla\overrightarrow{\chi}^{\varepsilon}|^2\,dx \leq C\varepsilon\,\omega_d\left(\left(r_j + \varepsilon^{\frac{3}{2d}}\right)^d-r_j^d\right)\,\varepsilon^{-\frac3{2d}} \leq C\varepsilon^{1-\frac{1}{d}} \to0\qquad\text{as }\varepsilon\to0
\end{align}
for $d\geq2$. Combining \eqref{eq:recovery=property-1}, \eqref{eq:psi-disjoint-sum}, \eqref{eq:psi-term-1}, and \eqref{eq:psi-term-2}, we obtain
\begin{align}\label{eq:recovery}
\limsup_{\varepsilon\to0}\cF^{\varepsilon}(\overrightarrow{\psi}^{\varepsilon}) \leq \cE^0(\overrightarrow{\rho}^0).
\end{align}

\medskip

For $j=1,\cdots,N$, denote by $\overrightarrow{\rho}^{\varepsilon,j}$ defined by $\overrightarrow{\rho}^{\varepsilon,j}=\overrightarrow{\rho}^{\varepsilon}$ on the $j$-th ball $B_j$ and by zero outside $B_j$. Similarly, for $j=1,\cdots,N$, denote by $\overrightarrow{\psi}^{\varepsilon,j}$ defined by $\overrightarrow{\psi}^{\varepsilon,j}=\overrightarrow{\psi}^{\varepsilon}$ on the $j$-th ball $B_j$ and by zero outside $B_j$. As \eqref{eq:equivalence-relation} holds with $\overrightarrow{\rho}^{\varepsilon,j}$ and $\overrightarrow{\psi}^{\varepsilon,j}$ for each $j=1,\cdots,N$, we have, by Proposition \ref{prop:minimization},
\begin{align}\label{eq:no-cost-j}
\widetilde{\cE}^{\varepsilon}\left(\overrightarrow{\rho}^{\varepsilon,j}\right)\leq \cF^{\varepsilon}\left(\overrightarrow{\psi}^{\varepsilon,j}\right)\qquad\text{for each }j=1,\cdots,N.
\end{align}
Therefore, combining \eqref{eq:no-cost-0}, \eqref{eq:disjoint}, \eqref{eq:rho-conclusion}, \eqref{eq:no-cost-j}, and successive applications of Proposition \ref{prop:minimization}, we obtain
\begin{align*}
\cE^{\varepsilon}(\overrightarrow{\rho}^{\varepsilon}) = \widetilde{\cE}^{\varepsilon}\left(\sum_{j=0}^N\overrightarrow{\rho}^{\varepsilon,j}\right) & \leq \sum_{j=0}^N\widetilde{\cE}^{\varepsilon}\left(\overrightarrow{\rho}^{\varepsilon,j}\right) + NCe^{-\frac{\delta_0}{C\varepsilon}} \\
& \leq \sum_{j=0}^N\cF^{\varepsilon}\left(\overrightarrow{\psi}^{\varepsilon,j}\right) + C\varepsilon\delta^{-1} + NCe^{-\frac{\delta_0}{C\varepsilon}} \\
& \leq \cF^{\varepsilon}\left(\overrightarrow{\psi}^{\varepsilon}\right) + C\varepsilon\delta^{-1} + NCe^{-\frac{\delta_0}{C\varepsilon}}.
\end{align*}
Taking the limit supremum as $\varepsilon\to0$ to the above and using \eqref{eq:recovery} finish the proof.
\end{proof}

\section{Applications to cell-cell adhesion and the DAH: Proof of Theorem \ref{thm:DAH}}\label{sec:DAH}
In this section, we establish some key properties of the limiting interfacial energy $\cE^0$ depending on the interaction coefficients $\alpha_{ij}$.
We recall that the  adhesion coefficients appearing in $\cE^0$ depend on the $\alpha_{ij}$ via the formula\eqref{eq:metric-coefficient-before-change-of-variables} which we recall here for convenience:
$$    \sigma_{ij}:=d(\overrightarrow{\alpha_i},\overrightarrow{\alpha_j}), \qquad i,j = 0,\dots N
$$
where the function $d:\R^N\to [0,\infty]$ is the distance defined by \eqref{eq:distance-definition-before-change-of-variables} and also recalled here:
\begin{align*}
d(\overrightarrow{\zeta_0},\overrightarrow{\zeta_1}):=\inf\left\{\frac{1}{\sqrt{2}}\int_0^1\sqrt{h(\gamma(t))}|\gamma'(t)|\,dt\,|\,\text{$\gamma:[0,1]\to\R^N$ is piecewise $C^1$} ,\,\gamma(0)=\overrightarrow{\zeta_0},\,\gamma(1)=\overrightarrow{\zeta_1}\right\}
\end{align*}
for any $\overrightarrow{\zeta_0},\overrightarrow{\zeta_1}\in\R^N$.
In what follows, we will refer as geodesic the curve that realizes the infimum in this definition (assuming it exists. the rigorous arguments will rely on almost geodesics).

As detailed in Section \ref{subsec:steinberg}, the properties of $\cE^0$ are related to cases of equality in the triangle inequality
$$
d(\overrightarrow{\alpha_i},\overrightarrow{\alpha_j})
\leq d(\overrightarrow{\alpha_i},\overrightarrow{\alpha_k})
+d(\overrightarrow{\alpha_k},\overrightarrow{\alpha_j})
$$
which we explore in this section.

\subsection{Non-interacting species}\label{subsec:potentials}

We start with a simple particular case in which cells of different species do not interact ($\alpha_{ij}=0$ if $i\neq j$). In that case we can prove the following proposition which implies Theorem \ref{thm:DAH}-(i) when $N=2$:
\begin{proposition}\label{prop:sorting}
If $A$ is diagonal, then $d(\overrightarrow{\alpha_i},\overrightarrow{\alpha_j})
= d(\overrightarrow{\alpha_i},0) +d(0,\overrightarrow{\alpha_j})$ that is:
\begin{align*}
\sigma_{ij} = \sigma_{i0} + \sigma_{0j}\qquad\text{for any }i,j=1,\cdots,N.
\end{align*}
Therefore, for $\overrightarrow{\rho}=\sum_{i=1}^N\theta_i\chi_{E_i}\overrightarrow{e_i}\in\cF(A,\overrightarrow{m})$,
we have
\begin{align*}
\cE^{0}(\overrightarrow{\rho}) = \sum_{i,j=0}^N\sigma_{ij}\cH^{d-1}(\partial^*E_i \cap \partial^*E_j \cap \Omega) = 2 \sum_{i=1}^N \sigma_{0i}\cH^{d-1}(\partial^*E_i \cap \Omega).
\end{align*}
\end{proposition}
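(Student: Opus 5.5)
The plan is to prove the reverse triangle inequality $d(\overrightarrow{\alpha_i},\overrightarrow{\alpha_j})\geq d(\overrightarrow{\alpha_i},0)+d(0,\overrightarrow{\alpha_j})$ by exhibiting a calibration: a Lipschitz function $u$ on $\R^N$ with $|\nabla u|\leq\sqrt{h/2}$ whose values at $\overrightarrow{\alpha_i}$ and $\overrightarrow{\alpha_j}$ differ by exactly $\sigma_{0i}+\sigma_{0j}$. The other inequality is just \eqref{eq:triangle-ineq}.

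\textbf{Reduction to a diagonal $Q$.} Any two factorizations $A=Q^TQ$ differ by an orthogonal matrix, $Q'=UQ$. By \eqref{eq:hdefl}, $h_{Q'}(U\overrightarrow{\psi})=h_Q(\overrightarrow{\psi})$, and the phase points transform as $\overrightarrow{\alpha_i}'=U\overrightarrow{\alpha_i}$. So the coefficients $\sigma_{ij}$ do not depend on the choice of $Q$, and we may take $Q=\mathrm{diag}(\sqrt{\alpha_{11}},\dots,\sqrt{\alpha_{NN}})$. Then $\overrightarrow{\alpha_k}=\theta_k\sqrt{\alpha_{kk}}\,\overrightarrow{e_k}$ lies on the $k$-th axis. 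Write $g^*(p)=(\frac{m-1}{m}\max\{p,0\})^{\frac{m}{m-1}}$ and $v_k=\sqrt{\alpha_{kk}}\psi_k-a_k$, so that $f^*(Q^T\overrightarrow{\psi}-\overrightarrow{a})=\max_k g^*(v_k)$. Introduce the one-dimensional potentials
$$h_k(s)=\tfrac12 s^2-g^*(\sqrt{\alpha_{kk}}s-a_k)=h(s\overrightarrow{e_k}),$$
which satisfy $0\leq h_k(s)\leq \frac12 s^2$.

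\textbf{Key pointwise inequality.} For $i\neq j$ we show $h(\overrightarrow{\psi})\geq h_i(\psi_i)+h_j(\psi_j)$. Let $k$ be an index achieving $\max_l g^*(v_l)$; then $h(\overrightarrow{\psi})=\frac12|\overrightarrow{\psi}|^2-g^*(v_k)$.
\begin{itemize}
\item If $k=i$, drop the other coordinates to get $h\geq h_i(\psi_i)+\frac12\psi_j^2\geq h_i(\psi_i)+h_j(\psi_j)$. The case $k=j$ is symmetric.
\item If $k\notin\{i,j\}$, then $g^*(v_k)\geq g^*(v_i),g^*(v_j)$ gives nothing directly. Instead use $\frac12\psi_k^2-g^*(v_k)=h_k(\psi_k)\geq0$, so $h\geq\frac12\psi_i^2+\frac12\psi_j^2\geq h_i(\psi_i)+h_j(\psi_j)$.
\end{itemize}

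\textbf{Calibration.} Set $D_k(s)=\int_0^s\sqrt{h_k(t)/2}\,dt$ (signed), and $u(\overrightarrow{\psi})=D_i(\psi_i)-D_j(\psi_j)$. Then $|\nabla u|^2=\frac12(h_i(\psi_i)+h_j(\psi_j))\leq\frac12 h(\overrightarrow{\psi})$. For any piecewise $C^1$ path $\gamma$ from $\overrightarrow{\alpha_i}$ to $\overrightarrow{\alpha_j}$, the chain rule gives
$$\frac{1}{\sqrt2}\int_0^1\sqrt{h(\gamma)}|\gamma'|\,dt\geq u(\overrightarrow{\alpha_i})-u(\overrightarrow{\alpha_j})=D_i(\theta_i\sqrt{\alpha_{ii}})+D_j(\theta_j\sqrt{\alpha_{jj}}).$$
Finally, $D_k(\theta_k\sqrt{\alpha_{kk}})=\sigma_{0k}$. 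The inequality $\geq$ follows from the same calibration with $u=D_k(\psi_k)$, since $h\geq h_k(\psi_k)$ (the case above with one index). The inequality $\leq$ follows by taking the straight segment along the $k$-th axis, where $h=h_k$. Hence $\sigma_{ij}\geq\sigma_{0i}+\sigma_{0j}$.

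\textbf{Energy formula.} Substitute $\sigma_{ij}=\sigma_{i0}+\sigma_{0j}$ for $i,j\geq1$, $i\neq j$, into \eqref{eq:limit-energy-perimeter}. Then use that, for a Caccioppoli partition, $\cH^{d-1}(\partial^*E_i\cap\Omega)=\sum_{j\neq i}\cH^{d-1}(\Sigma_{ij})$ up to $\cH^{d-1}$-null sets. Regrouping the ordered double sum gives the factor $2\sum_i\sigma_{0i}\cH^{d-1}(\partial^*E_i\cap\Omega)$. The main obstacle is the pointwise inequality $h\geq h_i+h_j$, in particular the case $k\notin\{i,j\}$, which the argument above handles with $h_k\geq0$.
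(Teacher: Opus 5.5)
Your proof is correct and follows essentially the same route as the paper. Both arguments rest on a pointwise decoupling inequality for $h$, followed by a reduction of the path length to one-dimensional distances along the axes. The paper derives $h(\overrightarrow{\psi})\geq\sum_k h(\psi_k\overrightarrow{e_k})$ from the superadditivity $\sum_k f_m(\rho_k)\leq f_m(\sum_k\rho_k)$ in the primal formula \eqref{def:potential-h}, whereas you obtain the two-index version from the dual formula \eqref{eq:hdefl}. Your calibration $u=D_i(\psi_i)-D_j(\psi_j)$ plays the role of the paper's Cauchy--Schwarz step, and you also make explicit several points the paper leaves implicit: independence of $\sigma_{ij}$ from the choice of $Q$, the restriction to $i\neq j$, and the Caccioppoli-partition regrouping in the energy formula.
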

In particular, if  $ \overrightarrow{\rho} $ minimizes $\cE^0$, then  each $E_i$ minimizes the energy $\cH^{d-1}(\partial^*E_i \cap \Omega)$, which leads to surfaces with constant mean-curvature that intersects the fixed boundary $\Omega$ with normal contact angle.

\begin{proof}[Proof of Proposition \ref{prop:sorting}]
Since $A$ is diagonal, the matrix $Q$ is the diagonal matrix with entries $\alpha_{jj}^{1/2}$ and we have 
$$ \overrightarrow{\alpha_i} = \theta_i \alpha_{ii}^{1/2}  \overrightarrow{e_i} .$$
In particular, we can write:
\begin{align}
h(\overrightarrow{\psi}) & = \inf_{\overrightarrow{\rho}\in\R^N}\left\{W(\overrightarrow{\rho})+\frac12|\overrightarrow{\psi}-Q\overrightarrow{\rho}|^2\right\} \nonumber\\
& = \inf_{\overrightarrow{\rho}\in\R^N}\left\{f\left(\sum_j\rho_j\right) + \sum_ja_j\rho_j-\frac12\sum_j\alpha_{jj}\rho_j^2+\frac12\sum_j(\psi_j-\alpha_{jj}^{1/2}\rho_j)^2\right\} \nonumber \\
& \geq \inf_{\overrightarrow{\rho}\in\R^N}\left\{\sum_jf\left(\rho_j\right) + \sum_ja_j\rho_j-\frac12\sum_j\alpha_{jj}\rho_j^2+\frac12\sum_j(\psi_j-\alpha_{jj}^{1/2}\rho_j)^2\right\} \nonumber \\
& \geq \sum_{j} h(\psi_j\overrightarrow{e_j})\label{eq:hine}
\end{align}
where we used the fact that $\sum_jf\left(\rho_j\right)\leq f\left(\sum_j\rho_j\right)$ when $f$ is given by \eqref{eq:f}.

We can now use this inequality to show that the geodesic (that is the curve that realizes the minimum in the definition of $d(\overrightarrow{\alpha_i},\overrightarrow{\alpha_j})$ \eqref{eq:distance-definition-before-change-of-variables}) 
from $\overrightarrow{\alpha_i}$ to $\overrightarrow{\alpha_j}$  is the curve $\gamma_0$ which is the  piecewise straight line along the axes from $\overrightarrow{\alpha_i}$ to $0$ and then from $0$ to  $\overrightarrow{\alpha_j}$:

Fix  $i=1$ and $j=2$ (without loss of generality) and let 
$\overrightarrow{\psi}=(\psi_1,\cdots,\psi_N):[0,1]\to\R^N$ be an arbitrary piecewise $C^1$ curve joining $\overrightarrow{\alpha_1}$ to $\overrightarrow{\alpha_2}$, that is
\begin{equation}\label{eq:endpoints}
\overrightarrow{\psi}(0) = (\theta_1 \alpha_{11}^{1/2},0,\dots 0), \qquad 
\overrightarrow{\psi}(1) = (0,\theta_2 \alpha_{22}^{1/2},0,\dots 0).
\end{equation}

By Cauchy-Schwarz inequality and \eqref{eq:hine}, we have
\begin{align*}
\int_0^1\sqrt{h(\overrightarrow{\psi})}|\overrightarrow{\psi}'|\,dt &\geq \int_0^1\sqrt{\sum_{k=1}^N h(\psi_k\overrightarrow{e_k})}\sqrt{\sum_{k=1,2}\psi_k'^2}\,dt \\
& \geq \sum_{k=1}^N\int_0^1\sqrt{h(\psi_k\overrightarrow{e_k})}|\psi_k'|\,dt.
\end{align*}
In view of \eqref{eq:endpoints}, this right-hand side is minimum when $\psi_k(s)=0$ for all $s\in [0,1]$ for $k=3, \dots, N$.
Furthermore, the right-hand side does not depend on the choice of the curve $\overrightarrow{\psi}$ and agrees with the left-hand side when $\overrightarrow{\psi}=\gamma_0$, which finishes the proof.
\end{proof}

\subsection{The phase space and engulfment}\label{subsec:engulfment}
We now take $N=2$ (only two types of cell are interacting) and write the interaction coefficients matrix as $A=\begin{pmatrix}
\alpha & \gamma \\
\gamma & \beta
\end{pmatrix}$ with $\alpha\geq\beta>0$. We choose the matrix  $Q$ (which satisfies $A=Q^TQ$) to be $Q=\begin{pmatrix}
\sqrt{\alpha} & \frac{\gamma}{\sqrt{\alpha}} \\
0 & \sqrt{\beta - \frac{\gamma^2}{\alpha}}
\end{pmatrix}$.

\medskip

The phase vectors $\overrightarrow{\alpha_i}$ are then given by
\begin{align*}
\overrightarrow{\alpha_0} = \overrightarrow{0},\quad\overrightarrow{\alpha_1} &= \theta_1 Q \overrightarrow{e_1} = \left(\frac12 \alpha\right)^{\frac{1}{m-2}} \begin{pmatrix} \sqrt{\alpha} \\ 0 \end{pmatrix},\quad\text{and }\,\,\overrightarrow{\alpha_2} = \theta_2 Q \overrightarrow{e_2} = \left(\frac12 \beta\right)^{\frac{1}{m-2}} \begin{pmatrix} \frac{\gamma}{\sqrt{\alpha}} \\ \sqrt{\beta - \frac{\gamma^2}{\alpha}} \end{pmatrix},
\end{align*}
and are shown in Figure \ref{fig:phase} in the phase plane. 
As $\gamma$ varies, the vector $\overrightarrow{\alpha_2}$ moves along a circle of a fixed radius in the clockwise direction.

\begin{figure}[htbp]
	\begin{center}
            \includegraphics[height=5cm]{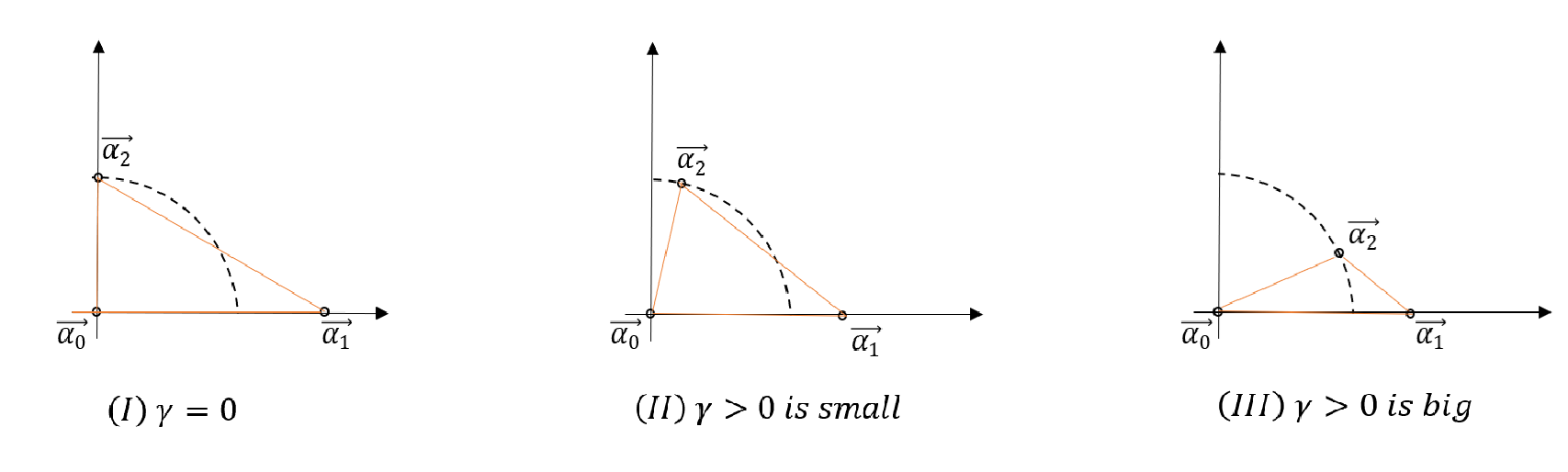}
            \captionsetup{width=0.8\textwidth} 
		\vskip 0pt
		\caption{The phase vectors $\overrightarrow{\alpha_0}$ (vacuum), $\overrightarrow{\alpha_1}$ (say, cell A), and $\overrightarrow{\alpha_2}$ (cell B).}
        \label{fig:phase}
	\end{center}
\end{figure}

In the weak coupling regime, when $\gamma>0$ is small (Figure \ref{fig:phase}(II)), the vector $\overrightarrow{\alpha_2}$ enters the first quadrant and gets closers to the vector $\overrightarrow{\alpha_1}$ (in the Euclidean metric), indicating that the cross-adhesion between cells A and B becomes stronger. We will prove that the geodesic from  $\overrightarrow{\alpha_1}$ to $\overrightarrow{\alpha_2}$ does not pass through $\overrightarrow{\alpha_0}$ (as was the case when $\gamma=0$) leading to strict triangle inequalities and proving Theorem \ref{thm:DAH}-(ii).
  
In the strong coupling regime  (larger $\gamma$, see Figure \ref{fig:phase}(III)), the vector $\overrightarrow{\alpha_2}$ gets very close to the $x$-axis. 
Because the potential function $h$ vanishes at $\overrightarrow{\alpha_2}$, we will show that the geodesic from $\overrightarrow{\alpha_0}$ to $\overrightarrow{\alpha_1}$ passes through $\overrightarrow{\alpha_2}$, leading to an equality in the triangle inequality and proving Theorem \ref{thm:DAH}-(iii).
 
\medskip

These geodesics are shown (numerically) in Appendix \ref{subsec:numerics} (see Figure \ref{fig:geodesics}). These computations suggest that there is a critical value of $\gamma$ at which this transition takes place.

\medskip

We now introduce some notations that will be used in the proofs of Theorem \ref{thm:DAH} below.
We will use $x$ and $y$ as the coordinates in the phase plane (so $\overrightarrow{\psi}=(x,y)$). Then, the potential $h(\overrightarrow{\psi})$ is given by (using Lemma \ref{lem:legendre}(i)),
\begin{align}\label{eq:h-in-xy}
h(x,y) = \frac12(x^2+y^2) - \left(\frac{m-1}{m}\max\left\{v_0,v_1(x,y),v_2(x,y)\right\}\right)^{\frac{m}{m-1}}
\end{align}
where
\begin{align*}
v_0 (x,y): = 0,\qquad v_1(x,y) := \sqrt{\alpha}x - a_1,\quad\text{and}\quad v_2(x,y) := \frac{\gamma}{\sqrt{\alpha}}x + \sqrt{\beta-\frac{\gamma^2}{\alpha}}y - a_2.
\end{align*}
We divide the phase plane into the three regions 
\begin{align*}
V_i:=\{(x,y)\,|\,v_i(x,y) =\max\{v_0(x,y),v_1(x,y),v_2(x,y)\} \}. 
\end{align*}
These regions can be computed explicitly and are plotted in  Figure \ref{fig:coordinate-settings}.
Note that the separating  lines
\begin{align*}
L_{ij} := \{(x,y)\,|\,v_i (x,y)= v_j(x,y) \}
\end{align*}
meet at one point $M$.

\begin{figure}[htbp]
	\begin{center}
            \includegraphics[height=7cm]{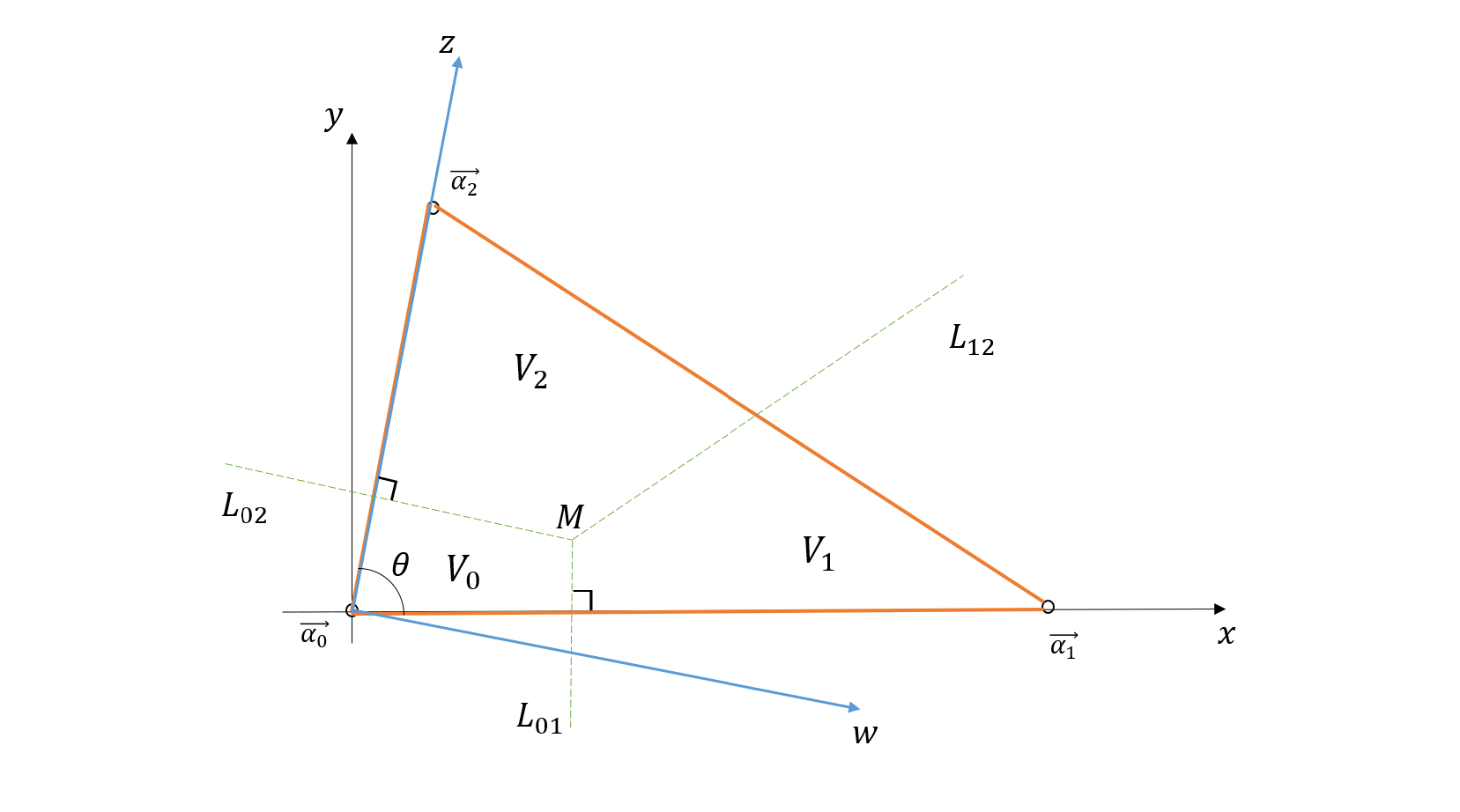}
            \captionsetup{width=0.8\textwidth} 
		\vskip 0pt
		\caption{The cells $V_i$, the bisection lines $L_{ij}$, and the $zw$-coordinate system.}
        \label{fig:coordinate-settings}
	\end{center}
\end{figure}

In the region $V_0 \cup V_1$, we can write
\begin{align}\label{eq:decomposition-xy}
h(x,y) = h_1(x) + \frac12y^2\text{ in }V_0 \cup V_1,\quad \text{ where }h_1(x) := \frac12x^2 - \left(\frac{m-1}{m}\max\{\sqrt{\alpha}x-a_1,0\}\right)^{\frac{m}{m-1}}.
\end{align}
This decomposition will be useful later on and 
in order to write something similar in the region $V_0 \cup V_2$, we need to use a different set of coordinates:
The $(z,w)$-coordinate system (the blue axis on Figure \ref{fig:coordinate-settings}) is defined from $(x,y)$ by rotation: $z = x \cos \theta + y \sin \theta$ and $w = x \sin \theta - y \cos \theta$ where 
\begin{equation}\label{eq:theta}
\cos\theta = \frac{\gamma}{\sqrt{\alpha\beta}}, \quad   \sin\theta = \sqrt{1-\frac{\gamma^2}{\alpha\beta}}.
\end{equation}
The arguments $v_0, v_1,v_2$ in \eqref{eq:h-in-xy} can then be written as functions of $(z,w)$ as follows:
\begin{align*}
v_0(z,w)=0,\quad v_1(z,w) = \frac{\gamma}{\sqrt{\beta}}z + \sqrt{\alpha - \frac{\gamma^2}{\beta}}w - a_1,\quad\text{and}\quad v_2(z,w) = \sqrt{\beta}z - a_2.
\end{align*}
In the region $V_0 \cup V_2$, we can then write (to be compared with \eqref{eq:decomposition-xy}):
\begin{align}\label{eq:decomposition-zw}
h(z,w) = h_2(z) + \frac12w^2\text{ in }V_0 \cup V_2,\quad  \text{ where }h_2(z) := \frac12z^2 - \left(\frac{m-1}{m}\max\{\sqrt{\beta}z-a_2,0\}\right)^{\frac{m}{m-1}}.
\end{align}

Finally, since our goal is to identify the geodesics, we denote
\begin{align*}
I(\gamma) : = \int_0^{\text{Length}(\gamma)}\sqrt{h(\gamma(s))}\,ds\quad\text{for any piecewise $C^1$ curve $\gamma$},
\end{align*}
where $s$ is the arclength parameter of $\gamma$ (with respect to the Euclidean metric).

\begin{proof}[Proof of Theorem \ref{thm:DAH}-(ii)]
When $\gamma =0$, we have $\sigma_{12} = \sigma_{10} + \sigma_{02}$ (see Proposition \ref{prop:sorting}).
In particular we have $\sigma_{10} = \sigma_{12} - \sigma_{02} <\sigma_{12} + \sigma_{02} $. The symmetry $\sigma_{ij}=\sigma_{ji}$ and the continuity of the distance $d$ with respect to $\gamma$ (which follows from the growth of $h$ at infinity) imply
\begin{align*}
\sigma_{01} < \sigma_{02} + \sigma_{21} \quad\text{for small }\gamma>0.
\end{align*}
We can show similarly that 
$$  \sigma_{02} < \sigma_{01} + \sigma_{12}  \quad\text{for small }\gamma>0.
$$

We will now show the following inequality which implies the remaining strict triangle  inequality: for small $\gamma>0$ we have
\begin{align}\label{eq:conclusion-strict-ineq1}
\sigma_{12}  \leq \sigma_{10} + \sigma_{02}- C_{\gamma}
\end{align}
with $C_{\gamma} := \frac{\gamma^2}{2\sqrt2\alpha\beta}\min\left\{\frac{a_1}{2\sqrt{\alpha}},\frac{a_2}{2\sqrt{\beta}}\right\}^2>0$.

\medskip

For any $\delta>0$, there exists a curve $\gamma_{ij}$ from $\overrightarrow{\alpha_i}$ to $\overrightarrow{\alpha_j}$ for $(i,j) = (2,0)$ and $(0,1)$, respectively, satisfying
\begin{align}\label{eq:almost-geodesic-11}
I(\gamma_{20}) \leq \sigma_{20} + \delta \quad\text{and}\quad I(\gamma_{01}) \leq \sigma_{01} + \delta.
\end{align}
By the decompositions \eqref{eq:h-in-xy} and \eqref{eq:decomposition-zw}, we can assume without loss of generality that both the $y$-component and the $w$-component of $\gamma_{20}$ and $\gamma_{01}$ are nonnegative. Otherwise, we can replace the curve $\gamma_{20}(t) = (x(t),y(t))$ by $(x(t),\max\{y(t),0\})$ (or, $(z(t),w(t))$ by $(z(t),\max\{w(t),0\})$) with cheaper cost. We let $\gamma$ be the concatenation of the curves $\gamma_{20}$ and $\gamma_{01}$ so that $\gamma$ travels from $\overrightarrow{\alpha_2}$ through $\overrightarrow{\alpha_0}$ to $\overrightarrow{\alpha_1}$.

Next, we note that the region $V_0$ is included in the domain where $x<\frac{a_1}{\sqrt \alpha}$ (since $v_1(x,y)<0$ there) and $z<\frac{a_2}{\sqrt\beta}$ (which gives $v_2(z,w)<0$). So the disk $D := \{x^2+y^2\leq\eta^2 \}$ with $\eta := \min\left\{\frac{a_1}{2\sqrt{\alpha}},\frac{a_2}{2\sqrt{\beta}}\right\}$ is contained in $V_0$.
We now consider the last hitting point, $P_0$, of $\gamma_{20}$ on $\partial D$. Similarly, consider the first hitting point, $P_1$, of $\gamma_{01}$ on $\partial D$. With these choices, the subarcs $\gamma_{P_0O}$ (of $\gamma_{20}$) and $\gamma_{OP_1}$ (of $\gamma_{01}$) stay within the region $V_0$ where \eqref{eq:h-in-xy} applies. 
\medskip

\begin{figure}[htbp]
	\begin{center}
            \includegraphics[height=7cm]{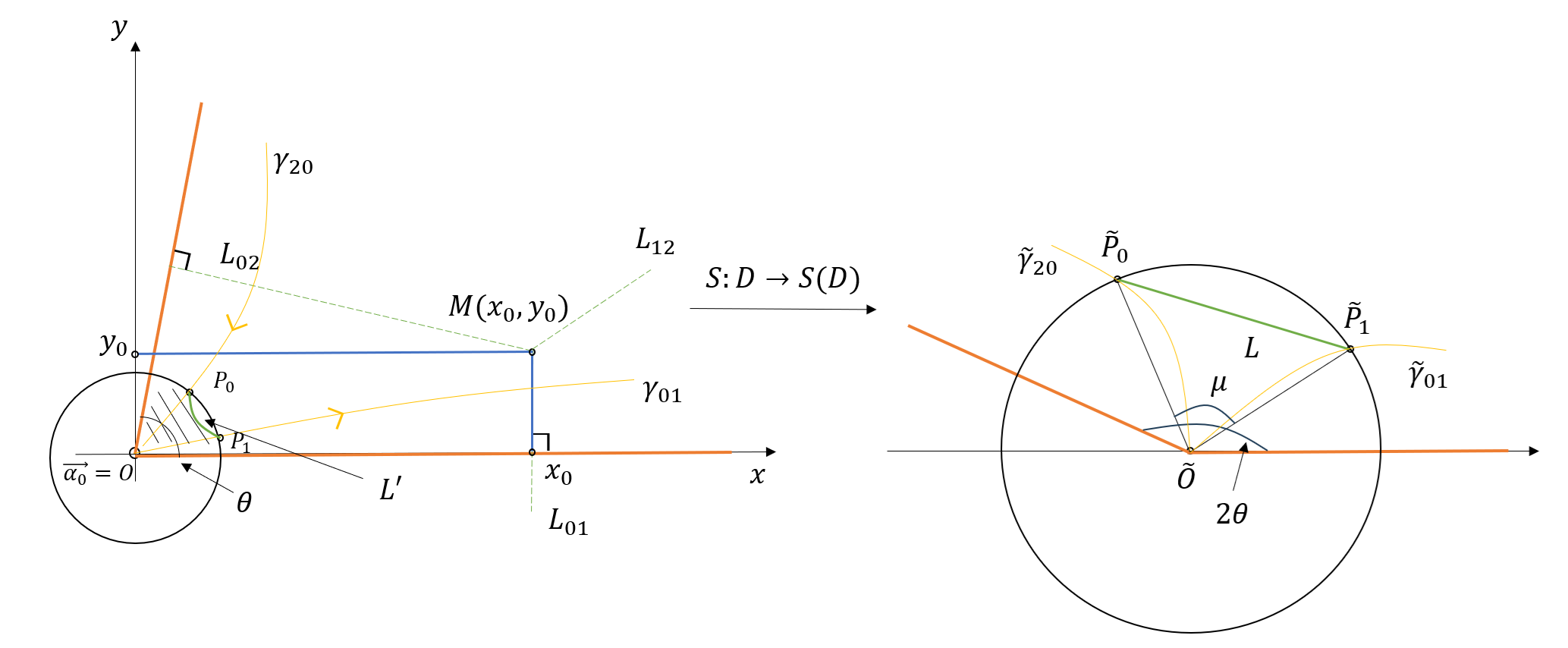}
            \captionsetup{width=0.9\textwidth} 
		\vskip 0pt
		\caption{The domain $\{x^2+y^2\leq\eta^2\text{ and }y,w\geq0\}$ (the shaded region) and the square map $S$.}
        \label{fig:square-transformation}
	\end{center}
\end{figure}
We now identify the $(x,y)$-plane with the complex plane $\mathbb C$, denoting $\xi = x+iy$.
Since $h(x,y) = \frac12(x^2+y^2)$ in $D$, we can write $h(\xi) = \frac 1 2 |\xi|^2$ and so  the length of a curve $\xi(t)$ joining $\xi(0)=\xi_1$ to $\xi(1)=\xi_2$ is 
$$
I(\xi) = \int_0^1 \sqrt{\frac 1 2 |\xi(t)|^2} |\xi'(t)|\, dt =\frac{1}{\sqrt 2} \int_0^1 |\xi(t)| |\xi'(t)|\, dt = \frac 1 {2\sqrt2} \int_0^1 |(\xi^2)'(t)| \, dt 
$$
The complex transformation $\tilde\xi=S(\xi):=\xi^2$ is thus an isometry from $(D,\sqrt{h(\xi)}|d\xi|)$ to $(S(D),\frac{1}{2\sqrt2}|d\tilde{\xi}|)$.

The points $\tilde{P_0}=S(P_0)$ and $\tilde {P_1}=S(P_1)$ lie on the circle of radius $\eta^2$, so their $\frac{1}{2\sqrt2}|d\tilde \xi|$-distance to $O$ is $r = \frac{\eta^2}{2\sqrt2}$.
Let $L$ denote the straight line joining  $\tilde{P_0}$ and $\tilde{P_1}$ in $S(D)$. This is the geodesic since the set $S(D)$ is convex ($\theta \leq \pi/2$) and it has $\frac{1}{2\sqrt2}|d\tilde \xi|$ length $2r \sin(\mu/2)$.
Let ${L}'$ denote the pullback of $L$ under the transformation $S$. 
Since $I(\gamma_{P_0O}) \geq r$ and $I(\gamma_{OP_1}) \geq r$, we deduce
\begin{align}\label{eq:save-cost1}
I(\gamma_{P_0O}) + I(\gamma_{OP_1}) - I(L') \geq 
 2r(1-\sin (\mu/2) )\geq 2r(1-\sin(\theta)) \geq r \frac{\gamma^2}{\alpha\beta}=
 \frac{\eta^2\gamma^2}{2\sqrt2\alpha\beta}=C_{\gamma}
\end{align}
where we recall that $\theta$ is defined by \eqref{eq:theta}. 

\medskip

Consider the alternative curve $\gamma_1$ defined by the concatenation of the three curves $\gamma_{\overrightarrow{\alpha_2}P_0}$, $L'$, and $\gamma_{P_1\overrightarrow{\alpha_1}}$, where the subarcs $\gamma_{\overrightarrow{\alpha_2}P_0}$ and $\gamma_{P_1\overrightarrow{\alpha_1}}$ are from $\gamma_{20}$ and $\gamma_{01}$, respectively. Then, \eqref{eq:almost-geodesic-11} and \eqref{eq:save-cost1} imply
\begin{align*}
\sigma_{12} \leq I(\gamma_1) \leq I(\gamma) - C_{\gamma} &\leq I(\gamma_{20}) + I(\gamma_{01}) - C_{\gamma} \leq \sigma_{20} + \sigma_{01} + 2\delta - C_{\gamma} 
\end{align*}
and \eqref{eq:conclusion-strict-ineq1} follows by taking $\delta\to0$.
\end{proof}

\begin{proof}[Proof of Theorem \ref{thm:DAH}(iii)]
In order to show that $\sigma_{01} = \sigma_{02} + \sigma_{21}$, it suffices to show that for any curve $\gamma$ joining from $\overrightarrow{\alpha_0}$ to $\overrightarrow{\alpha_1}$, there exists a curve $\widetilde{\gamma}$ joining $\overrightarrow{\alpha_0}$ through $\overrightarrow{\alpha_2}$ to $\overrightarrow{\alpha_1}$ satisfying $ I(\widetilde{\gamma})\leq I(\gamma)$. As before, we can assume that the $w$-component of $\gamma$ is nonnegative. Indeed, the curve $(z(t),\max\{w(t),0\})$ has a shorter length than $(z(t),w(t))$ since
\begin{align*}
h(z,w) \geq h(z,\max\{w,0\}) \quad \text{for }(z,w)\in V_i \quad (\text{for any }i=0,1,2).
\end{align*}

\medskip

The proof Theorem \ref{thm:DAH}(iii) now relies on the following claim:
When  $\gamma\geq\beta$, we have:
\begin{align}\label{eq:decomposition-gamma>beta}
h(z,w) \geq h(z,0) + h(\theta_2,w) \qquad\text{whenever}\quad z\leq\theta_2,\,w\geq0.
\end{align}
Postponing the proof of \eqref{eq:decomposition-gamma>beta} until the end of this section, we conclude the proof of Theorem \ref{thm:DAH}(iii) as follows:
\begin{figure}[htbp]
	\begin{center}
            \includegraphics[height=7cm]{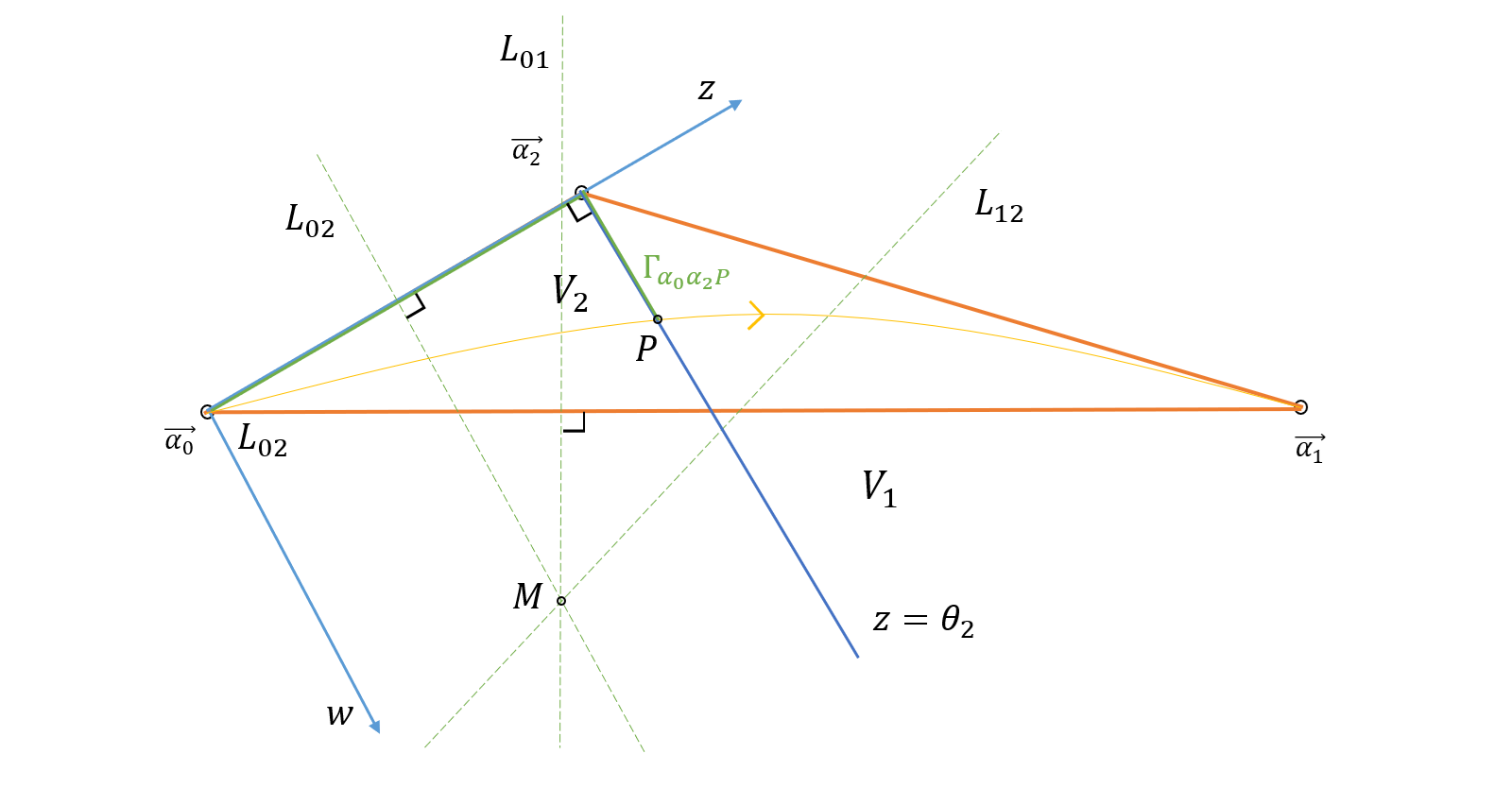}
            \captionsetup{width=0.8\textwidth} 
		\vskip 0pt
		\caption{The curve $\Gamma_{\alpha_0\alpha_2P}$ is cheaper than $\gamma_{\alpha_0P}$.} 
        \label{fig:full-engulfment}
	\end{center}
\end{figure}
Given a curve $\gamma$ from $\overrightarrow{\alpha_0}$ to $\overrightarrow{\alpha_1}$ that does not pass through $\overrightarrow{\alpha_2}$ we denote by $P$ the first point of $\gamma$ on the line $\{z=\theta_2\}$, and let $\gamma_{\alpha_0P}$ (resp. $\gamma_{P\alpha_1}$) be the subarc of $\gamma$ from $\overrightarrow{\alpha_0}$ to $P$ (resp. from $P$ to $\overrightarrow{\alpha_1}$) - see Figure \ref{fig:full-engulfment}.  Then, $\gamma_{\alpha_0P}$ is contained in the region $\{z\leq\theta_2,\,w\geq0\}$ so that \eqref{eq:decomposition-gamma>beta} holds and implies:
\begin{align*}
I(\gamma_{\alpha_0P}) &= \int_0^1\sqrt{h(z(t),w(t))}\sqrt{\dot{z}(t)^2 + \dot{w}(t)^2}\,dt\notag \\
&\geq \int_0^1 \sqrt{h(z(t),0)}|\dot{z}(t)|\,dt + \sqrt{h(\theta_2,w(t))}|\dot{w}(t)|\,dt\notag \\
&\geq I(\Gamma_{\alpha_0\alpha_2}) + I(\Gamma_{\alpha_2P}) \notag
\end{align*}
Here, $\Gamma_{\alpha_0\alpha_2}$ is the straight line from $\overrightarrow{\alpha_0}$ to $\overrightarrow{\alpha_2}$, $\Gamma_{\alpha_2P}$ is the straight line from $\overrightarrow{\alpha_2}$ to $P$.
This inequality shows that the curve $\widetilde{\gamma}$ given by the concatenation of $\Gamma_{\alpha_0\alpha_2}$ and $\Gamma_{\alpha_2P}$ and $\gamma_{P\alpha_1}$ 
has a smaller distance than $\gamma$ and passes through $\overrightarrow{\alpha_2}$.

\medskip

It thus only remains to show \eqref{eq:decomposition-gamma>beta} in order to complete the proof. In order to do this, we define
$$
H(z, w) := h(z, w) - h(z, 0) - h(\theta_2, w).
$$
Since 
\begin{align*}
h(z,w) 
& = \frac12(z^2+w^2) - \left(\frac{m-1}{m}\max\left\{v_0,v_1(z,w),v_2(z,w)\right\}\right)^{\frac{m}{m-1}}\\
& = \frac12(z^2+w^2) - G\left(\max\left\{v_0,v_1(z,w),v_2(z,w)\right\}\right) ,
\end{align*}
with $G(u) := \left(\frac{m-1}{m}u\right)^{\frac{m}{m-1}}$, 
we can write:
\begin{align*}
H(z, w) = G(\max \{v_0, v_1(\theta_2, w), v_2(\theta_2) \})\, +\, & G(\max \{v_0,v_1(z,0), v_2(z) \})\\
& - G(\max\{v_0,v_1(z, w),v_2(z)\}) - \frac{1}{2} \theta_2^2.
\end{align*}

Since $v_1(z,w) \geq v_1(z,0)$ when $w>0$, we have $H(z,w) =   G(\max \{v_0, v_1(\theta_2, w), v_2(\theta_2) \}) - \frac{1}{2} \theta_2^2$ whenever $v_2(z) \geq v_1(z,w)$ and so
\begin{align*}
\frac{\partial H}{\partial z}(z,w) =0 \quad\text{when }v_2(z) > v_1(z,w).
\end{align*}
In the set where $v_1(z,0)<v_2(z)<v_1(z,w)$, we have 
$$
\frac{\partial H}{\partial z}(z,w) = \pa_z  G( \max\{0,  v_2(z) \})- \pa_z G(\max\{0,v_1(z, w) \})
$$
so  using the definition of $v_i$ and the assumption $\gamma\geq\beta$, we see that
\begin{align*}
\frac{\partial H}{\partial z}(z,w) &=  \sqrt{\beta} \, G'(\max\{v_2(z), 0\}) - \frac{\gamma}{\sqrt{\beta}} \, G'(\max \{v_1(z, w),0\})
\\
&\leqslant \left( \sqrt{\beta} - \frac{\gamma}{\sqrt{\beta}} \right) G'(\max \{v_1(z, w),0\}) \leqslant 0 \quad\text{when }v_1(z,0)<v_2(z)<v_1(z,w).
\end{align*}
Finally, we have similarly
\begin{align*}
\frac{\partial H}{\partial z}(z,w) &=  \frac{\gamma}{\sqrt{\beta}} \, G'(\max\{v_1(z,0), 0\}) - \frac{\gamma}{\sqrt{\beta}} \, G'(\max \{v_1(z, w),0\})\leq0\quad\text{when }v_2(z)<v_1(z,0).
\end{align*}
We conclude that $\frac{\partial H}{\partial z} (x,w)\leq 0$ for all $w\geq0$. Since $H(\theta_2,w)=0$, it follows that $H(z,w)\geq0$ in the region $\{(z,\theta)\,|\, z\leq \theta_2,\,w\geq0\}$. This completes the proof of \eqref{eq:decomposition-gamma>beta} and of Theorem \ref{thm:DAH}-(iii).
\end{proof}

\section{Convergence of the gradient flow}

We keep $\sigma=1$ in this section as well for simplicity and we denote by $\overrightarrow{\rho}^{\varepsilon}(t,x)=(\rho_1^{\varepsilon},\cdots,\rho_N^{\varepsilon})^T$ a solution to \eqref{eq:epsilon-problem-1}-\eqref{eq:epsilon-problem-2} with   initial data $\{\overrightarrow{\rho}^{\varepsilon}_{in}\}_{\varepsilon\in(0,1)}$ satisfying
\begin{align*}
\sup_{\varepsilon\in(0,1)}\cE^{\varepsilon}(\overrightarrow{\rho}^{\varepsilon}_{in})\leq M\qquad\text{for some constant $M>0$.}
\end{align*}
We denote the flux associated to the $i$-th population by
\begin{equation}\label{eq:jdef}
j^{\varepsilon}_i:=\rho^{\varepsilon}_iv^{\varepsilon}_i, \qquad v^{\varepsilon}_i :=
\nabla f_m'\left(\rho_1^{\varepsilon}+\dots +\rho_N^\eps\right)  +   \sum_{j=1}^N\alpha_{ij}\nabla \phi^{\varepsilon}_j.
\end{equation}

\subsection{Phase separation: Theorem \ref{thm:phase-separation-convergence}(i)-(ii)}\label{subsec:phase-separation}

We begin by recording uniform estimates that are needed in the proof:

\begin{lemma}\label{lem:estimates}
The following hold, where $C_M>0$ is a suitable constant that is large depending only on $M$.
\begin{enumerate}[label=(\roman*)]
\item $\mathcal{E}^\varepsilon(\overrightarrow{\rho}^{\varepsilon}(t)) \le M$ for all $t\geq0$.


\item $\|\rho_i^\varepsilon\|_{L^\infty(0, \infty; L^m(\Omega))} \le C_M$ for $i=1,\cdots,N$.

\item $\|j_i^\varepsilon\|_{L^2\left(0, \infty; L^{\frac{2m}{m+1}}(\Omega)\right)} \le C_M$ for $i=1,\cdots,N$.

\item $\|\rho_i^\varepsilon(t) - \rho_i^\varepsilon(s)\|_{W^{-1, \frac{2m}{m+1}}(\Omega)} \le C_M \sqrt{t-s},$ for all $0 \le s \le t.$
\end{enumerate}
\end{lemma}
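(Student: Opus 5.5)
The plan is to derive all four estimates from the energy dissipation identity of the gradient-flow structure, in the spirit of the one-species analysis of \cite{KMW24,M24}. Formally, multiplying \eqref{eq:epsilon-problem-1} by $f_m'(\rho^\eps)+a_i-\sum_j\alpha_{ij}\phi^\eps_j$ (the first variation of $\eps\cE^\eps$ with respect to $\rho_i$, using the symmetry of $A$ from \ref{assumption:A1}), summing over $i$ and integrating by parts with the no-flux boundary conditions gives
\begin{equation*}
\cE^\eps(\overrightarrow{\rho}^\eps(t))+\frac{1}{\eps^2}\sum_{i=1}^N\int_0^t\int_\Omega \rho_i^\eps|v_i^\eps|^2\,dx\,ds\leq \cE^\eps(\overrightarrow{\rho}^\eps_{in})\leq M,
\end{equation*}
where $v_i^\eps$ is, up to sign, the velocity in \eqref{eq:jdef}. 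For solutions built by the JKO scheme (Theorem \ref{thm:well-posedness}), this inequality is inherited from the discrete energy inequality, so I would invoke it rather than redo the computation. It gives (i) directly, together with $\sum_i\int_0^\infty\int_\Omega\rho_i^\eps|v_i^\eps|^2\leq M\eps^2\leq M$.

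For (ii), I would use Lemma \ref{lem:cE-alternative}: all terms in \eqref{eq:energy-new} are nonnegative under \ref{assumption:A1}, so $\int_\Omega W(\overrightarrow{\rho}^\eps)\,dx\leq M\eps\leq M$. Since $W(\overrightarrow{\rho})\geq \frac{1}{m-1}\rho^m-C|\overrightarrow{\rho}|^2$, where $\rho=\sum_i\rho_i$, and $m>2$, Young's inequality gives $W\geq \frac{1}{2(m-1)}\rho^m-C$. Hence $\|\rho^\eps(t)\|_{L^m}^m\leq C(M+|\Omega|)$ uniformly in $t$, and each component is bounded because $0\leq\rho_i^\eps\leq\rho^\eps$.

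For (iii), I would apply H\"older with exponents chosen so that $\frac{m+1}{2m}=\frac{1}{2}+\frac{1}{2m}$:
\begin{equation*}
\|\rho_i^\eps v_i^\eps\|_{L^{\frac{2m}{m+1}}}\leq \|\sqrt{\rho_i^\eps}\|_{L^{2m}}\,\|\sqrt{\rho_i^\eps}v_i^\eps\|_{L^2}=\|\rho_i^\eps\|_{L^m}^{1/2}\|\sqrt{\rho_i^\eps}v_i^\eps\|_{L^2}.
\end{equation*}
Squaring, integrating in time, and using (ii) together with the dissipation bound yields (iii).

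For (iv), the equation reads $\eps\partial_t\rho_i^\eps=-\div(\rho_i^\eps v_i^\eps)$ with the sign convention of \eqref{eq:jdef}. The time scaling has to be tracked carefully. With the $1/\eps^2$ weight in the dissipation, $\partial_t\rho_i^\eps=-\div(\eps^{-1}j_i^\eps)$ and $\|\eps^{-1}j_i^\eps\|_{L^2L^{2m/(m+1)}}\leq C_M$. I will define the flux in (iii) so that it includes this factor, matching the normalization implicit in the lemma. Testing against $\zeta\in W^{1,\frac{2m}{m-1}}(\Omega)$ (no boundary condition is needed thanks to the no-flux condition) gives
\begin{equation*}
\Big|\int_\Omega(\rho_i^\eps(t)-\rho_i^\eps(s))\zeta\Big|\leq \int_s^t\|\eps^{-1}j_i^\eps\|_{L^{\frac{2m}{m+1}}}\|\nabla\zeta\|_{L^{\frac{2m}{m-1}}}\leq C_M\sqrt{t-s}\,\|\zeta\|_{W^{1,\frac{2m}{m-1}}}
\end{equation*}
by Cauchy--Schwarz in time.

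The main obstacle is the rigorous justification of the dissipation inequality and the bookkeeping of the powers of $\eps$ between the dissipation and $j_i^\eps$. I would handle both by relying on the discrete JKO inequality and lower semicontinuity, rather than on a formal computation that requires regularity of $\rho^\eps$.
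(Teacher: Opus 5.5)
Your proposal is correct and follows essentially the same route as the paper: invoke the energy dissipation inequality of Theorem \ref{thm:well-posedness}(v), get (ii) from the coercivity of $W$ together with $\int_\Omega W(\overrightarrow{\rho}^\eps)\,dx\le M\eps$, (iii) from the H\"older splitting $\|j_i^\eps\|_{L^{\frac{2m}{m+1}}}\le\|\rho_i^\eps\|_{L^m}^{1/2}\|j_i^\eps/\sqrt{\rho_i^\eps}\|_{L^2}$, and (iv) by duality against $W^{1,\frac{2m}{m-1}}$ test functions with Cauchy--Schwarz in time. Your choice to include the factor $\eps^{-1}$ in the flux matches the normalization of Theorem \ref{thm:well-posedness}(iii), which is the one the paper actually uses. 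The only cosmetic difference is in (iv): you reuse the bound from (iii), whereas the paper applies Cauchy--Schwarz directly to $\int\!\!\int |j_i^\eps|^2/\rho_i^\eps$ and $\int\!\!\int\rho_i^\eps|\nabla\psi|^2$.
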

\begin{proof}
Recall that the energy inequality (Theorem \ref{thm:well-posedness}-(v)) yields:
\begin{align}\label{eq:dissipation}
\cE^{\varepsilon}(\overrightarrow{\rho}^{\varepsilon}(t))+\sum_{i=1}^N\frac12\int_0^t\int_{\Omega}\frac{|j_i^{\varepsilon}|^2}{\rho^{\varepsilon}_i}\,dxdt\leq M.
\end{align}
\begin{enumerate}[label=(\roman*)]
\item This is a direct consequence of \eqref{eq:dissipation}.

\item From the form $f=f_m$ with $m>2$, the coercivity property holds in the sense that there are constants $c\in(0,1),\,C>0$ such that
\begin{align*}
W(\overrightarrow{\rho})\geq c|\overrightarrow{\rho}|^m\qquad\text{whenever }|\overrightarrow{\rho}|\geq C\text{ for }\overrightarrow{\rho}\in\R^N_{\geq0}.
\end{align*}
The statement (iii) follows from the coercivity property and the estimate that $\int_{\Omega}W(\overrightarrow{\rho}^{\varepsilon}(t))\,dx\leq\cE^{\varepsilon}(\overrightarrow{\rho}^{\varepsilon}(t))\leq M$ for $\varepsilon\in(0,1)$ and $t>0$.

\item By Cauchy-Schwarz's inequality, we have
\begin{align*}
\| j_i^\varepsilon \|_{L^{\frac{2m}{m+1}}(\Omega)} \le \| (\rho_i^\varepsilon)^{1/2} \|_{L^{2m}(\Omega)} \left\| \frac{j_i^\varepsilon}{(\rho_i^\varepsilon)^{1/2}} \right\|_{L^2(\Omega)},
\end{align*}
which implies
\begin{align*}
\int_0^T \| j_i^\varepsilon \|_{L^{\frac{2m}{m+1}}(\Omega)}^2 \, dt \le C_M\qquad\text{for each }i=1,\cdots,N.
\end{align*}

\item We first note that for $\psi\in W^{1,\frac{2m}{m-1}}(\Omega)$, we have, by (iii),
\begin{align*}
\left\| \rho_i^\varepsilon |\nabla \psi|^2 \right\|_{L^1(\Omega)}^{1/2} &\le \left\| \rho_i^\varepsilon \right\|_{L^m(\Omega)}^{1/2} \left\| |\nabla \psi|^2 \right\|_{L^{\frac{m}{m-1}}(\Omega)}^{1/2} \le C  \left\| \nabla \psi \right\|_{L^{\frac{2m}{m-1}}(\Omega)}.
\end{align*}
From the continuity equation
\begin{align*}
\partial_t\rho^{\varepsilon}_i+\div(j^{\varepsilon}_i)=0,
\end{align*}
we see that for any $\psi\in W^{1,\frac{2m}{m-1}}(\Omega)$ and $0\leq s\leq t$, it holds that
\begin{align*}
\left| \int_{\Omega} \left( \rho_i^{\varepsilon}(x, t) - \rho_i^{\varepsilon}(x, s) \right) \psi(x) \, dx \right| 
&= \left| \int_{s}^{t} \int_{\Omega} j_i^{\varepsilon} \cdot \nabla \psi \, dx \, dt \right| \\
&\le \left( \int_{s}^{t} \int_{\Omega} \frac{|j_i^{\varepsilon}|^2}{\rho_i^{\varepsilon}} \, dx \, dt \right)^{1/2} \left( \int_{s}^{t} \int_{\Omega} \rho_i^{\varepsilon} |\nabla \psi|^2 \, dx \, dt \right)^{1/2} \\
&\le C \|\nabla \psi\|_{L^{\frac{2m}{m-1}}(\Omega)} \sqrt{t - s},
\end{align*}
which implies the statement (v). Here, \eqref{eq:dissipation} is used in the last inequality.
\end{enumerate}
\end{proof}

Our next proposition, which follows from Lemma \ref{lem:estimates}, implies Theorem \ref{thm:phase-separation-convergence}(i)-(ii):
\begin{proposition}\label{prop:convergence}
Let $\varepsilon\to0$ be a sequence such that $\overrightarrow{\rho}^\eps_{in}$ converges to $\sum_{i=1}^N\rho_{i,in}\overrightarrow{e_i}$ as $\varepsilon\to0$ strongly in $L^1(\Omega)^N$. The following hold.
\begin{enumerate}[label=(\roman*)]
\item There exists a subsequence (still denoted by $\varepsilon$) along which $\rho_i^{\varepsilon}(t)$ converges to $\rho_i(t)$ in $W^{-1,\frac{2m}{m+1}}(\Omega)$ locally uniformly in $t\geq0$, and  $j_i^{\varepsilon}$ converges to $j_i$ weakly in $L^2(0,\infty;L^{\frac{2m}{m+1}}(\Omega))$ for each $i=1,\cdots,N$.

\item For each $i=1,\cdots,N$, there exists a velocity field $v_i\in L^2(\Omega\times(0,\infty),d\rho_i)^d$ such that the continuity equation with the flux $j_i:=\rho_i v_i$ holds
\begin{align*}
\begin{cases}
\partial_t \rho_i + \operatorname{div}(j_i) = 0 & \textit{in } \Omega\times(0,\infty),\\
j_i \cdot n = 0 & \textit{on } \partial \Omega\times(0,\infty), \\
\rho_i(\cdot, 0) = \rho_{i, in} & \textit{in } \Omega.
\end{cases}
\end{align*}

\item Along a further subsequence, $\rho^{\varepsilon}_i(t)$ converges strongly in $L^1(\Omega)$ locally uniformly in $t\geq0$. Moreover, we have
\begin{align*}
\rho_i(t)\in BV(\Omega,\{0,\theta_i\})\qquad\text{for all $t\geq0$ and }i=1,\cdots,N.
\end{align*}
\end{enumerate}
\end{proposition}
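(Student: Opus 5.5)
The plan is to combine the uniform bounds of Lemma \ref{lem:estimates} with a compactness argument in time (Arzelà--Ascoli in a negative Sobolev space), and then to upgrade to strong $L^1$ convergence using the energy bound together with the Modica--Mortola type estimate of Proposition \ref{prop:MM-functional-Baldo}.

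\textbf{Step (i).} By Lemma \ref{lem:estimates}(ii), $\rho_i^\eps(t)$ is bounded in $L^m(\Omega)$ uniformly in $t$ and $\eps$.
\begin{itemize}
\item Since $L^m(\Omega)$ embeds compactly into $W^{-1,\frac{2m}{m+1}}(\Omega)$ (dual of the compact embedding $W^{1,\frac{2m}{m-1}}\hookrightarrow L^{m'}$), each family $\{\rho_i^\eps(t)\}$ is relatively compact there.
\item Lemma \ref{lem:estimates}(iv) gives uniform $\frac12$-Hölder equicontinuity in time with values in $W^{-1,\frac{2m}{m+1}}$.
\item Arzelà--Ascoli on each $[0,T]$, followed by a diagonal argument in $T$, yields convergence locally uniformly in $t$.
\item Lemma \ref{lem:estimates}(iii) gives a weakly convergent subsequence $j_i^\eps\rightharpoonup j_i$ in $L^2(0,\infty;L^{\frac{2m}{m+1}})$.
\end{itemize}

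\textbf{Step (ii).} Pass to the limit in the weak form $\int\!\!\int \rho_i^\eps\pa_t\psi+j_i^\eps\cdot\na\psi\,dxdt+\int\rho^\eps_{i,in}\psi(0)\,dx=0$, taken for $\psi\in C_c^\infty(\overline\Omega\times[0,\infty))$. The no-flux condition is built into testing with functions that do not vanish on $\pa\Omega$. To write $j_i=\rho_iv_i$ with $v_i\in L^2(d\rho_i)$, use the joint convexity and lower semicontinuity of the Benamou--Brenier functional $(\rho,j)\mapsto\int |j|^2/\rho$ under weak convergence of measures. This gives $\int\!\!\int|j_i|^2/\rho_i\le\liminf\int\!\!\int|j_i^\eps|^2/\rho_i^\eps\le 2M$ by \eqref{eq:dissipation}. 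Hence $j_i\ll\rho_i$, and we set $v_i=dj_i/d\rho_i$.

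\textbf{Step (iii).} This is the main obstacle: uniform-in-time strong $L^1$ compactness. Fix $t$. Since $\cE^\eps(\overrightarrow\rho^\eps(t))\le M$, inequality \eqref{eq:MM-trick} gives
$$|\na(\varphi_k\circ\overrightarrow{\psi}^\eps(t))|(\Omega)\le M \quad\text{for each } k,$$
with $\overrightarrow\psi^\eps=Q\overrightarrow\phi^\eps$, so $\varphi_k\circ\overrightarrow\psi^\eps(t)$ is bounded in $BV$.

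The difficulty is recovering $\overrightarrow\rho^\eps$ itself from this information. The route I would take:
\begin{itemize}
\item From $\|Q(\overrightarrow\phi^\eps-\overrightarrow\rho^\eps)\|_{L^2}^2\le 2M\eps$ and $\int W(\overrightarrow\rho^\eps)\le M\eps$, the vector $\overrightarrow\rho^\eps$ is $L^1$-close to the set of wells $\{\theta_k\overrightarrow e_k\}$. Coercivity of $W$ and the $L^m$ bound give equi-integrability.
\item Define the well-projection $\pi(\overrightarrow\rho^\eps)$, with values in $\{\theta_k\overrightarrow e_k\}$, equal to the nearest well. Then $\|\overrightarrow\rho^\eps-\pi(\overrightarrow\rho^\eps)\|_{L^1}\to0$.
\item Since the $\overrightarrow\alpha_k$ are distinct (Proposition \ref{prop:mixing}), $Q\pi(\overrightarrow\rho^\eps)$ is determined by the values $(\varphi_k\circ Q\overrightarrow\rho^\eps)_k$. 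These are close in $L^1$ to $\varphi_k\circ\overrightarrow\psi^\eps$, which are $BV$-bounded and hence $L^1$-precompact.
\end{itemize}
This yields precompactness of $\{\overrightarrow\rho^\eps(t)\}$ in $L^1$ for each fixed $t$, and the limit must coincide with the $W^{-1}$-limit $\rho_i(t)$.

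To obtain local uniformity in $t$ rather than just pointwise convergence:
\begin{itemize}
\item Interpolate. The sets $K_\eps=\{\overrightarrow\rho^\eps(t)\}$ lie, up to an $o(1)$ error in $L^1$, in a common compact subset of $L^1$ (namely $L^1$-limits of bounded-$BV$ well-valued functions). On a compact set bounded in $L^m$, the $L^1$ norm is controlled by the $W^{-1,\frac{2m}{m+1}}$ norm through a modulus of continuity.
\item Combined with Step (i), this upgrades the locally uniform $W^{-1}$ convergence to locally uniform $L^1$ convergence.
\end{itemize}

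Finally, the liminf argument of Section \ref{subsec:liminf-property} applied at each $t$ gives $\cE^0(\overrightarrow\rho(t))\le M$. Hence $\overrightarrow{\rho}(t)\in\cF(A,\overrightarrow m)$, and in particular $\rho_i(t)\in BV(\Omega,\{0,\theta_i\})$.
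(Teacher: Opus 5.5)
Your parts (i) and (ii) follow the paper: Arzelà--Ascoli from Lemma~\ref{lem:estimates}, then lower semicontinuity of the Benamou--Brenier functional.

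For (iii) your route is correct but differs from the paper's. The paper never projects onto the wells. It introduces affine maps $L_i(\overrightarrow{\rho})=\overrightarrow{R}_i\cdot\overrightarrow{\rho}+S_i$ with $L_i(\theta_j\overrightarrow{e_j})=2d(\overrightarrow{\alpha_j},\overrightarrow{\alpha_i})$ and shows $\varphi_i(\overrightarrow{\psi}^\eps)-L_i(\overrightarrow{\rho}^\eps)\to0$ in $L^\infty(0,\infty;L^2(\Omega))$. It then uses the \emph{linearity} of $L_i$ to transport the locally uniform $W^{-1,\frac{2m}{m+1}}$ convergence of step (i) onto the BV-bounded family $\varphi_i(\overrightarrow{\psi}^\eps)$. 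The Aubin--Lions type Lemma~\ref{lem:LA} then gives convergence in $L^\infty(0,T;L^1)$ in one line. After that, the sets $E_i$ are read off as zero sets of $L_i(\overrightarrow{\rho})$, and $\overrightarrow{\rho}^\eps\to\sum_i\theta_i\chi_{E_i}\overrightarrow{e_i}$ is proved through the $A_1,A_2,A_3$ decomposition.

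Your nearest-well projection $\pi$ is nonlinear, so $W^{-1}$ convergence cannot be pushed through it. You therefore effectively reprove the content of Lemma~\ref{lem:LA} by hand: fixed-$t$ compactness, plus a modulus of continuity of the $L^1$ norm with respect to the $W^{-1,\frac{2m}{m+1}}$ norm on $L^1$-compact, $L^m$-bounded sets. In exchange, you get two simplifications:
\begin{itemize}
\item $\|\overrightarrow{\rho}^\eps-\pi(\overrightarrow{\rho}^\eps)\|_{L^1}\to0$ uniformly in $t$ follows directly from $\int_\Omega W(\overrightarrow{\rho}^\eps)\,dx\le M\eps$, coercivity and the $L^m$ bound. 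This avoids the paper's handling of $\mathcal{N}(Q)$.
\item The BV structure of $\rho_i(t)$ comes for free from the already-proved $\Gamma$-liminf.
\end{itemize}

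To make your sketch airtight, fix three points.
\begin{itemize}
\item Decode the well from $(\varphi_k\circ Q\pi(\overrightarrow{\rho}^\eps))_k$, not from $(\varphi_k\circ Q\overrightarrow{\rho}^\eps)_k$. Do this via a Lipschitz extension $\Phi$ of the finite map $(2d(\overrightarrow{\alpha_j},\overrightarrow{\alpha_k}))_k\mapsto\theta_j\overrightarrow{e_j}$. This map is well defined because $d(\overrightarrow{\alpha_j},\overrightarrow{\alpha_k})>0$ for $j\neq k$, which needs both Proposition~\ref{prop:mixing} and positivity of $h$ away from the wells.
\item Take $\Phi\big((\varphi_k\circ\overrightarrow{\psi}^\eps(t))_k\big)$ as the uniformly BV- and $L^\infty$-bounded approximant of $\overrightarrow{\rho}^\eps(t)$.
\item Convert the resulting $o(1)$ error in $L^1$ into an $o(1)$ error in $W^{-1,\frac{2m}{m+1}}$ by interpolation with the $L^m$ bound. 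This step is needed because $L^1$ does not embed into $W^{-1,\frac{2m}{m+1}}$ in general dimension.
\end{itemize}
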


\begin{proof}
The statement (i) is a consequence of Lemma \ref{lem:estimates}. We pass to the limit $\varepsilon\to0$ in the continuity equation  Theorem \ref{thm:well-posedness}(ii) to obtain
\begin{align*}
\int_{\Omega} \rho_{i,in}(x)\,\zeta(x,0)\,dx
\;+\; \int_0^{\infty}\int_{\Omega} \rho_i \,\partial_t \zeta
+ j_i \cdot \nabla \zeta \,dxdt
= 0\qquad\text{for any }\zeta\in C_c^{\infty}(\overline{\Omega}\times[0,\infty)).
\end{align*}
The fact that $j$ is of the form $\rho v$ follows from the lower semicontinuity of the following functional (as in \cite{MR-CS10}) with respect to the weak converge of measure of $\mu$ a scalar measure and $F$ a vector measure:
\begin{align*}
\Theta : (\mu, F) \mapsto 
\begin{cases} 
\displaystyle \int_0^T \int_{\Omega} \frac{|F|^2}{\mu} & \text{if } F \ll \mu \text{ a.e. } t \in [0, T]; \\ 
+\infty & \text{otherwise.} 
\end{cases}
\end{align*}
From the energy dissipation \eqref{eq:dissipation}, we have $\Theta(\rho_i^{\varepsilon},j_i^{\varepsilon})\leq 2M$, and so the lower semicontinuity of $\Theta$ implies $\Theta(\rho_i ,j_i )\leq 2M$. 
It follows that $j_i$ is absolutely continuous with respect to $\rho_i$ and that there exists a vector field $v_i\in L^2(\Omega\times(0,\infty),d\rho_i)^d$ satisfying $j_i=\rho_i v_i$.

\medskip

It remains to prove (iii). We recall here the functions
$\varphi_i:=\max\left\{2d(\cdot,\overrightarrow{\alpha_i}),M\right\}$
used in the proof of the liminf in the $\Gamma$-convergence of the energy.
As in Section \ref{subsec:liminf-property}, we write, using  Proposition \ref{prop:MM-functional-Baldo},
\begin{align*}
\cE^{\varepsilon}(\overrightarrow{\rho}^{\varepsilon}) &\geq \frac{1}{\varepsilon}\int_{\Omega}h(\overrightarrow{\psi^{\varepsilon}})\,dx + \varepsilon\int_{\Omega}\frac12|\nabla\overrightarrow{\psi}^{\varepsilon}|^2\,dx\geq\int_{\Omega}\sqrt{2h(\overrightarrow{\psi}^{\varepsilon})}|\nabla\overrightarrow{\psi}^{\varepsilon}|\,dx \geq \int_{\Omega}|\nabla(\varphi_i\circ\overrightarrow{\psi}^{\varepsilon})|\,dx
\end{align*} 
which implies
\begin{align*}
\int_{\Omega}|\nabla(\varphi_i\circ\overrightarrow{\psi}^{\varepsilon})|\,dx\leq\cE^{\varepsilon}(\overrightarrow{\rho}^{\varepsilon}) \leq M.
\end{align*}
Since the function $\varphi_i$ is bounded, we deduce
\begin{align}\label{eq:varphi-BV-bounded}
\|\varphi_i(\overrightarrow{\psi}^\varepsilon)\|_{L^{\infty}(0,\infty;BV(\Omega))}\leq M\qquad\text{for each }i=0,\cdots,N.
\end{align}

\medskip

Given $i=0,\dots,N$, we have $\varphi_i(Q\theta_j \overrightarrow{e_j}) = 2d(\overrightarrow{\alpha_j},\overrightarrow{\alpha_i})$ for $j=0,\dots,N$ so we introduce an affine mapping $L_i:\R^N\to\R$ such that $L_i(\theta_j \overrightarrow{e_j}) = 2d(\overrightarrow{\alpha_j},\overrightarrow{\alpha_i})$.
This mapping is given by 
\begin{align*}
L_i(\overrightarrow{\rho}) := \overrightarrow{R}_i \cdot \overrightarrow{\rho}+ S_i, \qquad \mbox{ where }
\begin{cases}
\overrightarrow{R}_i := 2 
\begin{pmatrix}
\theta_1^{-1} (d(\overrightarrow{\alpha_1}, \overrightarrow{\alpha_i}) - d(\overrightarrow{\alpha_0}, \overrightarrow{\alpha_i})) \\
\vdots \\
\theta_N^{-1} (d(\overrightarrow{\alpha_N}, \overrightarrow{\alpha_i}) - d(\overrightarrow{\alpha_0}, \overrightarrow{\alpha_i}))
\end{pmatrix}, \\[20pt]
S_i := 2 d(\overrightarrow{\alpha_0}, \overrightarrow{\alpha_i}).
\end{cases}
\end{align*}
We then claim that (for all $i=0,\cdots,N$) we have
\begin{align}\label{eq:auxiliary-ineq}
\left\| \varphi_i(\overrightarrow{\psi}^\varepsilon) - L_i( \overrightarrow{\rho}^\varepsilon ) \right\|_{L^\infty(0, \infty; L^2(\Omega))} \longrightarrow 0 \quad \text{as } \varepsilon \to 0 .
\end{align}

To prove \eqref{eq:auxiliary-ineq}, we first note that 
since the function $\varphi_i$ is bounded and Lipschitz, there exists a constant $C>0$ such that
\begin{align}\label{eq:auxiliary-ineq-1}
&\| \varphi_i(\overrightarrow{\psi}^\varepsilon) - \varphi_i(Q\overrightarrow{\rho}^\varepsilon) \|_{L^2(\Omega)} \le C \|  \overrightarrow{\psi}^\varepsilon - Q \overrightarrow{\rho}^\varepsilon \|_{L^2(\Omega)} \le C \varepsilon^{1/2}\to0\quad\text{as }\varepsilon\to0.
\end{align}
(using the energy inequality \eqref{eq:dissipation} and the definition \eqref{eq:energy-psi} of $\cE^\eps$).

Next, we note that the definitions of $L_i$ imply that $\varphi_i ( Q \overrightarrow{\rho} ) - L_i( \overrightarrow{\rho})
$ vanishes when $\overrightarrow{\rho}=\overrightarrow{0},\theta_1\overrightarrow{e_1},\cdots,\theta_N\overrightarrow{e_N}$ (which are the zeroes of $W$). Therefore, for any $\delta>0$, denoting by $V_\delta$ the $\delta$-neighborhood of $\{\overrightarrow{\alpha_0},\cdots,\overrightarrow{\alpha_N}\}$, we have
\begin{align*}
\left| \varphi_i ( Q \overrightarrow{\rho}^\varepsilon ) - L_i(  \overrightarrow{\rho}^\varepsilon  ) \right| \leq C \delta^2\qquad\text{if }\overrightarrow{\rho}^\varepsilon\in Q^{-1}V_{\delta}.
\end{align*}
Furthermore, since $W(\overrightarrow{\rho})>0$ for $\overrightarrow{\rho} \notin  Q^{-1}V_{\delta}$, there exists a constant $C_{\delta}>0$ such that
\begin{align*}
\left| \varphi_i ( Q \overrightarrow{\rho}^\varepsilon ) - L_i(\overrightarrow{\rho}^\varepsilon  ) \right| \leq C_{\delta} W( \overrightarrow{\rho}^\varepsilon )\qquad\text{if }\overrightarrow{\rho}^\varepsilon\notin Q^{-1}V_{\delta}.
\end{align*}
Consequently, we can write (using the energy inequality \eqref{eq:dissipation}):
\begin{align*}
\int \left| \varphi_i ( Q \overrightarrow{\rho}^{\varepsilon} ) - L_i (   \overrightarrow{\rho}^{\varepsilon}   ) \right|^2 dx \quad &\leq \int_{\{ \overrightarrow{\rho}^{\varepsilon} \in Q^{-1} V_{\delta} \}} C  \delta^2 dx + C_{\delta} \int_{\{ \overrightarrow{\rho}^{\varepsilon} \notin Q^{-1} V_{\delta} \}} W(\overrightarrow{\rho}^{\varepsilon}) dx \\
& \leq C |\Omega| \delta^2 + C_{\delta} \varepsilon.
\end{align*}
We conclude
\begin{align}\label{eq:auxiliary-ineq-2}
\limsup_{\varepsilon \to 0} \left\| \varphi_i ( Q \overrightarrow{\rho}^{\varepsilon} ) - L_i(   \overrightarrow{\rho}^{\varepsilon}  ) \right\|_{L^{\infty}(0, \infty; L^2(\Omega))} = 0\qquad\text{for each }i=0,\cdots,N.
\end{align}
The claim \eqref{eq:auxiliary-ineq} then follows from \eqref{eq:auxiliary-ineq-1} and \eqref{eq:auxiliary-ineq-2}.

\medskip

Before we proceed further, we recall the following Lions-Aubin compactness type lemma (see \cite[Lemma B.1]{KMW24}): 
\begin{lemma}\label{lem:LA}
Let $u_n$ be a sequence bounded in $L^{\infty}(0,T;BV(\Omega))$ and assume that $u_n$ converges to $u$ strongly in $L^{\infty}(0,T;W^{-1,\frac{2m}{m+1}}(\Omega))$. Then,
\begin{align*}
\|u_n-u\|_{L^{\infty}(0,T;L^1(\Omega))}\to 0.
\end{align*}
\end{lemma}
We note that Proposition \ref{prop:convergence}(i) and \eqref{eq:auxiliary-ineq} imply
\begin{align*}
\varphi_i ( \overrightarrow{\psi}^\varepsilon ) \longrightarrow  L_i(  \overrightarrow{\rho} )\quad \text{strongly in} \quad L^\infty \left( 0, T; W^{-1, \frac{2m}{m+1}}(\Omega) \right),
\end{align*}
which together with \eqref{eq:varphi-BV-bounded} and Lemma \ref{lem:LA} yields:
\begin{align*}
\varphi_i ( \overrightarrow{\psi}^\varepsilon ) \longrightarrow L_i(  \overrightarrow{\rho} )\quad \text{strongly in} \quad L^\infty \left( 0, T;L^1(\Omega) \right).
\end{align*}
Using \eqref{eq:auxiliary-ineq-1}, we also get:
\begin{align}\label{eq:convergence-linear-form}
\varphi_i(Q\overrightarrow{\rho}^{\varepsilon}) \longrightarrow L_i(  \overrightarrow{\rho} ) \quad \text{strongly in} \quad L^\infty \left( 0, T;L^1(\Omega) \right).
\end{align}

\medskip

Next, we claim that  $L_i(\overrightarrow{\rho})$ takes values in $\{\varphi_i(\overrightarrow{\alpha}_j)\}_{j=0..N}$. Indeed, for an a.e. $t>0$ that is fixed, and for $\delta\in(0,1)$ small, if $\cC_{\delta}$ denotes the set
$$
\cC_{\delta} := \{x\in\Omega\,|\,\mathrm{dist}(Q\overrightarrow{\rho}^{\varepsilon}(x),\cA) \geq \delta\}\quad\text{where}\quad\cA:=\{\overrightarrow{\alpha_0},\cdots,\overrightarrow{\alpha_N}\},
$$
then, as $W(\overrightarrow{\rho}^{\varepsilon}(x)) \geq c_{\delta}$ on $\cC_{\delta}$ for some constant $c_{\delta}>0$, it holds that
\begin{align*}
|\cC_{\delta}| \leq c_{\delta}^{-1}\int_{\Omega} W(\overrightarrow{\rho}^{\varepsilon}(x))\,dx \leq C_{\delta}\varepsilon
\end{align*}
for another constant $C_{\delta}>0$. This implies that the distance function $\mathrm{dist}(Q\overrightarrow{\rho}^{\varepsilon},\cA)$ converges to zero in measure, and consequently, upto a further subsequence of $\varepsilon\to0$, we get
\begin{align*}
\mathrm{dist}(Q\overrightarrow{\rho}^{\varepsilon},\cA)\to0\quad\text{a.e. on }\Omega.
\end{align*}
We deduce that $L_i(\overrightarrow{\rho}(t,x))\subset \{\varphi_i(\overrightarrow{\alpha}_j)\}_{j=0}^N$ a.e. $t>0$ and $x\in \Omega$.
and we now set
$$
E_i := \{x\in\Omega\,|\,L_i(\overrightarrow{\rho})=0\}\quad\text{for all}\quad i=0,\cdots,N.
$$
By a similar argument to the above, we see that for an a.e. $t>0$, we see that $\{E_i\}_{i=0}^N$ is a partition of $\Omega$ (upto a set of measure zero).

We now claim that
\begin{align}\label{eq:L1-convergence}
\overrightarrow{\rho}^{\varepsilon} \longrightarrow  \sum_{i=0}^N\theta_i\chi_{E_i}\overrightarrow{e_i} \quad \text{strongly in} \quad L^\infty \left( 0, T;L^1(\Omega)^N \right).
\end{align}
For this, it suffices to show
\begin{align}\label{eq:convergence-linear-form-2}
\overrightarrow{\rho}^{\varepsilon} \longrightarrow  \theta_i\overrightarrow{e_i} \quad \text{strongly in} \quad L^\infty \left( (0, T),dt;L^1(E_i(t)) \right)
\end{align}
for each $i=0,\cdots,N$.

Let $\mathcal{N}(Q):=\{\overrightarrow{\mathbf{x}}\in\R^N\,|\,Q\overrightarrow{\mathbf{x}}=\overrightarrow{0}\}$ denote the null space of $Q$. For any $\delta\in(0,1)$ small enough, there exists a constant $c_{\delta}\in(0,\delta)$ such that
\begin{align*}
|\varphi_i(Q\overrightarrow{\rho}^{\varepsilon}(x))|\geq c_{\delta}\qquad\text{on }A_1:=\{x\in E_i(t)\,|\,\mathrm{dist}(\overrightarrow{\rho}^{\varepsilon}(x),\mathcal{N}(Q)+\theta_i\overrightarrow{e_i})\geq\delta\}
\end{align*}
and that
\begin{align*}
&|\overrightarrow{\rho}^{\varepsilon}(x)-\theta_j\overrightarrow{e_j}|\geq c_{\delta}\quad\text{for any $j=0,\cdots,N$} \\
&\qquad\qquad\qquad\text{on }A_2:=\{x\in E_i(t)\,|\,\mathrm{dist}(\overrightarrow{\rho}^{\varepsilon}(x),\mathcal{N}(Q)+\theta_i\overrightarrow{e_i})\leq\delta\text{ and }|\overrightarrow{\rho}^{\varepsilon}(x)-\theta_i\overrightarrow{e_i}|\geq \delta\}.
\end{align*}
Here, we used Assumptions \ref{assumption:A1}, \ref{assumption:A2}, and Proposition \ref{prop:mixing} in particular for the latter. With $A_3:=\{x\in E_i(t)\,|\,|\overrightarrow{\rho}^{\varepsilon}(x)-\theta_i\overrightarrow{e_i}|\leq \delta\}$, the sets $A_1,A_2,A_3$ cover $E_t(t)$, and therefore,
\begin{align}\label{eq:A-123}
\int_{E_i(t)}|\overrightarrow{\rho}^{\varepsilon}(x)-\theta_i\overrightarrow{e_i}|\,dx &\leq \int_{A_1}|\overrightarrow{\rho}^{\varepsilon}(x)-\theta_i\overrightarrow{e_i}|\,dx + \int_{A_2}|\overrightarrow{\rho}^{\varepsilon}(x)-\theta_i\overrightarrow{e_i}|\,dx + \int_{A_3}|\overrightarrow{\rho}^{\varepsilon}(x)-\theta_i\overrightarrow{e_i}|\,dx.
\end{align}
By the property of the set $A_1$, we have
\begin{align*}
\int_{A_1}|\overrightarrow{\rho}^{\varepsilon}(x)-\theta_i\overrightarrow{e_i}|\,dx & \leq \int_{\{x\in E_i(t)\,|\,|\varphi_i(Q\overrightarrow{\rho}^{\varepsilon}(x))|\geq c_{\delta}\}}|\overrightarrow{\rho}^{\varepsilon}(x)-\theta_i\overrightarrow{e_i}|\,dx \\
& \leq C|\{x\in E_i(t)\,|\,|\varphi_i(Q\overrightarrow{\rho}^{\varepsilon}(x))|\geq c_{\delta}\}|^{\frac{m-1}{m}}\|\overrightarrow{\rho}^{\varepsilon}\|_{L^{\infty}(0,\infty;L^m(\Omega))} \\
& \leq C|\{x\in E_i(t)\,|\,|\varphi_i(Q\overrightarrow{\rho}^{\varepsilon}(x))|\geq c_{\delta}\}|^{\frac{m-1}{m}}.
\end{align*}
Here, H\"older's inequality and Lemma \ref{lem:estimates}(ii) are used in the second-last and last inequality, repsectively. By Chebyshev's inequality, we further have
\begin{align*}
|\{x\in E_i(t)\,|\,|\varphi_i(Q\overrightarrow{\rho}^{\varepsilon}(x))|\geq c_{\delta}|\}| \leq c^{-1}_{\delta}\|\varphi_i(Q\overrightarrow{\rho}^{\varepsilon})\|_{L^\infty \left( (0, T),dt;L^1(E_i(t)) \right)}.
\end{align*}
Consequently, by taking limsup as $\varepsilon\to0$ and using the definition of the set $E_i$ with \eqref{eq:convergence-linear-form}, we obtain
\begin{align}\label{eq:A-1}
\limsup_{\varepsilon\to0}\|\overrightarrow{\rho}^{\varepsilon}-\theta_i\overrightarrow{e_i}\|_{L^\infty \left( (0, T),dt;L^1(A_1) \right)}=0.
\end{align}
Next, by the property of the set $A_2$, we have
\begin{align*}
\int_{A_2}|\overrightarrow{\rho}^{\varepsilon}(x)-\theta_i\overrightarrow{e_i}|\,dx \leq C_{\delta}\int_{\Omega}W(\overrightarrow{\rho}^{\varepsilon}(x))\,dx
\end{align*}
for some constant $C_{\delta}>0$ since the potential $W$ only vanishes on $\overrightarrow{\alpha_0},\cdots,\overrightarrow{\alpha_N}$. By using \eqref{eq:dissipation}, we can conclude
\begin{align}\label{eq:A-2}
\limsup_{\varepsilon\to0}\|\overrightarrow{\rho}^{\varepsilon}-\theta_i\overrightarrow{e_i}\|_{L^\infty \left( (0, T),dt;L^1(A_2) \right)}=0.
\end{align}
Therefore, combining \eqref{eq:A-123}, \eqref{eq:A-1}, \eqref{eq:A-2}, and the definition of $A_3=\{x\in E_i(t)\,|\,|\overrightarrow{\rho}^{\varepsilon}(x)-\theta_i\overrightarrow{e_i}|\leq \delta\}$ yields
\begin{align*}
\limsup_{\varepsilon\to0}\|\overrightarrow{\rho}^{\varepsilon}-\theta_i\overrightarrow{e_i}\|_{L^\infty \left( (0, T),dt;L^1(E_i(t)) \right)} \leq \limsup_{\varepsilon\to0}\|\overrightarrow{\rho}^{\varepsilon}-\theta_i\overrightarrow{e_i}\|_{L^\infty \left( (0, T),dt;L^1(A_3) \right)} \leq |\Omega|\delta.
\end{align*}
Taking $\delta\to0$ proves \eqref{eq:convergence-linear-form-2} and thus proves \eqref{eq:L1-convergence}.

\medskip

It remains to show that $\rho_i(t):=\overrightarrow{\rho}(t)\cdot\overrightarrow{e_i}=\theta_i\chi_{E_i(t)}\in BV(\Omega,\{0,\theta_i\})$, or equivalently, $E_i(t)$ is a $BV(\Omega)$-set for $i=1,\cdots,N$, and $t>0$. We note, from \eqref{eq:varphi-BV-bounded}, \eqref{eq:auxiliary-ineq-1}, \eqref{eq:L1-convergence}, and the lower semicontinuity of the $BV(\Omega)$-norm with respect to the $L^1(\Omega)$-norm, we have
\begin{align*}
\|\varphi_i(Q\overrightarrow{\rho})\|_{L^{\infty}(0,\infty;BV(\Omega))}\leq M.
\end{align*}
Since the function $\varphi_i(Q\overrightarrow{\rho}(t))$ takes only finitely many values and $E_i(t)=\{x\in\Omega\,|\,\varphi_i(Q\overrightarrow{\rho}(x,t))=0\}$, which can be seen similarly as above from the strong $L^1(\Omega)$-convergence, we conclude that the set $E_i(t)$ is indeed a $BV(\Omega)$-set. 
\end{proof}

\subsection{Convergence of pressure and the evolution law: Theorem \ref{thm:phase-separation-convergence}(iii)-(iv)}\label{subsec:evolution-law}
We recall that the fluxes $j_i^\varepsilon$ are defined by \eqref{eq:jdef} and we denote by
$$ j^{\varepsilon}=j_1^{\varepsilon}+\cdots+j_N^{\varepsilon}$$
the total flux. We also define the pressure $p^{\varepsilon}$ by
\begin{align}\label{eq:pressure-def}
p^{\varepsilon}:=\frac{1}{\varepsilon}\left(\rho^{\varepsilon}f'(\rho^{\varepsilon})+\overrightarrow{a}\cdot\overrightarrow{\rho}^{\varepsilon}-(\overrightarrow{\rho}^{\varepsilon})^TA\overrightarrow{\phi}^{\varepsilon}\right)+m^{\varepsilon}(t),
\end{align}
where $m(t)\in\R$ is chosen so that $\int_{\Omega}p^{\varepsilon}(x,t)\,dx=0$.

With these notations, we have the following proposition:
\begin{proposition}\label{prop:ep-evolution}
It holds that
\begin{align}\label{eq:motion-law-epsilon}
\int_0^T\int_{\Omega} j^\varepsilon \cdot \xi - p^\varepsilon \operatorname{div}(\xi)\, dxdt &= - \frac{1}{\varepsilon} \int_0^T\int_{\Omega} \left( W(\overrightarrow{\rho}^\varepsilon) + \frac{1}{2} |\overrightarrow{\psi}^\varepsilon- Q\overrightarrow{\rho}^\varepsilon |^2 + \frac{1}{2} \varepsilon^2 | \nabla \overrightarrow{\psi}^\varepsilon |^2 \right) \operatorname{div}(\xi) \, dxdt \notag \\
&\qquad\qquad\qquad\qquad\qquad\qquad\qquad + \varepsilon \int_0^T\int_{\Omega} (\nabla\overrightarrow{\psi}^{\varepsilon})^T\nabla\overrightarrow{\psi}^{\varepsilon}:D\xi \, dxdt.
\end{align}
\end{proposition}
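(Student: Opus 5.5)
The identity \eqref{eq:motion-law-epsilon} holds for each fixed $t$, so I will prove it pointwise in time and then integrate over $(0,T)$. The approach is to rewrite the total flux $j^\varepsilon$ as a gradient of the pressure plus a divergence of a "stress tensor" built from $\overrightarrow{\psi}^\varepsilon$. The computation is the classical derivation of the Korteweg/Cahn--Hilliard stress identity, adapted to the vector setting.

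\textbf{Step 1: identify the flux as $-\rho_i\nabla\mu_i$.} By \eqref{eq:jdef} (up to the sign convention there), $j_i^\varepsilon = -\rho_i^\varepsilon\nabla\big(f_m'(\rho^\varepsilon) - (A\overrightarrow{\phi}^\varepsilon)_i\big)$. The constant $a_i$ may be inserted freely, since it has zero gradient. So I set
\[
\mu_i := f_m'(\rho^\varepsilon) + a_i - (A\overrightarrow{\phi}^\varepsilon)_i,
\]
and then $j^\varepsilon = -\sum_i\rho_i^\varepsilon\nabla\mu_i$.

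\textbf{Step 2: convert $\sum_i\rho_i\nabla\mu_i$ into a gradient plus a remainder.} I write
\[
\sum_i\rho_i\nabla\mu_i = \nabla\Big(\sum_i\rho_i\mu_i\Big) - \sum_i\mu_i\nabla\rho_i .
\]
The first term is the gradient of $\rho^\varepsilon f_m'(\rho^\varepsilon) + \overrightarrow{a}\cdot\overrightarrow{\rho}^\varepsilon - (\overrightarrow{\rho}^\varepsilon)^TA\overrightarrow{\phi}^\varepsilon = \varepsilon(p^\varepsilon - m^\varepsilon)$. Since $\nabla m^\varepsilon(t)=0$ and $\xi\cdot n=0$ on $\partial\Omega$, testing against $\xi$ turns this into $-\int p^\varepsilon\operatorname{div}\xi$, which produces the pressure term on the left-hand side. (Note the factor $1/\varepsilon$: I need to track the overall scaling, and check that the flux in \eqref{eq:jdef} carries the same normalization as $p^\varepsilon$. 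Otherwise the identity should be read with $j^\varepsilon$ rescaled by $\varepsilon^{-1}$, and I will fix constants accordingly.)

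The second term is handled by the chain rule. Since $\nabla f(\overrightarrow{\rho}) = f_m'(\rho)\sum_i\nabla\rho_i$, we get
\[
\sum_i\mu_i\nabla\rho_i = \nabla\big(f(\overrightarrow{\rho})+\overrightarrow{a}\cdot\overrightarrow{\rho}\big) - (A\overrightarrow{\phi})\cdot\nabla\overrightarrow{\rho}.
\]
I then use $\overrightarrow{\rho} = \overrightarrow{\phi} - \varepsilon^2\Delta\overrightarrow{\phi}$ (with $\sigma=1$). This gives
\[
(A\overrightarrow{\phi})\cdot\nabla\overrightarrow{\rho} = \nabla\Big(\tfrac12\overrightarrow{\phi}^TA\overrightarrow{\phi}\Big) - \varepsilon^2\sum_k(Q\partial_k\ldots),
\]
and the last piece is the standard identity
\[
\overrightarrow{\psi}\cdot\nabla\Delta\overrightarrow{\psi} = \operatorname{div}\big(\nabla\overrightarrow{\psi}^T\nabla\overrightarrow{\psi}\big)\text{-type terms} - \nabla\tfrac12|\nabla\overrightarrow{\psi}|^2 + \ldots,
\]
written with $\overrightarrow{\psi}=Q\overrightarrow{\phi}$.

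\textbf{Step 3: integrate against $\xi$ and regroup.} Integrating by parts against $\xi$, every gradient term becomes $-(\cdot)\operatorname{div}\xi$. The divergence of the tensor $(\nabla\overrightarrow{\psi})^T\nabla\overrightarrow{\psi}$ becomes $-\int(\nabla\overrightarrow{\psi})^T\nabla\overrightarrow{\psi}:D\xi$. Boundary terms vanish because $\xi\cdot n=0$ and $\nabla\overrightarrow{\phi}\cdot n=0$. Finally I regroup the scalar terms,
\[
f(\overrightarrow{\rho}) + \overrightarrow{a}\cdot\overrightarrow{\rho} - \overrightarrow{\rho}^TA\overrightarrow{\phi} + \tfrac12\overrightarrow{\phi}^TA\overrightarrow{\phi} + \tfrac{\varepsilon^2}2|\nabla\overrightarrow{\psi}|^2 ,
\]
into $W(\overrightarrow{\rho}) + \tfrac12|\overrightarrow{\psi} - Q\overrightarrow{\rho}|^2 + \tfrac{\varepsilon^2}{2}|\nabla\overrightarrow{\psi}|^2$, exactly as in Lemma \ref{lem:cE-alternative}. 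Checking this, the sum $-\tfrac12\overrightarrow{\rho}^TA\overrightarrow{\rho}+\tfrac12|Q\overrightarrow{\phi}-Q\overrightarrow{\rho}|^2$ matches. The overall sign then yields \eqref{eq:motion-law-epsilon}.

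\textbf{Main obstacle.} The main obstacle is regularity: the weak solutions from Theorem \ref{thm:well-posedness} may not justify the chain rule for $\nabla f(\overrightarrow{\rho})$, nor the third derivatives of $\overrightarrow{\phi}$. I would handle this in two ways. First, I use the $H^1$ bound on $\nabla(\rho^\varepsilon)^{m-1/2}$-type quantities coming from the dissipation, together with elliptic regularity giving $\overrightarrow{\phi}\in W^{3,\cdot}$. Second, I carry out the computation against a mollified $\overrightarrow{\rho}$ and pass to the limit in the mollification.
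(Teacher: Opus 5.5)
Your overall formal computation does lead to the right identity, and it is in substance the paper's. The gap is in how you organize it. It introduces quantities that do not exist for the weak solutions of Theorem \ref{thm:well-posedness}, and neither of your proposed fixes resolves this.

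\textbf{Where the regularity breaks.} Splitting $\sum_i\rho_i\nabla\mu_i=\nabla(\sum_i\rho_i\mu_i)-\sum_i\mu_i\nabla\rho_i$, and then substituting $\overrightarrow{\rho}=\overrightarrow{\phi}-\varepsilon^2\Delta\overrightarrow{\phi}$ inside $(A\overrightarrow{\phi})\cdot\nabla\overrightarrow{\rho}$, requires the gradients of the \emph{individual} densities $\rho_i$ and third derivatives of $\overrightarrow{\phi}$.
\begin{itemize}
\item The dissipation only gives weighted control of $\nabla\rho^m$ for the \emph{total} density $\rho=\rho_1+\dots+\rho_N$.
\item Each $\rho_i$ is merely transported (no diffusion acts on a single species), so it can carry jumps, e.g.\ across interfaces between species.
\item Hence $\rho_i\notin W^{1,p}$ and $\phi_i\in W^{3,p}$ is false in general, so your elliptic-regularity fix does not work.
\item Mollifying $\overrightarrow{\rho}$ does not help by itself either. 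The flux $j^\varepsilon$ is that of the actual weak solution, and identifying the limit of $\rho_\delta\nabla f_m'(\rho_\delta)$ with it requires exactly the weak identity below. Once you use that identity, the detour is unnecessary.
\end{itemize}

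\textbf{The missing observation: no derivative ever has to fall on $\overrightarrow{\rho}$.} Since $f_m'(\rho)$ is common to all species and $a_i$ is constant, you get directly $\sum_i\rho_i\nabla\mu_i=\rho\nabla f_m'(\rho)-(\overrightarrow{\rho})^TA\nabla\overrightarrow{\phi}$; your $\sum_i\mu_i\nabla\rho_i$ terms cancel identically.
\begin{itemize}
\item The identity $\rho\nabla f_m'(\rho)=\nabla\big(\rho f_m'(\rho)-f_m(\rho)\big)$ should not be derived by a chain rule. It is the weak pressure identity of Theorem \ref{thm:well-posedness}(iii), i.e.\ the definition of the total flux of a weak solution. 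Tested against $\xi$, it gives $\frac1\varepsilon\int(\rho f_m'-f_m)\operatorname{div}\xi$, which combines with $-\int p^\varepsilon\operatorname{div}\xi$.
\item For the remaining term, use the elliptic equation without differentiating it: $(\overrightarrow{\rho})^TA\nabla\overrightarrow{\phi}=\nabla\big(\tfrac12(\overrightarrow{\phi})^TA\overrightarrow{\phi}\big)-\varepsilon^2(\Delta\overrightarrow{\psi})\cdot\nabla\overrightarrow{\psi}$.
\item Then apply $(\Delta\overrightarrow{\psi})\cdot\partial_k\overrightarrow{\psi}=\partial_l(\partial_l\overrightarrow{\psi}\cdot\partial_k\overrightarrow{\psi})-\partial_k\tfrac12|\nabla\overrightarrow{\psi}|^2$. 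This needs only $\overrightarrow{\phi}\in W^{2,m}$ (elliptic regularity from $\rho_i\in L^m$), together with $\nabla\overrightarrow{\phi}\cdot n=0$ and $\xi\cdot n=0$.
\item Completing the square as in Lemma \ref{lem:cE-alternative} then gives the result. This is the paper's proof.
\end{itemize}

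\textbf{A sign slip in your identity.} The correct identity is $\overrightarrow{\psi}\cdot\partial_k\Delta\overrightarrow{\psi}=\partial_k(\overrightarrow{\psi}\cdot\Delta\overrightarrow{\psi})-\partial_l(\partial_l\overrightarrow{\psi}\cdot\partial_k\overrightarrow{\psi})+\partial_k\tfrac12|\nabla\overrightarrow{\psi}|^2$. So your sign on the $|\nabla\overrightarrow{\psi}|^2$ term is wrong. Your ``$\ldots$'' also hides the term $\nabla(\overrightarrow{\psi}\cdot\Delta\overrightarrow{\psi})=\varepsilon^{-2}\nabla\big((\overrightarrow{\phi})^TA(\overrightarrow{\phi}-\overrightarrow{\rho})\big)$. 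That term is the only source of the $-(\overrightarrow{\rho})^TA\overrightarrow{\phi}$ in your final regrouping.
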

\begin{proof}
We can write (see also Theorem \ref{thm:well-posedness}-(iii))
\begin{align*}
j^\eps_i
 = \rho^\eps_i \na f_m'(\rho^\eps) + \rho_i^\eps (A\na \overrightarrow{\phi}^\eps)_i
 \end{align*}
and so the total flux satisfies
\begin{align}\label{eq:continuity-total}
\int_0^{T}\int_{\Omega} j^{\varepsilon} \cdot \xi \, dxdt=\frac{1}{\varepsilon }\int_0^{T}\int_{\Omega}
\left(\left(\rho^{\varepsilon}f'(\rho^{\varepsilon})-f(\rho^{\varepsilon})\right) \div(\xi)+(\overrightarrow{\rho}^{\varepsilon})^T A(\nabla
\overrightarrow{\phi}^{\varepsilon}\cdot\xi)\right)\,dxdt,
\end{align}
Therefore, using \eqref{eq:continuity-total} and the definition of the pressure \eqref{eq:pressure-def}, we obtain
\begin{align}\label{eq:j-epsilon}
\int_0^T\int_{\Omega} j^\varepsilon \cdot \xi - p^\varepsilon \operatorname{div}(\xi) \, dxdt &= -\frac{1}{\varepsilon} \int_0^T\int_{\Omega} \left( f(\rho^\varepsilon) + \overrightarrow{a} \cdot \overrightarrow{\rho}^\varepsilon - (\overrightarrow{\rho}^{\varepsilon})^TA\overrightarrow{\phi}^{\varepsilon} \right) \operatorname{div}(\xi) \, dxdt \notag \\
&\qquad\qquad\qquad\qquad + \frac{1}{\varepsilon} \int_0^T\int_{\Omega} (\overrightarrow{\rho}^{\varepsilon})^T A(\nabla
\overrightarrow{\phi}^{\varepsilon}\cdot\xi) \, dxdt.
\end{align}

\medskip

In order to complete the proof, we need to show the following identity:
(for any $\xi\in C^{\infty}(\overline{\Omega};\R^d)$ with $\xi\cdot n=0$ on $\partial\Omega$):
\begin{align}\label{eq:computation-result}
&\frac{1}{\varepsilon} \int_{\Omega} \left( f(\rho^\varepsilon) + \overrightarrow{a} \cdot \overrightarrow{\rho}^\varepsilon - (\overrightarrow{\rho}^\varepsilon)^T A \overrightarrow{\phi}^\varepsilon \right) \operatorname{div}(\xi) \, dx - \frac{1}{\varepsilon} \int_{\Omega} (\overrightarrow{\rho}^{\varepsilon})^T A(\nabla
\overrightarrow{\phi}^{\varepsilon}\cdot\xi) \, dx \\
&\quad = \frac{1}{\varepsilon} \int_{\Omega} \left( W(\overrightarrow{\rho}^\varepsilon) + \frac{1}{2} | \overrightarrow{\psi}^\varepsilon-Q\overrightarrow{\rho}^\varepsilon |^2 + \frac{1}{2} \varepsilon^2 |  \nabla \overrightarrow{\psi}^\varepsilon |^2 \right) \operatorname{div}(\xi) \, dx  - \varepsilon \int_{\Omega} (\nabla\overrightarrow{\psi}^{\varepsilon})^T\nabla\overrightarrow{\psi}^{\varepsilon}:\nabla\xi \, dx.\notag
\end{align}
Here, $\nabla\overrightarrow{\psi}^{\varepsilon}=\left((\psi^{\varepsilon}_i)_{x_k}\right)_{1\leq i\leq N,\,1\leq k\leq d}$ is the Jacobian matrix of size $N$ by $d$.

\medskip

The remainder of the proof is devoted to \eqref{eq:computation-result}. We first write:
\begin{align}\label{eq:expansion}
&\frac{1}{\varepsilon} \int_{\Omega} \left( f(\rho^\varepsilon) + \overrightarrow{a} \cdot \overrightarrow{\rho}^\varepsilon - (\overrightarrow{\rho}^\varepsilon)^T A \overrightarrow{\phi}^\varepsilon \right) \operatorname{div}(\xi) \, dx - \frac{1}{\varepsilon} \int_{\Omega} (\overrightarrow{\rho}^{\varepsilon})^T A(\nabla
\overrightarrow{\phi}^{\varepsilon}\cdot\xi) \, dx \notag \\
&\qquad= \frac{1}{\varepsilon} \int_{\Omega} \left( f(\rho^\varepsilon) + \overrightarrow{a} \cdot \overrightarrow{\rho}^\varepsilon - \frac{1}{2} (\overrightarrow{\rho}^\varepsilon)^T A \overrightarrow{\rho}^\varepsilon \right) \operatorname{div}(\xi) \, dx \notag \\
&\qquad\qquad + \frac{1}{\varepsilon} \int_{\Omega} \left( \frac{1}{2} (\overrightarrow{\rho}^\varepsilon)^T A \overrightarrow{\rho}^\varepsilon - (\overrightarrow{\rho}^\varepsilon)^T A \overrightarrow{\phi}^\varepsilon + \frac{1}{2} (\overrightarrow{\phi}^\varepsilon)^T A \overrightarrow{\phi}^\varepsilon \right) \operatorname{div}(\xi) \, dx \notag\\
&\qquad\qquad\qquad - \frac{1}{\varepsilon} \int_{\Omega} \frac{1}{2} (\overrightarrow{\phi}^\varepsilon)^T A \overrightarrow{\phi}^\varepsilon \operatorname{div}(\xi) \, dx - \frac{1}{\varepsilon} \int_{\Omega} (\overrightarrow{\rho}^{\varepsilon})^T A(\nabla
\overrightarrow{\phi}^{\varepsilon}\cdot\xi) \, dx.
\end{align}

We then compute  each term in the right-hand side: First of all,  the definition of  $W$ implies
\begin{align}\label{eq:RHS-term-1}
\frac{1}{\varepsilon} \int_{\Omega} \left( f_m(\rho^\varepsilon) + \overrightarrow{a} \cdot \overrightarrow{\rho}^\varepsilon - \frac{1}{2} (\overrightarrow{\rho}^\varepsilon)^T A \overrightarrow{\rho}^\varepsilon \right) \operatorname{div}(\xi) \, dx=\frac{1}{\varepsilon} \int_{\Omega} W(\overrightarrow{\rho}^{\varepsilon}) \operatorname{div}(\xi) \, dx.
\end{align}
Next, by the decomposition $A=Q^TQ$ and $\overrightarrow{\psi}^{\varepsilon}=Q\overrightarrow{\phi}^{\varepsilon}$, we have
\begin{align}\label{eq:RHS-term-2}
\frac{1}{\varepsilon} \int_{\Omega} \left( \frac{1}{2} (\overrightarrow{\rho}^\varepsilon)^T A \overrightarrow{\rho}^\varepsilon - (\overrightarrow{\rho}^\varepsilon)^T A \overrightarrow{\phi}^\varepsilon + \frac{1}{2} (\overrightarrow{\phi}^\varepsilon)^T A \overrightarrow{\phi}^\varepsilon \right) \operatorname{div}(\xi) \, dx = \frac{1}{\varepsilon}\int_{\Omega}\frac{1}{2} | \overrightarrow{\psi}^\varepsilon-Q\overrightarrow{\rho}^\varepsilon |^2\operatorname{div}(\xi)\,dx.
\end{align}
Finally, the remaining two terms in \eqref{eq:expansion} becomes (after some integration by parts and using the equation \eqref{eq:epsilon-problem-2}):
\begin{align*}
&\int_{\Omega} \frac{1}{2} (\overrightarrow{\phi}^\varepsilon)^T A \overrightarrow{\phi}^\varepsilon \operatorname{div}(\xi) \, dx + \int_{\Omega} (\overrightarrow{\rho}^{\varepsilon})^T A(\nabla
\overrightarrow{\phi}^{\varepsilon}\cdot\xi) \, dx \\
&\qquad = - \int_{\Omega} \frac12\nabla\left( (\overrightarrow{\phi}^\varepsilon)^T A \overrightarrow{\phi}^\varepsilon \right) \cdot\xi \, dx + \int_{\Omega} (\overrightarrow{\rho}^{\varepsilon})^T A(\nabla
\overrightarrow{\phi}^{\varepsilon}\cdot\xi) \, dx \\
&\qquad = - \int_{\Omega} (\overrightarrow{\phi}^\varepsilon)^T A (\nabla\overrightarrow{\phi}^\varepsilon\cdot\xi) \, dx + \int_{\Omega} (\overrightarrow{\rho}^{\varepsilon})^T A (\nabla
\overrightarrow{\phi}^{\varepsilon}\cdot\xi) \, dx \\
&\qquad = \int_{\Omega} (\overrightarrow{\rho}^{\varepsilon} - \overrightarrow{\phi}^{\varepsilon})^T A (\nabla
\overrightarrow{\phi}^{\varepsilon}\cdot\xi) \, dx \\
&\qquad = - \varepsilon^2\int_{\Omega} (\Delta\overrightarrow{\phi}^{\varepsilon})^T A (\nabla
\overrightarrow{\phi}^{\varepsilon}\cdot\xi) \, dx
\end{align*}
which then gives (after further integration by parts):
\begin{align*}
&\int_{\Omega} \frac{1}{2} (\overrightarrow{\phi}^\varepsilon)^T A \overrightarrow{\phi}^\varepsilon \operatorname{div}(\xi) \, dx + \int_{\Omega} (\overrightarrow{\rho}^{\varepsilon})^T A(\nabla
\overrightarrow{\phi}^{\varepsilon}\cdot\xi) \, dx \\
&\qquad = - \varepsilon^2\int_{\Omega} (\Delta\overrightarrow{\phi}^{\varepsilon})^T A (\nabla
\overrightarrow{\phi}^{\varepsilon}\cdot\xi) \, dx\\
&\qquad =  \varepsilon^2\int_{\Omega} (\nabla\overrightarrow{\phi}^{\varepsilon})^T A \nabla(\nabla\overrightarrow{\phi}^{\varepsilon}\cdot\xi) \, dx \\
&\qquad = \varepsilon^2\int_{\Omega} (\nabla\overrightarrow{\phi}^{\varepsilon})^T A (\nabla^2\overrightarrow{\phi}^{\varepsilon}\cdot\xi + \nabla\overrightarrow{\phi}^{\varepsilon}\cdot\nabla\xi) \, dx \\
&\qquad = \varepsilon^2\int_{\Omega} \frac{1}{2} \nabla\left((\overrightarrow{\phi}^\varepsilon)^T A \overrightarrow{\phi}^\varepsilon\right)\cdot\xi \, dx + \varepsilon^2\int_{\Omega} (\nabla\overrightarrow{\phi}^\varepsilon)^T A (\nabla\overrightarrow{\phi}^{\varepsilon}\cdot\nabla\xi) \, dx \\
&\qquad = - \varepsilon^2\int_{\Omega} \frac{1}{2} |\nabla\overrightarrow{\psi}^{\varepsilon}|^2\operatorname{div}(\xi) \, dx + \varepsilon^2 \int_{\Omega} (\nabla\overrightarrow{\psi}^{\varepsilon})^T\nabla\overrightarrow{\psi}^{\varepsilon}:\nabla\xi \, dx.
\end{align*}
In the last line, we used the decomposition $A=Q^TQ$ and the notation $\overrightarrow{\psi}^{\varepsilon}=Q\overrightarrow{\phi}^{\varepsilon}$. Therefore, we obtain
\begin{small}
\begin{align}\label{eq:RHS-term-3}
- \frac{1}{\varepsilon} \int_{\Omega} \frac{1}{2} (\overrightarrow{\phi}^\varepsilon)^T A \overrightarrow{\phi}^\varepsilon \operatorname{div}(\xi) \, dx - \frac{1}{\varepsilon} \int_{\Omega} (\overrightarrow{\rho}^{\varepsilon})^T A(\nabla
\overrightarrow{\phi}^{\varepsilon}\cdot\xi) \, dx =  \varepsilon\int_{\Omega} \frac{1}{2}|\nabla\overrightarrow{\psi}^{\varepsilon}|^2 \div{\xi} \, dx - \varepsilon\int_{\Omega}  (\nabla\overrightarrow{\psi}^{\varepsilon})^T\nabla\overrightarrow{\psi}^{\varepsilon}:\nabla\xi \, dx
\end{align}
\end{small}
and we complete the proof of \eqref{eq:computation-result} by combining \eqref{eq:expansion}-\eqref{eq:RHS-term-3}. We finish the proof of the proposition.
\end{proof}

We now prove Theorem \ref{thm:phase-separation-convergence}(iii) and (iv).

\begin{proof}[Proof of Theorem \ref{thm:phase-separation-convergence}(iii)]
By rearranging the terms of \eqref{eq:computation-result} of Proposition \ref{prop:ep-evolution}, we can write
\begin{align*}
\int_0^T\int_{\Omega} p^\varepsilon \operatorname{div}(\xi) \, dxdt &= \frac{1}{\varepsilon} \int_0^T\int_{\Omega} \left( W(\overrightarrow{\rho}^\varepsilon) + \frac{1}{2} |\overrightarrow{\psi}^\varepsilon- Q\overrightarrow{\rho}^\varepsilon |^2 + \frac{1}{2} \varepsilon^2 | \nabla \overrightarrow{\psi}^\varepsilon |^2 \right) \operatorname{div}(\xi) \, dxdt \notag \\
&\qquad - \varepsilon \int_0^T\int_{\Omega} (\nabla\overrightarrow{\psi}^{\varepsilon})^T\nabla\overrightarrow{\psi}^{\varepsilon}:D\xi \, dxdt + \int_0^T\int_{\Omega} j^\varepsilon \cdot \xi \, dxdt.
\end{align*}
Lemma \ref{lem:estimates}(i) and (iii) imply that there exists a constant  $C>0$ such that
\begin{align}\label{eq:pressure-C^1-star}
\left| \int_0^T\int_{\Omega} p^\varepsilon \operatorname{div}(\xi) \, dxdt \right| &\leq C\left( \| D\xi \|_{L^1(0, T; L^\infty(\Omega))} + \| j^\varepsilon \|_{L^2(0, T; L^1(\Omega))} \| \xi \|_{L^2(0, T; L^\infty(\Omega))}\right) \notag \\
&\leq C\| \xi \|_{L^2(0, T; C^1(\Omega))}.
\end{align}
Therefore, $\nabla p^{\varepsilon}$ is bounded in $L^2(0,T;C^1(\Omega)^*)$ uniformly in $\varepsilon\in(0,1)$.

We now proceed as in \cite{JKM21}: For $\varphi\in L^2(0,T;C^s(\Omega))$, consider the solution $u\in L^2(0,T;C^{2,s}(\Omega))$ to
\begin{align*}
\begin{cases}
\displaystyle \Delta u = \varphi - \frac{1}{|\Omega|}\int_{\Omega} \varphi \, dx & \text{in } \Omega, \\
\nabla u \cdot n = 0 & \text{on } \partial\Omega.
\end{cases}
\end{align*}
Then, by the fact that $\int_{\Omega}p^{\varepsilon}(x,t)\,dx=0$ and \eqref{eq:pressure-C^1-star}, we obtain
\begin{align*}
\left|  \int_{\Omega} p^{\varepsilon} \varphi \, dx  \right| = \left|  \int_{\Omega} p^{\varepsilon} \div (\nabla u) \, dx  \right|  &\leq C_M \| \nabla u \|_{L^{2}((0,T); C^1(\Omega))} \\
&\leq C_M \| u \|_{L^{2}((0,T); C^{2,s}(\Omega))} \leq C_M \| \varphi \|_{L^{2}((0,T); C^{s}(\Omega))}.
\end{align*}
This proves that $p^{\varepsilon}$ is bounded in $L^2(0,T;(C^s(\Omega))^*)$ uniformly in $\varepsilon\in(0,1)$ for any $s\in(0,1)$ is thus weak$^*$ convergent.
\end{proof}

\medskip

\begin{proof}[Proof of Theorem \ref{thm:phase-separation-convergence}(iv)]

By Proposition \ref{prop:convergence}(i) and Theorem \ref{thm:phase-separation-convergence}(iv), the left-hand side of \eqref{eq:motion-law-epsilon} satisfies
\begin{align}\label{eq:motion-law-LHS}
\lim_{\varepsilon\to0}\int_0^T\int_{\Omega} j^\varepsilon \cdot \xi - p^\varepsilon \operatorname{div}(\xi)\, dxdt=\int_0^T\int_{\Omega} j \cdot \xi - p \operatorname{div}(\xi)\, dxdt.
\end{align}

\medskip

Introduce the functions $u_{\varepsilon},v_{\varepsilon}\geq0$ defined by
\begin{align*}
u_{\varepsilon}^{2} = \frac{1}{\varepsilon} \left( W(\overrightarrow{\rho}^{\varepsilon}) + \frac{1}{2} |\overrightarrow{\psi}^{\varepsilon} - Q \overrightarrow{\rho}^{\varepsilon}|^{2} \right) \qquad\text{and}\qquad v_{\varepsilon}^{2} = \frac{1}{2} \varepsilon |\nabla \overrightarrow{\psi}^{\varepsilon}|^{2}. 
\end{align*}
Then, by \cite[Proposition 2.1]{B90} applied to all $i=0,\cdots,N$, we have
\begin{align*}
\bigvee_{i=0}^{N} \int_{\Omega} |\nabla \varphi_i(\overrightarrow{\psi}^{\varepsilon})| \, dx \leq \int_{\Omega} \sqrt{2h(\overrightarrow{\psi}^{\varepsilon})}  |\nabla \overrightarrow{\psi}^{\varepsilon}| \, dx &\leq \int_{\Omega} 2 u_{\varepsilon} v_{\varepsilon} \, dx \\
&= \int_{\Omega} u_{\varepsilon}^{2} + v_{\varepsilon}^{2} - (u_{\varepsilon} - v_{\varepsilon})^{2} \, dx \leq \int_{\Omega} u_{\varepsilon}^{2} + v_{\varepsilon}^{2} \, dx = \mathcal{E}^{\varepsilon}(\overrightarrow{\rho}^{\varepsilon}).
\end{align*}
Integrating in $t\in[0,T]$ and applying the assumption \eqref{assumption:energy-convergence}, and by the lower semicontinuity of the left-hand side and Fatou's Lemma, we derive
\begin{align*}
\int_0^T\cE^0(\overrightarrow{\rho}^0(t))\,dt &\leq \int_0^T\liminf_{\varepsilon\to0}\bigvee_{i=0}^{N} \int_{\Omega} |\nabla \varphi_i(\overrightarrow{\psi}^{\varepsilon})| \, dxdt \\
& \leq \liminf_{\varepsilon\to0}\int_0^T\bigvee_{i=0}^{N} \int_{\Omega} |\nabla \varphi_i(\overrightarrow{\psi}^{\varepsilon})| \, dxdt \leq \liminf_{\varepsilon\to0} \int_{0}^T\cE^{\varepsilon}(\overrightarrow{\rho}^\varepsilon(t))\,dt = \int_0^T\cE^0(\overrightarrow{\rho}^0(t))\,dt.
\end{align*}
The equality between the left and right hand side means that all these inequalities are in fact equalities in the limit, resulting in the following convergences:
\begin{align}\label{eq:convergence-squeezing}
\left\{
\begin{aligned}
u_{\varepsilon}^2 + v_{\varepsilon}^2 - \bigvee_{i=0}^{N} |\nabla \varphi_i(\overrightarrow{\psi}^{\varepsilon})| &\longrightarrow 0  && \text{ in } L^1(\Omega\times(0,T)), \\
2v_{\varepsilon}^2 - \bigvee_{i=0}^{N} |\nabla \varphi_i(\overrightarrow{\psi}^{\varepsilon})| &\longrightarrow 0  && \text{ in } L^1(\Omega\times(0,T)), \\
\sqrt{2h(\overrightarrow{\psi}^{\varepsilon})}|\nabla\overrightarrow{\psi}^{\varepsilon}| - \bigvee_{i=0}^{N} |\nabla \varphi_i(\overrightarrow{\psi}^{\varepsilon})| &\longrightarrow 0  && \text{ in } L^1(\Omega\times(0,T)), \\
\bigvee_{i=0}^{N} |\nabla \varphi_i(\overrightarrow{\psi}^{\varepsilon})|(\Omega) &\longrightarrow \bigvee_{i=0}^{N} |\nabla (\varphi_i\circ Q\overrightarrow{\rho}^0)|(\Omega)  && \text{ in } L^1(0,T). \\
\end{aligned}
\right.
\end{align}
Therefore, by \eqref{eq:convergence-squeezing}, Proposition \ref{prop:BV-fact} and \eqref{eq:bigVd}, we obtain that
\begin{align}\label{eq:motion-law-RHS-1}
\lim_{\varepsilon\to0}\frac{1}{\varepsilon} \int_0^T\int_{\Omega} \left( W(\overrightarrow{\rho}^{\varepsilon}) + \frac{1}{2} |\overrightarrow{\psi}^{\varepsilon}-Q\overrightarrow{\rho}^{\varepsilon} |^{2} + \frac{1}{2} \varepsilon^{2} |\nabla \overrightarrow{\psi}^{\varepsilon}|^{2} \right) \operatorname{div}\xi \, dx  dt
&=\lim_{\varepsilon\to0}  \int_0^T\int_{\Omega}  \left( u_\eps^2+v_\eps^2\right) \operatorname{div}\xi \, dx  dt \nonumber \\
&=\lim_{\varepsilon\to0}  \int_0^T\int_{\Omega}  \left(\bigvee_{i=0}^{N} |\nabla \varphi_i(\overrightarrow{\psi}^{\varepsilon})|\right) \operatorname{div}\xi \, dx  dt \nonumber \\
&=  \int_0^T\int_{\Omega}  \left(\bigvee_{i=0}^{N} |\nabla (\varphi_i\circ Q\overrightarrow{\rho}^0)|\right) \operatorname{div}\xi \, dx  dt \nonumber \\
& =\sum_{i, j=0}^{N} \sigma_{ij} \int_0^T\int_{\Omega} \operatorname{div}\xi \, d\mu_{ij}dt.
\end{align}

\medskip

Next, we write
\begin{align*}
\varepsilon\int_0^T\int_{\Omega}  (\nabla\overrightarrow{\psi}^{\varepsilon})^T\nabla\overrightarrow{\psi}^{\varepsilon}:\nabla\xi \, dxdt &= \varepsilon\int_0^T\int_{\Omega}  \frac{(\nabla\overrightarrow{\psi}^{\varepsilon})^T}{|\nabla\overrightarrow{\psi}^{\varepsilon}|}\frac{\nabla\overrightarrow{\psi}^{\varepsilon}}{|\nabla\overrightarrow{\psi}^{\varepsilon}|}:\nabla\xi |\nabla\overrightarrow{\psi}^{\varepsilon}|^2\, dxdt \\
&= \int_0^T\int_{\Omega}  \frac{(\nabla\overrightarrow{\psi}^{\varepsilon})^T}{|\nabla\overrightarrow{\psi}^{\varepsilon}|}\frac{\nabla\overrightarrow{\psi}^{\varepsilon}}{|\nabla\overrightarrow{\psi}^{\varepsilon}|}:\nabla\xi \cdot 2v_{\varepsilon}^2\, dxdt
\end{align*}
and use \eqref{eq:convergence-squeezing} to conclude:
\begin{align*}
\lim_{\varepsilon\to0}\varepsilon\int_0^T\int_{\Omega}  (\nabla\overrightarrow{\psi}^{\varepsilon})^T\nabla\overrightarrow{\psi}^{\varepsilon}:\nabla\xi \, dxdt = \lim_{\varepsilon\to0} \int_0^T\int_{\Omega}  \frac{(\nabla\overrightarrow{\psi}^{\varepsilon})^T}{|\nabla\overrightarrow{\psi}^{\varepsilon}|}\frac{\nabla\overrightarrow{\psi}^{\varepsilon}}{|\nabla\overrightarrow{\psi}^{\varepsilon}|}:\nabla\xi \cdot \sqrt{2h(\overrightarrow{\psi}^{\varepsilon})}|\nabla \overrightarrow{\psi}^{\varepsilon}|\, dxdt
\end{align*}
where the latter limit exists by \cite[Proposition 3.1, (3.12)]{LS18} which gives
\begin{align}\label{eq:motion-law-RHS-2}
\lim_{\varepsilon\to0} \int_0^T\int_{\Omega}  \frac{(\nabla\overrightarrow{\psi}^{\varepsilon})^T}{|\nabla\overrightarrow{\psi}^{\varepsilon}|}\frac{\nabla\overrightarrow{\psi}^{\varepsilon}}{|\nabla\overrightarrow{\psi}^{\varepsilon}|}:\nabla\xi \cdot \sqrt{2h(\overrightarrow{\psi}^{\varepsilon})}|\nabla \overrightarrow{\psi}^{\varepsilon}|\, dxdt = \sum_{i,j=0}^N\int_0^T\int_{\Omega} \nu_{ij} \otimes \nu_{ij} : \nabla\xi\, d\mu_{ij}dt.
\end{align}
Combining \eqref{eq:motion-law-LHS}, \eqref{eq:motion-law-RHS-1}, and \eqref{eq:motion-law-RHS-2} finishes the proof.
\end{proof}

\appendix
\section*{Appendix}\label{sec:appendix-A}
\section{Well-posedness of \eqref{eq:epsilon-problem-1}-\eqref{eq:epsilon-problem-2}}
We state the existence of solutions to \eqref{eq:epsilon-problem-1}-\eqref{eq:epsilon-problem-2}.

\begin{theorem}\label{thm:well-posedness}
Let $A=(\alpha_{ij})_{1\leq i,j\leq N}$ be given (not necessarily  assuming \ref{assumption:A1} and \ref{assumption:A2}). Fix $T>0$, $\varepsilon\in(0,1)$ and let $\{\rho_{i,in}^{\varepsilon}\}_{i=1}^N$ be a set of initial data such that $\rho_{i,in}^{\varepsilon}\in L^m(\Omega,\R_{\geq0})$ with $\int_{\Omega}\rho_{i,in}^{\varepsilon}\,dx=m_i$ for $i=1,\cdots,N$. Then, there exists a solution $\overrightarrow{\rho}^{\varepsilon}=(\rho^{\varepsilon}_1,\cdots,\rho^{\varepsilon}_N)^T$ to the system \eqref{eq:epsilon-problem-1}-\eqref{eq:epsilon-problem-2} with finite time horizon $T>0$. That is, for each $i=1,\cdots,N$,
\begin{enumerate}[label=(\roman*)]
\item The density function $\rho_i^{\varepsilon}$ satisfies $\rho_i^{\varepsilon}(t)\in m_i\cP_{ac}(\Omega)$ a.e. $t>0$ and 
\begin{align*}
\rho_i^{\varepsilon}\in L^{\infty}\left(0,T;L^m(\Omega)\right)\cap C^{1/2}\left((0,T);W_0^{-1,\frac{2m}{m-1}}(\Omega)\right)
\end{align*}

\item The continuity equation
\begin{align*}
\partial_t\rho_i^{\varepsilon}+\div(\rho_i^{\varepsilon}v_i^{\varepsilon})=0
\end{align*}
holds with the Neumann boundary condition and the initial condition $\rho_{i,in}^{\varepsilon}$ for some $v^{\varepsilon}_i\in L^2(\Omega\times(0,T),d\rho_i^{\varepsilon};\R^d)$, in the sense that
\begin{align*}
\int_{\Omega} \rho_{i,in}^{\varepsilon}(x)\,\zeta(x,0)\,dx
\;+\; \int_0^{\infty}\int_{\Omega} \rho_i^{\varepsilon} \,\partial_t \zeta
+ \rho_i^{\varepsilon} v_i^{\varepsilon} \cdot \nabla \zeta \,dxdt
= 0\qquad\text{for any }\zeta\in C_c^{\infty}(\overline{\Omega}\times[0,T)).
\end{align*}

\item The flux $j_i^{\varepsilon}:=\rho_i^{\varepsilon}v_i^{\varepsilon}$ is given by
\begin{align*}
j_i^{\varepsilon}=\frac{1}{\varepsilon}\left(-\rho_i^{\varepsilon}\nabla f_m'(\rho^{\varepsilon})+\sum_{j=1}^N\alpha_{ij}\rho_i^{\varepsilon}\nabla\phi_j^{\varepsilon}\right)
\end{align*}
where $\rho^{\varepsilon}:=\rho_1^{\varepsilon}+\cdots+\rho^{\varepsilon}_N$, in the sense that for any $\xi\in C_c^{\infty}(\overline{\Omega}\times[0,T);\R^d)$ with $\xi\cdot n=0$ on the boundary $\partial\Omega$, it holds that
\begin{align*}
\int_0^{\infty}\int_{\Omega} j_i^{\varepsilon} \cdot \xi \, dxdt=\frac{1}{\varepsilon }\int_0^{\infty}\int_{\Omega}
\left(- \rho_i^{\varepsilon} \overrightarrow{\varphi} \cdot \xi+\sum_{j=1}^{N} \alpha_{ij} \, \rho_i^{\varepsilon} \nabla \phi_j^{\varepsilon} \cdot \xi\right)\,dxdt
\end{align*}
and the pressure $\overrightarrow{\varphi}\in L^2(\Omega\times(0,T),d\rho_j^{\varepsilon};\R^d)$, for each $j=1,\cdots,N$, satisfies
\begin{align*}
\int_0^{\infty}\int_{\Omega} \rho^{\varepsilon}\, \overrightarrow{\varphi} \cdot \xi\,dxdt = \int_0^{\infty}\int_{\Omega}\left(f(\rho^{\varepsilon}) - \rho^{\varepsilon} f'(\rho^{\varepsilon})\right)\operatorname{div} \xi\,dxdt.
\end{align*}

\item The function $\phi_i^{\varepsilon}$ solves \eqref{eq:epsilon-problem-2}, in the sense that $\phi_i^{\varepsilon}\in L^{\infty}(0,T;H^1(\Omega))$ and
\begin{align*}
\int_{\Omega} (\rho(\cdot, t) - \phi(\cdot, t)) \psi + \varepsilon^2 \nabla \phi(\cdot, t) \cdot \nabla \psi \, dx = 0
\end{align*}
for any $t\in[0,T]$ and $\psi\in H^1(\Omega)$.

\item The energy dissipation holds
\begin{align*}
\cE^{\varepsilon}(\overrightarrow{\rho}^{\varepsilon}(t))+\sum_{i=1}^N\frac12\int_0^t\int_{\Omega}\rho_i^{\varepsilon}|v_i^{\varepsilon}|^2\,dxdt\leq \cE^{\varepsilon}(\overrightarrow{\rho}^{\varepsilon}_{in})
\end{align*}
for any $t\in[0,T]$, where $\overrightarrow{\rho}^{\varepsilon}_{in}:=(\rho^{\varepsilon}_{1,in},\cdots,\rho^{\varepsilon}_{N,in})^T$.
\end{enumerate}
\end{theorem}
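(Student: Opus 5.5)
The existence result will be obtained by the JKO (minimizing movement) scheme in the product Wasserstein space $\prod_{i=1}^N m_i\cP(\Omega)$, equipped with the distance $\mathbf{W}^2(\overrightarrow{\mu},\overrightarrow{\nu}):=\sum_i W_2^2(\mu_i,\nu_i)$ (each $W_2$ rescaled by the mass $m_i$). Since $\eps$ is fixed, I will work with $\cE^\eps$ as given. No sign or definiteness assumption on $A$ is needed, because the interaction term $\frac1\eps\int \overrightarrow{\rho}^TA\overrightarrow{\phi}^\eps$ is controlled by $C_\eps\|\overrightarrow{\rho}\|_{L^2}^2$. This follows from the elliptic estimate $\|\phi_j\|_{L^2}\le\sigma^{-1}\|\rho_j\|_{L^2}$. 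The term $\frac1\eps\int f_m(\rho)$ with $m>2$ dominates it: $\int\rho^2\le \delta\int\rho^m+C_\delta$ by interpolation with the fixed mass. So $\cE^\eps$ is bounded below and coercive in $L^m$ on each sublevel set.

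\textbf{Scheme.} Given the step $\tau>0$ and $\overrightarrow{\rho}^{n}$, let $\overrightarrow{\rho}^{n+1}$ minimize $\overrightarrow{\rho}\mapsto \frac{1}{2\tau}\mathbf{W}^2(\overrightarrow{\rho},\overrightarrow{\rho}^n)+\cE^\eps(\overrightarrow{\rho})$.

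\textbf{Existence of minimizers.} Take a minimizing sequence. Coercivity gives bounds in $L^m$, hence weak $L^m$ compactness. The term $\int f_m$ is convex and therefore weakly lower semicontinuous, and $\mathbf{W}^2$ is lower semicontinuous under narrow convergence. The interaction term is weakly continuous, because $\rho\mapsto\phi$ is compact from $L^2$ to $L^2$ (it maps into $H^1$). This yields a minimizer.

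\textbf{Euler--Lagrange equation.} Perturb each component by a push-forward $(\mathrm{id}+s\xi)_\#\rho_i^{n+1}$, with $\xi\cdot n=0$ on $\pa\Omega$, and compute the derivative of each term. For the internal energy, the standard computation gives $\frac1\eps\int (f(\rho)-\rho f'(\rho))\,\mathrm{div}\,\xi$, acting on the total density $\rho=\sum_i\rho_i$; this comes from the common pressure. For the interaction term, the symmetry of $G$ is used (for nonsymmetric $A$, the symmetric part enters, and the computation gives the drift $\sum_j\alpha_{ij}\nabla\phi_j$ after symmetrization; I will note this point). For the transport term, we get $\frac1\tau\int (T_i-\mathrm{id})\cdot\xi\,d\rho_i$ with $T_i$ the optimal map. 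This yields the discrete version of (iii). The pressure $\overrightarrow{\varphi}$ arises as $\nabla(\rho f'-f)$ divided by $\rho$ in the weak sense.

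\textbf{Estimates and passage to the limit $\tau\to0$.} The usual telescoping gives the discrete energy inequality $\cE^\eps(\overrightarrow{\rho}^{n+1})+\frac1{2\tau}\mathbf{W}^2\le \cE^\eps(\overrightarrow{\rho}^n)$. It implies a uniform $L^\infty(L^m)$ bound, $\frac12$-Hölder continuity in $W_2$ and in the negative Sobolev norm as in Lemma \ref{lem:estimates}(iv), and an $L^2(d\rho_i)$ bound on the discrete velocities. Let $\overline{\rho}^\tau$ be the piecewise constant interpolant and $\widetilde{\rho}^\tau$ the De Giorgi variational interpolant. The latter gives the sharp inequality (v) with factor $\frac12$ on the dissipation. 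Refined Ascoli--Arzelà gives narrow convergence for all times. The continuity equation (ii) passes to the limit directly, and the lower semicontinuity of $\Theta$ (as in Proposition \ref{prop:convergence}) gives $v_i\in L^2(d\rho_i)$.

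\textbf{Main obstacle.} The main obstacle is identifying the nonlinear flux (iii) in the limit. This requires strong convergence of $\rho^\tau$ to handle the terms $f(\rho)-\rho f'(\rho)=-\rho^m$ (up to constants) and $\rho_i\nabla\phi_j$. I would first derive an $H^1$ bound on $\rho^{m-1/2}$, or more precisely $\nabla\rho^{m-1/2}\in L^2$, from the dissipation. Summing over $i$, the dissipation bounds $\int\rho|\nabla f_m'(\rho)|^2$ minus the drift contribution, and the drift is controlled since $\nabla\phi_j\in L^\infty(L^2)$ uniformly at fixed $\eps$ (in fact $\in L^\infty$, by elliptic regularity and the $L^m$ bound). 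This spatial regularity of the total density $\rho$, combined with the time equicontinuity, gives strong $L^m$ compactness of $\rho$ via Aubin--Lions. The individual $\rho_i$ then only need weak convergence, because they appear linearly against $\nabla\phi_j$, which converges strongly. Finally, (iv) follows by linearity, completing the proof.
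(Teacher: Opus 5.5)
Your framework (JKO in the product Wasserstein space, telescoping estimates, passage $\tau\to0$) is the one the paper sketches. It does give the species-wise continuity equations (ii), the dissipation inequality (v), and the \emph{summed} flux identity. The genuine gap is the species-wise flux identity (iii) with a pressure field $\overrightarrow{\varphi}$ common to all species.

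\textbf{The discrete Euler--Lagrange step.} The formula $\frac1\eps\int(f(\rho)-\rho f'(\rho))\,\mathrm{div}\,\xi$ is the derivative of the internal energy only when \emph{all} components are transported by the same map $\mathrm{id}+s\xi$. In that case the transport terms add up, and you only obtain a relation for $\sum_i\rho_i v_i$. If instead you push forward $\rho_i$ alone, the map $s\mapsto\int f_m\big((\mathrm{id}+s\xi)_\#\rho_i+\sum_{k\neq i}\rho_k\big)$ is in general not differentiable. Two adjacent constant states $a\chi_{\{x_1<0\}}$ and $b\chi_{\{x_1>0\}}$ already give different one-sided derivatives. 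Its formal derivative $\int\rho_i\na f_m'(\rho)\cdot\xi$ presupposes a regularity of the pressure that must be proved first. The paper instead builds a common discrete pressure $p^{\tau,n}\in\partial f(\rho^{\tau,n})$ from the first-variation optimality conditions, as in \cite{KMW23}:
\begin{itemize}
\item Let $\varphi_i$ be a Kantorovich potential from $\rho_i^{n+1}$ to $\rho_i^n$. Minimality gives $\frac{\varphi_i}{\tau}+\frac1\eps\big(f_m'(\rho^{n+1})+a_i-(A\overrightarrow{\phi}^{n+1})_i\big)\geq C_i$, with equality on $\{\rho_i^{n+1}>0\}$.
\item Hence $f_m'(\rho^{n+1})=\max\{0,\max_k g_k\}$, where $g_k:=\eps(C_k-\varphi_k/\tau)-a_k+(A\overrightarrow{\phi}^{n+1})_k$.
\item So the pressure is a Sobolev function with $\na f_m'(\rho^{n+1})=\na g_i$ a.e.\ on $\{\rho_i^{n+1}>0\}$. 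This is exactly the per-species discrete identity $\rho_i^{n+1}v_i^{n+1}=\frac1\eps\rho_i^{n+1}\big(-\na f_m'(\rho^{n+1})+(A\na\overrightarrow{\phi}^{n+1})_i\big)$.
\end{itemize}

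\textbf{The limit $\tau\to0$.} Even with these discrete identities, your argument does not identify $\lim\rho_i^\tau\na p^\tau$. The energy and the dissipation control only the total density. Aubin--Lions therefore gives strong compactness of $\rho^\tau$ but not of the individual $\rho_i^\tau$, while $\na p^\tau$ is only weakly compact (bounded in $L^2(\rho^\tau\,dx\,dt)$). So $\rho_i^\tau\na p^\tau$ is a product of two weakly converging sequences, and strong convergence of $\rho^\tau$ only identifies the sum $\sum_i\rho_i^\tau\na p^\tau=\na(\rho^\tau)^m$. Your remark that the $\rho_i$ ``only need weak convergence'' covers $\rho_i\na\phi_j$ but not this term. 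You need an additional compactness argument, typically strong compactness of the pressure gradient, which is the standard difficulty for cross-diffusion systems with a common pressure. The paper asserts this convergence in Theorem~\ref{thm:time-discretization}(iv) by reference to the one-species construction, so it is not worked out there either. The later convergence analysis (Lemma~\ref{lem:estimates}, Proposition~\ref{prop:ep-evolution}) uses only the species-wise continuity equations, (v), and the summed identity \eqref{eq:continuity-total}, all of which your argument delivers.

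\textbf{Nonsymmetric $A$.} Your parenthetical is wrong. Since the Green kernel is symmetric, $\cE^\eps$ only sees $\frac12(A+A^T)$, so the scheme produces the drift $\sum_j\frac{\alpha_{ij}+\alpha_{ji}}{2}\na\phi_j$, not $\sum_j\alpha_{ij}\na\phi_j$. For nonsymmetric $A$ the system is not the gradient flow of $\cE^\eps$ and (v) need not hold, so this route requires $A$ symmetric.
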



There are many papers that addressed the existence of global-in-time weak solutions vs blow-up in finite time. It is well established (see \cite{S07} and the references therein) that the nonlinear diffusion, for $m>2-\frac2d$, suppresses blow-up and ensures the existence of global weak solutions, while finite time blow-up can still occur for certain initial conditions when $m<2-\frac2d$. 
Note also that  \cite{CLM14} proves the uniqueness of weak solutions to \eqref{eq:general-PKS}.

\medskip

Weak solutions to \eqref{eq:general-PKS} (as well as our system \eqref{eq:epsilon-problem-1}-\eqref{eq:epsilon-problem-2}) can be constructed via a discrete time scheme of JKO type as in \cite{KMW23}. We omit detailed statements and proofs  since they can be derived from  \cite{KMW23}  with natural modifications. We simply recall here the main steps:

\medskip

For $i=1,\cdots,N$, we let $K_i$ be the set of all measures that are absolutely continuous with respect to the Lebesgue measure on $\Omega$ whose density is in $L^m(\Omega;\R_{\geq0})$ with mass $m_i$. Let $\mathcal{K}=K_1\times\cdots\times K_N$ and be equipped with the Wasserstein distance in vector form defined by
\begin{align*}
W_2^2(\overrightarrow{\rho}^1,\overrightarrow{\rho}^2):=W_2^2(\rho_1^1,\rho_1^2)+\cdots+W_2^2(\rho_N^1,\rho_N^2)
\end{align*}
for $\overrightarrow{\rho}^i=(\rho_1^i,\cdots,\rho_N^i)^T\in\mathcal{K}$, $i=1,2$. Here, each individual term is the usual Wasserstein distance defined by for $\mu,\nu\in K_i$, $i=1,\cdots,N$,
\begin{align*}
W_2^2(\mu,\nu) = \inf_{\pi \in \Pi(\mu,\nu)} \int_{\Omega \times \Omega} |x - y|^2 \, d\pi(x, y)
\end{align*}
where $\Pi(\mu,\nu)$ denotes the set of all nonnegative measures with marginals $\mu$ and $\nu$.

\medskip

For fixed $\varepsilon\in(0,1)$ and  given $\tau\in(0,1)$ (the time step size that is destined to go to zero), we  define recursively a sequence $\overrightarrow{\rho}^n\in\mathcal{K}$ by
\begin{align*}
\overrightarrow{\rho}^{\tau,0} = \overrightarrow{\rho}^{\varepsilon}_{in}, \quad \overrightarrow{\rho}^{\tau,n} \in \operatorname{argmin} \left\{ \frac{1}{2\tau} W_2^2(\overrightarrow{\rho}, \overrightarrow{\rho}^{\tau,n-1}) + \cE^{\varepsilon}(\overrightarrow{\rho})\, :\, \overrightarrow{\rho} \in \mathcal{K} \right\} \quad \text{for } n \geq 0.
\end{align*}
The velocity field is then built from $T_i$, the unique optimal transport map from $\rho_i^{n}$ to $\rho_i^{n-1}$, as follows:
\begin{align*}
v_i^{\tau,n}(x):=\frac{x-T_i(x)}{\tau}. 
\end{align*}
We can define the pressure $p_i^{\tau,n}$ satisfying $p^{\tau,n}\in\partial f(\rho^{\tau,n})$ where $\rho^{\tau,n} = \rho^{\tau,n}_1 + \cdots + \rho^{\tau,n}_N$ and 
\begin{align*}
\rho_i^{\tau,n}v_i^{\tau,n}=\frac{1}{\varepsilon}\left(\sum_{j=1}^N\alpha_{ij}\rho_i^{\tau,n}\nabla\phi_j^{\tau,n} - \rho_i^{\tau,n}\nabla p^{\tau,n}\right),
\end{align*}
where $\phi_j^{\tau,n}$ is the solution to \eqref{eq:epsilon-problem-2} with $\rho_j^{\tau,n}$ in place of $\rho_i^{\varepsilon}$. We refer to \cite[Section 2]{KMW23} for the pressure.

\medskip

For each $n\in\mathbb{N}_{\geq0}$, define the piecewise constant functions by
\begin{equation*}
\begin{cases}
\overrightarrow{\rho}^{\tau}(t) &:= \overrightarrow{\rho}^{\tau,n} \quad \text{for } t \in [n\tau, (n+1)\tau), \\
\overrightarrow{\phi}^{\tau}(t) &:= \overrightarrow{\phi}^{\tau,n} \quad \text{for } t \in [n\tau, (n+1)\tau), \\
\,p^{\tau}(t)    &:= p^{\tau,n}    \quad \,\text{ for } t \in [n\tau, (n+1)\tau),\text{ and}\\
\,v_i^{\tau}(t) &:= v^{\tau,n} \quad \,\text{ for } t \in [n\tau, (n+1)\tau),\,i=1,\cdots,N.
\end{cases}
\end{equation*}
We have the following convergence theorem.

\begin{theorem}\label{thm:time-discretization}
Given $T>0$, $\varepsilon\in(0,1)$ and initial data $\overrightarrow{\rho}_{in}$ with $\cE^{\varepsilon}(\overrightarrow{\rho}_{in})<+\infty$, there is a sequence $\tau\to0$ such that the following hold.
\begin{enumerate}[label=(\roman*)]
\item The vector $\overrightarrow{\rho}^{\tau}$ converges to $\overrightarrow{\rho}\in L^{\infty}(0,T;\mathcal{K})\cap C^{1/2}(0,T;W_0^{-1,\frac{2m}{m-1}}(\Omega)^N)$ locally uniformly in $t>0$ with respect to the Wasserstein distance.

\item The vector $\overrightarrow{\phi}^{\tau}$ converges to $\overrightarrow{\phi}$ in $L^{\infty}(0,T;W^{1,2}(\Omega)^N)$.

\item The flux $j_i^{\tau}:=\rho_i^{\tau}v_i^{\tau}$ converges to $j_i$ weakly$^*$ in $L^{\frac{2m}{m-1}}(\Omega\times(0,T))$ for each $i=1,\cdots,N$. Moreover, for each $i=1,\cdots,N$, there exists $v_i\in L^2(\Omega\times(0,T);d\rho_i)$ such that $j_i=\rho_iv_i$.

\item The field $\rho_i^{\tau}\nabla p^{\tau}$ converges to $\rho_i\overrightarrow{\varphi}$ weakly$^*$ in $L^{\frac{2m}{m-1}}(\Omega\times(0,T))$ for each $i=1,\cdots,N$. 

\item Finally, the limits $\overrightarrow{\rho},\,\overrightarrow{\phi},\,\overrightarrow{\varphi},\,v_i$ solve the equations \eqref{eq:epsilon-problem-1}-\eqref{eq:epsilon-problem-2} upto time $T>0$ in the sense of Theorem \ref{thm:well-posedness}(i)-(iv). Also, the dissipation as stated in Theorem \ref{thm:well-posedness}(v) holds.
\end{enumerate}
\end{theorem}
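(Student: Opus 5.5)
The plan is the standard minimizing-movement (JKO) argument of \cite{KMW23}, adapted to the vector setting with the product Wasserstein distance. Throughout, $\varepsilon\in(0,1)$ is fixed. All constants may therefore depend on $\varepsilon$, and only the limit $\tau\to0$ matters.

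\textbf{Step 1: well-posedness of one step.} I first show that each minimization step is well defined. Write $\cE^\varepsilon$ in the form \eqref{eq:energy-new}. Under Assumption \ref{assumption:A1} every term is nonnegative, and $W(\overrightarrow{\rho})\geq c|\overrightarrow{\rho}|^m-C$. Hence a minimizing sequence in $\mathcal{K}$ is bounded in $L^m(\Omega)^N$, and by the Neumann problem \eqref{eq:phi} the corresponding $\overrightarrow{\phi}$ are bounded in $W^{2,m}$. Weak $L^m$ convergence gives strong convergence of $\overrightarrow{\phi}$. The convex part $\int f(\overrightarrow{\rho})+\overrightarrow{a}\cdot\overrightarrow{\rho}$ is weakly lower semicontinuous, and $W_2^2$ is lower semicontinuous under narrow convergence. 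So a minimizer $\overrightarrow{\rho}^{\tau,n}$ exists (uniqueness is not needed).

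\textbf{Step 2: a priori estimates.} Compare $\overrightarrow{\rho}^{\tau,n}$ with $\overrightarrow{\rho}^{\tau,n-1}$ and sum over $n$ to get
\[
\cE^\varepsilon(\overrightarrow{\rho}^{\tau,n})+\sum_{k\le n}\frac{1}{2\tau}W_2^2(\overrightarrow{\rho}^{\tau,k},\overrightarrow{\rho}^{\tau,k-1})\le\cE^\varepsilon(\overrightarrow{\rho}_{in}).
\]
This yields a uniform $L^\infty(0,T;L^m)$ bound and the approximate $\tfrac12$-Hölder estimate $W_2(\overrightarrow{\rho}^\tau(t),\overrightarrow{\rho}^\tau(s))\le C\sqrt{|t-s|+\tau}$. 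A refined Arzelà--Ascoli argument then gives item (i). The $W^{-1,\frac{2m}{m-1}}$ Hölder regularity follows by testing with gradients, as in Lemma \ref{lem:estimates}(iv). Item (ii) follows from elliptic estimates for \eqref{eq:phi}, together with compactness of $\overrightarrow{\rho}^\tau$ in $W^{-1}$; in fact $\overrightarrow{\phi}^\tau$ is bounded in $L^\infty(0,T;W^{1,\infty})$ because $m>2$ and $\varepsilon$ is fixed.

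\textbf{Step 3: Euler--Lagrange equation.} Perturb one component by push-forward, $\rho_i\mapsto(\mathrm{id}+s\xi)_\#\rho_i$ with $\xi\cdot n=0$. The first variation of the transport term is $\int\rho_i v_i^{\tau,n}\cdot\xi$. The interaction term gives $-\frac1\varepsilon\sum_j\alpha_{ij}\int\rho_i\nabla\phi_j\cdot\xi$, using the symmetry of $A$. The term $\int f(\rho_1+\dots+\rho_N)$ is not separately displacement-convex in each component. I therefore follow \cite[Section 2]{KMW23} and define the pressure $p^{\tau,n}\in\partial f(\rho^{\tau,n})$ through the optimality conditions, showing that $p^{\tau,n}$ is the same for all $i$ on the support of the total density. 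This gives the discrete flux identity $\rho_i v_i=\frac1\varepsilon(\sum_j\alpha_{ij}\rho_i\nabla\phi_j-\rho_i\nabla p)$, together with $\rho\nabla p=\nabla(\rho f'(\rho)-f(\rho))$ in the weak sense.

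\textbf{Step 4: passing to the limit.} From Step 2, $\sum_i\int_0^T\!\!\int\rho_i|v_i|^2\le C$. Since $\nabla\phi$ is bounded, summing over $i$ gives $\int\rho|\nabla p|^2\le C$, which is equivalent to a bound on $\nabla\rho^{m-1/2}$ in $L^2$. Hölder's inequality with the $L^m$ bound gives the weak$^*$ bounds on $j_i^\tau$ and $\rho_i^\tau\nabla p^\tau$, hence items (iii) and (iv). The representation $j_i=\rho_iv_i$ follows from lower semicontinuity of the Benamou--Brenier functional $\Theta$, as in Proposition \ref{prop:convergence}. For item (v), the cross terms $\rho_i^\tau\nabla\phi_j^\tau$ pass to the limit because $\overrightarrow{\phi}^\tau\to\overrightarrow{\phi}$ strongly. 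Passing to the limit in the pressure identity $\int\rho\,\overrightarrow{\varphi}\cdot\xi=\int(f-\rho f')\operatorname{div}\xi$ requires strong convergence of the total density $\rho^\tau$. I plan to get it from an Aubin--Lions argument, combining the $L^2H^1$ bound on $(\rho^\tau)^{m-1/2}$ with the time equicontinuity in $W_2$. The energy inequality of Theorem \ref{thm:well-posedness}(v) then follows from lower semicontinuity of $\cE^\varepsilon$ and of $\Theta$.

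\textbf{Main obstacle.} I expect the hardest steps to be the multicomponent pressure and its limit. First, one must show in Step 3 that a single scalar pressure serves all species even though only the total density enters $f$. Second, only $\rho^\tau$ converges strongly, while the individual $\rho_i^\tau$ converge only weakly. The limit of $\rho_i^\tau\nabla p^\tau$ therefore has to be identified as $\rho_i\overrightarrow{\varphi}$ through the weak-strong pairing of $\rho_i^\tau$ with $\nabla p^\tau$. If that pairing fails, my fallback is to keep $\overrightarrow{\varphi}$ as an abstract weak limit, as the statement of Theorem \ref{thm:well-posedness}(iii) allows.
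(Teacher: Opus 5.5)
Your overall route (JKO in the product Wasserstein space, a common pressure read off from the first-order optimality conditions, dissipation bounds, Benamou--Brenier lower semicontinuity) is the one the paper sketches by deferring to \cite{KMW23}. The discrete level is fine: the optimality conditions give $f_m'(\rho^{\tau,n})=\max\{0,\max_j g_j\}$, where each $g_j$ is built from the Kantorovich potential of species $j$ and $(A\overrightarrow{\phi}^{\tau,n})_j$. However, the step you flag as the main obstacle is a genuine gap, and neither of your two resolutions closes it. The weak--strong pairing needs strong compactness of $\nabla p^\tau$, and your estimates do not give it. The dissipation bound $\int_0^T\!\int_\Omega\rho^\tau|\nabla p^\tau|^2\le C$ is an $L^2$ bound on $\nabla(\rho^\tau)^{m-1/2}$. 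Combined with Aubin--Lions, it yields strong convergence of the total density $\rho^\tau$ but only weak convergence of $\nabla(\rho^\tau)^{m-1/2}$. Since $\rho_i^\tau\nabla p^\tau=\frac{2m}{2m-1}\,\frac{\rho_i^\tau}{\sqrt{\rho^\tau}}\,\nabla(\rho^\tau)^{m-1/2}$ and the species fractions $\rho_i^\tau/\rho^\tau$ are not compact, this is a weak--weak product and its limit is not identified.

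The fallback does not work either. Lower semicontinuity of $\Theta$ only shows that each weak limit $w_i$ of $\rho_i^\tau\nabla p^\tau$ has the form $\rho_i u_i$ with $u_i\in L^2(d\rho_i)$. Strong convergence of $\rho^\tau$ determines only the sum, $\sum_i w_i=\nabla\rho^m$. Nothing in your argument excludes $u_i\neq u_j$ on $\{\rho_i>0,\ \rho_j>0\}$, which correlated oscillations of the fractions and of $\nabla p^\tau$ would produce. Items (iv)--(v), through Theorem \ref{thm:well-posedness}(iii), assert a single field $\overrightarrow{\varphi}$ with $w_i=\rho_i\overrightarrow{\varphi}$ for every $i$. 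Your fallback would still give the per-species continuity equations, the dissipation inequality and the summed flux identity, which is what Section 5 actually uses, but not the statement as written. The missing ingredient is an extra compactness estimate for the gradient of the total pressure, e.g.\ strong $L^2$ convergence of $\nabla(\rho^\tau)^{m-1/2}$ from a higher-order bound obtained by a flow-interchange argument. This multispecies ingredient is absent from the one-species argument of \cite{KMW23}, and the paper's appendix does not supply it either. A smaller point: your bound on $\overrightarrow{\phi}^\tau$ in $L^\infty(0,T;W^{1,\infty})$ ``because $m>2$'' follows from $W^{2,m}\hookrightarrow W^{1,\infty}$ only when $m>d$. Otherwise, control $\int\rho_i|\nabla\phi_j|^2$ by H\"older and $W^{1,m}\hookrightarrow L^{2m/(m-1)}$.
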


\section{Transition from partial to full engulfment}\label{subsec:numerics}

While formula \eqref{eq:metric-coefficient-before-change-of-variables} does not allow us to explicitly compute the adhesion coefficients $\sigma_{ij}$ for given interaction coefficients $\alpha_{ij}$, we can compute numerical values easily. 
In this appendix, we fix 
$m=4$ and $A=\left( \begin{array}{ll} 4 & \gamma \\ \gamma & 1 \end{array}\right)$. We use the notations $A$, $B$, $C$ for the vertices $\overrightarrow{\alpha_0}$, $\overrightarrow{\alpha_1}$ and $\overrightarrow{\alpha_2}$.

\begin{table}[h]
\centering
\begin{tabular}{cccccc}
\toprule
$\gamma$ & $\sigma_{AB}$ & $\sigma_{AC}$ & $\sigma_{CB}$ & Detour & Gap \\
\midrule
0.50 & 1.0848 & 0.0678 & 1.0466 & 1.1144 & 0.0296 \\
0.60 & 1.0822 & 0.0678 & 1.0214 & 1.0892 & 0.0070 \\
0.70 & 1.0628 & 0.0678 & 0.9950 & 1.0628 & 0.0000 \\
0.80 & 1.0354 & 0.0678 & 0.9677 & 1.0355 & 0.0000 \\
0.90 & 1.0071 & 0.0678 & 0.9393 & 1.0071 & 0.0000 \\
1.00 & 0.9777 & 0.0678 & 0.9100 & 0.9778 & 0.0000 \\
1.10 & 0.9475 & 0.0678 & 0.8797 & 0.9475 & 0.0000 \\
\bottomrule
\end{tabular}
\caption{The table shows the vanishing of $\sigma_{AC} + \sigma_{CB} - \sigma_{AB}$ in the last column (Gap). The second-last column (Detour) represents $\sigma_{AC} + \sigma_{CB}$.} 
\label{tab:horizontal_geodesics}
\end{table}

Table \ref{tab:horizontal_geodesics} shows numerical values for the distances between the vertices. The last column gives the defect in the triangle inequality $\sigma_{AC} + \sigma_{CB} - \sigma_{AB}$,
and suggests that there exists a critical value $\gamma_c$ such that this defect vanishes when $\gamma\geq \gamma_c$ 
(so partial engulfment occurs when $\gamma <\gamma_c$ and full engulfment occurs when $\gamma\geq \gamma_c$). Note that when $\gamma \geq \beta (=1)$, this defect is zero, which is consistent with the result of Theorem \ref{thm:DAH}(iii) (is all compatible with \cite{FBC23,MT15}).

\begin{figure}[H]
	\begin{center}
            \includegraphics[height=6.2cm]{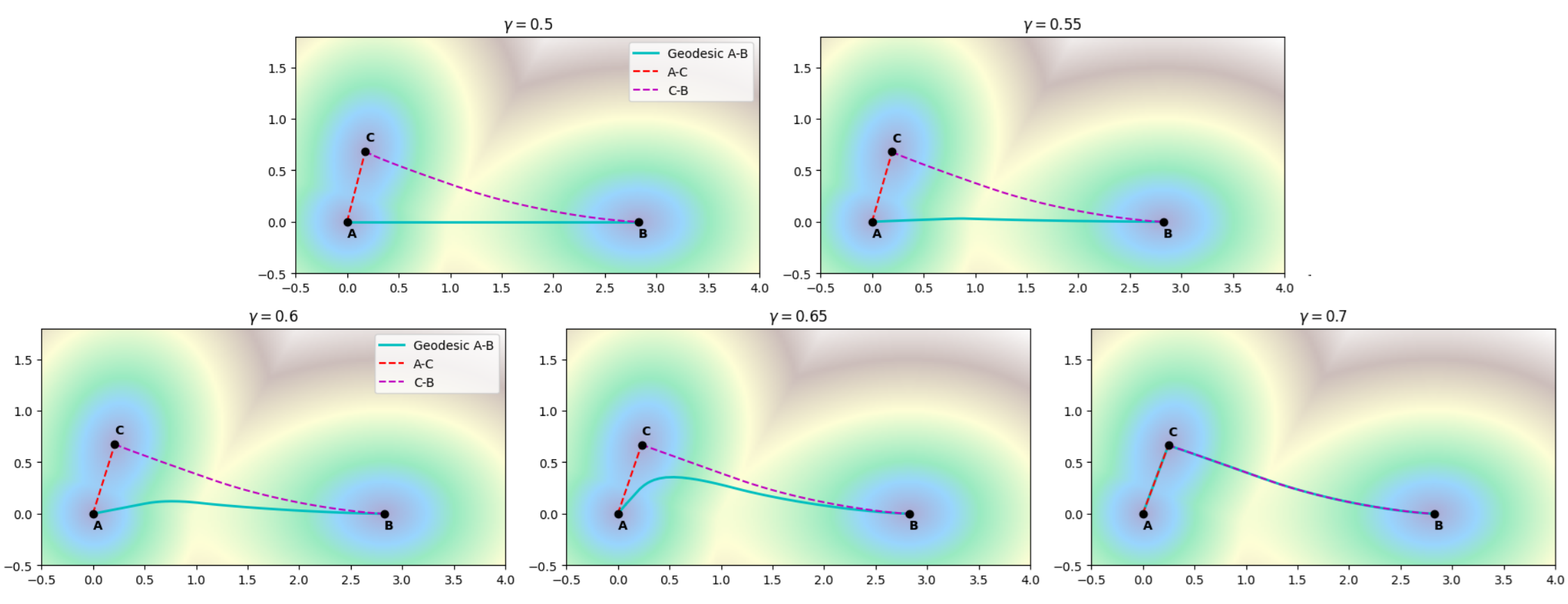}
            \captionsetup{width=0.8\textwidth} 
		\vskip 0pt
		\caption{The geodesics between the vertices are shown for $m=4$. When $\gamma=0.7$, the geodesic between $\overrightarrow{\alpha_0}$ and $\overrightarrow{\alpha_1}$ overlaps with the concatenation of those of $\overrightarrow{\alpha_0}$ and $\overrightarrow{\alpha_2}$, $\overrightarrow{\alpha_2}$ and $\overrightarrow{\alpha_1}$. The color map corresponds to the values of $h$.}
        \label{fig:geodesics}
	\end{center}
\end{figure}

Figure \ref{fig:geodesics} shows the geodesics  in the phase plane. When $\gamma=0.5$, the geodesic $\overrightarrow{\alpha_0}$ to $\overrightarrow{\alpha_1}$ is the straight line and this is pulled up to the other vertex $\overrightarrow{\alpha_2}$ as $\gamma$ increases. 
When $\gamma=0.7$,  the geodesic from $A=\overrightarrow{\alpha_0}$ to $B=\overrightarrow{\alpha_1}$ passes through the vertex $C=\overrightarrow{\alpha_2}$, which implies $\sigma_{AC} + \sigma_{CB} - \sigma_{AB} = 0$.

\begin{figure}[H]
    \centering
    
    \begin{minipage}{0.45\textwidth}
        \centering
        \small 
        \begin{tabular}{cccccc}
            \toprule
            $m$ & Lower bound & & $\gamma_c$ & & Upper bound \\ \midrule
            5   & 0.7500 & $\leqslant$ & $\gamma_c$ & $\leqslant$ & 0.8000 \\
            10  & 0.9200 & $\leqslant$ & $\gamma_c$ & $\leqslant$ & 0.9400 \\
            20  & 0.9600 & $\leqslant$ & $\gamma_c$ & $\leqslant$ & 0.9700 \\
            30  & 0.9700 & $\leqslant$ & $\gamma_c$ & $\leqslant$ & 0.9775 \\
            40  & 0.9850 & $\leqslant$ & $\gamma_c$ & $\leqslant$ & 0.9900 \\
            50  & 0.9900 & $\leqslant$ & $\gamma_c$ & $\leqslant$ & 0.9925 \\ \bottomrule
        \end{tabular}
        \vspace{1cm}
        \captionof{table}{Numerical bounds for $\gamma_c$ for various $m$.}
        \label{tab:m-higher-numerics}
    \end{minipage}
    \hfill 
    \begin{minipage}{0.5\textwidth}
        \centering
        \begin{tikzpicture}[scale=0.85] 
            \begin{axis}[
                xlabel={$m$},
                ylabel={$\gamma_c$},
                xmin=0, xmax=55,
                ymin=0.7, ymax=1.05,
                xtick={5,10,20,30,40,50},
                grid=both,
                grid style={line width=.1pt, draw=gray!10},
                major grid style={line width=.2pt,draw=gray!50},
                legend style={at={(0.5,-0.2)},anchor=north,legend columns=-1}
            ]

            \addplot[blue, mark=*, name path=lower] coordinates {
                (5, 0.7500) (10, 0.9200) (20, 0.9600) (30, 0.9700) (40, 0.9850) (50, 0.9900)
            };
            \addlegendentry{Lower bound }

            \addplot[red, mark=square*, name path=upper] coordinates {
                (5, 0.8000) (10, 0.9400) (20, 0.9700) (30, 0.9775) (40, 0.9900) (50, 0.9925)
            };
            \addlegendentry{Upper bound}

            \addplot[gray!20, opacity=0.5] fill between[of=lower and upper];

            \end{axis}
        \end{tikzpicture}
        \captionof{figure}{The graph of bounds $\gamma_c$ in $m$.}
        \label{fig:m-higher-numerics}
    \end{minipage}

\end{figure}


Finally Table \ref{tab:m-higher-numerics}  and Figure \ref{fig:m-higher-numerics} illustrate the dependence of this critical $\gamma_c$ with $m$. We only compute lower and upper bounds of the transition values $\gamma_c$ for various values of $m$ as these are enough to indicate the general trend: $\gamma_c$ depends on $m$, is increasing and approaches $1$ as $m\to\infty$. This is consistent with Theorem \ref{thm:DAH}(iii) which states that $\gamma_c\leq 1$ for all $m>2$.

\section{Some properties of BV functions}
We recall the following classical result (see for instance \cite{AFP00,KMW24}):
\begin{proposition}\label{prop:BV}
Let $f_k$ be a sequence of functions such that $f_k \to f$ in $L^1(\Omega)$ when $k\to\infty$.
Then
$$\liminf_{k\to\infty} \int_\Omega \zeta(x)  |\na f_k|
\geq  \int_\Omega \zeta(x) |\na f|  
$$
for all $\zeta \in C(\Omega)$ with $\zeta \geq 0$.
Furthermore, if 
$ |\na f_k| (\Omega)\to   |\na f|(\Omega)$, 
then
$$
\lim_{k\to\infty} \int_\Omega\zeta (x) |\na f_k|    
= \int_\Omega\zeta(x)  |\na f| \qquad 
\mbox{ for all $\zeta\in C(\Omega)$}.$$
\end{proposition}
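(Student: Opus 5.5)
The plan is to derive both statements from the dual (distributional) characterization of the weighted total variation, and then to obtain the second statement from the first by a complementary-weight argument. Throughout, I take $\zeta\in C(\Omega)$ to be bounded, as in $C(\overline\Omega)$, which is how the result is used in the proof of Theorem \ref{thm:phase-separation-convergence}(iv) with $\zeta=\operatorname{div}\xi$. For unbounded $\zeta$ the second statement can fail, since mass of $|\nabla f_k|$ may escape to $\partial\Omega$.

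\emph{Step 1 (dual formula).} For any $g\in L^1(\Omega)$ and any bounded $\zeta\in C(\Omega)$ with $\zeta\ge0$, I will show that
\begin{align*}
\int_\Omega \zeta\,|\nabla g| = \sup\left\{\int_\Omega g\,\operatorname{div}\varphi\,dx \;:\; \varphi\in C^1_c(\Omega;\R^d),\ |\varphi|\le \zeta\right\},
\end{align*}
with both sides equal to $+\infty$ exactly when $g\notin BV_{loc}$ in the appropriate sense.

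The inequality "$\ge$" is immediate from integration by parts: $\int g\operatorname{div}\varphi=-\int\varphi\cdot d(\nabla g)\le\int\zeta\,d|\nabla g|$.

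For "$\le$" I will use the polar decomposition $\nabla g=\nu\,|\nabla g|$ with $|\nu|=1$ $|\nabla g|$-a.e. The test fields are built in three stages:
\begin{enumerate}
\item By Lusin's theorem, approximate $\nu$ in $L^1(\zeta\,|\nabla g|)$ by continuous fields $\nu_n$ with $|\nu_n|\le1$.
\item Multiply by a compactly supported cutoff $\chi_n\uparrow1$ and by $(\zeta-\delta)_+$.
\item Mollify slightly, then rescale by a factor $(1-\delta)$ so that the constraint $|\varphi|\le\zeta$ is preserved.
\end{enumerate}
Letting $n\to\infty$ and then $\delta\to0$, monotone and dominated convergence recover $\int\zeta\,d|\nabla g|$. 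This is the only technically delicate step, and the main obstacle is precisely keeping $|\varphi|\le\zeta$ while regularizing. Working with $(1-\delta)(\zeta-\delta)_+$ instead of $\zeta$ gives uniform room for the mollification error, which is why I would handle it this way.

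\emph{Step 2 (lower semicontinuity).} Fix an admissible $\varphi$ in the formula of Step 1. Since $f_k\to f$ in $L^1$ and $\operatorname{div}\varphi$ is bounded, we have $\int f_k\operatorname{div}\varphi\to\int f\operatorname{div}\varphi$. Moreover, by Step 1, $\int f_k\operatorname{div}\varphi\le\int\zeta|\nabla f_k|$ for each $k$. Passing to the $\liminf$ and then taking the supremum over $\varphi$ gives the first claim.

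\emph{Step 3 (continuity under convergence of total variation).} Let $\zeta$ be bounded and continuous, and choose $K$ with $0\le\zeta+c\le K$ for some constant $c$. Since $|\nabla f_k|(\Omega)\to|\nabla f|(\Omega)$, adding the constant $c$ does not affect the conclusion, so we may assume $0\le\zeta\le K$. Apply Step 2 to the nonnegative weights $\zeta$ and $K-\zeta$ to get
\begin{align*}
\limsup_{k}\int\zeta|\nabla f_k| &= K|\nabla f|(\Omega)-\liminf_k\int(K-\zeta)|\nabla f_k| \\
&\le K|\nabla f|(\Omega)-\int(K-\zeta)|\nabla f| = \int\zeta|\nabla f|.
\end{align*}
Combined with the $\liminf$ inequality from Step 2, this yields the limit. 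Here finiteness of $|\nabla f|(\Omega)$ is implicit in the hypothesis.
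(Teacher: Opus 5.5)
Your argument is correct and is the standard proof (dual representation of $\int\zeta\,d|\nabla g|$, lower semicontinuity by testing against admissible fields, then the complementary-weight trick with $K-\zeta$) of the classical fact that the paper does not prove but cites from \cite{AFP00,KMW24}. Your restriction to bounded $\zeta$ is justified. The second claim genuinely needs $\zeta$ bounded, for instance $\zeta\in C(\overline\Omega)$, which covers the application $\zeta=\operatorname{div}\xi$. The first claim extends to unbounded $\zeta\ge0$ by applying it to $\min\{\zeta,K\}$ and letting $K\to\infty$.
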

A similar result (with a similar proof) holds for the supremum of measures:
\begin{proposition}\label{prop:BV-fact}
Let $f_k^1$ and $f_k^2$ be two sequences of functions such that $f_k^i \to f^i$ in $L^1(\Omega)$ as $k \to \infty$ for $i=1, 2$. Let $\mu_k = |\nabla f_k^1| \vee |\nabla f_k^2|$ and $\mu = |\nabla f^1| \vee |\nabla f^2|$ be the corresponding supremum measures. Then
\[
\liminf_{k \to \infty} \int_{\Omega} \zeta(x) \, d\mu_k \geq \int_{\Omega} \zeta(x) \, d\mu\qquad\text{for all $\zeta \in C(\Omega)$ with $\zeta \geq 0$. }
\]
Furthermore, if $\mu_k(\Omega) \to \mu(\Omega)$, then
\[
\lim_{k \to \infty} \int_{\Omega} \zeta(x) \, d\mu_k = \int_{\Omega} \zeta(x) \, d\mu \qquad \textit{for all } \zeta \in C(\Omega).
\]
\end{proposition}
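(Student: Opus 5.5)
This is the standard argument behind Proposition \ref{prop:BV}, applied to supremum measures. Only the first (liminf) statement needs real work; the second follows from it by a complementation trick.

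\emph{Step 1 (liminf on open sets).} I first show that
$$\mu(U)\le\liminf_{k\to\infty}\mu_k(U)\qquad\text{for every open } U\subset\Omega .$$
Fix disjoint open sets $U',U''\subset U$. Lower semicontinuity of the total variation under $L^1$ convergence gives $|\nabla f^1|(U')\le\liminf_k|\nabla f^1_k|(U')$, and similarly for $f^2$ on $U''$. Adding these,
$$|\nabla f^1|(U')+|\nabla f^2|(U'')\le\liminf_k\bigl(|\nabla f^1_k|(U')+|\nabla f^2_k|(U'')\bigr)\le\liminf_k\mu_k(U),$$
where the last inequality is the definition of $\mu_k(U)$. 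Taking the supremum over all such pairs $(U',U'')$ yields $\mu(U)\le\liminf_k\mu_k(U)$.

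\emph{Step 2 (passing to test functions).} Let $\zeta\ge0$ be continuous. The layer-cake formula gives
$$\int\zeta\,d\mu_k=\int_0^\infty\mu_k(\{\zeta>s\})\,ds,$$
and each superlevel set $\{\zeta>s\}$ is open. Applying Fatou's lemma in $s$ together with Step 1 gives the liminf inequality for $\int\zeta\,d\mu_k$. If $\zeta$ is only continuous on the open set $\Omega$ and not bounded, I first truncate, using $\min(\zeta,n)$, and then let $n\to\infty$ by monotone convergence.

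\emph{Step 3 (the convergence statement).} Now assume $\mu_k(\Omega)\to\mu(\Omega)$ and take $\zeta$ bounded and continuous. For a constant $c\ge\sup|\zeta|$, the function $c-\zeta$ is nonnegative, so Step 2 applied to it gives
$$\liminf_k\int(c-\zeta)\,d\mu_k\ge\int(c-\zeta)\,d\mu .$$
Since $c\,\mu_k(\Omega)\to c\,\mu(\Omega)$, this rearranges to $\limsup_k\int\zeta\,d\mu_k\le\int\zeta\,d\mu$. Applying the same argument to $\zeta+c\ge0$ gives the matching bound $\liminf_k\int\zeta\,d\mu_k\ge\int\zeta\,d\mu$, and the two together prove the limit.

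\emph{Main obstacle.} The one delicate point is Step 1: the supremum measure is defined only through disjoint open sets, so lower semicontinuity must be established on open sets before the layer-cake argument in Step 2 can be applied. The argument needs only that each $|\nabla f^i_k|$ is a finite Radon measure; this holds, for example, whenever $\sup_k\mu_k(\Omega)<\infty$, and otherwise the liminf inequality is trivial.
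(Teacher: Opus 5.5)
Your proof is correct and is essentially the classical argument the paper alludes to; the paper gives no proof of its own beyond saying it is ``similar'' to Proposition \ref{prop:BV}. Your three steps are the natural adaptation: lower semicontinuity on open sets read off from the defining supremum formula for $\mu\vee\nu$, then the layer-cake formula with Fatou, then the complement trick using $\mu_k(\Omega)\to\mu(\Omega)$.

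Restricting to bounded $\zeta$ (and finite $\mu(\Omega)$) in Step 3 is necessary, not a loss. For unbounded $\zeta\in C(\Omega)$ the convergence statement fails: take $\Omega=(0,1)$, $f^1=f^2=f^2_k=0$, $f^1_k=\frac1k\chi_{(e^{-k},2e^{-k})}$ and $\zeta(x)=1/x$. Then $\mu_k(\Omega)=2/k\to0=\mu(\Omega)$, but $\int\zeta\,d\mu_k=\frac{3e^k}{2k}\to\infty$. Bounded $\zeta$ still covers the paper's use with $\zeta=\operatorname{div}\xi$.

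Your closing claim that the liminf is ``trivial'' when $\sup_k\mu_k(\Omega)=\infty$ is not accurate. It is harmless, though: Steps 1--2 never use finiteness, so you can simply drop it.
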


\bibliographystyle{plain}
\bibliography{Preprint}

\end{document}